\documentclass{amsart}

\usepackage{tikz}
\usepackage{tikz-cd}
\usepackage{hyperref}
\usepackage{amssymb}
\usepackage{amsmath}
\usepackage{amsthm}

\makeatletter
\def\l@section{\@tocline{2}{3pt}{1pc}{5pc}{}}
\def\l@subsection{\@tocline{2}{0pt}{1.5pc}{5pc}{}}
\makeatother

\usepackage[style=alphabetic, maxnames=4]{biblatex}
\newtheorem{thm}{Theorem}[subsection]
\newtheorem{thm*}{Theorem}
\newtheorem{prop}[thm]{Proposition}
\newtheorem{cor}[thm]{Corollary}
\newtheorem{lem}[thm]{Lemma}

\newtheorem{athm}{Theorem}[section]
\newtheorem{aprop}[athm]{Proposition}
\newtheorem{acor}[athm]{Corollary}
\newtheorem{alem}[athm]{Lemma}

\theoremstyle{definition}
\newtheorem{defi}[thm]{Definition}
\newtheorem{defi*}[thm*]{Definition}
\newtheorem{ex}[thm]{Example}

\newtheorem{rem}[thm]{Remark}
\newtheorem{war}[thm]{Warning}
\newtheorem{con}[thm]{Construction}
\newtheorem{quest}[thm]{Question}

\newtheorem{adefi}[athm]{Definition}
\newtheorem{adefi*}{Definition}
\newtheorem{aex}[athm]{Example}

\newtheorem{arem}[athm]{Remark}

\newtheorem{acon}[athm]{Construction}

\newtheorem*{ack}{Acknowledgements}

\newcommand{\An}{\text{\normalfont An}}
\newcommand{\Ring}{\text{\normalfont Ring}}
\newcommand{\Ab}{\text{\normalfont Ab}}

\newcommand{\Sch}{\text{\normalfont Sch}}
\newcommand{\dSch}{\text{\normalfont dSch}}
\newcommand{\CAlg}{\text{\normalfont CAlg}}

\newcommand{\colim}{\text{\normalfont colim}}
\renewcommand{\lim}{\text{\normalfont lim}}
\newcommand{\Sp}{\text{\normalfont Sp}}

\newcommand{\Spec}{\text{\normalfont Spec}\,}
\newcommand{\Open}{\text{\normalfont Open}}

\newcommand{\Free}{\text{\normalfont Free}}

\newcommand{\Freep}{\text{\normalfont Free}_{\delta_p}}
\newcommand{\Cofree}{\text{\normalfont Cofree}_{\hat\delta}}
\newcommand{\Forget}{\text{\normalfont Forget}}
\newcommand{\Map}{\text{\normalfont Map}}

\newcommand{\comp}[1]{#1_p^{\wedge}}
\newcommand{\can}{\text{\normalfont can}}
\newcommand{\Frob}{\text{\normalfont Frob}}
\newcommand{\an}{\text{\normalfont an}}

\newcommand{\fib}{\text{\normalfont fib}}

\newcommand{\cn}{\text{\normalfont cn}}
\newcommand{\op}{\text{\normalfont op}}
\newcommand{\alg}{\text{\normalfont alg}}

\newcommand{\tCp}{tC_p}
\newcommand{\PrL}{\text{\normalfont Pr}^{\text{\normalfont L}}}

\newcommand{\Hom}{\text{\normalfont Hom}}

\newcommand{\Der}{\text{\normalfont Der}}

\newcommand{\Mod}{\text{\normalfont Mod}}
\newcommand{\dis}{\text{\normalfont dis}}
\newcommand{\Zar}{\text{\normalfont Zar}}
\newcommand{\aff}{\text{\normalfont aff}}
\newcommand{\Str}{\text{\normalfont Str}}

\newcommand{\A}{\mathbb A}
\newcommand{\E}{\mathbb E}

\newcommand{\G}{\mathbb G}
\renewcommand{\P}{\mathbb P}
\renewcommand{\S}{\mathbb S}
\newcommand{\Z}{\mathbb Z}
\newcommand{\N}{\mathbb N}
\newcommand{\Q}{\mathbb Q}
\renewcommand{\O}{\mathcal O}

\newcommand{\D}{\mathcal D}

\newcommand{\p}{\mathfrak p}

\title{An obstruction to lifting schemes to spectral schemes}
\author{Robert Szafarczyk}
\date{}

\begin{document}

\begin{abstract}
\(\hat\delta\)-rings are a globalization of \(\delta\)-rings. Motivated by their application to obstructing spectral lifts, we develop the theory of animated \(\hat\delta\)-rings. They extend the discrete notion in the same way that animated \(\delta\)-rings extend that of \(\delta\)-rings, but are not given by the process of animation. We also generalize the definition to schemes and extend the obstruction to this setting. We apply it to concrete examples such as number rings, closed subschemes of \(\P^n\), and various group schemes.
\end{abstract}

\maketitle

\tableofcontents

\section{Introduction}

Several structures in algebra have been set up with motivation of capturing residual data coming from a more complicated object. For example, \(\lambda\)-rings (\cite{lambda}) have been extensively studied, because they axiomatize the structure of Adams operations on algebraic \(K\)-theory of a commutative ring. The operations themself come from the action of polynomial functors (e.g.\@ exterior powers) on the module category (see \cite{polynomial} for the strongest version of the result).

From a different perspective, \(\lambda\)-rings encode pairwise commuting integral Frobenius lifts over every prime number (\cite{Wilkerson}). Restricting to a single prime \(p\), one arrives at the notion of a \(\delta_p\)-ring (\cite{Joyal}):

\begin{defi*} \label{def_dfun}
Let \(R\) be a discrete ring and \(p\) a prime number. A \(\delta_p\)-structure on \(R\) is a function \(\delta:R\to R\) on underlying sets satisfying:
\begin{enumerate}
\item \(\delta(0)=\delta(1)=0\)
\item \(\delta(x+y)= \delta(x)+\delta(y)+\frac{x^p+y^p-(x+y)^p}p\)
\item \(\delta(xy)=x^p\delta(y)+\delta(x)y^p+p\delta(x)\delta(y)\).
\end{enumerate}
\(\phi(x)=x^p+p\delta(x)\) then defines an integral lift of the Frobenius endomorphism of~\(R/p\).
\end{defi*}

Such structures are also closely related to cohomology theories in algebraic geometry where it becomes useful to extend them to animated rings (the homotopy theory of simplicial rings) in order to capture more refined information and gain access to the powerful tools of homotopy theory. For \(\delta\)-rings this was done in \cite{Prismatization} with applications to prismatic cohomology and for \(\lambda\)-rings in \cite{Hubner}.

In this paper, we extend to the animated setting the notion of a \(\hat\delta\)-ring (read:\@ delta-hat ring) introduced in \cite{group_rings}:

\begin{defi*}
An (animated) \(\hat\delta\)-ring is an (animated) ring \(R\) together with an (animated) \(\delta_p\)-structure on its \(p\)-completion \(\comp R\) for all prime numbers \(p\).
\end{defi*}

In contrast to \(\lambda\)-rings and \(\delta\)-rings, the category of animated \(\hat\delta\)-rings is not the animation of a \(1\)-category. Moreover, the forgetful functor to animated rings does not admit a left adjoint, so it is not monadic. Truncations are not underlying either. We discuss this in Section \ref{sec_hatrings}.

The motivation for \(\hat\delta\)-rings comes from the following theorem.

\begin{thm*}[{\cite[Proposition 2.41]{group_rings}}] \label{thm_intro1}
The base change functor
\[-\otimes_{\S} \Z:\CAlg_{\S}\to\CAlg_\Z,\]
restricted to the full subcategory of those connective \(\E_\infty\)-rings with homology concentrated in degree \(0\), refines to a functor taking values in the category of \(\hat\delta\)-rings.
\end{thm*}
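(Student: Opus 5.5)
The plan is to build, for each prime \(p\) separately, a natural \(\delta_p\)-structure on \(\comp{(A\otimes_\S\Z)}\) for every connective \(\E_\infty\)-ring \(A\) with \(H_*(A;\Z)=\pi_*(A\otimes_\S\Z)\) concentrated in degree \(0\). The source of the structure will be the Tate-valued Frobenius. Write \(R=A\otimes_\S\Z\), a discrete ring under our hypothesis. The \(\E_\infty\)-structure of \(A\) gives a natural map of \(\E_\infty\)-rings
\[\varphi_p\colon A\to A^{\tCp},\]
and after base change to \(\Z\) it induces a ring map \(R\to (A\otimes_\S \Z)^{\tCp}\)-type object. The standard computation of the Tate construction on \(\Z\)-modules identifies \(\pi_0\) of \((R^{\otimes p})^{\tCp}\) (Tate diagonal composed with multiplication), for discrete \(R\), with \(R/p\) via the Frobenius \(x\mapsto x^p\).

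The key steps, in order, are as follows.

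\textbf{Step 1.} Use the Segal conjecture / Tate orbit lemma in the form \((X^{\otimes p})^{\tCp}\simeq \comp X\) for bounded below spectra \(X\) (Nikolaus--Scholze). From it, obtain for bounded below \(A\) a natural \(\E_\infty\)-map \(\comp A\to A^{\tCp}\). Then compare the Tate-valued Frobenius of \(A\) with that of \(\S\to\Z\), yielding a lift
\[\Phi\colon \comp A\to (\comp A)^{\tCp},\]
which on \(\pi_0\) of the mod \(p\) reduction is the Frobenius.

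\textbf{Step 2.} Base change to \(\Z\). The homology hypothesis implies \(A\to \tau_{\le 0}A\) becomes an equivalence after \(\otimes\Z\), and also that \(\comp A\otimes_\S\Z\simeq \comp R\) is discrete. I then produce a ring endomorphism \(\phi\colon\comp R\to\comp R\) lifting Frobenius on \(R/p\). I intend to get it by considering the composite \(\comp R\to (\comp R)^{\tCp}\to\) and using that the canonical map \(\comp R=(\comp R)^{hC_p}\to(\comp R)^{\tCp}\) (trivial action) has the correct \(\pi_0\), namely \(\comp R/p\), and the homotopy fixed point square. Concretely, \(\comp R^{hC_p}\) for trivial action has \(\pi_0=\comp R\). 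I would therefore aim to lift \(\varphi_p\) to a map \(A\to A^{hC_p}\) via the pullback \(A^{hC_p}=A^{tC_p}\times_{(A_{hC_p})[1]}\cdots\), which holds in the \(p\)-complete, homology-discrete case. This is essentially the construction of the \(\delta\)-structure on \(\pi_0\) of a \(p\)-complete \(\E_\infty\)-ring whose \(\pi_0\) is \(p\)-torsion free, in the style of the \(\E_\infty\) power operation \(\theta\).

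\textbf{Step 3.} Define \(\delta(x)=(\phi(x)-x^p)/p\). This requires \(p\)-torsion freeness of \(\comp R\). Here the concentration hypothesis enters: \(R\otimes^L\mathbb F_p\) being discrete forces \(\mathrm{Tor}_1(R,\mathbb F_p)=0\). Instead of dividing, I will obtain \(\delta\) directly from the \(\E_\infty\) power operation: the extended power \(D_p\) applied to \(x\in\pi_0\) and the Tate norm sequence give \(P(x)\in\pi_0(A^{hC_p}\otimes\Z)\cong \comp R[[t]]\)-type data. The coefficient of the class detected by the norm gives \(\delta(x)\), and the \(\delta\)-ring identities follow from the Cartan formula and additivity of \(P\) modulo transfers. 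Naturality in \(A\) gives the functor.

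The main obstacle is Step 2/3: controlling \(A^{hC_p}\otimes\Z\) versus \((A\otimes\Z)^{hC_p}\) (these do not commute) so that the \(\E_\infty\) power operation genuinely lands in \(\comp R\). To handle it, I would first treat free \(\E_\infty\)-rings and spheres, then use that \(\pi_0\) data depends only on the \(p\)-completion of \(\tau_{\le 1}\)-information.
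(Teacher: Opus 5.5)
Your starting point, the Tate-valued Frobenius $\varphi_p\colon A\to A^{\tCp}$, is the right one. However, you never identify its target, and that identification is the heart of the proof.

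The needed input is this: for a spectrum $X$ of bounded Tor-amplitude with \emph{trivial} $C_p$-action, the canonical map $X\to X^{\tCp}$ exhibits $X^{\tCp}$ as $\comp X$. This is a generalized Segal conjecture (Proposition~\ref{prop_torTate}); it fails for example for $X=\Z$. This is where the homology hypothesis really enters: it says $A$ is a Moore spectrum, of Tor-amplitude in $[0,1]$. Then $A^{\tCp}\otimes\Z\simeq\comp A\otimes\Z\simeq\comp R$ (Corollary~\ref{cor_comp}). Since $A^{\tCp}\to R^{\tCp}\to\tau_{[0,1]}R^{\tCp}\simeq R/p$ is an $\E_\infty$-map into a $\Z$-algebra, base-changing $\varphi_p$ gives a $\Z$-linear map $R\to\comp R$ whose composite to $R/p$ is $\Frob$ (Construction~\ref{con_lift}).

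The Nikolaus--Scholze theorem you quote concerns $T_p(X)=(X^{\otimes p})^{\tCp}$, i.e.\ the \emph{source} of $T_p(A)\to A^{\tCp}$. It only shows that $\varphi_p$ factors through $\comp A$, so your $\Phi$ is still just the Frobenius and not a lift. Your substitutes do not work either:
\begin{itemize}
\item $(A\otimes\Z)^{\tCp}=R^{\tCp}$ only sees $R/p$ in degree $0$; it is the target of $\Frob$. It is precisely the difference between $A^{\tCp}\otimes\Z$ and $(A\otimes\Z)^{\tCp}$ that produces the lift.
\item Lifting $\varphi_p$ through $A^{hC_p}=A^{\tCp}\times_{\Sigma A_{hC_p}}0$ requires a nullhomotopy of $A\to\Sigma A_{hC_p}$ that nothing provides, since the total power operation $A\to A^{hC_p}$ is not additive.
\item The power-operation route of Step~3 would in any case need $\pi_0A^{hC_p}$. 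The norm sequence computes this only once $A^{\tCp}$ is known, i.e.\ from the same missing lemma.
\end{itemize}

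Second, Steps~2--3 rest on false claims. The hypothesis makes $A\otimes_\S\Z$ discrete, not $R\otimes^L_\Z\mathbb F_p$. So $R$ may have $p$-torsion, and $\comp R$ need not be discrete, since $\pi_1\comp R$ is the $p$-adic Tate module of $R$ (Lemma~\ref{lem_pi1}). Nor does $A\to\tau_{\le0}A$ become an equivalence after $\otimes\Z$; already $\Z\otimes_\S\Z$ is not discrete. This is why a $\hat\delta$-structure is defined as an animated ring map $R\to\comp R$ with a homotopy to $\Frob$ in the \emph{derived} quotient $R/p$, and why you cannot define $\delta$ by dividing by $p$.

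The Tate construction only produces an $\E_\infty$-$\Z$-algebra map and an $\E_\infty$-homotopy, so one more step is needed, with no counterpart in your proposal. Both $\comp R$ and $R/p$ are $1$-truncated, and $\pi_1\comp R$ is torsion free (Corollary~\ref{cor_pi1tor}). Hence the comparison $\Map_\an(R,S)\to\Map_\Z(R,S)$ is an equivalence for $S=\comp R$ and a monomorphism for $S=R/p$ (Corollary~\ref{cor_2trun}, obtained by comparing the topological and algebraic cotangent complexes). It follows that the map, and then the homotopy, refine uniquely to animated data. Your proposed reduction to free $\E_\infty$-rings and ``$\tau_{\le1}$-information'' is too vague to replace either of these two inputs.
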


By this result, existence of \(\hat\delta\)-structures becomes an obstruction for lifting discrete rings to the sphere spectrum. Here, by a lift of a ring \(R\) we mean a connective \(\E_\infty\)-ring \(\S_R\) satisfying \(\S_R\otimes_\S\Z\simeq R\).

The lifting problem has already been studied in the literature and is equivalent to searching for \(\E_\infty\)-structures on Moore spectra such as \(\S[i]\) (of \(\Z[i]\)), \(\S/n\) (of \(\Z/n\)), \(\S_E\) (of an étale algebra \(E\)), \(\S[x]\) (of \(\Z[x]\)), and so forth.

The case of \(\S[i]\), and adjoining roots of unity to \(\E_\infty\)-ring spectra in general, was first considered in \cite{roots}. Using base change formulas for topological Hochschild homology the authors disproved the existence of such an \(\E_\infty\)-ring structure. In \cite[Remark 4.3]{moore}, using tools of chromatic homotopy theory, it was shown that \(\S/n\) is never \(\E_\infty\).

On the other hand, any étale algebra \(E\) admits a unique \(\E_\infty\)-lift \(\S_E\) by the celebrated result of Lurie (\cite[Theorem 7.5.0.6]{HA}) and \(\S[x]\) can be constructed as the spherical monoid ring \(\S[\N]=\Sigma^\infty_+\N\).

In each of the aforementioned examples very different techniques were employed to obstruct multiplicative structures on spherical lifts. Theorem \ref{thm_intro1} is fully functorial and gives a relatively simple algebraic obstruction for lift existence.

In the paper we will apply it in several examples. Non-existence of \(\E_\infty\)-structures on \(\S/n\) becomes a quick calculation (see Corollary \ref{cor_smodn}). We look at rings of integers and étale algebras as well. For example, we show that no ring of integers of a number field lifts to \(\S\) as an \(\E_\infty\)-algebra and prove the same for orders of the form \(\Z[ni]\) (see Chapter \ref{chap_ex}).

Another contribution of our work is extending the obstruction and all relevant notions to schemes.

\begin{defi*}
A \emph{\(\hat\delta\)-scheme} is a topological space \(X\) together with a sheaf of \(\hat\delta\)-rings \(\O_X^{\hat\delta}:\Open(X)^\op\to\Ring^\an_{\hat\delta}\) such that \(X\) admits a covering \(\{U_i\}\) for which \((U_i,\O_X^{\hat\delta}|_{U_i})\) is equivalent to an affine \(\hat\delta\)-scheme.
\end{defi*}

\begin{thm*}[Theorem \ref{thm_schemes}]
The base change functor
\[-\times_{\Spec \S}\Spec \Z: \Sch_{\S}\to \Sch_\Z,\]
restricted to the full subcategory of those connective spectral schemes that are sent to discrete schemes, refines to a functor taking values in \(\hat\delta\)-schemes.
\end{thm*}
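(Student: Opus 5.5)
Since \(\Spec \S\) and \(\Spec \Z\) have the same underlying topological space and base change of spectral schemes is computed locally, the plan is to globalize the affine statement (Theorem \ref{thm_intro1}, \cite[Proposition 2.41]{group_rings}) sheaf-theoretically. For a connective spectral scheme \(X=(|X|,\O_X)\) over \(\S\), the base change \(X\times_{\Spec\S}\Spec\Z\) has the same underlying space \(|X|\): \(\pi_0(\O_X\otimes_\S\Z)\cong\pi_0\O_X\), so the Zariski topology is unchanged. Its structure sheaf is the sheafification of \(U\mapsto \O_X(U)\otimes_\S\Z\). On affine opens \(U=\Spec A\) no sheafification is needed, since \(A\mapsto A\otimes_\S\Z\) preserves the finite limits and Zariski localizations defining the affine structure sheaf. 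The hypothesis that the base change is discrete means \(A\otimes_\S\Z\) is discrete for every affine open \(\Spec A\subset X\), using that affine opens form a basis and discreteness is local.

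\emph{Step 1 (basis).} Let \(\mathcal B\) be the poset of affine opens of \(X\). On \(\mathcal B\), the functor \(U\mapsto \O_X(U)\otimes_\S\Z\) lands in the full subcategory of \(\CAlg_\S\) of connective \(\E_\infty\)-rings with homology in degree \(0\), precomposed with base change. By Theorem \ref{thm_intro1} this functor lifts canonically, as a functor \(\mathcal B^\op\to\Ring^\an_{\hat\delta}\), to a presheaf of \(\hat\delta\)-rings \(\O^{\hat\delta}\) whose underlying ring presheaf is \(\O_{X_\Z}|_{\mathcal B}\). Functoriality in the restriction maps is automatic because the refinement in Theorem \ref{thm_intro1} is a functor, not merely objectwise.

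\emph{Step 2 (sheaf condition and extension).} We show that \(\O^{\hat\delta}\) is a sheaf on \(\mathcal B\) and extend it to all of \(\Open(X)\) by right Kan extension. This requires that the forgetful functor \(\Ring^\an_{\hat\delta}\to\Ring^\an\) be conservative and preserve (and hence detect) the relevant limits. It suffices to know limits of cosimplicial or Čech diagrams of affine covers, or equivalently finite limits together with sheafiness on a basis. Here I expect the main obstacle. Section \ref{sec_hatrings} warns that the forgetful functor has no left adjoint and that truncations are not underlying, so limit preservation is not formal. I would prove it directly from the definition: a \(\hat\delta\)-ring is a ring \(R\) together with \(\delta_p\)-structures on the \(\comp R\). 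Limits are then computed as \(\lim R_i\) with \(\delta_p\)-structure on \(\lim \comp{(R_i)}\), and one needs \(p\)-completion to commute with the limit. That holds for finite limits because completion is exact, and for the totalization because completion is itself a limit and commutes with limits. Animated \(\delta_p\)-rings are closed under limits in animated rings, as they are algebras over a monad that preserves sifted colimits and are also coalgebras over a comonad with a right adjoint. I would take the latter description from \cite{Prismatization} as given.

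\emph{Step 3 (locality and functoriality).} Local affineness of \((X,\O^{\hat\delta})\) holds by construction: on \(U=\Spec A\) we recover the affine \(\hat\delta\)-scheme attached to the \(\hat\delta\)-ring \(A\otimes_\S\Z\). For a morphism \(f\colon X\to Y\), we obtain \(f^{-1}\O_Y^{\hat\delta}\to\O_X^{\hat\delta}\) by checking on pairs of affine opens \(V\subset Y\) and \(U\subset f^{-1}V\), again by functoriality of Theorem \ref{thm_intro1}. Compatibility with composition follows from the universal property of the Kan extension. Forgetting the \(\hat\delta\)-structure recovers \(X\times_{\Spec\S}\Spec\Z\), which gives the claimed refinement.
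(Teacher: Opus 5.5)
Your overall route is the paper's: refine the structure sheaf on affine opens using the functoriality of the affine statement, glue via Zariski descent for \(\hat\delta\)-structures, and get functoriality from pairs of affine opens. The gap is in Step 2, at exactly the point you flagged.

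Your argument that \(p\)-completion is exact and commutes with limits is valid in \(\D(\Z)\). However, a limit in \(\Ring^\an\) is the connective cover \(\tau_{\geq 0}L\) of the limit \(L\) in \(\D(\Z)\), and \(\comp{(\tau_{\geq 0}L)}\) need not agree with \(\tau_{\geq 0}(\comp L)\). The reason is that \(\pi_1\comp{(\pi_{-1}L)}\) contributes to \(\pi_0\comp L\) (Remark~\ref{rem_lim}). The same objection applies to totalizations.

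As written, your reasoning would show that the forgetful functor preserves all pullbacks, and that is false. Together with preservation of products it would give preservation of all limits, hence a left adjoint, which the paper rules out. Concretely, take \(\Z[x]\to\Q[x]\leftarrow\Z[x]\), where the left copy has \(\delta(x)=0\) and the right copy has \(\delta(x)=1\) at a fixed prime \(p\). This is a valid diagram of \(\hat\delta\)-rings, since every ring map to a rational ring is a \(\hat\delta\)-map. The pullback in \(\Ring^\an\) is \(\Z[x]\) with both projections the identity. So no \(\hat\delta\)-structure on it makes both projections \(\hat\delta\)-maps. Correspondingly, its \(p\)-completion is \(\comp{\Z[x]}\), not the pullback \(\comp{\Z[x]}\times\comp{\Z[x]}\) of the completions.

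The missing input is that the limits you actually need are already connective in \(\D(\Z)\). Take an affine open \(U\) covered by affine (e.g.\ distinguished) opens. The \v{C}ech or Mayer--Vietoris diagram of the structure sheaf has limit \(\O(U)\) already in \(\D(\Z)\), because quasi-coherent sheaves on affines satisfy descent in the stable module category. Proposition~\ref{prop_disclim} says the forgetful functor creates exactly such limits. This is how the paper proves descent for \(\hat\delta\)-structures (Proposition~\ref{prop_glue}, Corollary~\ref{cor_glue}).

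Replace Step 2 by this. In Step 3, also use Remark~\ref{rem_loc} (equivalently Example~\ref{ex_loc}) to identify the structure on \((A\otimes\Z)[f^{-1}]\) obtained by functoriality with the localization structure of Construction~\ref{con_loc}. That is what makes each affine piece genuinely an affine \(\hat\delta\)-scheme. With these changes, your Steps 1 and 3 go through as in the paper.
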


Using this we show that neither the additive group scheme \(\G_a\) (see also \cite[Proposition 1.6.20]{ell2}), nor the general linear group \(\text{GL}_n\), for \(n\geq 2\), admit group scheme lifts to \(\Spec\S\). On the other hand, there exists a unique \(\hat\delta\)-group scheme structure on \(\G_m\). We also prove non-liftability for certain closed subschemes of the projective space (see Chapter \ref{chap_ex}).

It is also worth mentioning an unpublished result of Achim Krause that states that \(\hat\delta\)-structures on flat rings are in bijection with \(H_\infty\)-structures on their Moore spectra. Combined with our work this shows Zariski descent for \(H_\infty\)-structures, which is not clear from the definition itself (see Remark \ref{rem_achim}).

There are several open questions one can ask concerning the theory of \(\hat\delta\)-rings. Some of them were already posed in \cite{group_rings}.
\begin{enumerate}
\item Do all \(\hat\delta\)-rings lift to \(\S\)?
\item Does the free \(\delta_p\)-ring lift to \(\S\)?
\item When do morphisms lift? (\cite{group_rings} answers this for group rings)
\item What about lifts of animated rings or along other morphisms than \(\S\to \Z\)?
\item Give more examples (e.g.\@ general orders, other schemes).
\end{enumerate}

In an upcoming work we give a negative answer to question 1. Using a computational approach we find an example of a \(\hat\delta\)-ring that does not lift even to the 1-truncation of the sphere spectrum.

We would also like to bring attention to Section \ref{sec_cotan} of the Appendix. We develop there a strategy for studying \(\delta\)-rings using the cotangent complex formalism. It turned out not to be necessary for our other results, but we think it can be of independent interest. It allows us to reprove all structural properties of animated \(\delta\)-rings of \cite{Prismatization} without the use of Witt vectors.

\begin{ack}
This paper is my master thesis project advised by Thomas Nikolaus. I am very grateful to him for suggesting the topic as well as for our discussions. I would also like to thank him for including me in his research group during my stay in M\"unster, it was an amazing experience. Out of the group I would especially like to thank Edith Hübner for our numerous conversations and Konrad Bals for answering my derived algebraic geometry questions. From Copenhagen, I would like to thank Robert Burklund for several helpful suggestions concerning the introduction.

While in M\"unster, I was supported by the Mathematics M\"unster Short Term Scholarship. Last changes were made when I was supported by Danmarks Grundforskningsfond CPH-GEOTOP-DNRF151.
\end{ack}

\section{\(\hat\delta\)-structures} \label{chap_schemes}

Our main goal in this chapter is to define the notion of a \(\hat\delta\)-scheme. We begin in Section \ref{sec_hatrings} by introducing animated \(\hat\delta\)-rings, the affine building blocks for the theory, and establishing some of their formal properties. Certain aspects, like limits and truncations, do not behave as well as one would wish and require special care. In Section \ref{sec_hatschemes} we establish descent for \(\hat\delta\)-structures and define \(\hat\delta\)-schemes. Our definition is of the form ``a ringed space locally looking like \(\text{Spec}\) of a \(\hat\delta\)-ring".

We work with animated rings (\cite[Section 25.1]{SAG}) because of their good formal behaviour, but an unacquainted reader may suppose all rings to be discrete. The obstruction will only deal with discrete rings afterwards. Nonetheless, it is crucial, even with discrete rings, that we take derived quotients and derived completions in the definitions!

Before we get to the main content, we recall the following definition of \(\delta_p\)-rings that, in the discrete case, is equivalent to Definition \ref{def_dfun}, but readily generalizes to animated rings as well. We refer the reader to the appendix of \cite{Prismatization} for everything related to animated \(\delta_p\)-rings we will use.

\begin{defi*}
Let \(R\) be an animated ring. A \(\delta_p\)-\emph{structure} on \(R\) is an endomorphism \(\phi\) of \(R\) together with a homotopy (in \(\Ring^\an\)) making the diagram
\[\begin{tikzcd}
& R\ar[d,"\can"] \\ R \ar[ur,"\phi"] \ar[r,"\Frob"] & R/p
\end{tikzcd}\]
commute.
\end{defi*}

Hence, a \(\delta_p\)-structure is the same as a derived Frobenius lift. This point of view is much more abstract, which makes it easier to work with. We make use of it in this and the following chapter. We come back to the much more computable Definition \ref{def_dfun} in Chapter \ref{chap_ex}, where we treat examples.

\subsection{Animated \(\hat\delta\)-rings} \label{sec_hatrings}

We introduce one of our main objects of study.

\begin{defi} \label{defi_hatdelta}
An \emph{animated} \(\hat\delta\)-\emph{ring} is an animated ring \(R\) together with a \(\delta_p\)-structure on \(\comp R\) for every prime \(p\). That is, a \(\hat\delta\)-structure on \(R\) consists of morphisms \(\phi:R\to \comp R\), for each prime \(p\), and homotopies making the following diagrams commute:
\[\begin{tikzcd}
& \comp R\ar[d,"\can"] \\ R \ar[ur,"\phi"] \ar[r,"\Frob"] & R/p.
\end{tikzcd}\]
We denote the category of animated \(\hat\delta\)-rings by \(\Ring^\an_{\hat\delta}\) and the full subcategory spanned by discrete rings by \(\Ring_{\hat\delta}\).
\end{defi}

\begin{rem}
If \(R\) is \(p\)-complete, then a \(\hat\delta\)-structure is equivalent to a \(\delta_p\)-structure. If \(R\) is rational, then a \(\hat\delta\)-structure is no data.
\end{rem}

\begin{war}
In contrast to \(\delta_p\)-rings, the category of animated \(\hat\delta\)-rings is not the animation of \(\Ring_{\hat\delta}\). Even worse, there are more discrete objects in \(\Ring^\an_{\hat\delta}\) than those coming from \(\Ring_{\hat\delta}\) (see Proposition \ref{prop_truncated}). Nonetheless, we use the name discrete \(\hat\delta\)-ring to refer to an animated \(\hat\delta\)-ring whose underlying ring is discrete.
\end{war}

\begin{rem}
Even if \(R\) is a discrete ring, its \(p\)-completion \(\comp R\) need not be. But the category \(\Ring_{\hat\delta}\) is still a 1-category (see Proposition \ref{prop_truncated}).
\end{rem}

\begin{prop} \label{prop_cat}
The category \(\Ring_{\hat\delta}^\an\) is presentable. The forgetful functor \(\Forget:\Ring_{\hat\delta}^\an\to\Ring^\an\) is conservative and admits a right adjoint, denoted  \(\Cofree\). In particular, the forgetful functor creates all colimits.
\end{prop}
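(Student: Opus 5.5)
Write \(\Ring^\an_{\hat\delta}\) as an iterated limit of presentable \(\infty\)-categories along accessible functors. For a fixed prime \(p\), consider the \(\infty\)-category \(\mathcal C_p\) of pairs \((R,\phi_p,h_p)\) with \(\phi_p: R\to\comp R\) and \(h_p\) a homotopy \(\can\circ\phi_p\simeq \Frob\) in \(\Ring^\an\). Then \(\Ring^\an_{\hat\delta}\) is the fibre product \(\prod_{\Ring^\an} \mathcal C_p\) over all primes \(p\), taken in the \(\infty\)-category of \(\infty\)-categories.

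Each \(\mathcal C_p\) is itself a limit of \(\infty\)-categories. First take the lax equalizer (inserter) of the two functors \(\mathrm{id}\) and \((-)_p^\wedge\colon \Ring^\an\to\Ring^\an\); its objects are pairs \((R,\phi_p)\). Then take the pullback that adds the homotopy \(h_p\). This pullback is the equalizer, inside the \(\infty\)-category of pairs, of the two maps to the arrow category of \(\Ring^\an\) that record \(\can\circ\phi_p\) and \(\Frob\colon R\to R/p\).

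Everything is presentable for the following reasons:
\begin{itemize}
\item \(R\mapsto R/p\) is a pushout, so it is a colimit-preserving functor.
\item \(\Frob\) is functorial on \(\Ring^\an\), because \(R/p\) is an animated \(\mathbb F_p\)-algebra and derived Frobenius is natural.
\item \(p\)-completion is accessible, since it is a localization of \(\Ring^\an\).
\end{itemize}
By \cite[Proposition 5.5.3.13]{HTT} and the standard results on lax equalizers of accessible functors between presentable categories, each \(\mathcal C_p\) is presentable, and so is the countable fibre product.

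The hard point is colimits: I need the projections to \(\Ring^\an\) to preserve colimits, so that the forgetful functor creates them. The key observation is that giving \(\phi_p\colon R\to\comp R\) together with \(h_p\) is the same as giving a map out of \(R\) into a pullback-type object built functorially from \(R\). Since only maps out of \(R\) appear, the construction is a "coalgebra"-type structure. The standard fact is that for an inserter \(\mathrm{Ins}(F,G)\) with \(F\) colimit-preserving, colimits are computed underlying. Here \(F=\mathrm{id}\), and \(G=(-)^\wedge_p\) need not preserve colimits. Concretely, given a diagram \(R_i\) with structure maps \(\phi_i\colon R_i\to (R_i)^\wedge_p\), set \(R=\colim R_i\). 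The maps \(R_i\to (R_i)^\wedge_p\to \comp R\) assemble to a map \(R\to\comp R\), and the homotopies assemble in the same way because \(-/p\) and \(\Frob\) commute with colimits. So colimits are computed underlying, and the forgetful functor preserves all small colimits.

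Conservativity is immediate. A map of structured objects is an equivalence as soon as its underlying map is, because the extra data consists of points in mapping spaces, which are functorially transported along equivalences. Since the forgetful functor is accessible and preserves colimits between presentable categories, the adjoint functor theorem gives the right adjoint \(\Cofree\). Finally, a conservative functor that preserves colimits creates them, because the colimit diagram upstairs maps to one downstairs and conservativity detects colimit cones.

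The main obstacle is setting up the limit description rigorously so that presentability follows. I would phrase each \(\mathcal C_p\) as an iterated pullback of the arrow categories \(\mathrm{Fun}(\Delta^1,\Ring^\an)\) along the functors \((\mathrm{id},(-)^\wedge_p)\) and \((\can\circ -, \Frob)\). This is a limit in the \(\infty\)-category of accessible \(\infty\)-categories and accessible functors. Presentability then follows from accessibility plus the existence of colimits proven directly above, since accessible with all small colimits means presentable.
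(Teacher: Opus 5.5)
Your argument is correct, but it is organized differently from the paper's. The paper does not build \(\Ring^\an_{\hat\delta}\) from lax equalizers. Instead it imports from \cite{Prismatization} that \(\Ring^\an_{\delta_p}\to\Ring^\an\) has both adjoints, and deduces the same for the \(p\)-complete versions. It then writes \(\Ring^\an_{\hat\delta}\) as the pullback of \(\prod\comp{(-)}\colon\Ring^\an\to\prod\comp{(\Ring^\an)}\) along \(\prod\comp{(\Ring^\an_{\delta_p})}\to\prod\comp{(\Ring^\an)}\). There \(p\)-completion is viewed as a left adjoint into \(p\)-complete animated rings, not as an endofunctor of \(\Ring^\an\). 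So every functor in the square is a left adjoint and the square is a pullback in \(\PrL\). Presentability, colimit-preservation of \(\Forget\), and the existence of \(\Cofree\) then all follow at once.

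You instead work directly with the definition (maps \(R\to\comp R\) with homotopies). You get around the fact that \(\comp{(-)}\colon\Ring^\an\to\Ring^\an\) does not preserve colimits by the lax-equalizer principle: only the source functor, here the identity, needs to preserve colimits. This makes your proof self-contained. It needs no input from the theory of animated \(\delta_p\)-rings, and no translation between Frobenius lifts on \(\comp R\) and maps \(R\to\comp R\). It also explains conceptually why colimits, but not limits, are computed underlying. The paper's route is shorter, and its pullback description yields the mapping-space formula used repeatedly later (e.g.\ in Proposition \ref{prop_truncated}).

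Two points of detail need fixing.

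\textbf{The homotopy step.} It is not literally an equalizer of two functors into \(\mathrm{Fun}(\Delta^1,\Ring^\an)\). An equivalence of arrows need not be the identity on source and target, so that equalizer would add automorphisms of \(R\) and \(R/p\) as extra data. Take instead the pullback of
\(\mathrm{LEq}(\mathrm{id},\comp{(-)})\to\mathrm{LEq}(\mathrm{id},(-)/p)\), \((R,\phi)\mapsto(R,\can\circ\phi)\),
along the section \(R\mapsto(R,\Frob)\). Both functors preserve colimits, since colimits in all three categories are computed in \(\Ring^\an\). So the rest of your argument goes through unchanged.

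\textbf{Accessibility of \(\comp{(-)}\).} This does not hold merely because it is a localization. It holds because \(p\)-complete animated rings are closed under \(\aleph_1\)-filtered colimits, as \(p\)-completeness is the vanishing of a countable limit.
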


\begin{proof}
We can identify \(\Ring_{\hat\delta}^\an\) with the following pullback in \(\widehat{\text{Cat}}_\infty\):
\[\begin{tikzcd}
\Ring_{\hat\delta}^\an \ar[r,"\Forget"] \ar[d] & \Ring^\an \ar[d,"\prod \comp{(-)}"] \\
\prod \comp{(\Ring_{\delta_p}^\an)} \ar[r] & \prod \comp{(\Ring^\an)}.
\end{tikzcd}\]
By \cite[Proposition A.23]{Prismatization} the forgetful functor \(\Ring_{\delta_p}^\an\to\Ring^\an\) admits both adjoints. It follows that \(\comp{(\Ring_{\delta_p}^\an)}\to\comp{(\Ring^\an)}\) has both adjoints as well, because both categories are presentable and the forgetful functor commutes with limits and colimits (c.f.\@ \cite[Corollary 5.5.2.9]{HTT}). By \cite[Proposition 5.5.3.13]{HTT}, limits in \(\PrL\) are computed underlying in \(\widehat{\text{Cat}}_\infty\). Since our pullback diagram consists of left adjoints, it is in fact a pullback in \(\PrL\), so \(\Forget\) is a left adjoint. As conservativeness is clear, the result follows.
\end{proof}

\begin{cor} \label{cor_push}
Let \(\begin{tikzcd}[cramped, sep=small] A & C \ar[l] \ar[r] & B \end{tikzcd}\) be a diagram in \(\Ring_{\hat\delta}^\an\). Then the tensor product \(A\otimes_{C}B\) is naturally an animated \(\hat\delta\)-ring. Moreover
\[\begin{tikzcd}
A \ar[r] & A\otimes_C B \\
C \ar[r] \ar[u] & B \ar[u]
\end{tikzcd}\]
is a pushout in \(\Ring_{\hat\delta}^\an\).
\end{cor}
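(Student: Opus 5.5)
This is a direct consequence of Proposition \ref{prop_cat}: since \(\Forget\) creates colimits, the pushout computed in \(\Ring^\an\) carries a canonical \(\hat\delta\)-structure and is the pushout in \(\Ring^\an_{\hat\delta}\). It remains to recall that pushouts in \(\Ring^\an\) are relative tensor products. The plan is as follows.

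First, \(\Ring^\an_{\hat\delta}\) is presentable by Proposition \ref{prop_cat}, so the pushout \(P = A\sqcup_C B\) of the given span exists in \(\Ring^\an_{\hat\delta}\). Since \(\Forget\) is a left adjoint (its right adjoint is \(\Cofree\)), it preserves colimits. Hence \(\Forget(P)\) is the pushout of \(\Forget(A)\leftarrow\Forget(C)\to\Forget(B)\) in \(\Ring^\an\).

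Second, I identify that pushout as \(A\otimes_C B\). In animated rings, pushouts are computed by derived tensor products. One way to see this is to resolve by polynomial rings and use that the animation of the 1-categorical pushout of polynomial rings is the tensor product. Another is to use that \(\Ring^\an\to\CAlg_\Z^{\cn}\) preserves colimits, and pushouts of connective \(\E_\infty\)-rings are relative tensor products. Either way, \(\Forget(P)\simeq A\otimes_C B\) with the canonical maps from \(A\) and \(B\).

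Transporting the \(\hat\delta\)-structure of \(P\) along this equivalence gives the natural \(\hat\delta\)-structure on \(A\otimes_C B\). The square in the statement is then, by construction, the pushout square in \(\Ring^\an_{\hat\delta}\). Naturality in the span follows because colimits are functorial in the diagram.

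No step is a real obstacle; the only point needing care is the word ``naturally''. One could ask for an explicit description of the Frobenius lifts on \(\comp{(A\otimes_C B)}\). They arise from the pullback description in the proof of Proposition \ref{prop_cat}, because \(p\)-completion and the forgetful functors from \(\delta_p\)-rings preserve colimits. Concretely, \(\phi\) is the completed tensor product of the \(\phi\)'s on \(A\), \(B\) and \(C\). This description is not needed for the statement.
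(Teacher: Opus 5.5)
Your proposal is correct and follows the paper's route: the corollary is stated as an immediate consequence of Proposition \ref{prop_cat}, where \(\Forget\) is shown to be a left adjoint (with right adjoint \(\Cofree\)) and hence to create colimits. Combined with the standard identification of pushouts in \(\Ring^\an\) with relative tensor products, this gives the result, so no additional argument is required.
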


In fact, we can give a concrete formula for the right adjoint. We denote the \(p\)-typical Witt vectors functor by \(W_p:\Ring^\an\to\Ring^\an_{\delta_p}\). Recall that it defines a right adjoint to the forgetful functor (c.f.\@ appendix to \cite{Prismatization}).

\begin{lem} \label{lem_Witt}
If \(R\) is a \(p\)-complete animated ring, then the \(p\)-typical Witt vectors \(W_p(R)\) are also \(p\)-complete.
\end{lem}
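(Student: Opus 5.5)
We will reduce the statement to the discrete case by writing an animated ring as a colimit of simple pieces, combined with the explicit description of the underlying animated ring of \(W_p(R)\). Recall from the appendix of \cite{Prismatization} that the underlying object of \(W_p(R)\) is the limit \(\lim_n W_n(R)\) of the finite-length Witt vectors. Each \(W_n\) is the animation (left Kan extension from polynomial rings) of the classical functor \(W_n\). Since \(W_n(A)\cong A^n\) as sets, the functor \(W_n\) on animated rings has underlying anima \(R^{\times n}\). In fact, via the ghost/Verschiebung filtration, \(W_n(R)\) has a finite filtration with graded pieces \(V^iW_{n-i}(R)/V^{i+1}\), each equivalent as a \(\Z\)-module to \(R\) with a module structure twisted by Frobenius.

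\textbf{Step 1.} Check that each \(W_n(R)\) is \(p\)-complete. Recall that \(p\)-completeness of an animated ring, or of a connective \(\Z\)-module, is a condition on the underlying spectrum: \(\lim(\cdots\xrightarrow{p}M\xrightarrow{p}M)\simeq 0\). The full subcategory of \(p\)-complete modules is closed under extensions and limits. So it suffices to know that the graded pieces of the \(V\)-filtration on \(W_n(R)\) are \(p\)-complete as spectra.

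The \(i\)-th graded piece is \(V^i\) applied to \(R\). Its underlying abelian group (spectrum) is that of \(R\), but multiplication by \(p\) on it need not agree with multiplication by \(p\) on \(R\). However, completeness only depends on the underlying spectrum, since \(p\)-completeness is the vanishing of \(\lim(\ldots\xrightarrow{p}M)\), which uses addition only. The additive group structure on the graded piece is the additive group of \(R\). This is the step needing care, and it is the \textbf{main obstacle}. To make it rigorous, I would verify the claim on polynomial rings, where the exact sequences \(0\to W_{n-1}(A)\xrightarrow{V}W_n(A)\to A\to 0\) are sequences of abelian groups that are functorial in \(A\). These sequences therefore animate to fiber sequences of \(\Z\)-modules \(W_{n-1}(R)\to W_n(R)\to R\), because the underlying-module functor commutes with sifted colimits. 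Induction on \(n\), using closure of \(p\)-complete modules under fibers and extensions, then shows that \(W_n(R)\) is \(p\)-complete whenever \(R\) is.

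\textbf{Step 2.} Pass to the limit: \(W_p(R)=\lim_n W_n(R)\) is computed on underlying modules, since the forgetful functor from animated rings to modules preserves limits. A limit of \(p\)-complete modules is \(p\)-complete, so \(W_p(R)\) is \(p\)-complete.

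If the identification \(W_p(R)\simeq\lim_n W_n(R)\) is not directly available in the cited source, I would instead use the right adjoint description. In that case I would compute \(W_p(R)\) via the cofree \(\delta_p\)-ring, namely the limit of the tower given by iterated ghost components. Then I would verify the fiber sequences above using the fact that \(W_p\) commutes with limits and sifted colimits only after reduction to finite lengths.
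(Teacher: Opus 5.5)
Your argument works, but it takes a genuinely different route from the paper's and leans on an input the paper avoids.

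\textbf{The paper's route.} The proof is purely formal. Consider the square formed by \(\Forget:\Ring^\an_{\delta_p}\to\Ring^\an\), the analogous forgetful functor between \(p\)-complete objects, and the two \(p\)-completion functors. It commutes, because \(p\)-completion of an animated \(\delta_p\)-ring is computed underlying, and all four functors are left adjoints. Passing to right adjoints gives \(W_p\circ\iota\simeq\iota\circ W_p'\), where \(\iota\) denotes the inclusions of \(p\)-complete objects and \(W_p'\) is the right adjoint on the \(p\)-complete side. Hence \(W_p(R)\) is \(p\)-complete whenever \(R\) is, and no model of \(W_p\) is ever used.

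\textbf{Your route.} You compute directly. Animating the natural short exact sequences \(0\to W_{n-1}(A)\xrightarrow{V}W_n(A)\to A\to 0\) on polynomial rings gives cofiber sequences in \(\D(\Z)\). Induction then makes each \(W_n(R)\) \(p\)-complete, and you pass to the limit. This buys a little more: each finite-length \(W_n(R)\) is \(p\)-complete, and the argument applies to any property of modules closed under extensions and limits.

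\textbf{The extra input.} All the weight now rests on identifying the right adjoint \(W_p(R)\) with \(\lim_n W_n(R)\) for animated \(R\). This is true but not a formality. \(W_p\) does not preserve filtered colimits, so it is not the animation of the classical Witt vectors. It should therefore be cited precisely or proved. One way is to produce a natural comparison map and check it on polynomial rings, where both sides are classical, using that both functors preserve geometric realizations and \(\aleph_1\)-filtered colimits. Your fallback paragraph does not supply this, and its premise is off: \(W_p\) does not commute with all sifted colimits.

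\textbf{Two smaller points.} First, the ``main obstacle'' you flag is illusory. \(p\)-completeness is a property of the underlying spectrum, and multiplication by \(p\) is determined by the additive structure, so the Frobenius-twisted module structures on graded pieces never enter. Second, in Step 2, limits in \(\Ring^\an\) are connective covers of limits in \(\D(\Z)\). This is harmless here: the transition maps \(W_{n+1}(R)\to W_n(R)\) are surjective on \(\pi_0\), so the limit is already connective. Alternatively, derived \(p\)-completeness can be checked on homotopy groups.
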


\begin{proof}
We have a commutative square of left adjoints
\[\begin{tikzcd}
\Ring_{\delta_p}^\an \ar[r,"\Forget"] \ar[d,"\comp{(-)}"] & \Ring^\an \ar[d,"\comp{(-)}"] \\
\comp{(\Ring_{\delta_p}^\an)} \ar[r,"\Forget"] & \comp{(\Ring^\an)}.
\end{tikzcd}\]
Passing to right adjoints yields the result.
\end{proof}

\begin{prop} \label{prop_cofree}
Let \(R\) be an animated ring. Then \(\Cofree(R)\) is given as the pullback
\[\begin{tikzcd}
\Cofree(R) \ar[r] \ar[d] & \prod W_p(\comp R) \ar[d] \\ R \ar[r] & \prod \comp R
\end{tikzcd}\]
in \(\Ring^\an\). The \(\hat\delta\)-structure is given, on a prime \(p\), by the canonical \(\delta_p\)-structure on \(W_p(\comp R)\simeq \comp{\Cofree(R)}\).
\end{prop}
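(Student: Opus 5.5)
The plan is to compute the right adjoint directly from the pullback description of \(\Ring^\an_{\hat\delta}\) in the proof of Proposition \ref{prop_cat}. A map of animated \(\hat\delta\)-rings \(A\to \Cofree(R)\) should amount to three pieces of data: a ring map \(A\to R\), \(\delta_p\)-maps \(\comp A\to W_p(\comp R)\) for every \(p\), and a compatibility between them. Concretely, let \(C\) denote the pullback in the statement. For an animated \(\hat\delta\)-ring \(A\) I will exhibit natural equivalences
\[\Map_{\Ring^\an}(A,C)\simeq \Map_{\Ring^\an}(A,R)\times_{\prod\Map(A,\comp R)}\prod_p \Map(A,W_p(\comp R)).\]
Since \(\comp R\) and, by Lemma \ref{lem_Witt}, \(W_p(\comp R)\) are \(p\)-complete, maps out of \(A\) into them factor uniquely through \(\comp A\). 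So the right-hand side becomes
\[\Map(A,R)\times_{\prod \Map(\comp A,\comp R)}\prod_p\Map(\comp A, W_p(\comp R)).\]

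Mapping spaces in a pullback of \(\infty\)-categories are pullbacks of mapping spaces. Hence \(\Map_{\Ring^\an_{\hat\delta}}(A,X)\) for a \(\hat\delta\)-ring \(X\) is
\[\Map(A,X)\times_{\prod\Map(\comp A,\comp X)}\prod_p\Map_{\delta_p}(\comp A,\comp X).\]
By the Witt vector adjunction, \(\Map_{\delta_p}(\comp A,W_p(\comp R))\simeq \Map(\comp A,\comp R)\). The desired adjunction equivalence therefore reduces to two things. First, I must show that \(C\) carries a \(\hat\delta\)-structure for which \(\comp C\simeq W_p(\comp R)\) as \(\delta_p\)-rings. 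Second, I must check that under this identification the comparison maps match the ones above. For the second point, the transfer of structure from \(\comp C\) gives the \(\delta_p\)-map \(\comp A\to W_p(\comp R)\), which corresponds via the Witt adjunction to \(\comp A\to\comp R\). This in turn is the \(p\)-completion of the composite \(A\to C\to R\), which is exactly the compatibility encoded in the fiber product.

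The main obstacle is computing \(\comp C\). The idea is that \(p\)-completion is exact, so it preserves pullbacks. Applying it to the square gives a pullback
\[\comp C\simeq \comp R\times_{(\prod_q \comp R_q)_p^\wedge}\Big(\prod_q W_q(\comp R_q)\Big)_p^\wedge,\]
where \(q\) runs over the primes and \(\comp R_q\) denotes the \(q\)-completion. For \(q\neq p\), the rings \(\comp R_q\) and \(W_q(\comp R_q)\) are \(q\)-complete, hence \(p\) acts invertibly on them. Because \(p\) is invertible in each factor, the same holds for the whole product, so its \(p\)-completion vanishes; I will check this via the vanishing of \(\lim(\cdots\xrightarrow{p}\prod\xrightarrow{p}\prod)\) and of \(\prod/p\). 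The \(q=p\) factors are already \(p\)-complete. The pullback therefore collapses to \(\comp R\times_{\comp R}W_p(\comp R)\simeq W_p(\comp R)\), with its canonical \(\delta_p\)-structure.

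This gives the \(\hat\delta\)-structure on \(C\). Naturality in \(A\) then yields the adjunction, and uniqueness of adjoints identifies \(C\) with \(\Cofree(R)\) from Proposition \ref{prop_cat}.
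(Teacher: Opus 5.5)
Your proposal is correct and follows the paper's proof: you compute \(\Map_{\hat\delta}(A,C)\) from the pullback description of \(\Ring^\an_{\hat\delta}\) and collapse it using the Witt vector adjunction. Your explicit computation \(\comp C\simeq W_p(\comp R)\) spells out a step the paper uses implicitly, and it needs one addition: the pullback in \(\Ring^\an\) agrees with the pullback in \(\D(\Z)\), because \(\pi_0 W_p(\comp R)\to\pi_0\comp R\) is surjective; only this lets you invoke exactness of \(p\)-completion.
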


\begin{proof}
Let \(\Cofree'(R) := R\times_{\prod \comp R}\prod W_p(\comp R)\) denote this pullback. For any \(A\in\Ring_{\hat\delta}^\an\) we have natural equivalences
\[\Map_{\hat\delta}(A,\Cofree'(R))\simeq \Map(A,R)\times_{\prod\Map(A,\comp R)}\prod\Map_{\delta_p}(A,W_p(\comp R))\]
\[\simeq \Map(A,R)\times_{\prod\Map(A,\comp R)}\prod\Map(A,\comp R)\simeq \Map(A,R).\]
This shows the adjunction relation, and so \(\Cofree(R)\simeq\Cofree'(R)\).
\end{proof}

\begin{cor} \label{cor_cofree}
The functor \(\Cofree\) commutes with geometric realizations and \(\aleph_1\)-filtered colimits.
\end{cor}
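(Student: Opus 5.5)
We prove Corollary \ref{cor_cofree} directly from the pullback formula of Proposition \ref{prop_cofree}:
\[\Cofree(R)\simeq R\times_{\prod_p \comp R}\prod_p W_p(\comp R).\]
The plan is to check that each functor in this formula commutes with geometric realizations and with \(\aleph_1\)-filtered colimits, and then to check that the pullback and the countable product over primes are compatible with these colimits. We may compute everything after applying the forgetful functor to \(\Ring^\an\). That functor commutes with sifted colimits, and it also preserves and detects pullbacks and finite products, so all colimit questions reduce to questions about underlying connective spectra (or spaces).

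\emph{Step 1: the individual functors.} The identity clearly commutes with both kinds of colimit. For \(p\)-completion, note that on connective objects the derived \(p\)-completion is \(\lim_n R/p^n\), and its underlying spectrum is \(\lim_n (R\otimes_\Z \Z/p^n)\). For \(W_p\), I would use the result recalled from the appendix of \cite{Prismatization}: \(W_p\) is right adjoint to the forgetful functor, and its underlying functor is a limit of the truncated Witt vector functors \(W_n\). Each \(W_n\) has underlying spectrum a finite iterated extension of copies of \(R\) (Verschiebung filtration), so each \(W_n\) commutes with sifted colimits. On \(p\)-complete objects, Lemma \ref{lem_Witt} keeps everything \(p\)-complete.

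\emph{Step 2: exchanging limits and colimits.} All the limits appearing here are countable: \(\lim_n\), \(\lim\) of \(W_n\), the product over primes \(p\), and a pullback. Countable limits of spectra commute with \(\aleph_1\)-filtered colimits, so this case follows formally.

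\emph{Step 3: geometric realizations (the main obstacle).} Countable limits do not commute with geometric realizations in general. What saves us is connectivity: we work with connective objects, and geometric realization of connective spectra is computed by a colimit whose \(n\)-skeleton determines the \(n\)-truncation. Concretely, the fibers \(p^n\) and \(V^n\) in the towers, and the filtered pieces, are all connective. So for each fixed degree \(k\), the homotopy group \(\pi_k\) of a geometric realization depends only on the finite skeleton \(\mathrm{sk}_{k+1}\), which is a finite colimit and therefore commutes with all limits of spectra. The plan is to show that \(\tau_{\le k}\) of both sides agree using this fact, after checking the following lim-one issue. In a countable limit of connective spectra, a \(\lim^1\) term can contribute to \(\pi_k\); we need these terms to match, via Mittag-Leffler or surjectivity of transition maps. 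For the \(W_n\) tower the transition maps are surjective on \(\pi_0\), and the towers \(R/p^n\) and \(W_n\) have connective fibers. Using this, each truncated tower is determined by finitely many pieces at each homotopy level, and this is the point I expect to require the most care.

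Finally, the pullback commutes with sifted colimits because a pullback of spectra is a finite limit. We only need that the resulting object is still connective, and it is, since the map \(W_p(\comp R)\to\comp R\) is surjective on \(\pi_0\).
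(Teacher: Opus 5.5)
Your proposal is essentially the paper's argument. Both use the pullback formula of Proposition \ref{prop_cofree} and check that everything involved is connective, with \(\pi_0\)-surjectivity of \(\prod W_p(\comp R)\to\prod\comp R\) making the pullback a pullback in \(\D(\Z)\). Both then commute the colimits past \(p\)-completion, Witt vectors, the product over primes and the pullback. Your skeletal argument usefully makes explicit the uniform connectivity that the paper's short claim ``geometric realizations commute with countable products'' relies on without saying so.

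The part to fix is the end of Step 3. There is no real \(\lim^1\) obstacle, and your closing claim that each truncated tower is determined by finitely many pieces is false if it refers to stages of the towers: already \(\pi_0\lim_n R/p^n\) depends on every \(n\). Drop it and, as the paper does, write each sequential limit as the fiber of a map between countable products. Stability handles the fiber, and your skeleton estimate gives \(\pi_k|\prod_i X_{i,\bullet}|\cong\prod_i\pi_k|X_{i,\bullet}|\) for levelwise connective \(X_{i,\bullet}\), which completes the argument.
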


\begin{proof}
Geometric realizations and \(\aleph_1\)-filtered colimits commute with countable limits in \(\D(\Z)\). Indeed, they commute with countable products and with finite limits (by stability). Moreover, they preserve connectivity, and so in \(\Ring^\an\) they are computed underlying in \(\D(\Z)\). This means that they commute with \(p\)-completions and Witt vectors. Thus, the description from Proposition \ref{prop_cofree} yields the result. We also use here that the pullback in question is actually a pullback in \(\D(\Z)\), as on \(\pi_0\) the map \(\prod W_p(\comp R)\to \prod\comp R\) is surjective, hence the pullback itself commutes with sifted colimits (by stability).
\end{proof}

We will now explain the behavior of limits and truncations in the category \(\Ring^\an_{\hat\delta}\). The following negative assertion will follow from our later results on truncations.

\begin{prop}
The forgetful functor \(\Ring_{\hat\delta}^\an\to \Ring^\an\) has no left adjoint. In particular, limits of \(\hat\delta\)-rings are not computed underlying.
\end{prop}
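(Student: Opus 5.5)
Since the forgetful functor is a functor between presentable categories by Proposition \ref{prop_cat}, the adjoint functor theorem reduces the existence of a left adjoint to preservation of limits and accessibility. Accessibility is automatic, because \(\Forget\) is a left adjoint. Hence I will show directly that \(\Forget\) fails to preserve some limit. The ``in particular'' then follows at once, and the two assertions are really the same statement. My plan is to find a limit diagram in \(\Ring^\an_{\hat\delta}\) whose underlying ring differs from the limit in \(\Ring^\an\). Truncations are the natural source of such a diagram, which matches the remark that this follows from the later results on truncations.

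The concrete strategy is to exploit the failure of \(p\)-completion to commute with infinite products and with the relevant limits. A \(\hat\delta\)-structure lives on \(\comp R\), and the map \(R\mapsto \comp R\) does not preserve infinite products of non-\(p\)-complete rings uniformly in \(p\) when all primes are taken together. Take the product \(\prod_n R_n\) in \(\Ring^\an\) of \(\hat\delta\)-rings \(R_n\), for instance \(R_n=\Z\) with its unique structure, or rings like \(\Z[1/N]\)-algebras. Then \(\comp{(\prod R_n)}\) in general differs from \(\prod \comp{R_n}\). A Frobenius lift on each \(\comp{R_n}\) therefore does not obviously induce one on \(\comp{(\prod R_n)}\).

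A cleaner alternative route uses Proposition \ref{prop_truncated}. If some discrete object of \(\Ring^\an_{\hat\delta}\) does not come from \(\Ring_{\hat\delta}\), then the truncation \(\tau_{\le 0}\) in \(\Ring^\an_{\hat\delta}\) is not computed underlying. A right adjoint to the inclusion of discrete objects exists by presentability. Suppose \(\Forget\) had a left adjoint \(L\). Then \(\Forget\) would preserve all limits, and in particular Postnikov towers and the limits describing \(n\)-truncated objects as local objects. I would then argue that the \(\hat\delta\)-structure on \(R\) descends compatibly to \(\tau_{\le n}R\). Concretely, I would compare \(\Map(L(\Z[x]\otimes S^k),R)\simeq \Map(S^k,\Omega^\infty R)\), which detects the homotopy groups of the underlying ring through maps out of free objects. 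This would force truncation to be underlying, contradicting Proposition \ref{prop_truncated}.

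The main obstacle is making the counterexample explicit, namely producing a specific diagram where the \(\Ring^\an\)-limit carries no compatible \(\hat\delta\)-structure. I would first try the infinite product \(\prod_{p}\mathbb F_p\), or the product \(\prod_n \Z/p^n\) in the form \(\prod_n \Z_p\) with the trivial structure, and compute \(\pi_0\) of its completion at a prime \(q\) in order to check whether the required Frobenius lift exists. Failing that, I would rely on the truncation argument above.
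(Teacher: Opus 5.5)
\textbf{The product route cannot work.} By Proposition \ref{prop_disclim} and the corollary after it, \(\Forget\) creates every limit whose limit in \(\D(\Z)\) is connective, and in particular it creates all products. The reason is that derived \(p\)-completion \(X\mapsto \lim_n X/p^n\) commutes with all limits in \(\D(\Z)\). So \(\comp{(\prod R_n)}\simeq\prod\comp{(R_n)}\), and the Frobenius lifts simply assemble. Your claim that these two differ is false.

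Your proposed test cases do not apply either. By Lemma \ref{lem_torvan}, \(\mathbb F_p\) and \(\Z/p^n\) carry no \(\hat\delta\)-structure at all, so they give no diagram in \(\Ring^\an_{\hat\delta}\). And \(\prod_n \Z_p\) is a product, hence computed underlying. A limit exhibiting the failure must have a non-connective limit in \(\D(\Z)\), as in Remark \ref{rem_lim}. That is why an explicit counterexample is awkward to produce, and the paper avoids giving one.

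\textbf{The truncation argument is correct and is the paper's proof.} It should be stated without the detours. Suppose \(L\) were a left adjoint. Then \(\Omega^\infty\Forget(R)\simeq\Map(\Z[x],\Forget R)\simeq\Map_{\hat\delta}(L\Z[x],R)\). Hence \(\Forget\) would send \(n\)-truncated objects of \(\Ring^\an_{\hat\delta}\) to \(n\)-truncated animated rings; this is the statement that right adjoints preserve truncated objects. That contradicts the first assertion of Proposition \ref{prop_truncated}(3). Its witness is \(\Cofree(R)\) for an \(n\)-truncated \(R\) with \(\pi_{n+1}\comp R\neq 0\) for some \(p\), by Lemma \ref{lem_truncofree}.

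You do not need Postnikov towers, or a proof that truncations would become underlying. You also do not need the \(\hat\delta\)-structure to descend compatibly to \(\tau_{\le n}R\), which is false in general (see the discussion before Proposition \ref{prop_nortrun}).

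Your opening reduction is fine. \(\Forget\) is accessible, since it is a left adjoint between presentable categories. So by the adjoint functor theorem it has a left adjoint if and only if it preserves limits, and the ``in particular'' follows.
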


\begin{proof}
Since right adjoints preserve truncated objects, the claim follows from Proposition \ref{prop_truncated}.
\end{proof}

\begin{rem} \label{rem_lim}
We can say more explicitly what can go wrong with limits of \(\hat\delta\)-rings, but let us first say how this can be fixed. The problem stems from the fact that limits do not preserve connectivity. If we allow for negative homotopy groups, that is extend our formalism to derived rings of \cite{raksit}, then limits of derived \(\hat\delta\)-rings would be computed underlying. We do not pursue this approach, but there should not be any obstacles in extending our results to this setting.

With this in mind, the problem with limits of animated rings reduces to understanding why connective covers of derived \(\hat\delta\)-rings need not preserve \(\hat\delta\)-structures. This is because \(\pi_1\comp{(\pi_{-1}R)}\) might give a non-zero contribution to \(\pi_0\comp R\). If in addition, the Frobenius lift \(\phi\) sends \(\pi_0R\) non-trivially to this contribution, then the connective cover cannot inherit a \(\hat\delta\)-structure.
\end{rem}

\begin{prop} \label{prop_disclim}
Let \(\{R_i\}\) be a diagram of animated \(\hat\delta\)-rings whose limit in \(\D(\Z)\) is connective. Then the underlying animated ring of the limit \(\lim\, R_i\) in \(\Ring_{\hat\delta}^\an\) agrees with the limit \(\lim\, R_i\) in \(\Ring^\an\). In other words, the forgetful functor \(\Ring^\an_{\hat\delta}\to\Ring^\an\) creates all limits which can be computed underlying in \(\D(\Z)\).
\end{prop}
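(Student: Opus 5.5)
Let \(L\) denote the limit of the underlying diagram \(\{R_i\}\) in \(\D(\Z)\). By assumption \(L\) is connective, so it is also the limit in \(\Ring^\an\): the forgetful functor \(\Ring^\an\to\D(\Z)_{\geq 0}\) creates limits, and a limit in \(\D(\Z)\) that happens to be connective is the limit in \(\D(\Z)_{\geq0}\). The plan has three steps. First, equip \(L\) with a natural \(\hat\delta\)-structure. Second, show that \(L\), together with the projections \(L\to R_i\), is a limit cone in \(\Ring^\an_{\hat\delta}\). Third, conclude that this limit agrees with the one in \(\Ring^\an\).

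\emph{Step 1 (the structure).} Derived \(p\)-completion is a limit construction (\(\comp M=\lim_n M/p^n\), with \(M/p^n\) the cofiber of multiplication by \(p^n\)), so it commutes with all limits in \(\D(\Z)\). Hence \(\comp L\simeq\lim\comp{R_i}\) and \(L/p\simeq\lim R_i/p\) in \(\D(\Z)\). Both sides are connective: \(L/p\) is a cofiber of connective objects, and \(\comp L\) is connective because it is the \(p\)-completion of a connective object. These are therefore also limits in \(\Ring^\an\).

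The natural transformations \(\phi_i:R_i\to\comp{R_i}\), together with the homotopies \(\can\circ\phi_i\simeq\Frob\), are data of a diagram in the arrow/lax category indexed by \(i\). Taking limits in \(\Ring^\an\) then gives a map \(\phi:L\to\lim\comp{R_i}\simeq\comp L\) and a homotopy \(\can\circ\phi\simeq\lim\Frob_i\). Here we use that \(\Frob:R\to R/p\) is natural in \(R\), so that \(\lim\Frob_i=\Frob_L\) under the identification \(L/p\simeq\lim R_i/p\).

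A cleaner way to phrase this uses the pullback description from the proof of Proposition \ref{prop_cat}. Since \(\Ring^\an_{\hat\delta}=\Ring^\an\times_{\prod\comp{(\Ring^\an)}}\prod\comp{(\Ring^\an_{\delta_p})}\), a limit in the pullback category is computed componentwise wherever the three comparison functors preserve the relevant limit. We argue as follows:
\begin{itemize}
\item \(\Ring^\an_{\delta_p}\to\Ring^\an\) preserves all limits (\cite[Proposition A.23]{Prismatization}), and so does its \(p\)-complete version.
\item The functor \(\comp{(-)}:\Ring^\an\to\comp{(\Ring^\an)}\) preserves our particular limit, since \(\lim\) commutes with \(\comp{(-)}\) in \(\D(\Z)\) and all objects involved are connective.
\end{itemize}
Thus the cone \((L,\{\lim_i\comp{R_i}\}_p)\) is a limit in each corner. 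It remains to show that this makes it a limit in the pullback.

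\emph{Step 2 (universal property).} For \(A\in\Ring^\an_{\hat\delta}\), write
\[\Map_{\hat\delta}(A,R_i)\simeq\Map(A,R_i)\times_{\prod_p\Map(A,\comp{R_i})}\prod_p\Map_{\delta_p}(\comp A,\comp{R_i}).\]
Limits commute with fiber products of mapping spaces. Using \(\lim\Map(A,R_i)\simeq\Map(A,L)\), \(\lim\Map(A,\comp{R_i})\simeq\Map(A,\comp L)\) and \(\lim\Map_{\delta_p}(\comp A,\comp{R_i})\simeq\Map_{\delta_p}(\comp A,\comp L)\), we obtain \(\lim\Map_{\hat\delta}(A,R_i)\simeq\Map_{\hat\delta}(A,L)\). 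The last identification is exactly where we need \(\comp L\simeq\lim\comp{R_i}\) as \(\delta_p\)-rings, which follows because the forgetful functor from \(\delta_p\)-rings creates limits.

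\emph{Step 3 and the main obstacle.} The main difficulty is the identification \(\comp L\simeq\lim\comp{R_i}\) in \(\Ring^\an\), and more precisely the role of connectivity. In general the limit in \(\Ring^\an\) is the connective cover \(\tau_{\geq0}\) of the \(\D(\Z)\)-limit, and \(\comp{(\tau_{\geq0}M)}\neq\tau_{\geq0}\comp M\); this is the failure described in Remark \ref{rem_lim}. Our hypothesis removes this: \(L\) equals its connective cover, so \(p\)-completion commutes with the limit. Once this is settled, the equivalence \(\lim\Map_{\hat\delta}(A,R_i)\simeq\Map_{\hat\delta}(A,L)\) is formal. By Yoneda, \(L\) is the limit in \(\Ring^\an_{\hat\delta}\), and its underlying animated ring is the limit in \(\Ring^\an\), which proves the statement.
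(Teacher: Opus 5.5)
Your proposal is correct and is essentially the paper's argument: you use that \(p\)-completion and reduction mod \(p\) commute with limits in \(\D(\Z)\), use connectivity to get \(\comp L\simeq\lim\,\comp{(R_i)}\) and \(L/p\simeq\lim\, R_i/p\) in \(\Ring^{\an}\), and then take the limit of the Frobenius-lift diagrams. Your Step 2 checks the universal property through the pullback formula for \(\Map_{\hat\delta}\) (equivalently, componentwise limits in the pullback description of \(\Ring^{\an}_{\hat\delta}\)), which just makes explicit what the paper calls clear from the construction.
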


\begin{proof}
Let \(R=\lim\, R_i\) in \(\Ring^\an\). By assumption, it can be computed underlying in \(\D(\Z)\) where both the \(p\)-completion and the mod \(p\) reduction commute with limits. Hence, we get \(\comp R\simeq \lim\, \comp{(R_i)}\) and \(R/p\simeq \lim\, R_i/p\) in \(\D(\Z)\). Since \(\comp R\) and \(R/p\) are connective, those identities are also valid in \(\Ring^\an\).

Therefore, taking the limit of the diagrams
\[\begin{tikzcd}
& \comp{(R_i)} \ar[d,"\can"] \\ R_i \ar[ur,"{\phi_i}"] \ar[r,"\Frob"] & R_i/p
\end{tikzcd}\]
induces a \(\hat\delta\)-structure on \(R\). It is then clear from the construction that \(R\) satisfies the universal property for the limit of \(R_i\) in \(\Ring_{\hat\delta}^\an\).
\end{proof}

\begin{cor}
The forgetful functor \(\Ring_{\hat\delta}^\an\to\Ring^\an\) creates arbitrary products.
\end{cor}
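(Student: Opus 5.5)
The plan is to deduce this directly from Proposition \ref{prop_disclim}. Let \(\{R_i\}_{i\in I}\) be a family of animated \(\hat\delta\)-rings. By Proposition \ref{prop_disclim}, it suffices to check that the product \(\prod R_i\) formed in \(\D(\Z)\) is connective. Then that proposition says two things: the product in \(\Ring^\an\) has a \(\hat\delta\)-structure, and with this structure it is the product in \(\Ring^\an_{\hat\delta}\). Hence \(\Forget\) creates the product.

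The key step is therefore connectivity of products in \(\D(\Z)\). This is a standard fact. Products in \(\D(\Z)\) are exact, since \(\prod\) is an exact functor on abelian groups. So \(\pi_n(\prod R_i)\simeq\prod\pi_n(R_i)\), and this vanishes for \(n<0\) because each \(R_i\) is connective. For the same reason, the product in \(\Ring^\an\) is computed underlying in \(\D(\Z)\). So no comparison between the two notions of product is needed beyond what Proposition \ref{prop_disclim} already provides.

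The one point to watch is the word ``creates'', which also requires uniqueness and conservativity. Conservativity of \(\Forget\) is part of Proposition \ref{prop_cat}. Uniqueness of the lifted structure comes from the universal property established in Proposition \ref{prop_disclim}. There is no real obstacle. The only subtle input is hidden in Proposition \ref{prop_disclim}: \(p\)-completion and \(-/p\) commute with the product in \(\D(\Z)\), so that \(\comp{(\prod R_i)}\simeq\prod\comp{(R_i)}\) is connective. That proposition already handles this, so I would simply cite it.
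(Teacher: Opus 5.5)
Your proposal is correct and matches the paper: the corollary is stated there without separate proof, as the special case of Proposition~\ref{prop_disclim} for products. The only input it needs is the one you supply, namely that \(\pi_n(\prod R_i)\simeq\prod\pi_n R_i\) in \(\D(\Z)\), so a product of connective objects is again connective.
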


\begin{cor}
Let \(R\) be an animated \(\hat\delta\)-ring. Then the fracture square
\[\begin{tikzcd}
R \ar[r] \ar[d] & \prod \comp R\ar[d] \\ R_\Q \ar[r] & \big(\prod \comp R\big)_\Q
\end{tikzcd}\]
is a pullback of animated \(\hat\delta\)-rings.
\end{cor}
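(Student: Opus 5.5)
The plan is to deduce the statement from Proposition \ref{prop_disclim}: first show that the arithmetic fracture square is a pullback in \(\D(\Z)\) with connective limit, then check that the diagram lives in \(\Ring^\an_{\hat\delta}\). Proposition \ref{prop_disclim} then says the limit in \(\Ring^\an_{\hat\delta}\) is computed underlying, and that limit is \(R\).

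\emph{Step 1: the square is a pullback in \(\D(\Z)\).} This is the standard arithmetic fracture square. Its fibres along the horizontal maps are \(\fib(R\to\prod\comp R)\) and \(\fib(R_\Q\to(\prod\comp R)_\Q)\). The first fibre is rational, because
\[(\prod \comp R)/n\simeq \prod_{p\mid n} R/p^{v_p(n)}\simeq R/n.\]
A rational object is unchanged by rationalization, so the map of fibres is an equivalence and the square is a pullback. (Rationalization is a filtered colimit and so commutes with the fibre.) The limit is \(R\), which is connective.

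\emph{Step 2: the diagram lies in \(\Ring^\an_{\hat\delta}\).}
\begin{enumerate}
\item Each \(\comp R\) carries the \(\delta_p\)-structure obtained from the \(\hat\delta\)-structure on \(R\). For \(\ell\neq p\), its \(\ell\)-completion vanishes. So \(\comp R\) is a \(\hat\delta\)-ring by the remark on \(p\)-complete rings.
\item The product \(\prod\comp R\) is then a \(\hat\delta\)-ring by the corollary that products are created underlying.
\item The rational rings \(R_\Q\) and \((\prod\comp R)_\Q\) carry unique, trivial \(\hat\delta\)-structures.
\end{enumerate}

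\emph{Step 3: the maps are \(\hat\delta\)-maps.}
\begin{enumerate}
\item Maps into rational rings are automatically \(\hat\delta\)-maps, since the mapping space condition is vacuous after \(p\)-completion. This covers \(R\to R_\Q\) and both maps into \((\prod\comp R)_\Q\).
\item Call the remaining map \(R\to\prod\comp R\). Because products are created underlying, it suffices to check that each component \(R\to\comp R\) is a \(\hat\delta\)-map.
\item After \(p\)-completion, this component becomes the identity of \(\comp R\), which is compatible with \(\phi\).
\item After \(\ell\)-completion for \(\ell\neq p\), the target vanishes, so there is nothing to check.
\end{enumerate}
Making the homotopy coherence here precise is the main obstacle. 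I would handle it by describing \(\Ring^\an_{\hat\delta}\) via the pullback of categories in Proposition \ref{prop_cat}, so that a \(\hat\delta\)-map is simply a pair consisting of a ring map and compatible \(\delta_p\)-maps after completion.

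Finally, Proposition \ref{prop_disclim} applies to the cospan, since its \(\D(\Z)\)-limit is \(R\) and hence connective. The limit in \(\Ring^\an_{\hat\delta}\) is therefore \(R\), carrying some \(\hat\delta\)-structure. To see that this structure is the original one, compare \(p\)-completions. The \(p\)-completion of the limit is \(\comp R\) with its \(\delta_p\)-structure, because completion kills the rational corners and the factors with \(\ell\neq p\).
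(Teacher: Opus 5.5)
Your proposal is correct and follows the intended route. The paper gives no separate proof and treats the statement as an immediate consequence of Proposition \ref{prop_disclim}, applied to the arithmetic fracture square, which is a pullback in \(\D(\Z)\) with connective limit \(R\), exactly as in your Steps 1--3; your observation that \(\Map_{\hat\delta}(A,B)\simeq\Map(A,B)\) for rational \(B\) also supplies the homotopy filling the square in \(\Ring^\an_{\hat\delta}\). For the final identification you need not compare \(p\)-completions: the induced \(\hat\delta\)-map from \(R\), with its original structure, to the limit is an underlying equivalence, hence an equivalence because \(\Forget\) is conservative (Proposition \ref{prop_cat}).
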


\begin{cor}
The forgetful functor \(\Ring_{\hat\delta}^\an\to \Ring^\an\) is comonadic.
\end{cor}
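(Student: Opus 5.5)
We prove comonadicity with the dual form of the Barr--Beck--Lurie theorem, as stated in \cite[Theorem 4.7.3.5]{HA}. That theorem needs three facts about \(\Forget:\Ring_{\hat\delta}^\an\to\Ring^\an\):
\begin{enumerate}
\item \(\Forget\) admits a right adjoint;
\item \(\Forget\) is conservative;
\item \(\Ring_{\hat\delta}^\an\) admits limits of \(\Forget\)-split cosimplicial objects, and \(\Forget\) preserves them.
\end{enumerate}
The first two are part of Proposition \ref{prop_cat}: the right adjoint is \(\Cofree\), and conservativity is stated there. So only the third condition needs work.

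Let \(X^\bullet\) be a cosimplicial object in \(\Ring_{\hat\delta}^\an\) such that \(\Forget(X^\bullet)\) is split in \(\Ring^\an\). Then \(\Forget(X^\bullet)\) extends to a split augmented cosimplicial object \(R\to \Forget(X^\bullet)\), where \(R\) is an animated ring. Split augmented cosimplicial objects are absolute limit diagrams, so they are preserved by every functor. Apply this to the forgetful functor \(\Ring^\an\to\D(\Z)\). It follows that \(R\) is the totalization of \(X^\bullet\) computed in \(\D(\Z)\), and this totalization is connective because it is the connective object \(R\).

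The hypothesis of Proposition \ref{prop_disclim} is therefore satisfied. That proposition gives that \(\lim X^\bullet\) exists in \(\Ring_{\hat\delta}^\an\), and that its underlying animated ring agrees with the limit computed in \(\Ring^\an\), namely \(R\). This is exactly the statement that \(\Forget\) preserves the limit, which establishes the third condition.

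The only point that needs care is the claim that the underlying limit is connective. Proposition \ref{prop_disclim} requires the limit taken in \(\D(\Z)\), not in \(\Ring^\an\), to be connective. This holds because the splitting is absolute and so survives the passage to \(\D(\Z)\). Once this is checked, the Barr--Beck--Lurie theorem applies and \(\Forget\) is comonadic.
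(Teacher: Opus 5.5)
Your proof is correct and follows the same route as the paper. Both arguments apply the dual Barr--Beck--Lurie theorem, take the right adjoint and conservativity from Proposition \ref{prop_cat}, and get the split-totalization condition from Proposition \ref{prop_disclim} once the totalization in \(\D(\Z)\) is known to be connective. The one difference is how that connectivity is shown. The paper deduces it from comonadicity of \(\D(\Z)_{\geq 0}\to\D(\Z)\), while you get it directly from the absoluteness of split augmented cosimplicial limits under \(\Ring^\an\to\D(\Z)\); both are valid.
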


\begin{proof}
Note that the inclusion \(\D(\Z)_{\geq 0}\to\D(\Z)\) is comonadic, because the counit axiom yields a splitting to \(\tau_{\geq 0}A \to A\). The dual to the Barr-Beck-Lurie theorem then implies that split totalizations preserve connectivity. Hence, by Proposition \ref{prop_disclim}, the forgetful functor \(\Ring_{\hat\delta}^\an\to \Ring^\an\) creates split totalizations, and so is comonadic, again by the dual to the Barr-Beck-Lurie theorem.
\end{proof}

We will now investigate truncations. We begin with the following computation.

\begin{lem} \label{lem_truncofree}
Let \(R\) be an \(n\)-truncated animated ring.
\begin{enumerate}
\item If there is \(p\) for which \(\pi_{n+1}\comp R \ne 0\), then \(\Cofree(R)\) is an \((n+1)\)-truncated animated ring which is not \(n\)-truncated.
\item If not, then \(\Cofree(R)\) is an \(n\)-truncated animated ring.
\end{enumerate}
\end{lem}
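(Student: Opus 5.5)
Since \(\Cofree(R)\) is given by the pullback of Proposition \ref{prop_cofree}, and the proof of Corollary \ref{cor_cofree} shows this pullback is computed in \(\D(\Z)\) (the map \(\prod W_p(\comp R)\to\prod\comp R\) is surjective on \(\pi_0\)), I would read off the homotopy groups of \(\Cofree(R)\) from the long exact sequence
\[\cdots\to \pi_{k}\Cofree(R)\to \pi_k R\oplus \prod_p\pi_k W_p(\comp R)\to \prod_p \pi_k\comp R\to \pi_{k-1}\Cofree(R)\to\cdots.\]
So I first need the homotopy groups of \(\comp R\) and of \(W_p(\comp R)\).

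The first step is to bound \(\comp R\). For \(R\) \(n\)-truncated, derived \(p\)-completion is \(\lim_k R/p^k\), equivalently there is the short exact sequence \(0\to \mathrm{Ext}(\Z/p^\infty,\pi_iR)\to \pi_i\comp R\to \mathrm{Hom}(\Z/p^\infty,\pi_{i-1}R)\to 0\) for the derived completion. Hence \(\pi_i\comp R=0\) for \(i>n+1\), and \(\pi_{n+1}\comp R\simeq T_p\pi_nR\) (the \(p\)-adic Tate module) may be nonzero; so \(\comp R\) is \((n+1)\)-truncated. Next I need \(W_p(S)\) for \(S=\comp R\). The key input is the underlying description of Witt vectors of an animated ring: \(W_p(S)\simeq\lim_m W_m(S)\), and each \(W_m(S)\) is, as an object of \(\D(\Z)\), an iterated extension of copies of \(S\) (via Verschiebung filtration \(V^iW_{m-i}\) with graded pieces \(S\) as underlying anima/spectra, from the \(\Sigma\)-animation of the classical statement). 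Since \(S\) is \((n+1)\)-truncated and the inverse system \(W_{m+1}(S)\to W_m(S)\) is surjective on \(\pi_0\) and the transition maps are split on graded pieces, the Milnor sequence gives \(\pi_kW_p(S)\cong\lim_m\pi_kW_m(S)\) with no \(\lim^1\) (the systems are Mittag-Leffler on each \(\pi_k\) because the filtration quotients split off as the "last coordinate"). Concretely, as an anima \(W_p(S)\simeq\prod_{\N}S\), giving \(\pi_kW_p(\comp R)\cong\prod_\N\pi_k\comp R\). This is the step I expect to be the main obstacle: justifying that animated Witt vectors have underlying anima \(\prod_\N S\) (I would argue by animation: for polynomial rings the ghost-coordinate-free Witt coordinates give a natural bijection of underlying sets \(W(A)\cong\prod_\N A\), and this natural equivalence of sifted-colimit-preserving functors on underlying anima extends, then limits pass to \(W_p=\lim W_m\)).

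Now the long exact sequence. For \(k\ge n+2\), all terms vanish except possibly \(\pi_k\Cofree(R)\), sandwiched between zeros coming from \(\pi_{k}\) and \(\pi_{k+1}\) of \((n+1)\)-truncated objects; so \(\Cofree(R)\) is \((n+1)\)-truncated. In degree \(n+1\): \(\pi_{n+2}\prod\comp R=0\), so \(\pi_{n+1}\Cofree(R)=\ker\big(\prod_p\pi_{n+1}W_p(\comp R)\to\prod_p\pi_{n+1}\comp R\big)\) (as \(\pi_{n+1}R=0\)). Under the identification \(\pi_{n+1}W_p(\comp R)\cong\prod_\N\pi_{n+1}\comp R\) the map to \(\pi_{n+1}\comp R\) is projection to the zeroth coordinate, so the kernel is \(\prod_p\prod_{\N_{>0}}\pi_{n+1}\comp R\), which is nonzero exactly when some \(\pi_{n+1}\comp R\neq0\). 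This proves (1) and (2) simultaneously: in case (2) the kernel vanishes, and then \(\pi_{n+1}\Cofree(R)=0\), so \(\Cofree(R)\) is \(n\)-truncated.
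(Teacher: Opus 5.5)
Your proposal is correct and follows essentially the paper's proof: the paper also uses $\Omega^\infty W_p(\comp R)\simeq\prod_\N\Omega^\infty\comp R$ to get $\pi_{n+1}W_p(\comp R)\simeq\prod_\N\pi_{n+1}\comp R$, and then reads off the truncation from the pullback of Proposition~\ref{prop_cofree}, leaving implicit what you spell out (the long exact sequence, the $(n+1)$-truncatedness of $\comp R$, and that the counit is projection onto the zeroth Witt coordinate). One small precision: run the animation argument on the finite-length $W_m$, whose underlying functor $A\mapsto A^m$ does preserve sifted colimits, and only afterwards pass to $\lim_m$, because $A\mapsto\prod_\N A$ itself does not commute with filtered colimits.
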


\begin{proof}
We have \(\Omega^\infty W_p(\comp R)\simeq \prod_\N \Omega^\infty\comp R\). Thus, \(\pi_{n+1}W_p(\comp R)\simeq \prod_\N\pi_{n+1}\comp R\) and the result follows from the formula of Proposition \ref{prop_cofree}.
\end{proof}

\begin{lem} \label{lem_trunc}
Let \(R\) be an animated \(\hat\delta\)-ring for which \(R/p\), hence also \(\comp R\), is \((n+1)\)-truncated as an animated ring. Then, for every animated \(\delta_p\)-ring \(A\), the fibers of the forgetful map \(\Map_{\delta_p}(A,\comp R)\to \Map(A,\comp R)\) are \(n\)-truncated.
\end{lem}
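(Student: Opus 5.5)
Since \(\comp R\) is \(p\)-complete, a \(\delta_p\)-structure on it is the same as a \(\hat\delta\)-structure, and the forgetful functor \(\Ring^\an_{\delta_p}\to\Ring^\an\) has right adjoint \(W_p\). The plan is to identify the fiber over a map \(f:A\to\comp R\) with a space of lifts, and then bound its truncatedness by truncatedness of \(W_p\). By adjunction,
\[\Map_{\delta_p}(A,\comp R)\to \Map(A,\comp R)\]
is identified with the map \(\Map_{\delta_p}(A,\comp R)\to\Map_{\delta_p}(A,W_p(\comp R))\) given by postcomposition with the unit (comultiplication) \(c:\comp R\to W_p(\comp R)\) of the adjunction, evaluated on the \(\delta_p\)-ring \(\comp R\). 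So it suffices to show that \(c\) is an \(n\)-truncated morphism in \(\Ring^\an_{\delta_p}\), i.e.\ has \(n\)-truncated fibers on mapping spaces. Since \(\Ring^\an_{\delta_p}\to\Ring^\an\) is conservative and preserves limits (it has a left adjoint, \cite[Proposition A.23]{Prismatization}), being \(n\)-truncated is detected on diagonals, so it suffices to check \(c\) is \(n\)-truncated as a map of animated rings, equivalently of underlying spaces/connective complexes.

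For the underlying statement, use the description \(\Omega^\infty W_p(S)\simeq\prod_\N\Omega^\infty S\) from the proof of Lemma \ref{lem_truncofree}, applied to \(S=\comp R\). The map \(c\) composed with the projection \(W_p(\comp R)\to\comp R\) onto the zeroth ghost/Witt component is the identity, so \(c\) is a section of a map. A section \(s\) of \(q\) is \(n\)-truncated whenever \(q\) is \((n+1)\)-truncated, which holds here because \(\Omega^\infty W_p(\comp R)\to\Omega^\infty\comp R\) is projection with fiber \(\prod_{\N_{>0}}\Omega^\infty\comp R\), an \((n+1)\)-truncated space since \(\comp R\) is \((n+1)\)-truncated (this follows from \(R/p\) being \((n+1)\)-truncated, as derived \(p\)-completion is built from \(R/p^k\), each an iterated extension of copies of \(R/p\), and the limit does not raise truncation level; with care about \(\lim^1\) this is the main technical point). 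Indeed fibers of \(s\) are loop spaces of fibers of \(q\), so \(n\)-truncated.

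The step I expect to be most delicate is the precise relation between \(R/p\) truncated and \(\comp R\) truncated, together with confirming that the decomposition \(\Omega^\infty W_p\simeq\prod_\N\Omega^\infty\) is compatible with the projection used above (i.e.\ that the unit map composed with the restriction to the first Witt coordinate is the identity on underlying spaces). If the Witt vector product decomposition is only available as spaces, that suffices, since truncatedness of maps is a statement about underlying spaces of mapping objects, and \(\Map(A,-)\) preserves \(n\)-truncated maps.
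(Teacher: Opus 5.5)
Your proof is correct, but it takes a different route from the paper. The paper works directly with \(\delta_p\)-structures as derived Frobenius lifts. It exhibits the forgetful map as a base change of the map from maps of Frobenius-lift triangles to maps of the same triangles with \(\phi\) forgotten. It then identifies each fiber of the latter with the fiber of a map from a path space in \(\Map(A,\comp R)\) to a loop space of \(\Map(A,R/p)\); both are \(n\)-truncated because \(\comp R\) and \(R/p\) are \((n+1)\)-truncated.

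You instead use the adjunction \(\Forget\dashv W_p\). The forgetful map becomes postcomposition with the unit \(c\colon\comp R\to W_p(\comp R)\). By the triangle identity, \(c\) is a section of the counit, whose fibers on underlying spaces are \(\prod_{i\geq 1}\Omega^\infty\comp R\). Since conservative finite-limit-preserving functors reflect \(n\)-truncated morphisms, \(c\) is \(n\)-truncated in \(\Ring^\an_{\delta_p}\).

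Your route is more formal and shows that only the \((n+1)\)-truncatedness of \(\comp R\) matters; the hypothesis on \(R/p\) serves only to imply it. The cost is that it depends on the Witt-vector input \(\Omega^\infty W_p(S)\simeq\prod_\N\Omega^\infty S\), i.e.\ on the free animated \(\delta_p\)-ring on one generator being \(\Z[x,\delta x,\delta^2 x,\dots]\). The paper's argument needs nothing beyond the definition.

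The two points you flag as delicate are routine. First, \((n+1)\)-truncated spectra are closed under limits, so \(\comp R\simeq\lim\, R/p^k\) is \((n+1)\)-truncated with no \(\lim^1\) issue. Second, the same adjunction chase that gives \(\Omega^\infty W_p(S)\simeq\Map(\Z\{x\},S)\) shows the counit is restriction along \(\Z[x]\to\Z\{x\}\), which is exactly projection onto the zeroth factor.
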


\begin{proof}
We have a pullback square
\[\begin{tikzcd}
\Map_{\delta_p}(A,\comp R) \ar[r] \ar[d] & \Map(A, \comp R) \ar[d] \\
\Map\Big(\resizebox{1.7cm}{!}{\begin{tikzpicture} \node(1) at (-0.5,-0.5) {\(A\)}; \node(2) at (1,-0.5) {\(A/p\)}; \node(3) at (1,1) {\(A\)}; \draw[->] (1) edge node[above=1,node font=\tiny]{\(\Frob\)} (2); \draw[->] (1) edge node[above left,node font=\tiny]{\(\phi_A\)} (3); \draw[->] (3) edge node[right,node font=\tiny]{\(\can\)} (2); \end{tikzpicture}},\resizebox{1.7cm}{!}{\begin{tikzpicture} \node(1) at (-0.5,-0.5) {\(\comp R\)}; \node(2) at (1,-0.5) {\(R/p\)}; \node(3) at (1,1) {\(\comp R\)}; \draw[->] (1) edge node[above=1,node font=\tiny]{\(\Frob\)} (2); \draw[->] (1) edge node[above left,node font=\tiny]{\(\phi_R\)} (3); \draw[->] (3) edge node[right,node font=\tiny]{\(\can\)} (2); \end{tikzpicture}}\Big) \ar[r] & \Map\Big(\resizebox{1.7cm}{!}{\begin{tikzpicture} \node(1) at (-0.5,-0.5) {\(A\)}; \node(2) at (1,-0.5) {\(A/p\)}; \node(3) at (1,1) {\(A\)}; \draw[->] (1) edge node[above=1,node font=\tiny]{\(\Frob\)} (2); \draw[->] (3) edge node[right,node font=\tiny]{\(\can\)} (2); \end{tikzpicture}},\resizebox{1.7cm}{!}{\begin{tikzpicture} \node(1) at (-0.5,-0.5) {\(\comp R\)}; \node(2) at (1,-0.5) {\(R/p\)}; \node(3) at (1,1) {\(\comp R\)}; \draw[->] (1) edge node[above=1,node font=\tiny]{\(\Frob\)} (2); \draw[->] (3) edge node[right,node font=\tiny]{\(\can\)} (2); \end{tikzpicture}}\Big).
\end{tikzcd}\]
Hence, it suffices to show that the bottom horizontal map has \(n\)-truncated fibers. Over a point given by
\[\begin{tikzcd}
& A\ar[d,"\can"] \ar[rrdd,"f_1"] \\ A \ar[rrdd,"f_0"] \ar[r,"\Frob"] & A/p \ar[rrdd,"f_2"] \\[-40] & &[-45] & \comp R \ar[d,"\can"] \\ & & \comp R \ar[r,"\Frob"] & R/p
\end{tikzcd}\]
its fiber can be identified with the fiber of
\[\begin{tikzcd}
\{\phi_Rf_0\}\times_{\Map(A,\comp R)} \{f_1\phi_A\} \ar[r] & \Omega_{\Frob f_0}\Map(A,R/p)
\end{tikzcd}\]
over the trivial loop. As both \(\comp R\) and \(R/p\) are \((n+1)\)-truncated, those spaces are \(n\)-truncated, so the fiber as well.
\end{proof}

As with limits (c.f.\@ Remark \ref{rem_lim}), truncations are not underlying because \(p\)-completion intertwines neighbouring homotopy groups. Nonetheless, we can give bounds on the discrepancy.

\begin{prop} \label{prop_truncated}
Let \(R\) be an animated \(\hat\delta\)-ring. For any \(n\) the following hold.
\begin{enumerate}
\item If \(R\) is an \(n\)-truncated object of \(\Ring^\an_{\hat\delta}\), then \(R\) is an \((n+1)\)-truncated animated ring.
\item If \(R\) is an \(n\)-truncated animated ring, then it is an \(n\)-truncated object in \(\Ring^\an_{\hat\delta}\).
\item There exist \(n\)-truncated objects in \(\Ring^\an_{\hat\delta}\) which are not \(n\)-truncated as animated rings. In particular, truncations in \(\Ring_{\hat\delta}^\an\) are not computed underlying.
\end{enumerate}
\end{prop}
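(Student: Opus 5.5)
We prove the three items separately, using the adjunction \(\Forget\dashv\Cofree\) of Proposition \ref{prop_cat}, the formula for \(\Cofree\) in Proposition \ref{prop_cofree}, and Lemmas \ref{lem_truncofree} and \ref{lem_trunc}.

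For (1), let \(R\) be \(n\)-truncated in \(\Ring^\an_{\hat\delta}\). The counit exhibits \(R\) as a retract of \(\Cofree(\Forget R)\). Indeed, the unit \(R\to\Cofree(\Forget R)\) is split by applying \(\Forget\) and composing with the counit, and this splitting is a map of \(\hat\delta\)-rings because the unit is one. Retracts of \(m\)-truncated objects are \(m\)-truncated, so it is natural to try a bootstrap on truncatedness of the underlying ring. A cleaner route uses a mapping-space argument with the universal property of the free animated ring. For \(k\ge n+2\), consider \(\pi_k\) of the underlying ring, i.e.\ \(\pi_0\Map(\Z[S^k],R)\) with basepoints. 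Then \(\Map_{\Ring^\an}(A,R)\simeq\Map_{\hat\delta}(A,\Cofree R)\) shows the mapping space appears as a fiber of a map between mapping spaces of truncated objects, provided \(\Cofree R\) is known to be truncated.

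The \(\hat\delta\)-structure also suggests a more direct approach. By the pullback description in Proposition \ref{prop_cat}, \(\Map_{\hat\delta}(A,R)\) is the pullback of \(\Map(A,R)\) and \(\prod_p\Map_{\delta_p}(\comp A,\comp R)\) over \(\prod_p\Map(A,\comp R)\). I would apply this with \(A=\Cofree(\text{free ring on } S^k)\) to extract \(\Map(\Z[S^k],R)\) as a \(\hat\delta\)-mapping space, which is \(n\)-truncated. This only gives \(\pi_k R\) up to the \(\delta_p\)-data, so a one-step loss appears: the fiber of \(\Map_{\delta_p}\to\Map\) is controlled by loops in \(\Map(A,R/p)\), which shifts degrees by one. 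That is exactly the \(n+1\) of the statement. I expect this bookkeeping, namely showing that the \(\delta_p\)-correction can lower truncation by at most one, to be the main obstacle.

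For (2), let \(R\) be \(n\)-truncated as an animated ring. Then \(R/p\) is \((n+1)\)-truncated, and so is \(\comp R\), since \(\comp R=\lim R/p^k\) is a countable limit with Milnor sequences. For any \(A\in\Ring^\an_{\hat\delta}\), the pullback description of \(\Map_{\hat\delta}(A,R)\) shows that the map \(\Map_{\hat\delta}(A,R)\to\Map(A,R)\) has fibers equal to products over \(p\) of the fibers of \(\Map_{\delta_p}(\comp A,\comp R)\to\Map(\comp A,\comp R)\). By Lemma \ref{lem_trunc}, these fibers are \(n\)-truncated. The base \(\Map(A,R)\) is \(n\)-truncated, so the total space is \(n\)-truncated.

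For (3), take a discrete ring \(R\) with \(\pi_1\comp R\ne0\) for some \(p\), for example \(R=\bigoplus_{\N}\Z/p^k\)-type rings such as \(\Z[x_1,x_2,\dots]/(p^kx_k)\), whose derived \(p\)-completion has nonzero \(\pi_1\) (a \(\lim^1\)/Tate module term). By Lemma \ref{lem_truncofree}, \(\Cofree(R)\) is \(1\)-truncated but not \(0\)-truncated as an animated ring. On the other hand, \(\Cofree(R)\) is \(0\)-truncated in \(\Ring^\an_{\hat\delta}\): right adjoints preserve truncated objects, and \(R\) is \(0\)-truncated in \(\Ring^\an\). Shifting degrees, for instance using \(R\oplus\Sigma^n M\) square-zero extensions, gives the analogous examples for general \(n\).
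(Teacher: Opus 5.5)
Your part (2) is correct and is exactly the paper's argument. Part (1), however, is not yet a proof, and this is the real gap.

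\textbf{Part (1).} There are three problems with what you wrote.
\begin{enumerate}
\item The retraction you describe is not a \(\hat\delta\)-map. The splitting of \(R\to\Cofree(\Forget R)\) is the counit \(\Forget\Cofree\Forget R\to\Forget R\), which is only a map of animated rings: at \(p\) it is the projection \(W_p(\comp R)\to\comp R\), which does not in general intertwine the Witt vector Frobenius with \(\phi_R\). Exploiting it would in any case presuppose that \(\Cofree(R)\) is truncated.
\item The test object \(A=\Cofree(\ldots)\) points the wrong way. \(\Cofree\) is a right adjoint, so there is no formula for maps \emph{out of} it.
\item Controlling the fiber of \(\Map_{\delta_p}\to\Map\) by Lemma \ref{lem_trunc} is circular here, since that lemma assumes \(R/p\) is already truncated.
\end{enumerate}
The missing idea is a \(\hat\delta\)-ring corepresenting \(\Map(\Z[1],-)\). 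Since \(\Forget\) has no left adjoint, it must be built by hand. The paper takes \(A\) to be the pullback of \(\prod_p\comp{\Freep(\Z[1])}\) and \(\Free(\Z[1])_\Q\) over \(\big(\prod_p\comp{\Freep(\Z[1])}\big)_\Q\).
\begin{itemize}
\item \(A\) is connective because \(\pi_0\Free(\Z[1])\simeq\pi_0\Freep(\Z[1])\simeq\Z\).
\item The square is its own fracture square, so \(A\) inherits a \(\hat\delta\)-structure.
\item Fracturing \(\Map_{\hat\delta}(A,R)\) and applying the free--forgetful adjunctions for \(\delta_p\)-rings and for animated rings gives \(\Map_{\hat\delta}(A,R)\simeq\Map(\Z[1],R)\simeq\tau_{\geq 0}(R[-1])\).
\end{itemize}
The \(n\)-truncatedness of this space is exactly \((n+1)\)-truncatedness of \(R\). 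So the loss of one degree does not come from the \(\delta_p\)-correction term, as you expected. It comes from having to test against \(\Z[1]\) rather than \(\Z\): with \(\Z\) the corners have \(\pi_0\) built from \(\Z[x]\) and \(\Z\{x\}\), and the pullback is no longer connective.

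\textbf{Part (3).} Your strategy (Lemma \ref{lem_truncofree} plus the fact that the right adjoint \(\Cofree\) preserves truncated objects) is the paper's, but your example fails. The ideal \((p^kx_k)_k\) is homogeneous for the multigrading by exponent vectors. Hence \(\Z[x_1,x_2,\dots]/(p^kx_k)\) is, as an abelian group, \(\Z\) plus a direct sum of finite cyclic \(p\)-groups. Such a group admits no nonzero map from \(\Z/p^\infty\), so \(\pi_1\comp R=0\) by Lemma \ref{lem_pi1}. The \(\lim^1\)-term you have in mind lands in \(\pi_0\comp R\), not in \(\pi_1\comp R\). By Lemma \ref{lem_truncofree}(2), \(\Cofree(R)\) is then discrete, so this gives no counterexample.

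What you need is that \(0\in\pi_nR\) be \(p^\infty\)-divisible (Corollary \ref{cor_pntrun}). For example, the split square-zero extension \(R=\Z\oplus(\Q_p/\Z_p)[n]\) has \(\pi_{n+1}\comp R\cong\Z_p\neq 0\), which handles every \(n\) at once.
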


\begin{proof}
We first show (1). Let \(\Free:\D(\Z)_{\geq 0}\to\Ring^\an\) and \(\Freep:\D(\Z)_{\geq 0}\to \Ring^\an_{\delta_p}\) denote left adjoints to the forgetful functors. On a projective \(\Z^{\oplus I}\) the functor \(\Free\) is given by \(\Z[x_i]_I\) and \(\Freep\) by \(\Z\{x_i\}_I\), the free \(\delta_p\)-ring on \(I\) generators. Consider the pullback
\[\begin{tikzcd}
A \ar[r] \ar[d] & \prod \comp{\Freep(\Z[1])} \ar[d] \\
\Free(\Z[1])_\Q \ar[r] & \big(\prod \comp{\Freep(\Z[1])}\big)_\Q
\end{tikzcd}\]
in \(\D(\Z)\). Since \(\Z[1]\) is equivalent to \(*\sqcup_{\Z} *\) in \(\D(\Z)_{\geq 0}\), we get that \(\pi_0\Free(\Z[1])\simeq\pi_0\Freep(\Z[1])\simeq\Z\). Thus, \(A\) is connective, and so this is a pullback square in \(\Ring^\an\). In fact, this is automatically the fracture square for \(A\), and so \(A\) inherits a \(\hat\delta\)-structure from the canonical \(\delta_p\)-structures on \(\comp{\Freep(\Z[1])}\).

Consider the mapping space \(\Map_{\hat\delta}(A,R)\). By assumption, it is \(n\)-truncated. On the other hand, by fracturing, we have a pullback square
\[\begin{tikzcd}
\Map_{\hat\delta}(A,R) \ar[r] \ar[d] & \prod\Map_{\delta_p}\big(\comp{\Freep(\Z[1])},\comp R\big) \ar[d] \\ \Map\big(\Free(\Z[1])_\Q,R_\Q\big) \ar[r] & \Map\Big({\Free(\Z[1])}_\Q,(\prod \comp R)_\Q\Big).
\end{tikzcd}\]
By adjunction, this is equivalent to
\[\begin{tikzcd}
\Map_{\hat\delta}(A,R) \ar[r] \ar[d] & \prod\Map(\comp{\Z[1]},\comp R) \ar[d] \\ \Map\big(\Z[1]_\Q,R_\Q\big) \ar[r] & \Map\Big({\Z[1]}_\Q,\big(\prod \comp R\big)_\Q\Big),
\end{tikzcd}\]
which, again by fracturing, yields \(\Map_{\hat\delta}(A,R)\simeq \Map(\Z[1],R)\simeq\tau_{\geq 0}(R[-1])\), and so \(R\) is \((n+1)\)-truncated, as desired.

We will now show (2). For any \(B\in\Ring^\an_{\hat\delta}\) we have
\[\Map_{\hat\delta}(B,R)\simeq \Map(B,R)\times_{\prod \Map(\comp B, \comp R)}\prod\Map_{\delta_p}(\comp B,\comp R).\]
Since \(R\) is \(n\)-truncated, the mapping space \(\Map(B,R)\) is \(n\)-truncated and the other two mapping spaces are \((n+1)\)-truncated. Thus, it suffices to show that the fibers of \(\Map_{\delta_p}(\comp B,\comp R)\to \Map(\comp B,\comp R)\) are \(n\)-truncated. This follows from Lemma \ref{lem_trunc}.

Assertion (3) follows from Lemma \ref{lem_truncofree} and the fact that right adjoints preserve truncated objects. More explicitly, take any \(n\)-truncated animated ring \(R\) for which \(\pi_{n+1}\comp R \ne 0\) for some \(p\). Then \(\Cofree(R)\) is an \(n\)-truncated object of \(\Ring_{\hat\delta}^\an\) which is not an \(n\)-truncated animated ring.
\end{proof}

That truncations are not underlying stems form the fact that the Frobenius lift \(\phi:R\to\comp R\) can induce a non-zero map \(\pi_{n+1}R\to \pi_{n+1}\comp R\to \pi_1\comp{(\pi_nR)}\). If that happens, there can be no \(\hat\delta\)-structure on \(\tau_{\leq n}R\) compatible with the truncation map \(R\to \tau_{\leq n}R\).

\begin{prop} \label{prop_nortrun}
Let \(R\) be an animated \(\hat\delta\)-ring. Assume \(\pi_{n+1}R\to \prod\pi_1\comp{(\pi_nR)}\) induced by Frobenius lifts is zero. Then \(\tau_{\leq n}R\) admits a unique \(\hat\delta\)-structure such that the canonical map \(R\to\tau_{\leq n}R\) can be refined to a \(\hat\delta\)-morphism.
\end{prop}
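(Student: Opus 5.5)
The plan is to write \(T:=\tau_{\leq n}R\) and \(t:R\to T\) for the truncation map. By Definition \ref{defi_hatdelta}, a \(\hat\delta\)-structure on \(T\) making \(t\) a \(\hat\delta\)-morphism amounts to the following data, for each prime \(p\):
\begin{itemize}
\item a map \(\phi_T:T\to\comp T\) together with an identification \(\phi_T\circ t\simeq \comp t\circ\phi_R\);
\item a homotopy \(\can\circ\phi_T\simeq\Frob\) in \(\Map(T,T/p)\), whose restriction along \(t\) is the homotopy induced from \(R\).
\end{itemize}
So the statement reduces to an extension problem along \(t\) for maps, and then for paths, with targets \(\comp T\) and \(T/p\). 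Both targets are \((n+1)\)-truncated animated rings: \(T/p\) clearly, and \(\comp T\) because derived \(p\)-completion of an \(n\)-truncated object can only create one new homotopy group. Concretely,
\[\pi_{n+1}\comp T\simeq \pi_1\comp{(\pi_nR)}.\]
Everything therefore rests on understanding
\[t^*:\Map(T,X)\to\Map(R,X)\]
for an \((n+1)\)-truncated animated ring \(X\).

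First, since \(X\) is \((n+1)\)-truncated, we may replace \(R\) by \(\tau_{\leq n+1}R\). In animated rings the Postnikov stage \(\tau_{\leq n+1}R\to\tau_{\leq n}R\) is a square-zero extension by \(\pi_{n+1}R[n+1]\), classified by a derivation \(T\to T\oplus\pi_{n+1}R[n+2]\). Using this presentation I would prove the key claim: for \((n+1)\)-truncated \(X\), the map \(t^*\) is a monomorphism of spaces, i.e.\ an inclusion of path components. Moreover, a map \(g:R\to X\) lies in the image exactly when the induced map \(\pi_{n+1}R\to\pi_{n+1}X\) vanishes. Necessity is clear, since \(\pi_{n+1}T=0\). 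For sufficiency and the monomorphism property, I would compare the fiber sequence of mapping spaces coming from the square-zero extension. The relevant obstruction and difference terms are maps out of \(\pi_{n+1}R[n+2]\), or \([n+1]\), into the \((n+1)\)-truncated \(X\). The only surviving term is \(\Hom(\pi_{n+1}R,\pi_{n+1}X)\), plus \(\pi_0\) of things that are contractible for degree reasons.

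Given the claim, apply it with \(X=\comp T\) and \(g=\comp t\circ\phi_R\). The induced map
\[\pi_{n+1}R\to\pi_{n+1}\comp T\simeq\pi_1\comp{(\pi_nR)}\]
is exactly the map assumed to be zero, so \(g\) extends to some \(\phi_T:T\to\comp T\), and uniquely so, since \(t^*\) is a monomorphism. Next apply the claim with \(X=T/p\). Because \(t^*\) is a monomorphism, it induces equivalences on path spaces. Hence the homotopy \(\can\circ\phi_T\circ t\simeq\Frob\circ t\) coming from \(R\) lifts uniquely to a homotopy \(\can\circ\phi_T\simeq\Frob\). This gives existence. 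For uniqueness, the space of such structures is the fiber of a map between spaces that are each contractible over the given data, so it is contractible. Doing this for each prime \(p\) completes the argument.

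The main obstacle I expect is the key claim, specifically that \(t^*\) is \((-1)\)-truncated rather than merely surjective on the relevant components. There is a risk that \(\Map(T,X)\) carries extra automorphisms, in the form of derivations of \(T\) into \(\pi_{n+1}X[n+1]\), that become trivial after restriction. I would try first to identify the fiber of \(t^*\) with a space of nullhomotopies of \(\pi_{n+1}R[n+1]\to X\) in modules. This space is \(\Map(\pi_{n+1}R[n+2],X)\), and it is contractible when nonempty because \(X\) has no \(\pi_{n+2}\). If that identification is too glib, a fallback is to argue only existence plus uniqueness up to equivalence on \(\pi_0\), which is what the statement needs.
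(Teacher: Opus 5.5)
Your argument is correct, and it is organized differently from the paper's proof.

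\textbf{The paper's route.} The paper first applies Proposition \ref{prop_trunpdelta} (proved in the appendix via the cotangent complex) to the \(\delta_p\)-ring \(\comp R\). This gives \(\tau_{\leq n}\comp R\) a Frobenius lift together with its Frobenius homotopy. The hypothesis is then used only to factor \(\tau_{\leq n}\phi\) through \(\comp{(\tau_{\leq n}R)}\), which differs from \(\tau_{\leq n}\comp R\) exactly in \(\pi_{n+1}\simeq\pi_1\comp{(\pi_nR)}\). That factorization step is precisely your key claim for \(X=\comp T\), and the paper invokes it without proof.

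\textbf{Your route.} You apply the same claim a second time, to \(X=T/p\). This produces the Frobenius homotopy and the \(2\)-cell making \(\comp t\) a \(\delta_p\)-map directly from the data on \(R\), so Proposition \ref{prop_trunpdelta} is never needed. In fact, the same argument with target \((\tau_{\leq n}R)/p\) reproves it.

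\textbf{What each buys.} Your version is self-contained, and uniqueness (contractibility of the space of structures plus refinements) is transparent. It also avoids transporting the homotopy from \((\tau_{\leq n}\comp R)/p\) to \(T/p=\comp T/p\), a step the paper's proof passes over quickly. The paper's route, in exchange, separates what holds for every \(\delta_p\)-ring from the genuinely \(\hat\delta\)-specific discrepancy created by \(p\)-completion.

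\textbf{Your main worry.} The square-zero presentation of \(\tau_{\leq n+1}R\) over \(T\) is a pullback, so it is not the right tool for computing maps out of \(\tau_{\leq n+1}R\). Your ``space of nullhomotopies'' description is nevertheless correct, and it follows from a pushout instead. Put \(M=\pi_{n+1}R\) and \(R_1=\tau_{\leq n+1}R\otimes_{\Free(M[n+1])}\Z\), formed along the module map \(M[n+1]\simeq\tau_{\geq n+1}\tau_{\leq n+1}R\to\tau_{\leq n+1}R\). Then \(R_1\) has the same \(\pi_{\leq n}\) as \(R\) and \(\pi_{n+1}R_1=0\), so \(\tau_{\leq n+1}R_1\simeq T\) compatibly with \(t\). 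Hence, for \((n+1)\)-truncated \(X\),
\[\Map(T,X)\simeq\Map(\tau_{\leq n+1}R,X)\times_{\Map_{\D(\Z)}(M[n+1],X)}\{0\}.\]
The middle space is the discrete set \(\Hom(M,\pi_{n+1}X)\). This gives at once both the monomorphism and the description of its image as the maps vanishing on \(\pi_{n+1}\), so your fallback is unnecessary.
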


\begin{proof}
By Proposition \ref{prop_trunpdelta} applied to \(\comp R\), we have a commutative diagram
\[\begin{tikzcd}
& \tau_{\leq n} \comp R \ar[d,"\can"] \\[8]
\tau_{\leq n}\comp R \ar[ur,"\tau_{\leq n}\phi"] \ar[r,"\Frob"] & (\tau_{\leq n}R)/p.
\end{tikzcd}\]
As \(\comp{(\tau_{\leq n} R)}\) is \((n+1)\)-truncated, we also have a commuting square
\[\begin{tikzcd}
\tau_{\leq n+1}R \ar[r,"\tau_{\leq n+1}\phi"] \ar[d] &[10] \comp{(\tau_{\leq n} R)} \ar[d,"\can"] \\
\tau_{\leq n}\comp R \ar[r,"\tau_{\leq n}\phi"] & \tau_{\leq n}\comp R.
\end{tikzcd}\]
Since the top horizontal map induces zero on \(\pi_{n+1}\) by assumption, we get that \(\tau_{\leq n}\phi\) factors uniquely through \(\comp{(\tau_{\leq n}R)}\) endowing it with a \(\delta_p\)-structure. This argument also makes \(\comp R \to \comp{(\tau_{\leq n}R)}\) into a \(\delta_p\)-morphism. Repeating this for every prime yields the result.
\end{proof}

It is moreover true that, under the hypothesis of Proposition \ref{prop_nortrun}, the \(\hat\delta\)-map \(R\to \tau_{\leq n}R\) agrees with \(n\)-truncation in \(\Ring_{\hat\delta}^\an\). This follows from the following characterization of truncated \(\hat\delta\)-rings.

\begin{prop} \label{prop_truncond}
Let \(R\) be an animated \(\hat\delta\)-ring. Then \(R\) is an \(n\)-truncated object in \(\Ring_{\hat\delta}^\an\) if and only if the following two conditions hold:
\begin{enumerate}
\item \(R\) is an \((n+1)\)-truncated animated ring,
\item there is no non-zero \(M\subset \pi_{n+1}R\) for which \(\phi_R(M)\subset \text{\normalfont Im}(\pi_0\comp M \to \pi_{n+1}\comp R)\) for all \(p\).
\end{enumerate}
\end{prop}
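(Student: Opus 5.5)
We reduce truncatedness to a statement about mapping spaces out of suitable test objects and the fracture square, as in the proof of Proposition \ref{prop_truncated}. Necessity of (1) is Proposition \ref{prop_truncated}(1). For sufficiency of (1)+(2), and necessity of (2), we use the formula
\[\Map_{\hat\delta}(B,R)\simeq \Map(B,R)\times_{\prod \Map(\comp B, \comp R)}\prod\Map_{\delta_p}(\comp B,\comp R)\]
valid for every \(B\in\Ring^\an_{\hat\delta}\). Assume (1). Then Lemma \ref{lem_trunc} applied with \(n+1\) in place of \(n\) shows that the fibers of \(\Map_{\delta_p}(\comp B,\comp R)\to\Map(\comp B,\comp R)\) are \((n+1)\)-truncated. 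Moreover, \(\Map(B,R)\) is \((n+1)\)-truncated and \(\Map(\comp B,\comp R)\) is \((n+2)\)-truncated. Hence \(\Map_{\hat\delta}(B,R)\) is at worst \((n+1)\)-truncated.

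The question is therefore whether \(\pi_{n+1}\) of \(\Map_{\hat\delta}(B,R)\) vanishes at every basepoint. We analyze this with the long exact sequence of the pullback. A class in \(\pi_{n+1}\) of the pullback at a point \(f\) consists of:
\begin{itemize}
\item a class \(\alpha\in\pi_{n+1}\Map(B,R)\), essentially a derivation-type element with values in \(\pi_{n+1}R\);
\item a compatible class in \(\pi_{n+1}\Map_{\delta_p}(\comp B,\comp R)\) for each \(p\), which lives over \(\pi_{n+1}\Map(\comp B,\comp R)\) and must be compatible with the Frobenius lifts.
\end{itemize}
The top homotopy \(\pi_{n+1}R\) enters \(\pi_{n+1}\comp R\) through \(\pi_0\) of the \(p\)-completion of \(\pi_{n+1}R\). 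The \(\delta_p\)-compatibility says that the image of \(\alpha\) under \(\phi_R\) must agree with the completion of \(\alpha\), up to the \(n\)-truncated correction coming from \(R/p\).

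Next we produce the obstruction module. Given such a nonzero family, let \(M\subset \pi_{n+1}R\) be the subgroup generated by the values of \(\alpha\). Compatibility then forces \(\phi_R(M)\subset \text{Im}(\pi_0\comp M\to\pi_{n+1}\comp R)\), contradicting (2). This proves that (1)+(2) imply \(n\)-truncatedness.

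Conversely, suppose \(M\) as in (2) exists. We build a test object detecting it, analogous to the object \(A\) in Proposition \ref{prop_truncated}. Take \(B\) to be the square-zero extension \(R\oplus M[n+1]\), or more simply use the map \(R\to \tau_{\leq n}R\) together with a \(\hat\delta\)-structure on the fiber term. The hypothesis \(\phi_R(M)\subset \text{Im}(\pi_0\comp M)\) is exactly what allows the Frobenius lift to be defined on \(\comp{(M[n+1])}\). This yields a nontrivial element of \(\pi_{n+1}\Map_{\hat\delta}(B,R)\), so \(R\) is not \(n\)-truncated.

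Alternatively, and more cleanly, we can argue via \(\Cofree\). \(R\) is \(n\)-truncated exactly when the unit-type comparison to the \(\hat\delta\)-truncation is an equivalence. By Lemma \ref{lem_truncofree}, Cofree of \(\tau_{\leq n}R\) has \(\pi_{n+1}\) equal to the \(\pi_1\comp{(\pi_nR)}\)-part, so we compare \(\pi_{n+1}R\) with that.

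The main obstacle is the precise bookkeeping in the middle step: identifying \(\pi_{n+1}\) of the \(\delta_p\)-mapping space in terms of \(\pi_{n+1}\comp R\) and \(\phi_R\). One must also check that the resulting subgroup really satisfies the image condition, and that the condition is preserved under passing to the generated submodule. I would first try to carry this out with \(B\) free, namely \(B=\Free(\Z[n+1])\) fractured as in the proof of Proposition \ref{prop_truncated}, where all maps are computable via adjunction.
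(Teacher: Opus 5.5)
Your proposal has a genuine gap in each direction.

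\textbf{Sufficiency.} Your strategy here (fracture pullback, long exact sequence, then taking \(M\) to be generated by the values of \(\alpha\)) is the paper's. However, the step you defer as ``bookkeeping'' is exactly where (2) must be used, and as written the argument does not go through. From (1) alone you only get that \(R/p\) and \(\comp R\) are \((n+2)\)-truncated, with \(\pi_{n+2}(R/p)\cong\pi_{n+1}R[p]\) and \(\pi_{n+2}\comp R\cong\pi_1\comp{(\pi_{n+1}R)}\). Since \(\Map(B,R)\) is \((n+1)\)-truncated, the kernel of \(\pi_{n+1}\Map_{\hat\delta}(B,R)\to\pi_{n+1}\Map(B,R)\) is \(\prod_p\pi_{n+1}\) of the fibers of \(\Map_{\delta_p}(\comp B,\comp R)\to\Map(\comp B,\comp R)\). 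Lemma \ref{lem_trunc} at level \(n+1\) says nothing about these groups. Classes there have \(\alpha=0\), so your \(M\) is \(0\) and (2) yields no contradiction. Your phrase ``up to the \(n\)-truncated correction coming from \(R/p\)'' is precisely where this is lost, since that correction sits in degree \(n+2\).

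The missing idea is to apply (2) first to an auxiliary submodule, namely the \(p\)-power torsion \(T\subset\pi_{n+1}R\). By the cofiber sequence of \(p^k\) on \(\comp R\), every \(p^k\)-torsion element of \(\pi_{n+1}\comp R\) comes from \(\pi_{n+1}R[p^k]\). Hence \(\phi_R(T)\subset\text{Im}(\pi_0\comp T\to\pi_{n+1}\comp R)\), while for \(\ell\neq p\) the \(\ell\)-adic Frobenius lift kills \(T\). So (2) forces \(T=0\). (The paper also applies (2) to the infinitely \(p\)-divisible elements, to get injectivity of \(\pi_{n+1}R\to\comp{(\pi_{n+1}R)}\).) This makes \(R/p\), hence \(\comp R\), \((n+1)\)-truncated, so Lemma \ref{lem_trunc} applies at level \(n\). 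The maps \(\pi_{n+1}\Map_{\delta_p}(\comp B,\comp R)\to\pi_{n+1}\Map(\comp B,\comp R)\) become injective, and \(\pi_{n+1}\Map_{\hat\delta}(B,R)\) becomes the kernel of the difference map on \(\pi_{n+1}\)'s. Only then does your image argument finish the proof, as in the paper.

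\textbf{Necessity of (2).} The test object \(R\oplus M[n+1]\) does not see \(M\). For \((n+1)\)-truncated \(R\), \(\pi_{n+1}\Map(B,R)\) at any base point is a group of derivations \(\pi_0B\to\pi_{n+1}R\), and \(\pi_0(R\oplus M[n+1])=\pi_0R\) has nothing mapping onto \(M\). For instance, if \(\pi_0R=\Z\) there are no nonzero derivations at all. You also never specify a \(\hat\delta\)-structure on \(B\) or a \(\hat\delta\)-base point.

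The paper instead chooses abelian group generators \(m_i\) of \(M\) and a discrete test ring with degree-\(0\) nilpotent generators \(x_i\). Its Frobenius lift on the \(x_i\) is a lift \(\Z^{\oplus I}\to\comp{(\Z^{\oplus I})}\) of \(\phi_R|_M\); this is where the hypothesis on \(M\) enters. The lift is chosen divisible by \(p\), which is possible because \(\phi\) vanishes mod \(p\) on positive homotopy. The derivation \(x_i\mapsto m_i\) is then a nonzero class in \(\pi_{n+1}\Map\) that lifts to \(\pi_{n+1}\Map_{\hat\delta}\), contradicting \(n\)-truncatedness. Your remark about \(\Cofree\) does not supply an argument either.
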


\begin{proof}
First let us assume conditions (1) and (2). We want to show that \(R\) is \(n\)-truncated as an animated \(\hat\delta\)-ring.

Note that, taking \(M\subset \pi_{n+1}R\) to be the submodule of arbitrarily \(p\)-divisible elements, condition (2) implies \(M=0\), because such elements vanish in \(p\)-complete groups and are preserved under \(\phi_R\). Similarly, if \(x\in \pi_{n+1}\comp R\) satisfies \(p^nx=0\), then, by the cofiber sequence \(\comp R\xrightarrow{p^n}\comp R \to R/p^n\), it comes from the image of \((\pi_{n+1}R)[p^n]\), so, taking \(M\) to be the submodule of \(p\)-power torsion elements, condition (2) also implies \(M=0\). Hence, \(R/p\) is \((n+1)\)-truncated and the \(p\)-completion \(\pi_{n+1}R\to\comp{(\pi_{n+1}R)}\) is injective.

Let \(A\) be any animated \(\hat\delta\)-ring. By Proposition \ref{prop_truncated} (2) we know that \(\Map_{\hat\delta}(A,R)\) is \((n+1)\)-truncated. Hence, we just need to show that \(\pi_{n+1}\Map_{\hat\delta}(A,R)=0\). Looking at the long exact sequence associated to the pullback
\[\Map_{\hat\delta}(A,R)\simeq \Map(A,R)\times_{\prod \Map(\comp A, \comp R)}\prod\Map_{\delta_p}(\comp A,\comp R)\]
and using that \(\comp R\) is \((n+1)\)-truncated we learn that \(\pi_{n+1}\Map_{\hat\delta}(A,R)\) is isomorphic to the kernel of
\[\pi_{n+1}\Map(A,R)\times \prod\pi_{n+1}\Map_{\delta_p}(\comp A,\comp R) \to \prod\pi_{n+1}\Map(\comp A,\comp R).\]
By Lemma \ref{lem_trunc}, the map \(\prod\pi_{n+1}\Map_{\delta_p}(\comp A,\comp R) \to \prod\pi_{n+1}\Map(\comp A,\comp R)\) is injective. Therefore, it remains to show that \(\pi_{n+1}\Map(A,R) \to \prod\pi_{n+1}\Map(\comp A,\comp R)\) is injective and that those subsets have zero intersection.

We first show injectivity. As \(R\) is \((n+1)\)-truncated, we get that \(\pi_{n+1}\Map(A,R)\) is a subset of \(\Hom(\pi_0A,\pi_{n+1}R)\). Similarly, \(\pi_{n+1}\Map(\comp A,\comp R)\subset \Hom(\pi_0\comp A,\pi_{n+1}\comp R)\) and image of \(p\)-completion lies in \(\Hom(\comp{(\pi_0A)},\comp{(\pi_{n+1}R)})\subset \Hom(\pi_0\comp A,\pi_{n+1}\comp R)\). Since \(p\)-completion \(\Hom(\pi_0A,\pi_{n+1}R)\to\Hom(\comp{(\pi_0A)},\comp{(\pi_{n+1}R)})\) is injective, we get what we wanted.

It remains to show that the intersection is zero. A morphism \(f:\pi_0 A \to \pi_{n+1} R\) that after \(p\)-completion lies in the image of \(\pi_{n+1}\Map_{\delta_p}(\comp A, \comp R)\) must satisfy \(\phi_R f=\comp f\phi_A\). But in that case \(M=\text{Im}(f)\) satisfies \(\phi_R(M)\subset \text{\normalfont Im}(\pi_0\comp M \to \pi_{n+1}\comp R)\), so condition (2) implies \(f=0\), as required.

To finish the proof, we will show that an \(n\)-truncated object \(S\) of \(\Ring_{\hat\delta}^\an\) satisfies condition (2). Condition (1) follows from Proposition \ref{prop_truncated} (1).

Assume to the contrary that there is a non-zero \(M\subset \pi_{n+1}S\) such that \(\phi_S(M)\subset \text{\normalfont Im}(\pi_0\comp M \to \pi_{n+1}\comp S)\) for all \(p\). Let us choose \(\{x_i\}_I\) abelian group generators of \(M\). Note that they are also \(p\)-adic generators for \(\pi_0\comp M\), because the fiber of \(\comp{(\Z^{\oplus I})}\to \comp M\) is connective. Hence, for every \(p\), we can find a lift \(\Z^{\oplus I}\to \comp{(\Z^{\oplus I})}\) of \(\phi_S|_M\) which is moreover divisible by \(p\) (because \(\phi_S\) vanishes mod \(p\) on higher homotopy groups by \cite{BSWitt}[Proposition 11.6]). They induce a \(\hat\delta\)-structure on \(\bigotimes_p\Z_{(p)}[x_i]/(x_i^p)\). By construction, the canonical element of \(\pi_{n+1}\Map\left(\bigotimes_p\Z_{(p)}[x_i]/(x_i^p),S\right)\) given by the composite \(\Z^{\oplus I} \to M \subset \pi_{n+1}S\) lifts to \(\pi_{n+1}\Map_{\hat\delta}\left(\bigotimes_p\Z_{(p)}[x_i]/(x_i^p),S\right)\). But then \(S\) is not \(n\)-truncated, contradiction.
\end{proof}

We can also use a stronger truncation procedure which both preserves \(\hat\delta\)-structures and is more computable then the categorical one. We describe it in Construction \ref{con_tauhat}. But before we do so, we introduce the following terminology.

\begin{defi}
Let \(M\) be an abelian group. We say that \(0\in M\) is \(p^\infty\)-\emph{divisible} if there exist elements \(x_0,x_1,...\in M\) satisfying \(px_{i+1}=x_i\), \(px_0=0\), and \(x_k\ne 0\) for some \(k\).
\end{defi}

\begin{ex}
If \(M\) has bounded \(p^\infty\)-torsion, then \(0\in M\) is not \(p^\infty\)-divisible.
\end{ex}

\begin{lem} \label{lem_pi1}
Let \(M\) be an abelian group. Then
\[\pi_1\comp M=\{(x_0,x_1,...)\in M^\N \,|\, px_0=0\ \text{and}\ px_{i+1}=x_i\}.\]
\end{lem}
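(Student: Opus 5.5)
Here \(M\) is a discrete abelian group and \(\comp M\) denotes its derived \(p\)-completion. I would use the standard formula
\[\comp M \simeq \lim_k M/^{\mathbb L}p^k,\]
where \(M/^{\mathbb L}p^k\) is the cofiber of \(p^k: M\to M\). This complex has \(\pi_0 = M/p^k\) and \(\pi_1 = M[p^k]\), the \(p^k\)-torsion. The transition maps \(M/^{\mathbb L}p^{k+1}\to M/^{\mathbb L}p^k\) are induced by the map of cofiber sequences that is multiplication by \(p\) on the source copy of \(M\) and the identity on the target copy. Hence on \(\pi_0\) they are the projection \(M/p^{k+1}\to M/p^k\), and on \(\pi_1\) they are multiplication by \(p\), i.e.\ \(M[p^{k+1}]\xrightarrow{p}M[p^k]\).

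The Milnor sequence for this sequential limit then gives
\[0\to {\lim_k}^1 \pi_2(M/^{\mathbb L}p^k) \to \pi_1\comp M \to \lim_k \pi_1(M/^{\mathbb L}p^k)\to 0.\]
The \(\lim^1\) term vanishes because \(\pi_2=0\). Therefore \(\pi_1\comp M\cong \lim_k M[p^{k+1}]\), with transition maps multiplication by \(p\).

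It remains to unwind this inverse limit into the stated description. An element of the limit is a sequence \((x_0,x_1,\dots)\) with \(x_i\in M[p^{i+1}]\) and \(px_{i+1}=x_i\). The conditions \(px_0=0\) and \(px_{i+1}=x_i\) force inductively \(p^{i+1}x_i=0\), so the torsion conditions are automatic. This is exactly the set in the statement, and the group structure is componentwise.

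The main point requiring care is the identification of the transition maps on \(\pi_1\) as multiplication by \(p\) rather than inclusions. Inclusions would instead give the union \(M[p^\infty]\) (in the colimit direction), which is the wrong answer. I would check this directly on the two-term complexes \([M\xrightarrow{p^{k+1}}M]\to[M\xrightarrow{p^k}M]\), whose map is \((p,\mathrm{id})\). Alternatively, one can cite the standard formula \(\pi_1\comp M\cong T_p M=\Hom(\Z[1/p]/\Z,M)\) and note that \(\Hom(\Z[1/p]/\Z,M)\) is the same set of compatible sequences \((x_i)\), with \(x_i\) the image of \(1/p^{i+1}\).
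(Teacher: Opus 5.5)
Your proposal is correct and follows the same route as the paper: both write \(\comp M\simeq \lim M/p^k\) and identify \(\pi_1\comp M\) with the inverse limit of the torsion subgroups \(M[p^k]\) under multiplication by \(p\). You also spell out two points the paper leaves implicit, namely that the \(\lim^1\)-term vanishes (since \(\pi_2(M/p^k)=0\)) and that the transition maps on \(\pi_1\) are multiplication by \(p\).
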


\begin{proof}
We have \(\comp M=\lim\, M/p^n\). Hence, \(\pi_1\comp M\) can be identified with the 1-categorical limit of the inverse system \(...\to M[p^2]\to M[p]\to 0\) with transition maps given by multiplication by \(p\). The formula follows.
\end{proof}

\begin{cor} \label{cor_pi1tor}
Let \(M\) be an abelian group. Then \(\pi_1\comp M\) is torsion free and has no non-zero infinitely \(p\)-divisible elements.
\end{cor}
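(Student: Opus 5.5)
The plan is to read everything off the explicit description in Lemma \ref{lem_pi1}:
\[\pi_1\comp M=\{(x_0,x_1,\dots)\in M^\N \,|\, px_0=0,\ px_{i+1}=x_i\}.\]
The group structure is componentwise; this follows from the identification with the inverse limit of the system \(\cdots\to M[p^2]\to M[p]\to 0\) in the proof of that lemma. Both claims then become elementary statements about compatible sequences.

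\textbf{Torsion freeness.} We first show there is no \(p\)-torsion. Suppose \(x=(x_i)\) satisfies \(px=0\), i.e.\ \(px_i=0\) for all \(i\). The relation \(x_i=px_{i+1}\) then gives \(x_i=0\) for every \(i\), so \(x=0\).

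Now let \(m\) be an integer prime to \(p\) with \(mx=0\). Each \(x_i\) lies in \(M[p^{i+1}]\), since \(p^{i+1}x_i=p^ix_0\cdot p=\dots=0\). Multiplication by \(m\) is injective on \(M[p^{i+1}]\), because \(m\) is invertible modulo \(p^{i+1}\). Hence \(mx_i=0\) forces \(x_i=0\).

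A general torsion element is killed by \(p^km\) with \(m\) prime to \(p\). We apply the two cases above in turn: first \(m(p^kx)=0\) gives \(p^kx=0\), and then the \(p\)-torsion-free statement, used \(k\) times, gives \(x=0\).

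\textbf{No infinitely \(p\)-divisible elements.} Suppose \(x\) is divisible by \(p^k\) for every \(k\), say \(x=p^ky^{(k)}\) with \(y^{(k)}\in\pi_1\comp M\). The \(i\)-th coordinate of \(p^ky^{(k)}\) is \(p^ky^{(k)}_i=y^{(k)}_{i-k}\) for \(i\ge k\). For \(k>i\) it is \(p^{k-i-1}\cdot p^{i+1}y^{(k)}_i\). Since \(y^{(k)}_i\in M[p^{i+1}]\), as shown above, this vanishes. Therefore \(x_i=0\) for every \(i\), and \(x=0\).

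\textbf{Expected obstacle.} The only point needing care is justifying the componentwise group structure and the computation \(p^k y_i = y_{i-k}\) from the limit description. This is immediate from the inverse system in Lemma \ref{lem_pi1}, so I expect no real difficulty.
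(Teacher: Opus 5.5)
Your proposal is correct and follows the paper's intended route: the paper states this corollary without proof, as an immediate consequence of the explicit description in Lemma \ref{lem_pi1}. Your coordinatewise arguments, all resting on \(x_i\in M[p^{i+1}]\), supply exactly those omitted details; the one slip is in the justification of that membership, which should read \(p^{i+1}x_i=p\cdot(p^ix_i)=px_0=0\).
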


\begin{lem}
Let \(X\) be an \(n\)-truncated spectrum. Then
\[\pi_{n+1}\comp X\simeq \pi_1\comp{(\pi_n X)}.\]
\end{lem}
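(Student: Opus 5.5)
The plan is to compute \(\comp X\) as the derived limit \(\lim_k X/p^k\) and to track homotopy groups through the Milnor sequence. For an \(n\)-truncated spectrum \(X\), each \(X/p^k\) is \((n+1)\)-truncated, since the cofiber sequence \(X\xrightarrow{p^k}X\to X/p^k\) raises the truncation level by at most one. Its top homotopy group is
\[\pi_{n+1}(X/p^k)\simeq (\pi_nX)[p^k],\]
the \(p^k\)-torsion in \(\pi_n X\). Under the reduction maps \(X/p^{k+1}\to X/p^k\), these identifications turn into multiplication by \(p\), \((\pi_nX)[p^{k+1}]\to(\pi_nX)[p^k]\). This is the same compatibility as in the proof of Lemma \ref{lem_pi1}, and it can be checked by comparing the cofiber sequences for \(p^{k+1}\) and \(p^k\).

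Next I apply the Milnor sequence:
\[0\to \lim\nolimits^1_k \pi_{n+2}(X/p^k)\to \pi_{n+1}\comp X\to \lim_k \pi_{n+1}(X/p^k)\to 0.\]
Since \(X/p^k\) is \((n+1)\)-truncated, \(\pi_{n+2}(X/p^k)=0\), so the \(\lim^1\) term vanishes. This gives
\[\pi_{n+1}\comp X\simeq \lim_k (\pi_nX)[p^k],\]
with the transition maps being multiplication by \(p\).

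For the comparison, let \(M=\pi_nX\), viewed as a discrete spectrum. The same computation, now with \(n=0\), is exactly Lemma \ref{lem_pi1}: it identifies \(\pi_1\comp M\) with this same inverse limit. It remains to check that the two identifications are compatible, which is what makes the isomorphism natural. Consider the truncation map \(X\to\tau_{\geq n}X\), which, since \(X\) is \(n\)-truncated, is really the map \(X\to\Sigma^n M\) given by the top Postnikov piece. More precisely, take the fiber sequence \(\tau_{\leq n-1}X\to X\to\Sigma^nM\) and apply \(p\)-completion, which is exact. The completion of \(\tau_{\leq n-1}X\) has vanishing \(\pi_{n+1}\) and \(\pi_n\)... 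In fact only \(\pi_{n+1}\) and \(\pi_{n}\) of it matter for the long exact sequence, and by the computation above applied at level \(n-1\) it is \(n\)-truncated. So \(\pi_{n+1}\comp X\to\pi_{n+1}\comp{(\Sigma^nM)}=\pi_1\comp M\) is an isomorphism, because \(\pi_{n+1}\) and \(\pi_{n+2}\) of \(\comp{(\tau_{\leq n-1}X)}\) vanish.

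I would actually present this second argument as the proof, since it is cleaner: the fiber sequence argument reduces directly to the statement that \(p\)-completion of an \(m\)-truncated spectrum is \((m+1)\)-truncated, and that fact is exactly what the first Milnor computation shows. The only mildly delicate step is keeping track of the torsion identifications in the Milnor sequence, and it is avoided entirely by the fiber sequence route.
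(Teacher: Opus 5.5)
\textbf{Verdict: your first (Milnor) argument is a correct proof by a different route. The fiber-sequence argument you say you would present is the paper's proof, but as written it has the Postnikov sequence backwards.}

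\textbf{The Milnor argument.} This already proves the lemma. Each \(X/p^k\) is \((n+1)\)-truncated, so the \(\lim^1\)-term vanishes and \(\pi_{n+1}\comp X\cong\lim_k(\pi_nX)[p^k]\), with multiplication by \(p\) as transition maps. This is exactly the description of \(\pi_1\comp{(\pi_nX)}\) given in Lemma~\ref{lem_pi1}. Both identifications are natural, so no further compatibility check is needed. This is a direct computation, whereas the paper argues by a fiber sequence.

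\textbf{The fiber-sequence argument.} For \(n\)-truncated \(X\), the top homotopy group sits at the \emph{fiber} end: \(\Sigma^n\pi_nX\simeq\tau_{\geq n}X\to X\to\tau_{\leq n-1}X\). There is no map \(X\to\tau_{\geq n}X\) in general. A map \(X\to\Sigma^n\pi_nX\) that is an isomorphism on \(\pi_n\) would split \(X\simeq\tau_{\leq n-1}X\oplus\Sigma^n\pi_nX\). This fails already for \(X=\tau_{\leq 1}\S\), \(n=1\): there \(\eta\) acts nontrivially from \(\pi_0\) to \(\pi_1\), but it acts trivially on a sum of Eilenberg--MacLane spectra.

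With your orientation, the long exact sequence would also need \(\pi_n\comp{(\tau_{\leq n-1}X)}\cong\pi_1\comp{(\pi_{n-1}X)}\) to vanish. It need not, e.g.\ if \(\pi_{n-1}X=\Q/\Z\), since \(\comp{(\Q/\Z)}\simeq\Z_p[1]\).

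The vanishing you finally invoke is the right input, but only for the correctly oriented sequence. That input is the vanishing of \(\pi_{n+1}\) and \(\pi_{n+2}\) of the \(n\)-truncated spectrum \(\comp{(\tau_{\leq n-1}X)}\). \(p\)-completing \(\Sigma^n\pi_nX\to X\to\tau_{\leq n-1}X\) then gives \(\pi_1\comp{(\pi_nX)}=\pi_{n+1}\comp{(\Sigma^n\pi_nX)}\xrightarrow{\sim}\pi_{n+1}\comp X\). This is precisely the paper's one-line proof.

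\textbf{Comparison.} The paper's route needs only exactness of \(p\)-completion, plus the fact that completion raises the truncation level by at most one, which your Milnor computation shows. The Milnor route additionally gives the explicit description of \(\pi_{n+1}\comp X\) as the \(p\)-adic Tate module of \(\pi_nX\).
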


\begin{proof}
Follows from the fiber sequence \(\comp{(\pi_nX)}\to\comp X\to \comp{(\tau_{<n}X)}\).
\end{proof}

\begin{cor} \label{cor_pntrun}
Let \(X\) be an \(n\)-truncated spectrum. The following conditions are equivalent:
\begin{enumerate}
\item \(\comp X\) is \(n\)-truncated.
\item \(0\in\pi_nX\) is not \(p^\infty\)-divisible.
\end{enumerate}
\end{cor}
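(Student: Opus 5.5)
Since \(X\) is \(n\)-truncated, \(p\)-completion can only create homotopy in one degree above the top. So the plan is to reduce condition (1) to the vanishing of the single group \(\pi_{n+1}\comp X\), and then identify that group with the sequences of Lemma \ref{lem_pi1}.

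First, I would check that \(\comp X\) is always \((n+1)\)-truncated. Derived \(p\)-completion of a spectrum is \(\lim_k X/p^k\). Each \(X/p^k\) is \((n+1)\)-truncated, with \(\pi_{n+1}(X/p^k)=(\pi_nX)[p^k]\). The Milnor sequence gives, for each \(m\),
\[0\to \lim\nolimits^1_k \pi_{m+1}(X/p^k)\to \pi_m\comp X\to \lim\nolimits_k \pi_m(X/p^k)\to 0.\]
For \(m\geq n+2\) both outer terms vanish, so \(\pi_m\comp X=0\). Alternatively, I can use the fiber sequence \(\comp{(\pi_nX)}\to\comp X\to\comp{(\tau_{<n}X)}\) from the preceding lemma and induct on the Postnikov tower: the completion of an Eilenberg--MacLane spectrum \(\Sigma^k HM\) is concentrated in degrees \(k\) and \(k+1\). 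Hence condition (1) is equivalent to \(\pi_{n+1}\comp X=0\).

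Next, the preceding lemma gives \(\pi_{n+1}\comp X\simeq \pi_1\comp{(\pi_nX)}\). By Lemma \ref{lem_pi1}, this group is the set of sequences \((x_0,x_1,\dots)\) in \(M=\pi_nX\) with \(px_0=0\) and \(px_{i+1}=x_i\).

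It remains to compare this with the definition. The group \(\pi_1\comp M\) is nonzero exactly when some such sequence has some \(x_k\neq 0\), which is precisely the statement that \(0\in M\) is \(p^\infty\)-divisible. Therefore \(\comp X\) is \(n\)-truncated if and only if \(\pi_1\comp{(\pi_nX)}=0\), if and only if \(0\in\pi_nX\) is not \(p^\infty\)-divisible.

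The only step needing care is the first one, the claim that \(\comp X\) is \((n+1)\)-truncated so that only \(\pi_{n+1}\) matters. The fiber-sequence argument handles this cleanly: \(\comp{(\pi_nX)}=\Sigma^n\comp{(H\pi_nX)}\) lives in degrees \(n\) and \(n+1\), and \(\comp{(\tau_{<n}X)}\) is \(n\)-truncated by the same argument applied one level down. The remaining steps are direct unwinding of the earlier lemmas.
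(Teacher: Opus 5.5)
Your proposal is correct and is essentially the paper's argument: the paper leaves the corollary as an immediate consequence of the preceding lemma \(\pi_{n+1}\comp X\simeq \pi_1\comp{(\pi_nX)}\) together with Lemma \ref{lem_pi1}, and your Milnor-sequence computation makes explicit the fact, used there only implicitly, that \(\comp X\) is \((n+1)\)-truncated. Rely on the Milnor argument rather than the Postnikov induction you call cleaner: that induction has no base case unless \(X\) is bounded below, whereas the statement allows arbitrary \(n\)-truncated spectra.
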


\begin{con} \label{con_tauhat}
Let \(R\) be an animated ring. We denote by \(\hat\tau_{\leq n}R\) the pullback of the diagram
\[\begin{tikzcd}
\hat\tau_{\leq n}R \ar[r] \ar[d] & \prod\tau_{\leq n}(\comp{R}) \ar[d] \\
(\tau_{\leq n}R)_\mathbb Q \ar[r] & \big(\prod\tau_{\leq n}(\comp{R})\big)_\mathbb Q.
\end{tikzcd}\]
Note that the right vertical and the bottom horizontal maps are jointly surjective on \(\pi_0\), so \(\hat\tau_{\leq n}R\) is connective and this pullback can be computed underlying in \(\D(\Z)\). Moreover, this is already the fracture square for \(\hat\tau_{\leq n}R\). In particular, the \(p\)-completion of \(\hat\tau_{\leq n}R\) is \(n\)-truncated. In the case \(n=0\) we also write \(\hat\pi_0\) for \(\hat\tau_{\leq 0}\).
\end{con}

Another motivation for introducing \(\hat\pi_0\) is to make use of the set-theoretical definition of \(\delta_p\)-rings, which only works in the discrete setting.

\begin{rem}
We have a fiber sequence
\[\begin{tikzcd}[sep=15]
\hat\pi_0(\pi_nR)[n] \ar[r] & \hat\tau_{\leq n}R \ar[r] & \tau_{\leq n-1}R.
\end{tikzcd}\]
\end{rem}

\begin{rem}
There is a comparison map \(\tau_{\leq n} R\to \hat\tau_{\leq n}R\), which is a rational isomorphism. After \(p\)-completion it agrees with the \(n\)-truncation of \(\comp{(\tau_{\leq n}R)}\). In particular, by Corollary \ref{cor_pntrun}, if \(0\in\pi_nR\) is not \(p^\infty\)-divisible for any \(p\), then \(\hat\tau_{\leq n}R\simeq\tau_{\leq n}R\).
\end{rem}

\begin{prop} \label{prop_tauhat}
Let \(R\) be an animated \(\hat\delta\)-ring. Then \(\hat\tau_{\leq n}R\) is naturally a \(\hat\delta\)-ring and the canonical map \(R\to \hat\tau_{\leq n}R\) admits a refinement to a \(\hat\delta\)-morphism. Moreover, \(\hat\tau_{\leq n}R\) is an \(n\)-truncated object of \(\Ring^\an_{\hat\delta}\).
\end{prop}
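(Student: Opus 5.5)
We build the \(\hat\delta\)-structure on \(\hat\tau_{\leq n}R\) prime by prime. Construction \ref{con_tauhat} is already the fracture square of \(\hat\tau_{\leq n}R\), so we have \(\comp{(\hat\tau_{\leq n}R)}\simeq \tau_{\leq n}(\comp R)\). Reducing mod \(p\) gives \((\hat\tau_{\leq n}R)/p\simeq (\tau_{\leq n}\comp R)/p\). This agrees with \((\tau_{\leq n}R)/p\), since both are obtained from the \(p\)-adic part. Therefore a \(\hat\delta\)-structure on \(\hat\tau_{\leq n}R\) is the same as a \(\delta_p\)-structure on \(\tau_{\leq n}(\comp R)\) for every \(p\); the rational part imposes no condition. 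We obtain such a structure by applying Proposition \ref{prop_trunpdelta} to the \(p\)-complete \(\delta_p\)-ring \(\comp R\). The Postnikov truncation \(\tau_{\leq n}\comp R\) of an animated \(\delta_p\)-ring inherits a Frobenius lift \(\tau_{\leq n}\phi\) together with the homotopy to \(\Frob\) on \((\tau_{\leq n}\comp R)/p\). The truncation \(\tau_{\leq n}\comp R\) is still \(p\)-complete, since its homotopy groups are derived \(p\)-complete. Hence this is a \(\delta_p\)-structure on \(\comp{(\hat\tau_{\leq n}R)}\), and the map \(\comp R\to\tau_{\leq n}\comp R\) is a \(\delta_p\)-map.

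For the morphism \(R\to\hat\tau_{\leq n}R\), we first note that the map \(R\to \hat\tau_{\leq n}R\) is induced from the fracture square of \(R\) by the compatible maps \(R_\Q\to(\tau_{\leq n}R)_\Q\) and \(\prod\comp R\to\prod\tau_{\leq n}\comp R\). After \(p\)-completion, it is exactly the truncation \(\comp R\to\tau_{\leq n}\comp R\), which we just made a \(\delta_p\)-morphism. Unwinding Definition \ref{defi_hatdelta}, a \(\hat\delta\)-morphism is a map of rings whose \(p\)-completions are \(\delta_p\)-maps, compatibly with the diagrams. So the refinement follows from the previous paragraph, using that the homotopies come functorially from Proposition \ref{prop_trunpdelta}. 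Naturality in \(R\) follows because every step is functorial.

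For \(n\)-truncatedness, we use Proposition \ref{prop_truncond}. Condition (1) holds because \(\hat\tau_{\leq n}R\) sits in a pullback of \(n\)-truncated objects with an \(n\)-truncated rational corner, so it is \((n+1)\)-truncated. Explicitly, \(\pi_{n+1}\hat\tau_{\leq n}R\) is the cokernel-type term coming from \(\pi_n\) of the corners. For condition (2), let \(M\subset\pi_{n+1}\hat\tau_{\leq n}R\) be nonzero. This group is \(\pi_1\) of \(\hat\pi_0(\pi_nR)\) up to the fiber sequence of the preceding remark, so it maps to \(\pi_{n+1}\) of the \(p\)-completions. Those targets vanish, because \(\comp{(\hat\tau_{\leq n}R)}=\tau_{\leq n}\comp R\) is \(n\)-truncated. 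Hence \(\pi_0\comp M\to\pi_{n+1}\comp{(\hat\tau_{\leq n}R)}=0\) has zero image, and the condition would force \(\phi(M)=0\) for all \(p\).

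The main obstacle is to rule out this last situation: a nonzero \(M\) on which all Frobenius lifts vanish. This requires knowing that \(M\) injects into \(\prod_p\) of something detected by the \(\phi\)'s. Alternatively, one can use the direct argument in the style of Proposition \ref{prop_truncated} (2). For any \(A\), the space \(\Map_{\hat\delta}(A,\hat\tau_{\leq n}R)\) is the pullback of \(\Map(A_\Q,(\tau_{\leq n}R)_\Q)\) and \(\prod\Map_{\delta_p}(\comp A,\tau_{\leq n}\comp R)\) over an \(n\)-truncated rational term. The first two spaces are \(n\)-truncated. The third term is \(n\)-truncated because \(\tau_{\leq n}\comp R\) is \(n\)-truncated as an animated ring, so by Proposition \ref{prop_truncated} (2) it is \(n\)-truncated as a \(\delta_p\)-ring. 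A pullback of \(n\)-truncated spaces is \(n\)-truncated. I would pursue this fracture-square argument first, since it avoids Proposition \ref{prop_truncond} entirely. The only delicate point there is justifying that \(\Map_{\hat\delta}\) decomposes along the fracture square, which is the corollary that fracture squares are pullbacks in \(\Ring^\an_{\hat\delta}\).
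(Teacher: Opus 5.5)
Your argument is correct and follows the paper. The \(\hat\delta\)-structure and the refinement of \(R\to\hat\tau_{\leq n}R\) come from \(\comp{(\hat\tau_{\leq n}R)}\simeq\tau_{\leq n}\comp R\) together with Proposition \ref{prop_trunpdelta}, and truncatedness comes from Proposition \ref{prop_truncated}(2).

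You did miss that \(\hat\tau_{\leq n}R\) is itself an \(n\)-truncated animated ring, because \(\D(\Z)_{\leq n}\) is closed under pullbacks. A cokernel on \(\pi_n\) of the corners feeds into \(\pi_{n-1}\) of the pullback, not into \(\pi_{n+1}\) as you claim. So Proposition \ref{prop_truncated}(2) applies to \(\hat\tau_{\leq n}R\) directly, which is the paper's one-line argument. For the same reason, the ``main obstacle'' you raise on the Proposition \ref{prop_truncond} route is vacuous, since \(\pi_{n+1}\hat\tau_{\leq n}R=0\). Your mapping-space argument over the fracture square is valid, but it does more work than needed.

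Your aside that \((\tau_{\leq n}\comp R)/p\simeq(\tau_{\leq n}R)/p\) is false in general. Take \(n=0\) and the \(\hat\delta\)-ring \(R=\Z\oplus\Q_p/\Z_p\) from Example \ref{ex_sqzero}: the left side is discrete, while the right side has \(\pi_1\simeq R[p]\simeq\Z/p\). Nothing in your proof uses this claim, so it does no harm.
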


\begin{proof}
The first statement follows from the identity \(\comp{(\hat\tau_{\leq n}R)}\simeq \tau_{\leq n}\comp R\) and the fact that truncations of \(\delta_p\)-rings are computed underlying. Second statement follows from Proposition \ref{prop_truncated}.
\end{proof}

\begin{rem}
By \ref{prop_disclim}, for any \(R\in\Ring_{\hat\delta}^\an\), we have
\[R\simeq \lim\,\hat\tau_{\leq n}R\]
in the category of animated \(\hat\delta\)-rings.
\end{rem}

For completeness, we also provide a more algebraic description of \(\hat\pi_0\).

\begin{prop}
The functor \(\hat\pi_0:\Ab\to\Ab\) is a localisation with essential image the full subcategory of those abelian groups that admit no non-zero map from \(\Q/\Z\).
\end{prop}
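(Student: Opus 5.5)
Write \(\hat\pi_0 M\) for the pullback of \(M_\Q \to (\prod_p \pi_0\comp M)_\Q \leftarrow \prod_p \pi_0\comp M\), computed in \(\D(\Z)\). This is the case \(n=0\) of Construction \ref{con_tauhat} applied to the discrete module \(M\). The plan has three steps: show that \(\hat\pi_0 M\) is discrete, identify the kernel of the unit \(\eta_M: M\to\hat\pi_0M\), and then check the localisation property.

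\emph{Discreteness and the unit.} The map \(\prod_p\pi_0\comp M\oplus M_\Q\to(\prod_p\pi_0\comp M)_\Q\) is surjective, as noted in Construction \ref{con_tauhat}. Its \(\pi_{-1}\) therefore vanishes, so the pullback is a discrete abelian group. There is a natural map \(\eta_M:M\to\hat\pi_0M\), since \(M\) is the fracture pullback of \(M_\Q\) and \(\prod\comp M\), and we can compose with \(\pi_0\). Comparing the fracture square of \(M\) with the square defining \(\hat\pi_0M\), the fiber of \(\eta_M\) is the fiber of \(\prod_p\comp M\to\prod_p\pi_0\comp M\), namely \(\prod_p(\pi_1\comp M)[1]\). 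Hence \(\ker\eta_M\simeq\prod_p\pi_1\comp M\) up to a shift, and \(\eta_M\) is surjective. More concretely, \(\eta_M\) is the quotient of \(M\) by the image of the connecting map \(\prod_p\pi_1\comp M\to M\). By Lemma \ref{lem_pi1}, this map sends \((x_0,x_1,\dots)\mapsto x_0\) at each prime, up to sign. So \(\ker\eta_M\) is the subgroup generated by the \(x_0\) for \(p^\infty\)-divisible towers, which is exactly the sum of the images of all maps \(\Z[1/p]/\Z\to M\). Since \(\Q/\Z=\bigoplus_p\Z[1/p]/\Z\), this is the subgroup \(D(M)\) generated by images of maps \(\Q/\Z\to M\).

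\emph{Localisation.} I must show three things: \(\hat\pi_0M\) has no nonzero map from \(\Q/\Z\); groups with no such maps are fixed by \(\hat\pi_0\); and \(\eta\) is initial among maps to such groups.

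The second point is immediate. If there is no nonzero map \(\Q/\Z\to M\), then \(D(M)=0\), so \(\eta_M\) is an isomorphism. Equivalently, Corollary \ref{cor_pntrun} with \(n=0\) gives that \(\comp M\) is discrete.

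For the third point, let \(f:M\to N\) with \(N\) admitting no nonzero map from \(\Q/\Z\). Then \(f\) kills \(D(M)\), so \(f\) factors uniquely through the surjection \(\eta_M\). This is the universal property, provided the first point holds.

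\emph{Main obstacle.} The first point amounts to showing \(\pi_1\comp{(\hat\pi_0M)}=0\), i.e.\ \(\hat\pi_0\hat\pi_0M\simeq\hat\pi_0M\). The naive quotient \(M/D(M)\) can fail this, because the divisible subgroup need not split off and new divisible towers can appear in the quotient. The construction avoids the problem because \(\comp{(\hat\pi_0M)}\simeq\pi_0\comp M\), as is stated in Construction \ref{con_tauhat}, where the square is already a fracture square. Then \(\pi_1\) of \(\comp{(\hat\pi_0 M)}\) vanishes, and Corollary \ref{cor_pntrun} shows that \(0\in\hat\pi_0M\) is not \(p^\infty\)-divisible for any \(p\). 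This rules out nonzero maps \(\Z[1/p]/\Z\to\hat\pi_0M\), hence nonzero maps from \(\Q/\Z\). To be safe, I would verify the fracture-square claim directly. Rationalizing the square kills the \(\prod_p\) terms, and \(p\)-completion kills the rational terms, leaving \(\comp{(\pi_0\comp M)}\). This is \(\pi_0\comp M\) by the standard fact that the \(p\)-completion of the derived-complete group \(\pi_0\comp M\) is itself.

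Finally, the essential image is identified as claimed. The image is contained in the groups with no nonzero map from \(\Q/\Z\) by the first point, and every such group is in the image by the second point.
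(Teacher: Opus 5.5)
\textbf{Verdict: genuine gap, but easily repaired.} Your first two points are correct. The locality of \(\hat\pi_0M\) follows from \(\comp{(\hat\pi_0M)}\simeq\pi_0\comp M\). The unit \(\eta_M\) is an isomorphism for local \(M\) by Corollary \ref{cor_pntrun}. Together these amount to the paper's one-line argument, which maps \(\Q/\Z\) into the defining square and uses \((\Q/\Z)_\Q\simeq 0\) and \(\comp{(\Q/\Z)}\simeq\Z_p[1]\).

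\textbf{Where the gap is.} Your universal-property step rests on the computation of \(\ker\eta_M\), and that computation contains two errors that happen to cancel.

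\emph{First error.} The map from the fracture square of \(M\) to the square defining \(\hat\pi_0M\) is not an equivalence on the bottom-right corner either. So \(\fib(\eta_M)\) is not \(\prod_p(\pi_1\comp M)[1]\); indeed, no map of discrete groups has a fiber with nonzero \(\pi_1\). Write \(A=\prod_p\pi_1\comp M\). Then the fiber is \(\fib(A[1]\to A_\Q[1])\simeq A_\Q/A\), concentrated in degree \(0\). Hence \(\ker\eta_M\cong A_\Q/A\), not \(A\).

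\emph{Second error.} The connecting map of the fracture square is \(\pi_1\big((\prod_p\comp M)_\Q\big)=A_\Q\to M\), and it vanishes on \(A\) itself. For a tower \(t=(x_0,x_1,\dots)\in\pi_1\comp M\), it sends \(t/p^{k+1}\) to \(x_k\) (up to sign); it does not send \(t\) to \(x_0\). Your claim that the bottoms \(x_0\) of towers generate the sum of the images of all maps \(\Z[1/p]/\Z\to M\) is false. For \(M=\Z[1/p]/\Z\) they generate only \(M[p]\), whereas \(\hat\pi_0M=0\), so \(\ker\eta_M=M\).

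\textbf{The repair.} Once the connecting map is corrected, its image is the set of all entries of \(p^\infty\)-divisible towers, over all primes. That set is exactly \(D(M)\). Also \(\pi_{-1}\fib(\eta_M)=0\), which gives surjectivity. So \(\ker\eta_M=D(M)\) holds, and your factorization argument goes through as written.

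Alternatively, you can avoid the kernel altogether. For \(f:M\to N\) with \(N\) local, naturality of \(\eta\) together with your second point gives the factorization \(f=\eta_N^{-1}\circ\hat\pi_0(f)\circ\eta_M\). Uniqueness then follows from surjectivity of \(\eta_M\).
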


\begin{proof}
Follows from identities \((\Q/\Z)_\Q\simeq 0\) and \(\comp{(\Q/\Z)}\simeq\Z_p[1]\).
\end{proof}

\begin{rem}
Let \(R\in\Ring\). Comparing fracture squares for \(R\) and \(\hat\pi_0R\) we get the formula \(\hat\pi_0R=R/I\), where
\[I=\left(\prod\pi_1\comp R\right)_\Q\Big /\left(\prod\pi_1\comp R\right)\subset R.\]
\end{rem}

\subsection{\(\hat\delta\)-schemes} \label{sec_hatschemes}

In this section, we finally define \(\hat\delta\)-schemes. We further prove certain structural properties, which imply that we can interact with \(\hat\delta\)-schemes as with normal schemes. They admit a basis by affines, maps of \(\hat\delta\)-schemes glue, finite limits are computed via the tensor product of rings, and so forth.

\begin{con} \label{con_loc}
Let \(R\) be an animated \(\hat\delta\)-ring and let \(f\in \pi_0R\). We want to refine the canonical map \(R\to R[f^{-1}]\) to a morphism of \(\hat\delta\)-rings.

Let \(\phi : R \to \comp R\) be the Frobenius lift for \(R\). Since on the mod \(p\) reduction \(\phi\) is the Frobenius and \(f^p\) is invertible in \(R[f^{-1}]/p\), we get that \(\phi(f)\) is invertible in \(\comp{(R[f^{-1}])}\). Therefore, we get an induced map \(\phi':R[f^{-1}]\to\comp{(R[f^{-1}])}\) which constitutes a Frobenius lift on \(R[f^{-1}]\) compatible with the one on \(R\).
\end{con}

\begin{rem} \label{rem_loc}
Let \(R\) be an animated \(\hat\delta\)-ring. It follows from Construction \ref{con_loc} that precomposition with \(R\to R[f^{-1}]\) induces a fully faithful map \(\Map_{\hat\delta}(R[f^{-1}],S)\) \(\to \Map_{\hat\delta}(R,S)\) with essential image those \(\hat\delta\)-ring maps that send \(f\) to an invertible element of \(\pi_0S\). In particular, the space of refinements of \(R\to R[f^{-1}]\) to a \(\hat\delta\)-map admits an initial object, and so is contractible.
\end{rem}

\begin{war}
Note that if \(R\) is a non \(p\)-complete \(\delta_p\)-ring then the localization at \(f\in\pi_0R\) need not carry an induced \(\delta_p\)-structure. On the other hand, even if \(R\) is \(p\)-complete, \(R[f^{-1}]\) might not be.
\end{war}

Let \(X\) be a derived scheme (i.e. modelled on animated rings). We can consider the small site \(X_{\text{Zar}}^{\text{dis}}\) whose objects are affine opens of \(X\) and whose morphisms are the distinguished embeddings (those given by the non-vanishing locus of a single global section). Construction \ref{con_loc} induces a functor \(\Str_{\hat\delta}:(X_{\text{Zar}}^{\text{dis}})^\op\to \An\) sending an affine open \(\Spec A\) to the space of \(\hat\delta\)-structures on \(A\).

\begin{prop} \label{prop_glue}
The functor \(\Str_{\hat\delta}:(X_{\Zar}^\dis)^\op\to \An\) is a sheaf.
\end{prop}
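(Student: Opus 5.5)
Since \(\Str_{\hat\delta}(\Spec A)\) is a fiber of a forgetful functor, the plan is to write it as a finite limit of functors that are already known to be sheaves on \(X^\dis_\Zar\). Let \(\O\) be the structure sheaf. By Definition \ref{defi_hatdelta}, a \(\hat\delta\)-structure on \(A\) is, for each prime \(p\), a point of
\[\Map(A,\comp A)\times_{\Map(A,A/p)}\{\Frob\},\]
where the map sends \(\phi\) to \(\can\circ\phi\). Hence
\[\Str_{\hat\delta}(\Spec A)\simeq \prod_p \fib_{\Frob}\big(\Map(A,\comp A)\to\Map(A,A/p)\big).\]
Products and fibers of sheaves are sheaves, so it suffices to show, for each fixed \(p\), that the functors \(U=\Spec A\mapsto \Map(A,\comp A)\) and \(U\mapsto\Map(A,A/p)\) are sheaves. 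We also need the Frobenius section to be compatible with restriction.

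First we must check that this description matches the restriction maps coming from Construction \ref{con_loc}. Restriction along \(A\to A[f^{-1}]\) sends \(\phi\) to the unique extension \(\phi'\) of \(A\to\comp A\to\comp{(A[f^{-1}])}\). By Remark \ref{rem_loc}, \(\Map(A[f^{-1}],\comp{(A[f^{-1}])})\) is the union of components of \(\Map(A,\comp{(A[f^{-1}])})\) consisting of maps sending \(f\) to a unit. Every map lifting Frobenius modulo \(p\) sends \(f\) to a unit. Thus the fibers over \(\Frob\) are computed equally well using \(\Map(A,\comp{(A[f^{-1}])})\). This gives a functorial identification of \(\Str_{\hat\delta}(U)\) with the fiber of
\[\Map(A,\comp{\O(U)})\to\Map(A,\O(U)/p)\]
over \(\Frob\). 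The Frobenius is natural because \(\Frob\) on animated \(\mathbb F_p\)-algebras is a natural endomorphism.

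Next we reduce to a descent statement for the sheaves \(\comp\O\) and \(\O/p\). Let \(\Spec A=\bigcup\Spec A[f_i^{-1}]\) be a finite distinguished cover. The sheaf condition asks that the structures on \(A\) be the limit, over the Čech nerve, of the structures on the iterated localizations \(A_{I}=A[f_I^{-1}]\). On \(A_I\), the above description identifies the structures with the fiber of \(\Map(A,\comp{(A_I)})\to\Map(A,A_I/p)\), using the mapping-space equivalences for localizations once more. Limits commute with mapping spaces out of \(A\) and with fibers. It therefore suffices that \(\comp A\simeq\lim_I\comp{(A_I)}\) and \(A/p\simeq\lim_I A_I/p\) in \(\Ring^\an\).

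These two identities are the key input. Zariski descent gives \(A\simeq\lim_I A_I\) in \(\D(\Z)\), as a finite limit. Reduction mod \(p\) and derived \(p\)-completion are exact functors on \(\D(\Z)\), so they commute with this finite limit. Both sides are connective, so, as in Proposition \ref{prop_disclim}, the limits also hold in \(\Ring^\an\).

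The main obstacle is the bookkeeping of the first step, namely making restriction functorial at the \(\infty\)-categorical level rather than just on points. For that I would use Remark \ref{rem_loc}: the functor \(\Str_{\hat\delta}\) is the straightening of the right fibration of \(\hat\delta\)-lifts. Over distinguished embeddings, the fully faithful localization maps make this fibration a pullback of the one given by \(U\mapsto\Map(\O(X'),\comp\O(U))\) for a fixed affine \(X'\supseteq U\), locally in \(X'\). Descent then follows from the previous paragraph.
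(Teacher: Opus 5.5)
Your proposal is correct and follows essentially the paper's route: the paper's proof is the one-line appeal to the sheaf property of \(\O_X\) plus Proposition~\ref{prop_disclim}, and the proof of that proposition rests on exactly your key input, namely that derived \(p\)-completion and reduction mod \(p\) commute with the connective descent limit computed in \(\D(\Z)\). What you add is an explicit unwinding at the level of spaces of structures. You fix the source \(A\) on the slice over \(\Spec A\), and you use that a Frobenius lift automatically sends \(f\) to a unit (as in Construction~\ref{con_loc}). This spells out the step from ``the forgetful functor creates this limit'' to the actual equivalence \(\Str_{\hat\delta}(\Spec A)\simeq\lim_I\Str_{\hat\delta}(\Spec A_I)\), which the paper leaves implicit.
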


\begin{proof}
Follows from the fact that \(\O_X:(X_{\Zar}^\dis)^\op\to \Ring^\an\) is a sheaf and from Proposition \ref{prop_disclim}.
\end{proof}

We can also consider the site \(X_\Zar^\aff\) of affine opens and all open embeddings. Since the topology on \(X_\Zar^\aff\) is generated by \(X_\Zar^\dis\) we get, by the process of right Kan extension:

\begin{cor} \label{cor_glue}
The functor \(\Str_{\hat\delta}:(X_\Zar^\dis)^\op\to \An\) extends to a functor \(\Str_{\hat\delta}:(X_{\Zar}^\aff)^\op\to \An\) which is again a sheaf.
\end{cor}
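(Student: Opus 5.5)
The plan is to define the extension by right Kan extension along the inclusion \(i:X_\Zar^\dis\to X_\Zar^\aff\), and then check two things: that it restricts back to the original functor, and that it is a sheaf. Concretely, for an affine open \(U\), set \(\Str_{\hat\delta}(U)=\lim_{V}\Str_{\hat\delta}(V)\), where \(V\) ranges over the affine opens \(V\subset U\) that are distinguished in \(X\) (that is, over \((X_\Zar^\dis)_{/U}\)).

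\textbf{Step 1: the extension restricts to the original functor.} If \(U\) itself lies in \(X_\Zar^\dis\), then \(U\) is terminal in the indexing category. The right Kan extension therefore agrees with the original value, and \(i^*i_*\Str_{\hat\delta}\simeq\Str_{\hat\delta}\).

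\textbf{Step 2: reduce the sheaf condition to a comparison lemma.} I would use the standard comparison lemma for \(\infty\)-sheaves (cf.\@ \cite{SAG}, or the general statement that sheaves on a basis extend uniquely). The hypothesis to verify is that every affine open \(U\) is covered by opens that are distinguished both in \(X\) and in \(U\), and that this family is closed under finite intersections. This is classical:
\begin{itemize}
\item In an affine scheme \(U=\Spec A\), opens of the form \(D(g)\) form a basis.
\item For \(U\) and \(W=\Spec B\) affine opens of \(X\), every point of \(U\cap W\) has a neighbourhood that is simultaneously of the form \(D(g)\subset U\) and \(D(h)\subset W\).
\item Intersections of distinguished opens are distinguished: \(D(g)\cap D(g')=D(gg')\).
\end{itemize}
Since the underlying topological space of a derived scheme is that of its \(\pi_0\), these statements transfer from ordinary schemes.

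With this basis property in hand, the comparison lemma gives an equivalence \(\mathrm{Sh}(X_\Zar^\aff)\simeq\mathrm{Sh}(X_\Zar^\dis)\) via restriction, whose inverse is \(i_*\). Hence \(i_*\Str_{\hat\delta}\) is a sheaf by Proposition \ref{prop_glue}.

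\textbf{Main obstacle.} The step I expect to be delicate is that \(X_\Zar^\dis\) as defined has only distinguished embeddings as morphisms, so it is not a full subcategory of \(X_\Zar^\aff\). I would handle this by first showing that a sheaf on \(X_\Zar^\dis\) extends canonically to the full subcategory of \(X_\Zar^\aff\) on the same objects. For an arbitrary inclusion \(V\subset U\) of affines, the restriction is defined through a cover of \(V\) by opens distinguished in both \(U\) and \(V\), and the sheaf property on \(X_\Zar^\dis\) then glues these into a map.

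Alternatively, and more cleanly, one can observe that \(\Str_{\hat\delta}\) is the fiber of \(\O_X\) lifted to \(\Ring^\an_{\hat\delta}\) over the forgetful map. Then Remark \ref{rem_loc}, which says refinements of localizations are unique, combined with descent for \(\O_X\) on \(X_\Zar^\aff\) and Proposition \ref{prop_disclim} (limits of sections over covers are computed underlying, since \(\O_X\) is a sheaf of connective rings) gives functoriality along all open embeddings directly. I would use this second route if the comparison lemma becomes awkward.
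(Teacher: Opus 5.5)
This is essentially the paper's argument. The paper only remarks that the topology on \(X_\Zar^\aff\) is generated by \(X_\Zar^\dis\) and takes the right Kan extension, and your ``main obstacle'' paragraph identifies exactly the point it leaves implicit.

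One correction to Step 1. The right Kan extension along \(i\) is indexed by all affine opens \(V\subset U\), with distinguished embeddings as morphisms. \(U\) is not terminal there, because a non-distinguished \(V\subset U\) has no morphism to \(U\) in \(X_\Zar^\dis\). The counit is still an equivalence, by Proposition \ref{prop_glue}. The opens that are distinguished in \(U\) and contained in \(V\) form a covering sieve of \(V\) in \(X_\Zar^\dis\). So on this indexing category \(\Str_{\hat\delta}\) is right Kan extended from the opens distinguished in \(U\), and among those \(U\) is terminal.

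This argument gives, with all coherences, the restriction maps you proposed to glue by hand. The same covering-sieve observation, applied through the adjunction between restriction \(i^*\) and \(i_*\), also gives the sheaf property on \(X_\Zar^\aff\). So you do not need the comparison lemma for full subcategories.
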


\begin{rem}
\(\hat\delta\)-rings also glue in the étale topology, but we do not go into this generality (see \cite{prismatic}).
\end{rem}

Suppose we are given a sheaf of animated \(\hat\delta\)-rings \((X_\Zar^\aff)^\op\to\Ring_{\hat\delta}^\an\). Since the category \(\Ring_{\hat\delta}^\an\) admits all limits we can formally extend any such \(\hat\delta\)-valued sheaf to a sheaf \(\mathcal F:\Open(X)^\op\to\Ring_{\hat\delta}^\an\) (c.f.\@ \cite[Proposition 1.1.4.4]{SAG}).

It is generally not true that the composite \(\Forget\circ\mathcal F:\Open(X)^\op\to\Ring_{\hat\delta}^\an\to \Ring^\an\) remains a sheaf, because \(\Forget\) does not commute with limits. We will refer to the sheafification of \(\Forget\circ\mathcal F\) as the \emph{underlying sheaf of animated rings} of \(\mathcal F\).

We are now ready to define \(\hat\delta\)-schemes.

\begin{con} \label{con_aff}
Let \(R\) be an animated \(\hat\delta\)-ring and let \(X\) be the topological space \(\Spec R\). We define a sheaf of \(\hat\delta\)-rings \(\O_X^{\hat\delta}:(X_\Zar^\dis)^\op\to \Ring^\an_{\hat\delta}\) by the obvious assignment \(D(f)\mapsto R[f^{-1}]\). By right Kan extension, it induces a sheaf on all of \(\Spec R\).

Invoking Proposition \ref{prop_disclim} we get that the composite \(\Forget\circ\O_X^{\hat\delta}\) agrees with the structure sheaf of \(\Spec R\) on affine opens. This implies that the underlying sheaf of animated rings of \(\O_X^{\hat\delta}\) agrees with the usual structure sheaf on \(\Spec R\). We refer to the pair \((\Spec R,\O_{\Spec R}^{\hat\delta})\) as an \emph{affine \(\hat\delta\)-scheme}.
\end{con}

\begin{defi}
A \emph{\(\hat\delta\)-scheme} is a topological space \(X\) together with a sheaf of \(\hat\delta\)-rings \(\O_X^{\hat\delta}:\Open(X)^\op\to\Ring^\an_{\hat\delta}\) such that \(X\) admits a covering \(\{U_i\}\) for which \((U_i,\O_X^{\hat\delta}|_{U_i})\) is equivalent to an affine \(\hat\delta\)-scheme. We denote by \(\O_X\) the underlying sheaf of animated rings of \(\O_X^{\hat\delta}\).
\end{defi}

\begin{prop}
Let \(X\) be a \(\hat\delta\)-scheme. Then the pair \((X,\O_X)\) forms a derived scheme.
\end{prop}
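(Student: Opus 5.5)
The claim is local on \(X\), so I would reduce to the affine case and then glue. Being a derived scheme means \((X,\O_X)\) is locally equivalent to \((\Spec A,\O_{\Spec A})\) for animated rings \(A\). Two ingredients are needed. The first is that forming the underlying sheaf of animated rings commutes with restriction to opens. The second is that, on an affine \(\hat\delta\)-scheme, the underlying sheaf agrees with the usual structure sheaf.

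First I check compatibility with restriction. Fix an open \(U\subset X\). The restriction of \(\Forget\circ\O_X^{\hat\delta}\) to \(\Open(U)\) is \(\Forget\circ(\O_X^{\hat\delta}|_U)\). Sheafification of presheaves on a topological space commutes with restriction to an open subset, because restriction preserves covering sieves and is exact. It follows that \(\O_X|_U\) is the underlying sheaf of animated rings of \(\O_X^{\hat\delta}|_U\).

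Next, choose a covering \(\{U_i\}\) with \((U_i,\O_X^{\hat\delta}|_{U_i})\simeq(\Spec R_i,\O^{\hat\delta}_{\Spec R_i})\). An equivalence of pairs (a homeomorphism together with an equivalence of \(\hat\delta\)-valued sheaves) induces an equivalence of the underlying sheaves of animated rings, since \(\Forget\) and sheafification are functorial. By the affine case of Construction \ref{con_aff}, the underlying sheaf of \(\O^{\hat\delta}_{\Spec R_i}\) is the usual structure sheaf of \(\Spec R_i\). Indeed, \(\Forget\circ\O^{\hat\delta}\) agrees with \(\O_{\Spec R_i}\) on the basis of affine (distinguished) opens, by Proposition \ref{prop_disclim}, since there the limits are computed in \(\D(\Z)\) and stay connective. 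A presheaf that agrees with a sheaf on a basis has that sheaf as its sheafification. Hence \((U_i,\O_X|_{U_i})\simeq(\Spec R_i,\O_{\Spec R_i})\).

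The main obstacle is the basis claim. The comparison map from \(\Forget\circ\O^{\hat\delta}\) to \(\O_{\Spec R}\) is only known to be an equivalence on affine opens, and one must argue that sheafification sees only this data. I would deduce this from the equivalence between sheaves on \(X\) and sheaves on the basis \(X_\Zar^\aff\) (\cite[Proposition 1.1.4.4]{SAG}): both presheaves restrict to the same sheaf on the basis, so their sheafifications coincide. Finally, locality of the stalks is automatic, because it is checked on the local models \(\Spec R_i\). This exhibits \((X,\O_X)\) as a derived scheme.
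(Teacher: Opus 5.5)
Your proposal is correct and follows the same route as the paper. The paper reduces to the affine case by locality of being a derived scheme (citing \cite[Corollary 1.1.6.4]{SAG}) and then appeals to Construction \ref{con_aff}; you additionally make explicit what the paper leaves implicit, namely that sheafification commutes with restriction to opens and that agreement with \(\O_{\Spec R}\) on the basis of affine opens (via Proposition \ref{prop_disclim}) forces the sheafification to be \(\O_{\Spec R}\).
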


\begin{proof}
It suffices to show the result for an affine \(\hat\delta\)-scheme (c.f.\@ \cite[Corollary 1.1.6.4]{SAG}). This we have already done in Construction \ref{con_aff}.
\end{proof}

\begin{rem}
Let \(X\) be a \(\hat\delta\)-scheme and \(U\subset X\) a non-affine open subset. Then \(\O_X^{\hat\delta}(U)\) might not agree with the ring \(\O_X(U)\). As mentioned in Remark \ref{rem_lim}, this problem would disappear when working with derived rings.
\end{rem}

\begin{prop} \label{prop_dschstr}
Let \(X\) be a derived scheme and let \(\O_X^{\hat\delta}\) be a sheaf of animated \(\hat\delta\)-rings on \(X\). The following conditions are equivalent:
\begin{enumerate}
\item \((X,\O_X^{\hat\delta})\) is a \(\hat\delta\)-scheme with \(X\) as the underlying derived scheme.
\item the composition \(\Forget\circ \O_X^{\hat\delta}\) agrees with \(\O_X\) when restricted to \(X_\Zar^\dis\).
\end{enumerate}
\end{prop}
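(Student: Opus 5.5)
For (1)\(\Rightarrow\)(2) the plan is to reduce to the affine case. Let \(\{U_i\}\) be a cover with \((U_i,\O_X^{\hat\delta}|_{U_i})\simeq(\Spec R_i,\O^{\hat\delta}_{\Spec R_i})\). By Construction \ref{con_aff} (which rests on Proposition \ref{prop_disclim}), \(\Forget\circ\O_X^{\hat\delta}\) agrees with the structure sheaf on every affine open of \(U_i\), in particular on distinguished affines \(D(f)\subset U_i\). The underlying derived scheme of \(X\) is obtained by gluing these affine pieces, so \(\O_X\) agrees with \(\Forget\circ\O_X^{\hat\delta}\) on all distinguished opens contained in some \(U_i\). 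These form a basis of \(X_\Zar^\dis\).

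To pass to an arbitrary affine \(V=\Spec A\subset X\), I cover \(V\) by distinguished opens \(D(g_j)\subset V\), each contained in some \(U_i\). These \(D(g_j)\) are still affines on which both sheaves agree, and their intersections are again distinguished in \(V\). On the finite Čech diagram both \(\O_X\) and \(\O_X^{\hat\delta}\) satisfy descent. The limit of the underlying animated rings is \(\O_X(V)=A\), which is connective and is computed in \(\D(\Z)\) by Zariski descent for quasi-coherent sheaves. Proposition \ref{prop_disclim} then says the \(\hat\delta\)-limit has underlying ring this same limit. Hence \(\Forget\,\O_X^{\hat\delta}(V)\simeq A\), naturally in \(V\), which gives (2).

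For (2)\(\Rightarrow\)(1), cover \(X\) by affine opens \(V=\Spec A\). By (2), \(\O_X^{\hat\delta}(V)\) is a \(\hat\delta\)-structure on \(A\), so Construction \ref{con_aff} produces an affine \(\hat\delta\)-scheme \((\Spec A,\O^{\hat\delta}_{\Spec A})\). It remains to identify \(\O^{\hat\delta}_{\Spec A}\) with \(\O_X^{\hat\delta}|_V\). For each \(f\in\pi_0A\), the restriction \(\O_X^{\hat\delta}(V)\to\O_X^{\hat\delta}(D(f))\) is a \(\hat\delta\)-refinement of \(A\to A[f^{-1}]\), since by (2) its underlying map is the localization. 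By Remark \ref{rem_loc} such refinements form a contractible space, and the \(\hat\delta\)-ring \(A[f^{-1}]\) is characterized by a universal property. Thus the two sheaves agree on the basis \(\{D(f)\}\), compatibly with restrictions, because refinements are unique up to contractible choice. Both sheaves are right Kan extended from the distinguished basis (in the sense of Corollary \ref{cor_glue} and \cite[Proposition 1.1.4.4]{SAG}), so they agree on all of \(\Open(V)\). The underlying derived scheme is then \(X\): it is glued from the \(\Spec A\), whose underlying structure sheaves agree with \(\O_X|_V\).

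The step I expect to be the main obstacle is the coherence in (2)\(\Rightarrow\)(1). The identification must be an equivalence of sheaves, not just objectwise. I would handle this by observing that, by Remark \ref{rem_loc}, the functor \(D(f)\mapsto\O_X^{\hat\delta}(D(f))\) on \(V_\Zar^\dis\) is left Kan extended from its value at \(V\) along the localizations, which characterizes it up to a contractible space of choices.
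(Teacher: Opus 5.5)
Your proposal is correct and follows the paper's proof: for (2)$\Rightarrow$(1) the paper likewise covers $X$ by affines, uses Remark \ref{rem_loc} to match $\O_X^{\hat\delta}|_V$ with $\O^{\hat\delta}_{\Spec A}$ on distinguished opens, and concludes because sheaves are determined by their values on that basis. For (1)$\Rightarrow$(2) the paper only cites Construction \ref{con_aff} and the definition, and your \v{C}ech-descent argument via Proposition \ref{prop_disclim}, for affine opens not contained in a single chart, fills in that step.
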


\begin{proof}
(1)\(\Rightarrow\)(2) follows from Construction \ref{con_aff} and by definition. We prove (2)\(\Rightarrow\)(1). Let \(\{U_i\}\) be an affine open cover of \(X\). It suffices to show that each \((U_i,\O_X^{\hat\delta}|_{U_i})\) is an affine \(\hat\delta\)-scheme. Since, by assumption, \(\O_X(U_i)\) agrees with \(\O_X^{\hat\delta}(U_i)\), we have a \(\hat\delta\)-structure on \(\O_X(U_i)\). Hence, it is enough to show that \((U_i,\O_X^{\hat\delta}|_{U_i})\simeq \Spec\O_X^{\hat\delta}(U_i)\) as \(\hat\delta\)-ringed spaces. The underlying topological space is the same and, by Remark \ref{rem_loc}, their structure sheaves agree on all distinguished opens, hence they are equivalent.
\end{proof}

\begin{prop}
Let \(X\) be a \(\hat\delta\)-scheme. Then \(X\) is an affine \(\hat\delta\)-scheme if and only if the underlying derived scheme of \(X\) is affine.
\end{prop}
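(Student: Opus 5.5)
The forward direction is immediate from Construction \ref{con_aff}: the underlying sheaf of animated rings of an affine \(\hat\delta\)-scheme \(\Spec R\) is the usual structure sheaf, so its underlying derived scheme is \(\Spec R\), which is affine. The work is in the converse. So let \(X\) be a \(\hat\delta\)-scheme whose underlying derived scheme \((X,\O_X)\) is equivalent to \(\Spec A\) for some animated ring \(A\simeq \O_X(X)\). The strategy is to equip \(A\) with a \(\hat\delta\)-structure coming from \(\O_X^{\hat\delta}\), and then to use Proposition \ref{prop_dschstr} together with Remark \ref{rem_loc} to identify \((X,\O_X^{\hat\delta})\) with \(\Spec A\).

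First I would check that \(\Forget\circ\O_X^{\hat\delta}\) agrees with \(\O_X\) on every affine open \(U\subset X\), and not merely on the opens of some chosen cover. By Proposition \ref{prop_dschstr} ((1)\(\Rightarrow\)(2)), \(\Forget\circ\O_X^{\hat\delta}\) agrees with \(\O_X\) on \(X_\Zar^\dis\). Now take an arbitrary affine open \(U\). It is covered by opens that are simultaneously distinguished in \(U\) and in members of an affine \(\hat\delta\)-cover. On such a cover, \(\O_X^{\hat\delta}(U)\) is the limit in \(\Ring^\an_{\hat\delta}\) of the corresponding \v{C}ech diagram. That same diagram computes \(\O_X(U)\) in \(\D(\Z)\), and \(\O_X(U)\) is connective because \(U\) is affine. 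Proposition \ref{prop_disclim} then says the limit is created by \(\Forget\), so \(\Forget\,\O_X^{\hat\delta}(U)\simeq\O_X(U)\).

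Applying this to \(U=X\) gives a \(\hat\delta\)-structure on \(A=\O_X(X)\), namely \(R:=\O_X^{\hat\delta}(X)\). The restriction maps \(R\to\O_X^{\hat\delta}(D(f))\) are \(\hat\delta\)-maps whose underlying maps are the localizations \(A\to A[f^{-1}]\). By Remark \ref{rem_loc}, the space of \(\hat\delta\)-refinements of a localization is contractible, so \(\O_X^{\hat\delta}(D(f))\simeq R[f^{-1}]\) as \(\hat\delta\)-rings, compatibly in \(f\). Hence \(\O_X^{\hat\delta}\) and \(\O^{\hat\delta}_{\Spec R}\) agree on the basis of distinguished opens of \(X=\Spec A\). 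Both are sheaves obtained by right Kan extension from that basis, so they agree everywhere, and \(X\simeq\Spec R\) as \(\hat\delta\)-schemes.

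I expect the main obstacle to be the first step: making sure the \v{C}ech-limit argument is legitimate, i.e.\ that the \(\hat\delta\)-valued sheaf condition on an arbitrary affine open is computed by a diagram of distinguished opens whose underlying limit in \(\D(\Z)\) is connective. This is exactly the hypothesis Proposition \ref{prop_disclim} needs. Since the underlying limit is the global sections of a quasi-coherent structure sheaf on an affine scheme, it is connective, so I expect this to go through, possibly via the refinement of Corollary \ref{cor_glue}.
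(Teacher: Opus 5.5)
Your proof is correct and follows the paper's route. The paper also obtains the \(\hat\delta\)-structure on \(\O_X(X)\) from Proposition \ref{prop_dschstr}, whose site \(X_{\Zar}^{\dis}\) already has every affine open, including \(X\) itself, as an object, and then identifies \((X,\O_X^{\hat\delta})\) with \(\Spec\O_X^{\hat\delta}(X)\) on distinguished opens via Remark \ref{rem_loc}. Your \v{C}ech argument through Proposition \ref{prop_disclim} adds a careful justification of \((1)\Rightarrow(2)\) in Proposition \ref{prop_dschstr} for affine opens not contained in a member of the affine \(\hat\delta\)-cover, a step the paper treats as holding by definition.
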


\begin{proof}
The (\(\Rightarrow\)) direction follows from Construction \ref{con_aff}. The (\(\Leftarrow\)) direction follows from Proposition \ref{prop_dschstr} and Remark \ref{rem_loc} (as in the proof of Proposition \ref{prop_dschstr}).
\end{proof}

\begin{defi} \label{defi_mor}
A morphism of \(\hat\delta\)-schemes is a continuous map \(f:X\to Y\) and a morphism of \(\hat\delta\)-ring valued sheaves \(\O_Y^{\hat\delta}\to f_*\O_X^{\hat\delta}\) that induces a morphism of derived schemes \(X\to Y\). We denote the category of \(\hat\delta\)-schemes by \(\dSch_{\hat\delta}\).
\end{defi}

\begin{prop} \label{prop_pullund}
The category \(\dSch_{\hat\delta}\) admits finite limits. Moreover, they are created by the forgetful functor \(\dSch_{\hat\delta}\to\dSch\).
\end{prop}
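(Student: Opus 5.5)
The plan is to reduce to the terminal object and pullbacks, and to build pullbacks by gluing the affine case. The terminal object is easy: \(\Spec\Z\), with \(\Z\) carrying its unique \(\hat\delta\)-structure (the identity Frobenius lift on each \(\Z_p\)), is terminal in \(\dSch_{\hat\delta}\). Indeed, by Remark \ref{rem_loc} and gluing, a morphism \(X\to\Spec\Z\) of \(\hat\delta\)-schemes amounts to a \(\hat\delta\)-map \(\Z\to\O_X^{\hat\delta}(U)\) compatibly for affine \(U\). The space of such maps is contractible because \(\Z\) is initial in \(\Ring^\an_{\hat\delta}\): the forgetful functor creates colimits by Proposition \ref{prop_cat}, so the initial object is \(\Z\).

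For pullbacks, start with the affine case. Given \(\hat\delta\)-maps \(A\leftarrow C\to B\), Corollary \ref{cor_push} says \(A\otimes_C B\) is the pushout in \(\Ring_{\hat\delta}^\an\) and its underlying ring is the usual tensor product. I then claim \(\Spec(A\otimes_CB)\) is the pullback in \(\dSch_{\hat\delta}\) of the affine \(\hat\delta\)-schemes. The key input is that \(\Spec\) is fully faithful from \(\Ring_{\hat\delta}^{\an,\op}\) to affine \(\hat\delta\)-schemes and that, for any \(\hat\delta\)-scheme \(X\),
\[\Map_{\dSch_{\hat\delta}}(X,\Spec R)\simeq \Map_{\hat\delta}(R,\O_X^{\hat\delta}(X)).\]
I would prove this by writing \(X\) as a colimit of its affine opens and using the sheaf condition for \(\O_X^{\hat\delta}\), which holds in \(\Ring^\an_{\hat\delta}\) since the sheaf was defined with values there. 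On affines the identification follows from Construction \ref{con_aff}, with Remark \ref{rem_loc} ensuring uniqueness of the maps on distinguished opens. The pushout property of \(A\otimes_CB\) then gives the universal property of the fiber product.

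For general \(X\to Z\leftarrow Y\), choose affine covers of \(Z\), then of the preimages in \(X\) and \(Y\). Form the affine \(\hat\delta\)-fiber products \(U\times_WV\) and glue them along the overlaps \((U\cap U')\times_{W\cap W'}(V\cap V')\). This gluing works because the \(\hat\delta\)-structures on the pieces form a sheaf (Proposition \ref{prop_glue} and Corollary \ref{cor_glue}), and because localizations carry unique compatible \(\hat\delta\)-structures (Remark \ref{rem_loc}). The underlying derived scheme of the glued object is the ordinary derived fiber product \(X\times_ZY\), which proves that the forgetful functor preserves these limits. By Proposition \ref{prop_dschstr}, a \(\hat\delta\)-scheme structure on a derived scheme is equivalently a sheaf \(\O^{\hat\delta}\) whose values on affine opens refine \(\O\). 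Combining this with the Zariski descent of Proposition \ref{prop_glue}, the forgetful functor is conservative, and a \(\hat\delta\)-structure on \(X\times_ZY\) is determined by its restrictions to the affine pieces. Hence the forgetful functor creates the limit.

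I expect the main obstacle to be the mapping-space identification \(\Map(X,\Spec R)\simeq\Map_{\hat\delta}(R,\O_X^{\hat\delta}(X))\) when \(X\) is non-affine. Here \(\O_X^{\hat\delta}(X)\) need not agree with \(\O_X(X)\), as noted in the remark after the definition of \(\hat\delta\)-schemes, so the comparison has to be made entirely within \(\Ring^\an_{\hat\delta}\)-valued sheaves rather than via the underlying rings. The first thing to try is to check the identification on the basis \(X_\Zar^\dis\) and then extend by right Kan extension, using that morphisms of \(\hat\delta\)-ring-valued sheaves are determined on a basis.
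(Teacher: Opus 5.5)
Your proposal is correct and takes the paper's route. The paper's proof is the one-line observation that Corollary \ref{cor_push} (tensor products of animated \(\hat\delta\)-rings are computed underlying and are pushouts in \(\Ring^\an_{\hat\delta}\)) lets the usual affine-local construction of finite limits of derived schemes lift to \(\hat\delta\)-schemes; your terminal object \(\Spec\Z\), affine case via the \(\Spec\)/global-sections adjunction, gluing via Proposition \ref{prop_dschstr} and Corollary \ref{cor_glue}, and conservativity argument supply the details it leaves implicit.
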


\begin{proof}
Follows from Corollary \ref{cor_push}.
\end{proof}

We say that a morphism of \(\hat\delta\)-schemes \(i:U\to X\) is an \emph{open immersion} if the underlying map of schemes is an open immersion and the map \(\O_X^{\hat\delta}\to i_*\O_U^{\hat\delta}\) is an equivalence when restricted to \(U\).

\begin{con}
We put a Grothendieck topology on \(\dSch_{\hat\delta}\) whose covers are generated by collections \(\{U_i\to X\}\) such that all \(U_i\to X\) are open immersions and the induced map of topological spaces \(\bigsqcup U_i\to X\) is surjective. By construction, the forgetful functor \(\dSch_{\hat\delta}\to\dSch\) sends covers to covers.
\end{con}

\begin{prop} \label{prop_gluemaps}
The Grothendieck topology on \(\dSch_{\hat\delta}\) is subcanonical. That is, \(\Map_{\dSch_{\hat\delta}}(-,X):\dSch_{\hat\delta}^\op\to\An\) is a sheaf for any \(\hat\delta\)-scheme \(X\).
\end{prop}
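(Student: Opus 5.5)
The plan is to reduce the claim to two facts already established: the Zariski descent for \(\hat\delta\)-structures in Proposition \ref{prop_glue} and Corollary \ref{cor_glue}, and the fact that maps of derived schemes glue along Zariski covers. The idea is to separate a \(\hat\delta\)-morphism into an underlying morphism of derived schemes together with compatibility data, and to check descent for each part.

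Fix a \(\hat\delta\)-scheme \(X\) and a cover \(\{U_i\to Y\}\) in \(\dSch_{\hat\delta}\). It suffices to show that
\[\Map_{\dSch_{\hat\delta}}(Y,X)\to \lim \Map_{\dSch_{\hat\delta}}(U_\bullet,X)\]
is an equivalence, where \(U_\bullet\) is the Čech nerve. By Proposition \ref{prop_pullund}, this nerve is computed by the underlying derived-scheme intersections \(U_{i_0}\cap\dots\cap U_{i_n}\), each carrying the restricted \(\hat\delta\)-structure sheaf.

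The first step is to describe \(\Map_{\dSch_{\hat\delta}}(Z,X)\) as a pullback
\[\Map_{\dSch_{\hat\delta}}(Z,X)\simeq \Map_{\dSch}(Z,X)\times_{\Map(\O_X,f_*\O_Z)}\Map(\O^{\hat\delta}_X,f_*\O^{\hat\delta}_Z),\]
natural in \(Z\). By Proposition \ref{prop_dschstr} and Remark \ref{rem_loc}, the second factor is determined on distinguished affine opens \(V=\Spec B\subset X\) and \(W=\Spec A\subset Z\) with \(f(W)\subset V\). On such opens, the fibre of \(\Map_{\hat\delta}(B,A)\to\Map(B,A)\) is the space of homotopies between the two induced maps \(B\to\comp A\) lying over \(R/p\), taken for each \(p\). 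So the fibre of the forgetful map \(\Map_{\dSch_{\hat\delta}}(Z,X)\to \Map_{\dSch}(Z,X)\) over a given \(f\) is a limit, over the site of pairs of affines \(W\subset f^{-1}(V)\), of such spaces of compatibilities.

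The second step is descent. \(\Map_{\dSch}(-,X)\) satisfies Zariski descent because the Zariski topology on \(\dSch\) is subcanonical, and the covering condition is preserved by the forgetful functor. For the fibres, note that the assignment \(W\mapsto \Map_{\hat\delta}(B,\O^{\hat\delta}_Z(W))\) is a sheaf on \(Z^{\aff}_{\Zar}\). Indeed, by Proposition \ref{prop_disclim} the limits of affine structure rings over a Zariski cover are computed underlying in \(\D(\Z)\), since \(\O_Z\) is a sheaf with connective values on affines, and the forgetful functor creates these limits. Hence mapping into them commutes with the limit. Combining this with the Kan-extension description of \(\O^{\hat\delta}\) on arbitrary opens shows that the fibres satisfy descent along \(\{U_i\}\). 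A limit of sheaves is a sheaf, so the fibre product is a sheaf.

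The main obstacle is the subtlety that \(\O^{\hat\delta}(U)\) and \(\O(U)\) disagree on non-affine opens. The pullback description must therefore be taken over affine opens only, and not as a statement about global sections. I would handle this by working throughout with the restriction to the basis \(X^{\dis}_{\Zar}\), using Corollary \ref{cor_glue} and the right Kan extension in Construction \ref{con_aff}. This reduces everything to affine-on-affine mapping spaces, where Proposition \ref{prop_disclim} applies.
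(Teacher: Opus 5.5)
Your proposal is correct and follows essentially the same route as the paper. You split a \(\hat\delta\)-morphism into an underlying morphism of derived schemes, which is a Zariski sheaf because \(\dSch\) is subcanonical, plus a \(\hat\delta\)-refinement of \(\O_X^{\hat\delta}\to f_*\O_Y^{\hat\delta}\) on \(X_\Zar^\dis\), which is local on the domain because \(\O^{\hat\delta}_Y\) is a sheaf; your extra appeal to Proposition \ref{prop_disclim} is harmless but unnecessary, since \(\O^{\hat\delta}_Z\) is by definition a \(\Ring^\an_{\hat\delta}\)-valued sheaf and \(\Map_{\hat\delta}(B,-)\) preserves limits.
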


\begin{proof}
In view of Proposition \ref{prop_dschstr}, a morphism of \(\hat\delta\)-schemes \(f:Y\to X\) consists of a morphism of underlying derived schemes together with a refinement of \(\O_X^{\hat\delta}\to f_*\O_Y^{\hat\delta}\) to a morphism of \(\hat\delta\)-ring valued sheaves on \(X_\Zar^\dis\).

The first piece of data is a sheaf on \(\dSch\). It therefore suffices to show that maps of \(\hat\delta\)-ring valued sheaves are local on the domain. This follows from the fact that \(\O_Y^{\hat\delta}\) is a sheaf.
\end{proof}

\begin{prop}
Let \(X\in\dSch_{\hat\delta}\) and \(R\in\Ring^\an_{\hat\delta}\). Taking global sections induces a natural equivalence
\[\Map_{\dSch_{\hat\delta}}(X,\Spec R) \simeq \Map_{\hat\delta}(R,\O_X^{\hat\delta}(X)).\]
In particular, \(\text{\normalfont Spec}(-)\) defines a fully faithful functor \((\Ring^\an_{\hat\delta})^\op\to\dSch_{\hat\delta}\) which is right adjoint to the global sections functor.
\end{prop}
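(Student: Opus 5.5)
We first prove the equivalence when \(X=\Spec A\) is an affine \(\hat\delta\)-scheme, and then extend to general \(X\) by descent. Fully faithfulness and the adjunction are formal consequences of the equivalence.

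\emph{The affine case.} Let \(X=\Spec A\). By Proposition \ref{prop_dschstr} and Definition \ref{defi_mor}, a morphism \(\Spec A\to\Spec R\) of \(\hat\delta\)-schemes consists of two pieces of data:
\begin{enumerate}
\item a morphism of underlying derived schemes, which is the same as a map of animated rings \(g:R\to A\);
\item a refinement of \(\O_{\Spec R}^{\hat\delta}\to g_*\O_{\Spec A}^{\hat\delta}\) to a map of \(\hat\delta\)-ring valued sheaves on \((\Spec R)^\dis_\Zar\).
\end{enumerate}
On the distinguished open \(D(f)\), the second datum is a \(\hat\delta\)-refinement of \(R[f^{-1}]\to A[g(f)^{-1}]\), compatible with restrictions. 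Evaluating at \(D(1)\) gives the map to \(\Map_{\hat\delta}(R,A)\). For the inverse, we use Remark \ref{rem_loc}. The composite \(R\to A\to A[g(f)^{-1}]\) sends \(f\) to a unit, so its \(\hat\delta\)-refinement extends uniquely, in a contractible space of choices, to a \(\hat\delta\)-map \(R[f^{-1}]\to A[g(f)^{-1}]\). These extensions are compatible with further localizations, again by Remark \ref{rem_loc}. Hence the space of such sheaf maps on the distinguished site is the limit over \(D(f)\) of spaces that are all equivalent to the value at \(D(1)\). Since \(D(1)\) is terminal, this limit is \(\Map_{\hat\delta}(R,A)\).

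\emph{The general case.} Choose an affine open cover \(\{U_i\}\) of \(X\) and form the Čech nerve, refining intersections by affine covers to get a hypercover by affines if \(X\) is not semi-separated. By Proposition \ref{prop_gluemaps}, \(\Map_{\dSch_{\hat\delta}}(-,\Spec R)\) is a sheaf. Therefore
\[\Map_{\dSch_{\hat\delta}}(X,\Spec R)\simeq\lim \Map_{\dSch_{\hat\delta}}(U_\bullet,\Spec R)\simeq \lim\Map_{\hat\delta}(R,\O_X^{\hat\delta}(U_\bullet)).\]
This equals \(\Map_{\hat\delta}(R,\lim \O_X^{\hat\delta}(U_\bullet))\), because mapping spaces out of \(R\) in the presentable category \(\Ring^\an_{\hat\delta}\) take limits in \(\Ring^\an_{\hat\delta}\) to limits of spaces. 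Since \(\O_X^{\hat\delta}\) is a sheaf with values in \(\Ring^\an_{\hat\delta}\), the limit \(\lim\O_X^{\hat\delta}(U_\bullet)\) is \(\O_X^{\hat\delta}(X)\). It matters that we work with the \(\hat\delta\)-valued sheaf \(\O_X^{\hat\delta}\) and not \(\O_X\), since by Remark \ref{rem_lim} its limits need not be underlying. Naturality in \(X\) and \(R\) follows because every identification is given by restriction, i.e. by taking global sections.

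\emph{Main obstacle.} The hardest step is the affine case. There one must justify that a map of sheaves on the distinguished site is determined by its value at \(D(1)\), coherently and not merely on \(\pi_0\). The plan is to phrase Remark \ref{rem_loc} as saying that the relevant functor from distinguished opens to arrow categories is right Kan extended from \(D(1)\).

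Finally, take \(X=\Spec A\) in the equivalence. Global sections \(\O^{\hat\delta}_{\Spec A}(\Spec A)=A\) by Construction \ref{con_aff}, so \(\Map(\Spec A,\Spec R)\simeq\Map_{\hat\delta}(R,A)\). This gives fully faithfulness of \(\Spec\) and the adjunction with global sections.
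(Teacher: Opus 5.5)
Your proof is correct and follows essentially the same route as the paper: the affine case, which the paper simply calls clear and which you justify via Remark~\ref{rem_loc}, followed by descent using Proposition~\ref{prop_gluemaps} and the fact that \(\O_X^{\hat\delta}\) is a sheaf with values in \(\Ring^\an_{\hat\delta}\). The only difference is that the paper uses a two-step \v{C}ech argument instead of your hypercover: first separated \(X\), where intersections of affines are affine, and then general \(X\), where intersections of affines are separated. This two-step argument is just your hypercover unwound; since every finite intersection of your pieces lies inside some affine \(U_i\), that hypercover is \(1\)-coskeletal, so \v{C}ech descent suffices.
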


\begin{proof}
If \(X\) is affine, then the result is clear. If \(X\) is separated, then it admits a cover by affine opens whose intersections are still affine. The result then follows from Proposition \ref{prop_gluemaps} and the fact that \(\O^{\hat\delta}_X\) is a sheaf. Lastly, we can cover a general \(X\) with affine opens. As their intersections are separated, we conclude again from Proposition \ref{prop_gluemaps} and the fact that \(\O^{\hat\delta}_X\) is a sheaf.
\end{proof}

\section{Lifts to the sphere} \label{chap_lifts}

Let us formally define what we mean by a lift.

\begin{defi*}
Let \(R\) be a discrete ring. A \emph{lift of \(R\) to \(\S\)} is a connective \(\E_\infty\)-ring \(A\) such that \(A\otimes \Z\simeq R\) as \(\E_\infty\)-rings.
\end{defi*}

This naturally extends to schemes.

\begin{defi*}
Let \(X\) be a discrete scheme. A \emph{lift of \(X\) to \(\Spec\S\)} is a connective spectral scheme \(Y\) such that \(Y\times_{\Spec\S}\Spec\Z\simeq X\).
\end{defi*}

\subsection{Moore schemes} \label{sec_moorescheme}

The theory of Moore spectra (see Appendix \ref{sec_Mtor}) completely describes lifts of discrete \(\Z\)-modules to \(\S\). Apart from some functoriality subtleties, the picture is clear. There is no obstruction to existence of lifts and every abelian group lifts in an essentially unique way.

In Section~\ref{sec_moorescheme}, we define Moore schemes, an analog to Moore spectra, but, unlike for abelian groups, not all schemes lift. We establish the obstruction in Section \ref{sec_obstruction}.

\begin{defi}
We say that a connective \(\E_\infty\)-ring is a \emph{Moore ring} if its underlying spectrum is a Moore spectrum.
\end{defi}

As with Moore spectra, we use the notation \(\S_R\) to denote a Moore ring for a discrete ring \(R\). By definition, \(\S_R\) is then a lift of \(R\) to \(\S\). Since Moore spectra are unique, the question of whether \(R\) admits a lift becomes equivalent to existence of \(\E_\infty\)-ring structures on the Moore spectrum \(\S_R\) extending the ring structure on \(R=\pi_0\S_R\). A priori, there can exist multiple \(\E_\infty\)-structures on \(\S_R\) extending the same ring structure on \(R\).

\begin{rem}
Let \(\S_A\) and \(\S_B\) be Moore rings. Then base change of \(\S_A\otimes \S_B\) to \(\Z\) is isomorphic to \(A\otimes_\Z B\). Indeed, we can compute \((\S_A\otimes \S_B)\otimes\Z\simeq (\S_A\otimes \Z)\otimes_\Z(\S_B\otimes\Z)\simeq A\otimes_\Z B\). Hence, one could say that \(\S_A\otimes \S_B\) is a lift of \(A\otimes_\Z B\), even though the latter might not be discrete.
\end{rem}

\begin{lem} \label{lem_ratmoore}
Let \(R\) be a discrete ring. Suppose we are given lifts \(\S_{\comp R}\) of \(\comp R\) for each \(p\). Then the canonical map
\[\begin{tikzcd}
\big(\prod\S_{\comp R}\big)_\Q\ar[r] & \big(\prod\comp R\big)_\Q
\end{tikzcd}\]
is an equivalence.
\end{lem}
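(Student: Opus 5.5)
The plan is to reduce the statement to a computation of homotopy groups. For each \(p\) let \(F_p\) be the fiber of the canonical map \(\S_{\comp R}\to \S_{\comp R}\otimes\Z\simeq\comp R\), which is the \(0\)-truncation of \(\S_{\comp R}\). Products are limits, so \(\prod F_p\) is the fiber of \(\prod \S_{\comp R}\to\prod\comp R\). Rationalization is exact, so it suffices to show that \(\big(\prod F_p\big)_\Q\simeq 0\). Rationalization is also a filtered colimit, so it commutes with homotopy groups: \(\pi_n(X_\Q)\simeq \pi_n(X)\otimes\Q\). Moreover \(\pi_n\prod F_p\simeq\prod\pi_nF_p\). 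Hence the claim reduces to showing that, for each fixed \(n\), the group \(\prod_p \pi_nF_p\) is torsion of bounded exponent, since such a group vanishes after tensoring with \(\Q\).

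To identify \(\pi_n F_p\), I would use that \(\S_{\comp R}\) is a Moore spectrum, so its \(\pi_0\) map to \(\comp R\) is an isomorphism. It follows that \(F_p\) is \(1\)-connective, with \(\pi_nF_p\simeq \pi_n\S_{\comp R}\) for \(n\ge 1\). Next I would invoke the universal coefficient sequence for Moore spectra (Appendix \ref{sec_Mtor}):
\[0\to \pi_n\S\otimes M\to \pi_n\S_M\to \mathrm{Tor}(\pi_{n-1}\S,M)\to 0,\]
applied with \(M=\comp R\). For \(n\ge 1\) the group \(\pi_n\S\) is finite, so \(\pi_n\S\otimes M\) is killed by \(e_n:=|\pi_n\S|\). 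The Tor term is handled separately in two cases.
\begin{enumerate}
\item For \(n=1\) it is \(\mathrm{Tor}(\Z,M)=0\).
\item For \(n\ge 2\) it is killed by \(e_{n-1}\).
\end{enumerate}
Thus \(\pi_n\S_{\comp R}\) is killed by \(N_n:=e_ne_{n-1}\), with the convention \(e_0=1\). The key point is that \(N_n\) is independent of \(p\). Consequently \(\prod_p\pi_nF_p\) is killed by \(N_n\), so it is rationally zero. This holds for every \(n\), hence \(\big(\prod F_p\big)_\Q\simeq 0\) and the map in the statement is an equivalence.

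The only real subtlety is the interchange of rationalization with the infinite product, which in general fails. This is exactly why the argument needs the uniform bound on the exponent of the torsion, rather than merely knowing that each factor is torsion. That bound comes from finiteness of the stable stems, and it is the step I would check most carefully. In particular, the Tor term must also be bounded uniformly; this works because \(\mathrm{Tor}(\pi_{n-1}\S,-)\) is killed by the exponent of \(\pi_{n-1}\S\) whenever \(n-1\ge1\).
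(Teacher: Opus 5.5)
Your proof is correct, but it gets the uniformity in \(p\) from a different source than the paper does. The paper works with the same fiber, written as \(\prod_p \S_{\comp R}\otimes\tau_{\geq 1}\S\), and never bounds exponents. It discards finitely many primes \(p<N\), whose contribution is a finite product of torsion groups and hence rationally trivial. It then shows that for \(p\gg 0\) the group \(\pi_n(\S_{\comp R}\otimes\tau_{\geq 1}\S)=\pi_n(\S_{\comp R}\otimes\tau_{[1,n]}\S)\) vanishes outright, because \(\S_{\comp R}\) is \(p\)-complete (Proposition~\ref{prop_moorepcomp}) while \(\tau_{[1,n]}\S\) has torsion at only finitely many primes. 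You instead show that \(\pi_n\) of \emph{any} Moore spectrum in positive degree is killed by \(e_ne_{n-1}\). So you never need to know that the factors \(\pi_nF_p\) vanish for large \(p\) (they do), only that they share a common exponent. Your route is more elementary and more general: it never uses \(p\)-completeness, so it proves that \((\prod_i \S_{M_i})_\Q\to(\prod_i M_i)_\Q\) is an equivalence for an arbitrary family of Moore spectra, and it makes transparent exactly why rationalization commutes with this infinite product. The paper's route needs no universal coefficient sequence. That sequence is not stated in Appendix~\ref{sec_Mtor}; it follows by realizing a two-term free resolution of \(M\) as a map of wedges of spheres with cofiber \(\S_M\). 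The paper's route also does not use that \(\comp R\) is discrete, whereas your identification of \(F_p\) with \(\tau_{\geq1}\S_{\comp R}\) does; this is legitimate here, since ``lift'' is only defined for discrete rings. If you want your argument to be equally robust, filter \(\tau_{[1,n]}\S\) by its Postnikov tower. The layers of \(\S_{\comp R}\otimes\tau_{[1,n]}\S\) are \(\Sigma^k(\comp R\otimes^L_\Z\pi_k\S)\), which shows directly that \(\pi_n(\S_{\comp R}\otimes\tau_{\geq 1}\S)\) is killed by \(e_1\cdots e_n\), uniformly in \(p\).
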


\begin{proof}
We have a fiber sequence
\[\begin{tikzcd}[sep=15]
\big(\prod(\S_{\comp R}\otimes \tau_{\geq 1}\S)\big)_\Q\ar[r] & \big(\prod\S_{\comp R}\big)_\Q\ar[r] & \big(\prod\comp R\big)_\Q.
\end{tikzcd}\]
It is therefore enough to show that the fiber is zero. Note that, for each \(N\geq 0\), we have \(\big(\prod_{p<N}(\S_{\comp R}\otimes \tau_{\geq 1}\S)\big)_\Q\simeq0\) as rationalization commutes with finite products and \(\tau_{\geq 1}\S\) is torsion by Serre finiteness theorem. Therefore, since \(\pi_n\big(\prod(\S_{\comp R}\otimes \tau_{\geq 1}\S)\big)_\Q\simeq\big(\prod\pi_n(\S_{\comp R}\otimes \tau_{\geq 1}\S)\big)_\Q\), the claim will follow if we show \(\pi_n(\S_{\comp R}\otimes \tau_{\geq 1}\S)\simeq 0\) for \(p >\hspace{-1mm}>0\). For this we can replace \(\tau_{\geq 1}\S\) by \(\tau_{[1,n]}\S\). As \(\S_{\comp R}\) is \(p\)-complete by Proposition \ref{prop_moorepcomp} and \(\tau_{[1,n]}\S\) vanishes after inverting finitely many primes, we get that \(\S_{\comp R}\otimes \tau_{[1,n]}\S\simeq 0\) for \(p>\hspace{-1mm}>0\), as required.
\end{proof}

\begin{prop}
Let \(R\) be a discrete ring. Suppose we are given lifts \(\S_{\comp R}\) of \(\comp R\) for each \(p\). Let \(\S_R\) denote the pullback
\[\begin{tikzcd}
\S_R \ar[r] \ar[d] & \prod \S_{\comp R}\ar[d] \\ R_\Q \ar[r] & \big(\prod \comp R\big)_\Q
\end{tikzcd}\]
of \(\E_{\infty}\)-rings. Then \(\S_R\) defines a lift of \(R\).
\end{prop}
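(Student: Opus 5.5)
The plan is to check that \(\S_R\otimes\Z\simeq R\). Since \(-\otimes\Z\) is exact, it commutes with finite limits, and a pullback of \(\E_\infty\)-rings is computed underlying in spectra. So it suffices to compute the pullback in spectra, check that it is connective, and identify \(\S_R\otimes\Z\) with the pullback of the base-changed square. We have \(R_\Q\otimes\Z\simeq R_\Q\), because \(R_\Q\) is already a \(\Z\)-module and \(\Z\otimes\Z\) rationalizes to \(\Q\); more simply, \(X_\Q\otimes\Z\simeq X_\Q\otimes_{\S_\Q}\Q\) and \(\S_\Q\simeq\Q\). The same argument gives \(\big(\prod\comp R\big)_\Q\otimes\Z\simeq\big(\prod\comp R\big)_\Q\). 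The base-changed square is therefore
\[\begin{tikzcd}
\S_R\otimes\Z \ar[r] \ar[d] & \big(\prod\S_{\comp R}\big)\otimes\Z \ar[d] \\ R_\Q \ar[r] & \big(\prod\comp R\big)_\Q.
\end{tikzcd}\]

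The key step is to identify \(\big(\prod\S_{\comp R}\big)\otimes\Z\). Tensoring with \(\Z\) does not commute with infinite products in general, so we use the fiber sequence \(\tau_{\geq1}\S\to\S\to\Z\) instead. Write \(P=\prod\S_{\comp R}\). It is enough to show that \(P\otimes\tau_{\geq 1}\S\to\prod(\S_{\comp R}\otimes\tau_{\geq1}\S)\) is an equivalence, and for this we only need to check equivalence on each \(\pi_n\).

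\(\tau_{\geq1}\S\) is a colimit of a sequence of finite spectra (finite skeleta, since \(\S\) has finitely generated homotopy). Tensoring with a finite spectrum commutes with products, so only the colimit needs care, and on each fixed \(\pi_n\) the sequence stabilizes once the skeleton's dimension exceeds \(n+1\). Hence \(P\otimes\Z\simeq\prod(\S_{\comp R}\otimes\Z)\simeq\prod\comp R\). Here we use that each \(\S_{\comp R}\) is a lift of \(\comp R\).

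Alternatively, one can use that \(\Z\) is of finite type (it admits a cell structure with finitely many cells in each degree), which gives \(P\otimes\Z\simeq\prod(\S_{\comp R}\otimes\Z)\) degreewise. Either route requires this finiteness argument, and I expect it to be the main technical point.

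With the identification in hand, the base-changed square becomes the arithmetic fracture square of the discrete ring \(R\):
\[\begin{tikzcd}
R \ar[r] \ar[d] & \prod\comp R \ar[d] \\ R_\Q \ar[r] & \big(\prod\comp R\big)_\Q.
\end{tikzcd}\]
This square is a pullback, so \(\S_R\otimes\Z\simeq R\), and this identification holds as \(\E_\infty\)-rings because all maps are ring maps. Note that \(\comp R\) here means the derived completion, and the fracture square holds with derived completions.

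It remains to check that \(\S_R\) is connective. By Lemma \ref{lem_ratmoore} the right vertical map \(\prod\S_{\comp R}\to\big(\prod\S_{\comp R}\big)_\Q\simeq\big(\prod\comp R\big)_\Q\) is rationalization. The map \(\pi_0(\prod\S_{\comp R})\oplus R_\Q\to(\prod\comp R)_\Q\) is surjective, exactly as for the fracture square of \(R\). This surjectivity gives \(\pi_{-1}\S_R=0\), and the groups in negative degrees below \(-1\) vanish because all the corners are connective. Hence \(\S_R\) is connective, and it is therefore a lift of \(R\).
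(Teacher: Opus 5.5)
Your proof is correct, but it takes a different route from the paper's. The paper never base changes the square itself. Instead it rationalizes the square, using Lemma \ref{lem_ratmoore} to get \((\S_R)_\Q\simeq R_\Q\), and \(p\)-completes it, using that each \(\S_{\comp R}\) is \(p\)-complete, to get \(\comp{(\S_R)}\simeq\S_{\comp R}\). It then concludes because \(\S_R\otimes\Z\) and \(R\) have the same fracture square; passing from \(\comp{(\S_R)}\) to the \(p\)-completion of \(\S_R\otimes\Z\) tacitly uses Corollary \ref{cor_comp}.

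You instead commute \(-\otimes\Z\) with the infinite product directly, via the finite-type argument, so the base-changed square is literally the fracture square of \(R\). That argument uses that the factors \(\S_{\comp R}\) are uniformly bounded below (here connective), which you should state explicitly.

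Your route is more self-contained: it uses neither \(p\)-completeness of the lifts nor Lemma \ref{lem_ratmoore}. In fact, rationalizing \((\prod\S_{\comp R})\otimes\Z\simeq\prod\comp R\) reproves that lemma. So your appeal to it in the connectivity step is superfluous, since the \(\pi_0\)-surjectivity only involves \(\pi_0\prod\S_{\comp R}=\prod\pi_0\comp R\). Connectivity even follows from \(\S_R\otimes\Z\simeq R\) by Hurewicz, because \(\S_R\) is \((-1)\)-connective. You also verify connectivity explicitly, which the paper leaves implicit.

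What the paper's route buys in exchange is extra information: the glued lift has the prescribed local pieces, \(\comp{(\S_R)}\simeq\S_{\comp R}\) and \((\S_R)_\Q\simeq R_\Q\).
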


\begin{proof}
By Lemma \ref{lem_ratmoore}, we have \((\S_R)_\Q\simeq R_\Q\), and, by construction, we have \(\comp{(\S_R)}\simeq \S_{\comp R}\). It then follows that \(\S_R\otimes \Z\simeq R\) as \(\E_\infty\)-rings, because their fracture squares are the same.
\end{proof}

Our next goal is to define Moore schemes.

\begin{prop}
Let \(f:A\to B\) be a morphism of connective \(\E_\infty\)-rings. If \(A\) is a Moore ring and \(f\) is flat, then \(B\) is a Moore ring.
\end{prop}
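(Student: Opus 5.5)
The plan is to use the characterization of Moore spectra by their integral homology: a connective spectrum \(X\) is a Moore spectrum precisely when \(X\otimes\Z\) is discrete, i.e.\ \(H_n(X;\Z)=0\) for \(n>0\) (see Appendix \ref{sec_Mtor}). So it suffices to show that \(B\otimes\Z\) is discrete, given that \(A\otimes\Z\simeq \pi_0A\) is discrete and that \(f\) is flat.

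First, rewrite the integral homology of \(B\) as a base change over \(A\):
\[B\otimes\Z\simeq B\otimes_A(A\otimes\Z).\]
This holds because \(B\otimes_A-\) applied to the \(A\)-module \(A\otimes\Z\) gives \(B\otimes_A A\otimes \Z\simeq B\otimes\Z\). Since \(A\) is a Moore ring, \(A\otimes\Z\) is a discrete \(A\)-module, concentrated in degree \(0\) with \(\pi_0=\pi_0A\otimes_\Z\Z=\pi_0A\).

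Second, apply flatness. For a flat map of connective \(\E_\infty\)-rings \(A\to B\), the functor \(B\otimes_A-\) carries discrete \(A\)-modules \(M\) to discrete \(B\)-modules, with \(\pi_*(B\otimes_AM)\simeq \pi_*B\otimes_{\pi_0A}\pi_0 M\). This is the standard consequence of Lurie's definition of flatness, where \(\pi_0B\) is flat over \(\pi_0A\) and \(\pi_nB\simeq\pi_nA\otimes_{\pi_0A}\pi_0B\) (\cite[Proposition 7.2.2.13]{HA}). With \(M=A\otimes\Z\), this gives
\[\pi_n(B\otimes\Z)\simeq \pi_0B\otimes_{\pi_0A}\pi_n(A\otimes\Z).\]
The right-hand side vanishes for \(n\ne0\), and for \(n=0\) it equals \(\pi_0B\). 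Hence \(B\otimes\Z\simeq\pi_0B\) is discrete. Since \(B\) is connective, it is therefore a Moore spectrum, and so a Moore ring.

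The main point needing care is the homology characterization of Moore spectra for connective spectra. It follows from the Hurewicz theorem applied to the cofiber of \(\tau_{\geq1}X\to X\) together with Moore spectrum uniqueness. Concretely, if \(H_*(X)\) is concentrated in degree \(0\) with value \(M\), then the Moore spectrum \(\S_M\to X\) can be constructed inducing an iso on \(\pi_0\), and it is a homology isomorphism between connective spectra, hence an equivalence. Everything else is formal.
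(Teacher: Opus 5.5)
Your argument is correct and is essentially the paper's proof. The paper observes that \(B\otimes\Z\simeq B\otimes_A(A\otimes\Z)\) is flat over the discrete ring \(A\otimes\Z\) and hence discrete, which is exactly what your Tor computation unpacks; and since the paper \emph{defines} a Moore spectrum as a connective \(X\) with \(X\otimes\Z\) discrete, your Hurewicz discussion is not needed. One slip to fix: for flat \(A\to B\) the general formula is \(\pi_n(B\otimes_A M)\simeq\pi_0B\otimes_{\pi_0A}\pi_nM\), as you correctly use in the next line, and not \(\pi_*B\otimes_{\pi_0A}\pi_0M\), which already fails for \(B=A\) non-discrete (e.g.\ \(A=B=\S\), \(M=\Z\)).
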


\begin{proof}
Since \(B\) is flat over \(A\), the base change \(B\otimes \Z\) is flat over \(A\otimes \Z\). As \(A\otimes \Z\) is discrete, \(B\otimes\Z\) is discrete as well.
\end{proof}

\begin{cor} \label{cor_Mloc}
Let \(\S_R\) be a Moore ring for \(R\) and let \(f\in R=\pi_0\S_R\). Then \(\S_R[f^{-1}]\) is a Moore ring for \(R[f^{-1}]\).
\end{cor}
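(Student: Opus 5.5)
The plan is to deduce this directly from the preceding proposition on flat morphisms. The map \(\S_R\to\S_R[f^{-1}]\) is a localization of connective \(\E_\infty\)-rings, so I will first check that it is flat and then identify the base change to \(\Z\).

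For flatness, I will use the standard description of the localization: \(\S_R[f^{-1}]\) is the filtered colimit of the sequence
\[\S_R\xrightarrow{f}\S_R\xrightarrow{f}\S_R\to\cdots\]
of \(\S_R\)-modules. Its homotopy groups are therefore \(\pi_*\S_R\otimes_{\pi_0\S_R}R[f^{-1}]\). Since \(R[f^{-1}]\) is flat over \(R=\pi_0\S_R\), this is exactly Lurie's criterion for flatness of a map of connective \(\E_\infty\)-rings: \(\pi_0\) is flat and \(\pi_*\) is obtained by base change from \(\pi_0\). The proposition on flat morphisms then shows that \(\S_R[f^{-1}]\) is a Moore ring.

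It remains to see that it is a Moore ring for \(R[f^{-1}]\). Since base change \(-\otimes\Z\) commutes with localization, we have
\[\S_R[f^{-1}]\otimes\Z\simeq(\S_R\otimes\Z)[f^{-1}]\simeq R[f^{-1}]\]
as \(\E_\infty\)-rings. Equivalently, \(\pi_0\S_R[f^{-1}]\simeq R[f^{-1}]\) by the homotopy group formula above.

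None of these steps is a serious obstacle. The only point requiring a citation is the homotopy group formula for localizations of \(\E_\infty\)-rings, which I would take from \cite{HA} (the discussion of localizations of \(\E_\infty\)-rings at elements of \(\pi_0\)).
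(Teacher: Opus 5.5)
Your proposal is correct and follows the paper's route: the corollary is deduced from the preceding proposition on flat morphisms of connective \(\E_\infty\)-rings, using that localization at \(f\) is flat, and then \(\S_R[f^{-1}]\otimes\Z\simeq(\S_R\otimes\Z)[f^{-1}]\simeq R[f^{-1}]\). Your final computation alone already suffices, since it directly exhibits the base change as the discrete ring \(R[f^{-1}]\), so the flatness step is not strictly needed.
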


\begin{cor} \label{cor_stalk}
Let \(\p\) be a point in \(\Spec \S_R\), that is a prime ideal of \(R\). Then the stalk \((\S_R)_\p\) at \(\p\) is a Moore ring for \(R_\p\).
\end{cor}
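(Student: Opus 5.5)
The stalk \((\S_R)_\p\) is a filtered colimit of basic localizations, so I will write it that way and let Corollary \ref{cor_Mloc} and the closure of Moore spectra under filtered colimits do the work. Concretely,
\[(\S_R)_\p\simeq \colim_{f\notin\p}\S_R[f^{-1}],\]
where the colimit runs over the filtered poset of elements \(f\in R\setminus\p\) ordered by divisibility. Equivalently, it runs over the distinguished opens \(D(f)\) containing \(\p\). This is the standard description of stalks of the structure sheaf of the spectral scheme \(\Spec\S_R\). Its underlying topological space is \(\Spec \pi_0\S_R=\Spec R\), so points are indeed primes of \(R\).

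By Corollary \ref{cor_Mloc}, each term \(\S_R[f^{-1}]\) is a Moore ring for \(R[f^{-1}]\). Its underlying spectrum is therefore connective with \(\S_R[f^{-1}]\otimes\Z\simeq R[f^{-1}]\) discrete. Next I need that Moore spectra are closed under filtered colimits, with \(\pi_0\) commuting with the colimit. I would argue this directly:

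\begin{itemize}
\item \(-\otimes\Z\) commutes with colimits, so \((\colim_f \S_R[f^{-1}])\otimes\Z\simeq \colim_f R[f^{-1}]\).
\item Filtered colimits in \(\D(\Z)\) are computed on homotopy groups, so this colimit is discrete and equal to the \(1\)-categorical colimit \(\colim_{f\notin\p}R[f^{-1}]=R_\p\).
\item Connectivity is preserved by colimits.
\item Filtered colimits of \(\E_\infty\)-rings are computed on underlying spectra.
\end{itemize}

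Hence \((\S_R)_\p\) is a connective \(\E_\infty\)-ring whose base change to \(\Z\) is the discrete ring \(R_\p\). This is exactly the Moore ring condition, as in the characterization used in the proof preceding Corollary \ref{cor_Mloc}.

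The only point requiring care is matching the definition of "Moore spectrum" from the appendix (Appendix \ref{sec_Mtor}) with the condition "connective with discrete \(\Z\)-homology". I expect the appendix to characterize Moore spectra exactly this way, since the flatness proposition above used it. If instead Moore spectra are defined by a condition on \(\pi_0\) and homology, the same colimit computation gives it.

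An alternative route avoids colimits altogether: \(R\to R_\p\) is flat, so \(\S_R\to(\S_R)_\p\) is flat in the spectral sense. The preceding proposition on flat maps out of Moore rings then applies directly. I would present the colimit argument as primary and mention this as a remark.
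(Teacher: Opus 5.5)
Your argument is correct, but your primary route differs from the paper's. The paper states this corollary, like Corollary \ref{cor_Mloc}, as an immediate consequence of the proposition just before it. The localization \(\S_R\to(\S_R)_\p\) is a flat map of connective \(\E_\infty\)-rings, with homotopy \(\pi_*\S_R\otimes_R R_\p\). Hence \((\S_R)_\p\otimes\Z\) is flat over \(\S_R\otimes\Z\simeq R\), so it is discrete, with \(\pi_0=R_\p\). This is exactly the ``alternative route'' you relegate to a remark.

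Your colimit argument uses only Corollary \ref{cor_Mloc} together with the fact that a filtered colimit of Moore rings is a Moore ring. That fact holds because base change to \(\Z\) commutes with colimits, filtered colimits of discrete objects in \(\D(\Z)\) are discrete, and filtered colimits in \(\CAlg\) are computed underlying. The flatness route is a one-liner and avoids identifying the stalk with a colimit over distinguished neighbourhoods. Yours proves a slightly more general closure property, valid for any filtered diagram of Moore rings whether or not the transition maps are flat, at the cost of that (standard) identification.

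Your worry about the definition is unnecessary. The appendix defines a Moore spectrum for \(A\) as a connective spectrum \(\S_A\) with \(\S_A\otimes\Z\simeq A\), which is precisely what you verify.
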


\begin{lem} \label{lem_Mglue}
Let \(A\) be a connective \(\E_{\infty}\)-ring and let \(f_i\in\pi_0A=:R\) be a collection of elements that generate the unit ideal. Assume that each \(A[f_i^{-1}]\) is a Moore ring for \(R[f_i^{-1}]\). Then \(A\) is a Moore ring for \(R\).
\end{lem}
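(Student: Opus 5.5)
Being a Moore ring amounts to saying that \(A\otimes\Z\) is discrete, i.e.\ \(\pi_n(A\otimes\Z)=0\) for \(n\neq 0\); then \(\pi_0(A\otimes\Z)\simeq\pi_0A=R\) automatically, since \(A\) is connective. The plan is to check this vanishing Zariski locally on \(\Spec R\), using that \(A\otimes\Z\) is an \(A\)-module and localizations of \(A\) at elements of \(R\) are flat.

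First, write \(M:=A\otimes\Z\), viewed as an \(A\)-module (indeed an \(\E_\infty\)-\(A\)-algebra). Its homotopy groups \(\pi_nM\) are modules over \(R=\pi_0A\). Localization at \(f_i\) is flat over \(A\), so
\[\pi_n\big(M[f_i^{-1}]\big)\simeq (\pi_nM)[f_i^{-1}].\]
We also have \(M[f_i^{-1}]\simeq M\otimes_A A[f_i^{-1}]\simeq A[f_i^{-1}]\otimes\Z\). By hypothesis \(A[f_i^{-1}]\) is a Moore ring, so this spectrum is discrete. Therefore \((\pi_nM)[f_i^{-1}]=0\) for all \(n\neq 0\) and all \(i\).

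Second, since the \(f_i\) generate the unit ideal of \(R\), an \(R\)-module that vanishes after inverting each \(f_i\) is zero. This is the standard algebra fact: every element \(x\) is killed by some power \(f_i^{k_i}\) for each \(i\). Only finitely many \(f_i\) are needed to write \(1\) as a combination, and those finitely many powers \(f_i^{k_i}\) still generate the unit ideal, so \(x=0\). Hence \(\pi_nM=0\) for \(n\neq0\), and \(M\simeq R\) is discrete. It follows that \(A\) is a Moore ring for \(R\), as its underlying spectrum is a connective spectrum with \(A\otimes\Z\) discrete with \(\pi_0=R\), which is the defining property of the Moore spectrum \(\S_R\) (Appendix \ref{sec_Mtor}).

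The only step needing care is the identification \(\pi_*(M[f^{-1}])\simeq(\pi_*M)[f^{-1}]\) together with \(M[f^{-1}]\simeq A[f^{-1}]\otimes\Z\). This follows because \(A[f^{-1}]\) is the filtered colimit of \(A\xrightarrow{f}A\xrightarrow{f}\cdots\), and both tensoring with \(\Z\) and taking homotopy groups commute with filtered colimits. I expect no real obstacle here. If the definition of Moore spectrum in the appendix is phrased differently, for example via \(H\Z\)-homology concentrated in degree \(0\), it agrees with the above criterion.
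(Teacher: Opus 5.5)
Your proof is correct, but it takes a different route from the paper's. The paper reduces to finitely many \(f_i\) and writes \(A\) and \(R\) as finite (\v{C}ech-type) limits of the localizations \(A[f_{i_1}^{-1},\ldots,f_{i_k}^{-1}]\) and \(R[f_{i_1}^{-1},\ldots,f_{i_k}^{-1}]\). It then invokes Corollary~\ref{cor_Mloc} to see that each term of the first diagram is a Moore ring for the corresponding term of the second, and concludes because \(-\otimes\Z\) commutes with finite limits.

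You instead treat the homotopy groups of \(M=A\otimes\Z\) as \(R\)-modules. Since localization commutes with \(\pi_*\) and with \(-\otimes\Z\), \(\pi_nM\) vanishes after inverting each \(f_i\), so the partition-of-unity argument gives \(\pi_nM=0\) for \(n\neq 0\). You then get \(A\otimes\Z\simeq R\) as \(\E_\infty\)-rings from discreteness together with \(\pi_0(A\otimes\Z)\cong\pi_0A\otimes\pi_0\Z=R\), since discrete \(\E_\infty\)-rings are determined by their \(\pi_0\).

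Your version is more elementary. It needs neither Zariski descent for \(A\) (that \(A\) is the limit of its diagram of localizations) nor the Moore property of the multiple overlaps. It also needs only the basic fact that a module killed by inverting each \(f_i\) is zero, rather than exactness of the \v{C}ech complex of \(R\), which the paper's argument implicitly uses. It is the same kind of reasoning the paper later uses in its stalk-locality remark. The paper's argument is more formal: it produces \(A\otimes\Z\simeq R\) directly as a finite limit of the identifications on the localizations, in keeping with the descent-style gluing arguments used throughout.
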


\begin{proof}
By quasi-compactness, we can assume there are only finitely many \(f_i\)'s. Then \(R\), respectively \(A\), can be written as a finite limit of rings of the form \(R[f_{i_1}^{-1},...,f_{i_k}^{-1}]\), respectively \(A[f_{i_1}^{-1},...,f_{i_k}^{-1}]\). Since, by Corollary \ref{cor_Mloc}, \(A[f_{i_1}^{-1},...,f_{i_k}^{-1}]\) is a Moore ring for \(R[f_{i_1}^{-1},...,f_{i_k}^{-1}]\) and base change commutes with finite limits, we deduce that \(A\) is a Moore ring for \(R\).
\end{proof}

\begin{prop} \label{prop_Maff}
Let \(X\) be a connective spectral scheme and let \(\{\Spec \S_{R_i}\}\) be an affine open cover of \(X\), where each \(\S_{R_i}\) is a Moore ring. Then the global sections on every affine open of \(X\) form a Moore ring.
\end{prop}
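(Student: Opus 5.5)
Let \(U=\Spec A\) be an arbitrary affine open of \(X\), and set \(R:=\pi_0A\). We show that \(A\) is a Moore ring for \(R\) by reducing to Lemma \ref{lem_Mglue}. The idea is to cover \(U\) by opens that are simultaneously distinguished in \(U\) and in one of the given charts \(\Spec\S_{R_i}\); on such opens the global sections are Moore rings by Corollary \ref{cor_Mloc}.

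First, for each point \(x\in U\), pick an index \(i\) with \(x\in \Spec\S_{R_i}\). The intersection \(U\cap \Spec\S_{R_i}\) is open in both affines. By the standard argument for affine schemes, we can find a single open \(V\ni x\) that is distinguished in both: \(V=D(g)\subset \Spec\S_{R_i}\) with \(g\in R_i\), and \(V=D(f)\subset U\) with \(f\in R\). Explicitly, we first shrink to \(D(g')\subset \Spec\S_{R_i}\) contained in the intersection. Then we take \(D(f)\subset D(g')\) inside \(U\). Since \(D(f)\) is a distinguished open of the affine \(D(g')\), it equals \(D(g)\) for the restriction of \(f\) to \(D(g')\) multiplied by a suitable power of \(g'\), lifted to \(R_i\). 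This step only concerns the underlying topological space and \(\pi_0\) of the structure sheaf, so it is purely classical, because distinguished opens in a spectral scheme are determined by \(\pi_0\).

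Next, on such a \(V\) we compute the sections in two ways: \(\O_X(V)\simeq \S_{R_i}[g^{-1}]\simeq A[f^{-1}]\). By Corollary \ref{cor_Mloc}, \(\S_{R_i}[g^{-1}]\) is a Moore ring for \(R_i[g^{-1}]\), and hence \(A[f^{-1}]\) is a Moore ring. Its \(\pi_0\) is \(R[f^{-1}]\), since localization is flat and therefore \(\pi_0(A[f^{-1}])=R[f^{-1}]\). So \(A[f^{-1}]\) is a Moore ring for \(R[f^{-1}]\), as required by the hypothesis of Lemma \ref{lem_Mglue}.

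Finally, the elements \(f\) obtained in this way, for \(x\) ranging over \(U\), generate the unit ideal of \(R\), since the corresponding \(D(f)\) cover \(U\). Lemma \ref{lem_Mglue} then shows that \(A\) is a Moore ring for \(R\). The only step requiring any care is the first one, the choice of doubly distinguished opens. It is standard, but one should confirm that the identification \(\S_{R_i}[g^{-1}]\simeq A[f^{-1}]\) holds as \(\E_\infty\)-rings. This follows because the spectral structure sheaf on an affine evaluates to the localization on distinguished opens, and both sides compute \(\O_X(V)\).
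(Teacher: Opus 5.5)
Your proof is correct and is essentially the paper's argument. The paper simply cites Corollary \ref{cor_Mloc}, Lemma \ref{lem_Mglue} and the Affine Communication Lemma; your construction of opens that are distinguished both in $U$ and in a chart $\Spec\S_{R_i}$, followed by Lemma \ref{lem_Mglue}, is the standard proof of that lemma written out explicitly.
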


\begin{proof}
Follows from Corollary \ref{cor_Mloc}, Lemma \ref{lem_Mglue}, and the Affine Communication Lemma.
\end{proof}

This leads to the following definition.

\begin{defi}
We define a \emph{Moore scheme} to be a connective spectral scheme for which global sections on each affine open are a Moore ring.
\end{defi}

We write \(X_\S\) to denote a Moore scheme for a discrete scheme \(X\). Then, by definition, \(X_\S\) defines a lift of \(X\) to \(\Spec \S\). Moreover, all lifts are of this form. Note that being a Moore scheme is a property, so Moore schemes form a full subcategory of connective spectral schemes.

\begin{rem}
By Lemma \ref{prop_Maff}, the property of being a Moore scheme is affine local.
\end{rem}

\begin{rem}[Stalk locality]
A derived scheme whose all stalks are discrete is itself discrete. Indeed, for each affine open \(\Spec R\), the sheaf \(\pi_nR\) would be nowhere supported when \(n>0\), hence would be zero. This implies, together with Corollary \ref{cor_stalk} and the fact that stalks commute with base change along \(\Spec \Z\to \Spec\S\), that a spectral scheme is a Moore scheme if and only if all its stalks are Moore rings.
\end{rem}

\begin{rem} \label{rem_achim}
In the introduction we mentioned a theorem of Achim Krause that \(\hat\delta\)-structures on a flat \(R\) are in one-to-one correspondence with \(H_{\infty}\)-structures on \(\S_R\). Together with the results of this section they imply that \(H_{\infty}\)-structures on flat Moore spectra satisfy Zariski descent. Étale descent should also be true.
\end{rem}

\begin{quest}
Do \(H_\infty\)-structures satisfy Zariski/étale descent (on \(\pi_0\)) in general?
\end{quest}

\subsection{Obstruction} \label{sec_obstruction}

In this section, we finally set up the obstruction for existence of lifts. We say obstruction in the sense of necessary condition.

\begin{thm}[{\cite[Proposition 2.41]{group_rings}}] \label{thm_obstruction}
Let \(\S_R\) be a Moore ring for a discrete ring \(R\). Then \(R\) admits a \(\hat\delta\)-structure functorial in \(\S_R\). That is, the base change functor \(-\otimes \Z:\CAlg\to\CAlg_\Z\) restricted to the full subcategory of Moore rings refines to a functor taking values in \(\Ring_{\hat\delta}\).
\end{thm}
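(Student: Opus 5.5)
The approach is to produce, for each prime \(p\), a Frobenius lift on \(\comp R\) out of the Tate-valued Frobenius \(\S_R\to \S_R^{tC_p}\). Then we identify its target with \(\comp{(\S_R)}\) and base change to \(\Z\).

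\textbf{Step 1: the Tate-valued Frobenius.} For any \(\E_\infty\)-ring \(A\), the Tate-valued Frobenius \(\varphi_p: A\to A^{\tCp}\) (Nikolaus--Scholze) is a natural map of \(\E_\infty\)-rings. It factors the Tate diagonal through the \(\E_\infty\)-structure.

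\textbf{Step 2: identifying the target.} For \(A=\S_R\) a connective Moore ring, the Segal conjecture type statement says that \(\S^{\tCp}\simeq \comp\S\). Together with the fact that \(X\mapsto (X^{\otimes p})^{\tCp}\) is exact, this gives that the Tate diagonal \(X\to (X^{\otimes p})^{\tCp}\) exhibits the target as \(\comp X\) for \(X\) bounded below (Nikolaus--Scholze III.1.7). I would use the \(\E_\infty\)-version: the composite
\[\S_R\to (\S_R^{\otimes p})^{\tCp}\to \S_R^{\tCp}\]
yields \(\phi:\S_R\to \S_R^{\tCp}\). Base changing along \(\S\to\Z\) then gives a ring map \(R\to (\S_R^{\tCp})\otimes\Z\).

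\textbf{Step 3: making the target \(\comp R\) with the mod \(p\) compatibility.} This is the main obstacle. The map \(\S_R^{\tCp}\to R^{\tCp}\) is not obviously compatible with base change, and \(R^{\tCp}\) is not \(\comp R\). Instead I would use the lax structure. \(\pi_0\) of \(\S_R^{\tCp}\), and more precisely its connective cover, receives a ring map from \(\comp{(\S_R)}\) via the Tate diagonal, and this identifies \(\tau_{\ge0}\) appropriately after \(p\)-completion. Composing with \(\comp{(\S_R)}\to \comp R\), which is a map of \(\E_\infty\)-rings because \(\comp{(\S_R)}\otimes\Z\simeq\comp R\) for Moore spectra (Proposition \ref{prop_moorepcomp}), gives \(\phi_p:R\to\comp R\) on \(\pi_0\). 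The homotopy \(\can\circ\phi_p\simeq\Frob\) to \(R/p\) comes from the classical fact that the Tate-valued Frobenius on \(\pi_0\) reduces mod \(p\) to \(x\mapsto x^p\), via the norm \(x\mapsto N(x)\) and the identity \(\pi_0 \S^{\tCp}/p\) computations. Since \(R\) is discrete and \(\comp R\) is at most \(1\)-truncated (it is \(\comp R\simeq \lim R/p^n\) with derived pieces), the homotopy is determined by an identity of maps of discrete rings \(R\to\pi_0(R/p)\). Hence it is a property checkable on elements, and the resulting space of \(\hat\delta\)-structures is discrete here. This matches Proposition \ref{prop_truncated}, which says \(\Ring_{\hat\delta}\) is a \(1\)-category.

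\textbf{Step 4: functoriality.} Every construction used (Tate-valued Frobenius, \(p\)-completion, base change to \(\Z\)) is natural in \(\E_\infty\)-maps between Moore rings. Hence the assignment \(\S_R\mapsto (R,\{\phi_p\})\) refines \(-\otimes\Z\) to a functor into \(\Ring_{\hat\delta}\). Naturality of the homotopies is automatic since, by Step 3, it is a property in the discrete target.
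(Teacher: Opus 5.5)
Your Step 3 has a genuine gap, in two places. First, the identification of the target does not work as written. The map \(\comp{(\S_R)}\simeq(\S_R^{\otimes p})^{\tCp}\to\S_R^{\tCp}\) coming from the Tate diagonal is the multiplication map, which is essentially the Frobenius itself: for \(\S[M]\) it is induced by \(m\mapsto pm\), cf.\ Example \ref{ex_frobmon}. It is not an equivalence. Since it points \emph{into} \(\S_R^{\tCp}\), you also cannot compose it with \(\comp{(\S_R)}\to\comp R\) to get a map out of \(R\).

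What you need is that the \emph{canonical} map \(\S_R\to\S_R^{hC_p}\to\S_R^{\tCp}\), for the trivial action, is \(p\)-completion. This is not Nikolaus--Scholze III.1.7, and it fails for general bounded below spectra (\(\Z^{\tCp}\) is 2-periodic). It holds because Moore spectra have Tor-amplitude in \([0,1]\) (Proposition \ref{prop_torTate}); Corollary \ref{cor_comp} then gives \(\S_R^{\tCp}\otimes\Z\simeq\comp R\).

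Likewise, the compatibility with \(\Frob\) must be a homotopy of \(\E_\infty\)-maps, not an identity on \(\pi_0\). To get it, compose with \(\S_R^{\tCp}\to R^{\tCp}\to\tau_{[0,1]}R^{\tCp}\simeq R/p\), so you never need \(R^{\tCp}\) itself. Then use that \(\tau_{[0,1]}\) of the Tate Frobenius of \(R\) is \(\Frob\), and factor through \(-\otimes\Z\) because \(\Frob\) is \(\Z\)-linear.

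Second, and this is the real missing idea: all of this yields only \(\Z\)-linear data. You get a map \(\phi_\Z\colon R\to\comp R\) and a homotopy \(\can\circ\phi_\Z\simeq\Frob\) in \(\CAlg_\Z\). Definition \ref{defi_hatdelta} requires them in \(\Ring^\an\), and \(\Ring^\an\to\CAlg^\cn_\Z\) is neither full nor faithful.

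Your way around this is that the homotopy is a property checkable on elements because everything is discrete. That is false. \(R/p\) and \(\comp R\) are the derived quotient and completion: \(\pi_1(R/p)\) is the \(p\)-torsion of \(R\), and \(\pi_1\comp R\) is as in Lemma \ref{lem_pi1}. So the relevant mapping spaces are not discrete. Concretely, take \(R=\Z/p^n\). The identity satisfies \(\phi(x)\equiv x^p\) in \(\pi_0(R/p)\), yet \(\Z/p^n\) has no \(\delta_p\)-structure (Lemma \ref{lem_torvan}): the two maps \(R\to R/p\) differ by a nonzero class in \(\pi_1(R/p)\). So your Step 3 output could not even yield Corollary \ref{cor_smodn}.

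The paper closes this gap in Appendix \ref{sec_ani}. Comparing topological and algebraic cotangent complexes, with Serre's bound on torsion in \(\pi_*\S\), shows that \(\Map_\an(R,S)\to\Map_\Z(R,S)\) has contractible fibers for \(S\) as in Corollary \ref{cor_2trun}. It is an equivalence if moreover \(\pi_1S\) is 2-torsion free. Since \(\pi_1\comp R\) is torsion free (Corollary \ref{cor_pi1tor}), \(\phi_\Z\) refines uniquely to an animated ring map. Applying the comparison to \(S=R/p\) refines the homotopy uniquely. That uniqueness, not discreteness, is also what makes your Step 4 functoriality go through.
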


\begin{con} \label{con_lift}
Let \(\S_R\) be a Moore ring for a discrete ring \(R\). Recall that \(\tau_{[0,1]}R^{\tCp}\simeq R/p\) and that \(\tau_{[0,1]}\) applied to the Tate valued Frobenius \(R\to R^{\tCp}\) agrees with the Frobenius on \(R\). We then get a commutative diagram
\[\begin{tikzcd}
\S_R \ar[r,"\varphi"] \ar[d] & \S_R^{\tCp} \ar[d] \\ R \ar[r,"\Frob"] & R/p
\end{tikzcd}\]
which, as the Frobenius is \(\Z\)-linear, factors through
\[\begin{tikzcd}
\S_R\otimes\Z \ar[r,"\varphi_\Z"] \ar[d,"\sim"] & \S_R^{\tCp}\otimes \Z \ar[d,"\can"] \\ R \ar[r,"\Frob"] & R/p.
\end{tikzcd}\]
Combining Proposition \ref{prop_torTate} with Corollary \ref{cor_comp} yields \(\S_R^{\tCp}\otimes \Z\simeq \comp R\), and so \(\phi_\Z\) defines a \(\Z\)-linear lift
\[\begin{tikzcd}
& \comp R \ar[d,"\can"] \\ R \ar[r,"\Frob"] \ar[ur,"\phi_\Z"] & R/p
\end{tikzcd}\]
of the Frobenius on \(\comp R\). Moreover, this construction is functorial in \(\S_R\).
\end{con}

\begin{proof}[Proof of Theorem \ref{thm_obstruction}]
In view of Construction \ref{con_lift}, it suffices to show that the obtained Forbenius lift refines to a lift in \(\Ring^\an\). Combining Corollary \ref{cor_2trun} (for \(S=\comp R\)) with Corollary \ref{cor_pi1tor} we get that \(\phi_\Z\) admits a unique refinement to an animated ring map. Then, applying Corollary \ref{cor_2trun} again (for \(S=R/p\)), we see that the homotopy exhibiting \(\phi_\Z\) as a Frobenius lift admits a unique refinement as well.
\end{proof}

We have set up \(\hat\delta\)-schemes in such a way that Theorem \ref{thm_obstruction} glues.

\begin{thm} \label{thm_schemes}
Let \(X_\S\) be a Moore scheme for a discrete scheme \(X\). Then \(X\) admits a \(\hat\delta\)-scheme structure functorial in \(X_\S\).
\end{thm}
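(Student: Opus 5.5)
The plan is to build the \(\hat\delta\)-structure affine-locally using Theorem \ref{thm_obstruction}, and then glue it with the sheaf property of \(\Str_{\hat\delta}\) from Proposition \ref{prop_glue} and Corollary \ref{cor_glue}. Functoriality will come from the functoriality in Theorem \ref{thm_obstruction} together with Proposition \ref{prop_gluemaps}.

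First, let \(X_\S\) be a Moore scheme with base change \(X\). For every affine open \(U\subset X\) there is a corresponding affine open \(\Spec \S_{R_U}\subset X_\S\), since the two schemes share the same underlying topological space. By definition of a Moore scheme, \(\S_{R_U}:=\O_{X_\S}(U)\) is a Moore ring, and \(\O_X(U)\simeq \S_{R_U}\otimes\Z\). Theorem \ref{thm_obstruction} then gives a \(\hat\delta\)-structure on \(\O_X(U)\), functorial in \(\S_{R_U}\). Restricting to the site \(X_\Zar^\dis\), a distinguished inclusion \(D(f)\subset U\) corresponds to the localization \(\S_{R_U}\to \S_{R_U}[f^{-1}]\). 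This is a map of Moore rings by Corollary \ref{cor_Mloc}, so by functoriality its base change \(R_U\to R_U[f^{-1}]\) is a \(\hat\delta\)-map. Altogether this yields a functor \((X_\Zar^\dis)^\op\to\Ring_{\hat\delta}\subset\Ring^\an_{\hat\delta}\) lifting \(\O_X|_{X_\Zar^\dis}\). More precisely, it is a compatible section of \(\Str_{\hat\delta}\).

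The main subtlety is coherence. The functoriality in Theorem \ref{thm_obstruction} is a functor of \(\infty\)-categories from Moore rings to \(\Ring_{\hat\delta}\), so composing it with \(\O_{X_\S}:(X_\Zar^\dis)^\op\to\{\text{Moore rings}\}\) gives a genuine diagram rather than a homotopy-coherence problem. One also has to check that this diagram lands compatibly with the unique refinements of Remark \ref{rem_loc}. That is automatic, because the space of refinements of \(R\to R[f^{-1}]\) is contractible.

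Next we extend and identify the structure. The functor is a sheaf of \(\hat\delta\)-rings on \(X_\Zar^\dis\): its underlying animated rings form the sheaf \(\O_X\), and limits along covers are computed underlying by Proposition \ref{prop_disclim}, since the relevant Čech limits of discrete rings are connective (they are equal to \(\O_X(U)\)). We then right Kan extend it to \(\O_X^{\hat\delta}:\Open(X)^\op\to\Ring^\an_{\hat\delta}\). Its composite with \(\Forget\) agrees with \(\O_X\) on \(X_\Zar^\dis\), so Proposition \ref{prop_dschstr} shows that \((X,\O_X^{\hat\delta})\) is a \(\hat\delta\)-scheme with underlying scheme \(X\).

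Finally, for functoriality, take a map \(g:X_\S\to Y_\S\) of Moore schemes. For affine opens \(V\subset Y\) and \(U\subset g^{-1}(V)\) affine, the map \(\O_{Y_\S}(V)\to\O_{X_\S}(U)\) is a map of Moore rings. Theorem \ref{thm_obstruction} turns it into a \(\hat\delta\)-map, naturally in \((U,V)\). These maps assemble into a morphism \(\O_Y^{\hat\delta}\to g_*\O_X^{\hat\delta}\) on the distinguished sites, and hence into a morphism of \(\hat\delta\)-schemes. The gluing of such maps, and the compatibility with composition, follow from Proposition \ref{prop_gluemaps}.

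I expect the main obstacle to be setting up the functoriality cleanly at the \(\infty\)-categorical level, that is, organizing "pairs of affine opens with \(g(U)\subset V\)" into a diagram. The alternative fallback is to note that the relevant mapping spaces in \(\Ring_{\hat\delta}\) between discrete objects are discrete: by Proposition \ref{prop_truncated}(2), \(\Ring_{\hat\delta}\) is a \(1\)-category. Hence only 1-categorical compatibilities need checking, and these hold because they hold after forgetting, by conservativity together with the discreteness of the mapping spaces.
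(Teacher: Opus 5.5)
Your proposal is correct and follows essentially the paper's argument. You build the structure on affine opens via Theorem \ref{thm_obstruction}, match it on localizations (the paper cites Example \ref{ex_loc}, you use the contractibility in Remark \ref{rem_loc} together with Proposition \ref{prop_dschstr}), and obtain functoriality from the affine case by gluing. Your fallback, however, is unsound: conservativity plus discreteness of mapping spaces does not make \(\Forget\) faithful on \(\Ring_{\hat\delta}\), and it is not faithful, since the fibers in Lemma \ref{lem_trunc} are only \(0\)-truncated. For example, the \(\hat\delta\)-maps from \(\Z[x]\) (with \(\delta(x)=0\)) to \(\Z\oplus\Q_p/\Z_p\) (with the structure of Example \ref{ex_sqzero}) lying over \(x\mapsto 0\) form a torsor under \(p\Z_p\). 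So the coherence must come from the \(\infty\)-functoriality of Theorem \ref{thm_obstruction}, exactly as in your main argument.
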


\begin{proof}
Base change along \(\Spec \Z\to\Spec\S\) does not modify the underlying topological space of \(X_\S\). Therefore, it is computed on affine opens by applying \(-\otimes\Z\) to global sections, which by assumption form a Moore ring. Hence, by Theorem \ref{thm_obstruction} and Example \ref{ex_loc}, we get a \(\hat\delta\)-scheme structure on \(X\). Theorem \ref{thm_obstruction} also yields functoriality in maps between affine Moore schemes. This is enough, because any map of schemes is glued from maps on affine opens.
\end{proof}

Thus, we obtain a necessary condition for a scheme \(X\) to admit a lift to \(\Spec\S\). Namely, there must exist a \(\hat\delta\)-structure on \(X\). Moreover, given two lifts \(X_\S\) and \(Y_\S\), a necessary condition for a map \(X\to Y\) to lift to \(X_\S\to Y_\S\) is that it can be refined to a \(\hat\delta\)-scheme morphism.

\begin{war}
The proof is very specific to \(\S\) and \(\Z\), and to the fact that \(X\) is discrete.
\end{war}

\subsection{Some \(\hat\delta\)-structure computations} \label{sec_computations}

We will now investigate how certain common algebraic constructions interact with the Tate valued Frobenius. We will also identify the resulting \(\hat\delta\)-structures. We start with the most important computation.

\begin{ex}[Monoid rings] \label{ex_frobmon}
Let \(M\) be a monoid. We can form the monoid ring \(\S[M]=\bigoplus_M \S\), which is a Moore ring for \(\Z[M]\). We want to describe the Tate Frobenius of \(\S[M]\).

By the fact that \((-)^{tC_p}\) is additive and vanishes on induced representations, we get \(T_p(\S[M])=(\bigoplus_{M^{\times p}}\S)^{tC_p}\simeq (\bigoplus_{M}\S)^{tC_p} = \S[M]^{tC_p}\), with the isomorphism given by the diagonal embedding \(M\subset M^{\times p}\). Under this identification, the Tate diagonal \(\S[M]\to \S[M]^{tC_p}\) agrees with the canonical map. Therefore, we learn that the Tate valued Frobenius \(\S[M]\to \S[M]^{tC_p}\) factors as \(\S[M] \to \S[M]^{tC_p} \to \S[M]^{tC_p}\), where the first map is the canonical one and the second map is given by applying \((-)^{tC_p}\) to \(\alpha:\S[M]\to \S[M]\) induced by \([m]\to [pm]\).

Moreover, by Proposition \ref{prop_torTate}, the map \(\alpha^{tC_p}\) can be identified with the map \(\comp\alpha:\comp{\S[M]}\to \comp{\S[M]}\). Therefore, the \(\hat\delta\)-structure on \(\Z[M]\) is given, on each \(p\)-completion, by \(\Z[M]\to \comp{\Z[M]}\) induced by \([m]\to [pm]=[m]^p\). That is \(\delta([m])=0\).
\end{ex}

\begin{ex}[Colimits]
Let \(\{\S_{A_i}\}\) be a diagram of Moore rings. Then the colimit \(\S_A=\colim\, \S_{A_i}\) is a lift of \(A=\colim\, A_i\) (even if \(A\) is not discrete). The Tate valued Frobenius for \(\S_A\) factors as
\[\begin{tikzcd}[sep=15] \S_A\ar[r] & \colim\, \S_{A_i}^{tC_p} \ar[r] & \S_A^{tC_p}, \end{tikzcd}\]
where the first map is the colimit of Tate valued Frobenii for \(\S_{A_i}\) and the second map is just the colimit assembly map. Therefore, after base change, we recover the natural \(\hat\delta\)-structure on a colimit of animated \(\hat\delta\)-rings, in particular tensor products.
\end{ex}

\begin{ex}[Localizations] \label{ex_loc}
Let \(\S_A\) be a Moore ring for a discrete ring \(A\) and let \(f\in A\). Then \(\S_{A}[f^{-1}]=:\S_{A[f^{-1}]}\) is a Moore spectrum for \(A[f^{-1}]\). By naturality of the Tate valued Frobenius we get a commutative diagram
\[\begin{tikzcd}
\S_A \ar[r] \ar[d] & \S_A^{tC_p} \ar[d] \\
\S_{A[f^{-1}]}\ar[r] & \S_{A[f^{-1}]}^{tC_p}.
\end{tikzcd}\]
Hence, after base change to \(\Z\), we recover the \(\hat\delta\)-structure on \(A[f^{-1}]\) described in Construction \ref{con_loc}.
\end{ex}

\begin{ex}[Completions]
Let \(\S_A\) be a Moore ring for \(A\). Then \(\comp{(\S_A)}=:\S_{\comp A}\) is a Moore ring for \(\comp A\) (c.f.\@ Corollary \ref{cor_comp}). By \cite[Lemma I.2.9]{TC} and naturality, the Tate valued Frobenius for \(\S_A\) factors as
\[\begin{tikzcd}[sep=15] \S_A\ar[r] & \S_{\comp A} \ar[r] & \S_A^{tC_p}\simeq \S_{\comp A}^{tC_p}, \end{tikzcd}\]
where the first map is \(p\)-completion and the second is the Tate valued Frobenius for \(\S_{\comp A}\). Hence, after base change to \(\Z\), we see that the induced \(\hat\delta\)-structure on \(\comp A\) is just the \(\delta\)-structure on \(\comp A\) coming from the \(\hat\delta\)-structure on \(A\).
\end{ex}

For definitions and basic properties of square-zero extensions we refer to \cite[Section 7.4]{HA}.

\begin{ex}[Split square-zero extensions] \label{ex_sqzero}
Let \(\S_A\) be a Moore ring for \(A\) and let \(\S_M\) be a lift of an \(A\)-module \(M\) to an \(\S_A\)-module. Consider the split square-zero extension \(\S_{A}\oplus \S_M\). Base changing to \(\Z\) yields the split square-zero extension \(A\oplus M\) of \(A\) by \(M\). Since \(T_p\) is an exact functor, we get that the map
\[\begin{tikzcd}[sep=15] (\S_A^{\otimes p})^{tC_p}\oplus(\S_M^{\otimes p})^{tC_p}\ar[r] & T_p(\S_A\oplus \S_M) \end{tikzcd}\]
is an equivalence. Since the extension is square-zero, the multiplication map \(\S_M^{\otimes p}\to \S_M\) is nullhomotopic, and so the Tate valued Frobenius for \(\S_A\oplus \S_M\) factors as
\[\begin{tikzcd}[sep=15] \S_A\oplus \S_M \ar[r] & \S_A \ar[r,"\varphi"] & \S_A^{tC_p} \ar[r,"{(1,0)}"] &[3mm] (\S_A\oplus \S_M)^{tC_p}. \end{tikzcd}\]

From this we deduce that the induced \(\hat\delta\)-structure on \(A\oplus M\) satisfies \(\phi(a,m)=(\phi(a),0)\). This yields, at least if \(A\) and \(M\) are \(p\)-torsion free, the formula \(\delta(a,m)=(\delta(a),-a^{p-1}m)\).
\end{ex}

\section{Examples} \label{chap_ex}

We will now demonstrate some examples of rings and schemes whose lifting problems can be successfully studied with our obstruction.

\subsection{Étale algebras}

We begin by discussing étale algebras. Let us recall the definition.

\begin{defi}[{\cite[Definition 7.5.0.4]{HA}}]
An \(\E_{\infty}\)-algebra \(A\) is \emph{étale} if it is flat over \(\S\) and the ring \(\pi_0A\) is étale over \(\Z\).
\end{defi}

We have the following important theorem.

\begin{thm}[{\cite[Theorem 7.5.0.6]{HA} \label{thm_etale}}\hspace{-1mm}]
The assignment \(A\mapsto \pi_0A\) induces an equivalence between étale algebras over \(\S\) and étale algebras over \(\Z\).
\end{thm}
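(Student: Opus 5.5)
This is Lurie's theorem, \cite[Theorem 7.5.0.6]{HA}. The plan below follows the standard deformation-theoretic route; I will not try to rederive it from $\hat\delta$-structures.

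\emph{Essential surjectivity.} Let $R$ be an étale $\Z$-algebra. I will build $A$ as a limit of a Postnikov tower $\cdots\to A_n\to\cdots\to A_0$ in which each $A_n$ is flat over $\tau_{\le n}\S$. Start with $A_0=R$. Given $A_n$ flat over $\tau_{\le n}\S$, the map $\tau_{\le n+1}\S\to\tau_{\le n}\S$ is a square-zero extension by $\pi_{n+1}\S[n+1]$, classified by a derivation. Extending $A_n$ to $\tau_{\le n+1}\S$ is then obstructed by a class in $\mathrm{Ext}^{1}_{A_n}\big(L_{A_n/\tau_{\le n}\S},\,A_n\otimes_{\tau_{\le n}\S}\pi_{n+1}\S[n+1]\big)$. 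Lifts, when they exist, form a torsor under the corresponding $\mathrm{Ext}^0$, and their automorphisms are governed by the $\mathrm{Ext}^{-1}$.

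The key input is the vanishing of the cotangent complex. By flat base change, $L_{A_n/\tau_{\le n}\S}\otimes_{A_n}R\simeq L_{R/\Z}$, and $L_{R/\Z}\simeq 0$ because $R$ is étale over $\Z$. Since everything is connective, a Nakayama-type argument on the Postnikov filtration gives $L_{A_n/\tau_{\le n}\S}\simeq 0$. Hence all the obstruction groups vanish, and the lift $A_{n+1}$ exists and is unique up to contractible choice. Flatness of $A_{n+1}$ is checked on homotopy: $\pi_*A_{n+1}\cong R\otimes\pi_*\tau_{\le n+1}\S$, which follows from the long exact sequence of the square-zero extension. Finally set $A=\lim A_n$. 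It is flat over $\S$ with $\pi_0A=R$.

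\emph{Full faithfulness.} The same vanishing controls mapping spaces. For $A$ étale over $\S$ and any connective $B$, I claim $\Map(A,B)\to\Map(\pi_0A,\pi_0B)$ is an equivalence onto a discrete set. The argument climbs the Postnikov tower of $B$, $\tau_{\le n+1}B\to\tau_{\le n}B$. Each stage is a square-zero extension, and lifting a map from $A$ along it is controlled by $\mathrm{Ext}^*_A(L_{A/\S},\pi_{n+1}B[n+2])$. Since $L_{A/\S}\simeq 0$, the fibers at every stage are contractible. Because $B\simeq\lim\tau_{\le n}B$, this gives $\Map(A,B)\simeq\Map(A,\pi_0B)$. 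Maps into discrete targets factor through $\pi_0A$, so $\Map(A,\pi_0B)\simeq\Hom(\pi_0A,\pi_0B)$. Restricting to étale $B$ gives full faithfulness of $A\mapsto\pi_0A$.

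\emph{Main obstacle.} The hardest step is establishing $L_{A/\S}\simeq 0$ rigorously for flat $A$ with étale $\pi_0A$. This needs three ingredients: base-change compatibility of the topological cotangent complex, the connectivity estimate for $L_{A/\S}$ relative to $L_{\pi_0A/\Z}$, and the Hurewicz-type statement that a connective $A$-module $M$ with $M\otimes_A\pi_0A\simeq 0$ vanishes. I would first prove the connectivity claim by induction on homotopy degree, using the fundamental cofiber sequence for $\S\to A\to\pi_0A$.
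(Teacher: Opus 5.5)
The paper gives no proof of this statement. It is imported from \cite[Theorem 7.5.0.6]{HA}, so your opening citation is already all the paper does, and your deformation-theoretic sketch is an independent reconstruction. Its architecture is sound.
\begin{itemize}
\item Flatness gives $A_n\otimes_{\tau_{\le n}\S}\Z\simeq R$, so base change plus Nakayama reduces $L_{A_n/\tau_{\le n}\S}\simeq 0$ to $L_{R/\Z}\simeq 0$.
\item Given that vanishing, the derivation classifying $\tau_{\le n+1}\S\to\tau_{\le n}\S$ extends uniquely over $A_n$. The resulting pullback $A_{n+1}$ has $\pi_*A_{n+1}\cong R\otimes\pi_*\tau_{\le n+1}\S$.
\item The same vanishing makes every stage of the Postnikov tower of a target invisible to $\Map(A,-)$, which gives full faithfulness.
\end{itemize}
Your Ext indices are shifted by one. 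With $I=\pi_{n+1}\S[n+1]$ the fiber, the obstruction lies in $\pi_0\Map_{A_n}\big(L_{A_n/\tau_{\le n}\S},(A_n\otimes_{\tau_{\le n}\S}I)[2]\big)$. This is harmless, since the cotangent complex vanishes.

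The step that needs more than you give it is ``$L_{R/\Z}\simeq 0$ because $R$ is \'etale over $\Z$''. What base change produces here is the $\E_\infty$ cotangent complex of $R$ as an $\E_\infty$-$\Z$-algebra. Classical \'etaleness only gives $L^{\alg}_{R/\Z}\simeq 0$. The two differ in general: for instance $L_{\Z[x]/\Z}$ is not $\Z[x]\,dx$, and Appendix~\ref{sec_ani} is precisely about this discrepancy. So the usual argument (locally standard \'etale, polynomial rings have free cotangent complex) does not transfer.

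A clean fix is the diagonal argument.
\begin{itemize}
\item Since $R$ is flat, $R\otimes_\Z R$ is discrete.
\item Since $R$ is unramified, $\mu\colon R\otimes_\Z R\to R$ is the projection onto a factor of $R\otimes_\Z R\cong R\times C$. It is therefore a localization at an idempotent, so $L_{R/R\otimes_\Z R}\simeq 0$.
\item The transitivity sequence for $R\xrightarrow{i_1}R\otimes_\Z R\xrightarrow{\mu}R$ then gives $L_{R\otimes_\Z R/R}\otimes_{R\otimes_\Z R}R\simeq 0$.
\item By base change, the left side is $L_{R/\Z}\otimes_{R,\mu\circ i_2}R=L_{R/\Z}$.
\end{itemize}

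Relatedly, your ``main obstacle'' paragraph aims at the wrong target. Once $L_{R/\Z}\simeq 0$ is known for the $\E_\infty$ cotangent complex, $L_{A/\S}\simeq 0$ follows exactly as for $A_n$. Flatness gives $A\otimes_\S\Z\simeq\pi_0A$, base change gives $L_{A/\S}\otimes_A\pi_0A\simeq L_{\pi_0A/\Z}\simeq 0$, and Nakayama for connective modules finishes. No separate connectivity induction via $\S\to A\to\pi_0A$ is needed. The real content is the $\E_\infty$ vanishing for the discrete \'etale ring itself.
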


Let \(A\) be étale over \(\S\) and let \(R=\pi_0A\). Since \(A\) is flat, we have \(A\otimes\Z\simeq R\), so \(A\) defines a lift of \(R\). By Proposition \ref{prop_flat}, any Moore ring for \(R\) is étale over \(\S\). Theorem \ref{thm_etale} then implies that \(R\) has a unique lift.

\begin{prop} \label{prop_undelta}
Let \(R\) be an étale algebra over \(\Z\). Then \(R\) admits a unique \(\hat\delta\)-structure.
\end{prop}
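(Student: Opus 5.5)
Existence and uniqueness are handled separately; both reduce to the fact that étale maps have vanishing cotangent complex, so Frobenius lifts extend uniquely along them.

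\emph{Existence.} By Theorem \ref{thm_etale}, \(R\) has an étale lift \(\S_R\) over \(\S\). This is a Moore ring, since flatness gives \(\S_R\otimes\Z\simeq R\). Theorem \ref{thm_obstruction} then equips \(R\) with a \(\hat\delta\)-structure.

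\emph{Uniqueness.} Fix a prime \(p\). A \(\hat\delta\)-structure at \(p\) is a \(\delta_p\)-structure on \(\comp R\), which may be animated a priori. The plan is:
\begin{enumerate}
\item Show \(\comp R\) is discrete. Since \(R\) is flat over \(\Z\), it is \(p\)-torsion free. Hence \(R/p\) is discrete and \(\comp R\simeq\lim R/p^n\) is discrete.
\item Observe that \(\Z_p\to\comp R\) is the \(p\)-completion of an étale map, hence formally étale in the \(p\)-complete sense: its \(p\)-completed cotangent complex vanishes, since \(L_{R/\Z}\simeq 0\).
\item Identify the space of \(\delta_p\)-structures. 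By the definition of Section \ref{chap_schemes}, it is the space of maps \(\phi:\comp R\to\comp R\) in \(\Ring^\an\) together with a homotopy \(\can\circ\phi\simeq\Frob\) to \(R/p\). Equivalently, it is the fiber of
\[\Map(\comp R,\comp R)\to\Map(\comp R,R/p)\]
over \(\Frob\).
\item Show this fiber is contractible. Since \(\comp R\) is \(p\)-complete, \(\comp R\simeq\lim_n R/p^n\), and each step \(R/p^{n+1}\to R/p^n\) is a square-zero extension by \(R/p\). The obstructions and choices for lifting a map from \(\comp R\) along such an extension are controlled by \(\Map(L_{\comp R/\Z_p},-)\), which vanishes after \(p\)-completion. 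Note that \(\Frob\) is \(\Z_p\)-linear, because \(\Z_p\) carries its unique Frobenius lift, the identity. So each stage has a contractible space of lifts, and passing to the limit gives contractibility.
\end{enumerate}
This is precisely the statement that the forgetful map from \(\delta_p\)-structures to Frobenius data is an equivalence for \(p\)-completely étale algebras.

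The main obstacle is step 4. It requires the deformation-theoretic input in the animated setting: maps out of \(\comp R\) into a square-zero extension of a \(p\)-complete discrete ring lift uniquely once the \(p\)-completed cotangent complex vanishes. I would cite this from \cite{Prismatization}, or deduce it directly from \cite[Theorem 7.5.0.6]{HA}-style uniqueness, namely that étale maps are formally étale for all animated targets. A simpler alternative at the discrete level is the classical argument: a ring map \(\comp R\to\comp R\) lifting Frobenius is unique by Hensel-type lifting for étale algebras. Combined with step 1, and with the fact that the homotopy \(\can\circ\phi\simeq\Frob\) is unique because \(\pi_1\Map(\comp R,R/p)=0\) for discrete rings, this also gives contractibility.
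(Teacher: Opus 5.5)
Your proposal is correct, but it is organized differently from the paper. The paper's proof is a single citation. It applies Lemma 2.18 of \cite{prismatic} with \(A=\Z\), \(I=(p)\), \(B=\comp R\): a \((p,I)\)-completely étale algebra over a \(\delta\)-ring carries a unique compatible \(\delta\)-structure. This gives existence and uniqueness at once. The paper leaves implicit the reduction from animated to classical \(\delta_p\)-structures, which holds because \(\comp R\) is discrete and \(p\)-torsion free (your step 1) and because \(\Map(\comp R,R/p)\) is then discrete (your closing remark).

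You instead obtain existence from the spherical lift, via Theorem \ref{thm_etale} and Theorem \ref{thm_obstruction}. You obtain uniqueness from a direct deformation argument showing that restriction \(\Map(\comp R,\comp R)\to\Map(\comp R,R/p)\) is an equivalence. That argument is essentially the proof of the cited lemma specialized to the \(p\)-torsion free case, where you can work with the Frobenius lift \(\phi\) itself rather than with maps into \(W_2\). Its advantage is that it proves the whole space of animated \(\delta_p\)-structures on \(\comp R\) is contractible.

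That same argument already gives existence, since a contractible fiber is nonempty. So the detour through Lurie's theorem and the Tate-valued Frobenius is valid and non-circular, but unnecessary. The paper uses the lift only afterwards, to note that the unique \(\hat\delta\)-structure is the one induced by the unique lift.
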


\begin{proof}
Follows by applying, for each prime \(p\), Lemma 2.18 of \cite{prismatic} in the case \(A=\Z\), \(I=(p)\), \(B=\comp R\).
\end{proof}

We conclude that for an étale algebra over \(\Z\) there exist both a unique lift to \(\S\) and a unique \(\hat\delta\)-structure, necessarily induced by that lift.

\subsection{Rings of integers}

Let \(K\) be an algebraic number field, that is a finite extension of \(\Q\), and consider its associated ring of integers \(\O\). In this section, we will completely determine the obstruction for \(\O\) to admit a lift to the sphere spectrum. We will implicitly use some basic results from local class field theory. They can all be found, for example, in \cite{local}. The special case of adjoining roots of unity has already been considered in \cite{roots}.

Let \(\mathcal P\) denote the set of all ramified primes in \(\O\). For any prime \(p\) we have that \(\Z_p\subset \comp{(\O)}\) factors as \(\Z_p\subset \O'\subset \comp{\O}\), where the extension \(\Z_p\subset \O'\) is unramified and \(\O'\subset \comp{\O}\) totally ramified. That is, if \(p\notin\mathcal P\), we have \(\O'=\comp{\O}\), and if \(p\in\mathcal P\), then we have an isomorphism \(\comp{\O}\simeq \O'[x]/E(x)\) with \(E\in\O'[x]\) an Eisenstein polynomial for the maximal ideal \(\mathfrak m\) of \(\O'\). Since \(\Z_p\subset \O'\) is unramified we just have \(\mathfrak m = p\O'\).

Now, suppose \(\O\) admits a lift to the sphere. Then, by Theorem \ref{thm_obstruction}, for \(p\in\mathcal P\), there exists a \(\delta_p\)-structure on \(\O'[x]/E(x)\). Let \(\phi\) be the associated Frobenius lift. Since \(\phi(x)=x^p+p\delta(x)\) and \(\phi(p)=p\), we get that both \(\phi\big(E(x)\big)-\phi\big(E(0)\big)-x^{p^2}\) and \(E(x)^p-x^{p^2}\) are contained in the ideal \((p^2,px)\). Hence, we deduce that modulo \((p,x)\) we have
\[0=\delta\big(E(x)\big)=\frac{\phi\big(E(x)\big)-E(x)^p}{p}=\frac{\phi\big(E(0)\big)}{p}.\]
But \(E\) is Eisenstein, and so \(E(0)=pu\), with \(u\) a unit. Thus, \(\delta\big(E(x)\big)=\phi(u)\) is a unit as well, which contradicts our assumption.

\begin{cor}
No ring of integers of an algebraic number field lifts to \(\S\).
\end{cor}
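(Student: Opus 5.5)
The plan is to derive the corollary from the computation carried out just above, applied at a single ramified prime. Two things need care: the statement must exclude \(K=\Q\), and the \(\delta_p\)-structure must be passed from \(\comp{\O}\) to one local factor. The case \(K=\Q\) has to be set aside because \(\Z\) does lift, namely to \(\S\). So I read the statement as concerning number fields \(K\neq\Q\).

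\emph{Step 1: find a ramified prime.} By Minkowski's theorem, \(|d_K|>1\) whenever \(K\neq\Q\). Hence some rational prime \(p\) ramifies in \(\O\), i.e.\ \(\mathcal P\neq\emptyset\).

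\emph{Step 2: reduce to a local factor.} Suppose \(\O\) lifts to \(\S\). By Theorem \ref{thm_obstruction}, \(\O\) carries a \(\hat\delta\)-structure, which for our \(p\in\mathcal P\) is a \(\delta_p\)-structure on \(\comp{\O}\simeq\prod_{\mathfrak P\mid p}\O_{\mathfrak P}\). Here \(\O\) is finite over \(\Z\), so \(\comp{\O}\) is discrete.

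I claim the Frobenius lift \(\phi\) preserves each factor. Let \(e\) be the idempotent cutting out a factor.
\begin{itemize}
\item \(\phi(e)\) is again an idempotent.
\item \(\phi(e)\equiv e^p=e\) modulo \(p\).
\item In a \(p\)-complete ring, idempotents are determined by their reduction mod \(p\).
\end{itemize}
Therefore \(\phi(e)=e\). It follows that \(\phi\) restricts to a Frobenius lift on each \(\O_{\mathfrak P}\), that is, to a \(\delta_p\)-structure on each factor.

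\emph{Step 3: the contradiction at a ramified factor.} Choose \(\mathfrak P\mid p\) with ramification index \(e>1\). Write \(\O_{\mathfrak P}\simeq\O'[x]/E(x)\), where \(\O'/\Z_p\) is unramified and \(E\) is Eisenstein of degree \(e\geq 2\).

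The \(\delta_p\)-structure on \(\O'\) is the unique one from Proposition \ref{prop_undelta}. Compatibility with it is not even needed: the computation only uses that \(\phi(\O')\subset\O'\) and \(\phi(p)=p\).

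Now apply the congruence argument given above.
\begin{itemize}
\item Since \(E(x)=0\) in \(\O_{\mathfrak P}\), we have \(\delta(E(x))=0\).
\item Expanding \(\phi(E(x))-E(x)^p\) modulo \((p^2,px)\) shows \(\delta(E(x))\equiv\phi(E(0))/p=\phi(u)\) modulo \((p,x)\), where \(E(0)=pu\).
\item Because \(u\) is a unit, \(\phi(u)\) is a unit, so \(\phi(u)\notin(p,x)\).
\end{itemize}
Here \((p,x)\) is the maximal ideal of \(\O_{\mathfrak P}\), because \(x\) is a uniformizer. This gives the contradiction, so \(\O\) does not lift.

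The main obstacle is the congruence bookkeeping in Step 3. Checking that every cross term of \(\phi(E(x))-E(x)^p\) lies in \(p(p,x)\) uses \(e\geq 2\), which is where the leading term \(x^{p^2}\) cancels. I would verify this coefficient by coefficient. The idempotent argument in Step 2 is routine.
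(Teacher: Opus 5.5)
Your proof is correct and is essentially the paper's argument (Minkowski gives a ramified prime, and the Eisenstein congruence forces \(\delta(E(x))\equiv\phi(u)\) modulo the maximal ideal). It adds two worthwhile refinements: the idempotent step handles the fact, glossed over in the paper, that \(\comp{\O}\simeq\prod_{\mathfrak P\mid p}\O_{\mathfrak P}\) need not be a single \(\O'[x]/E(x)\), and your exclusion of \(K=\Q\) is necessary since \(\Z\) lifts to \(\S\). Minor points: the leading term is \(x^{pe}\) (the paper's \(x^{p^2}\) is a typo), and \(e\geq 2\) is used to put the cross term \(e\,p\,\delta(x)\,x^{p(e-1)}\) into \(p(p,x)\), not for that cancellation. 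Also, you never need \(\phi(\O')\subset\O'\), only that \(\phi(p\O')\subset p\O_{\mathfrak P}\) and that \(\phi(u)\) is a unit.
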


\begin{proof}
By the discussion above, if \(K\) is ramified, then \(\O\) cannot admit a lift. But, due to Minkowski's bound, \(\Q\) admits no unramified extensions.
\end{proof}

On the other hand, after inverting the ramified primes, the ring \(\O[\mathcal P^{-1}]\) becomes unramified, hence étale. So \(\O[\mathcal P^{-1}]\) admits a lift by Theorem \ref{thm_etale}.

Uniqueness of the \(\hat\delta\)-structure follows once again by Proposition \ref{prop_undelta}, but in this case we can compute it more precisely.

\begin{prop}
The ring \(\O[\mathcal P^{-1}]\) admits a unique \(\hat\delta\)-structure.
\end{prop}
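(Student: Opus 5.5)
The plan is to work one prime at a time. By Definition \ref{defi_hatdelta}, a \(\hat\delta\)-structure on \(R:=\O[\mathcal P^{-1}]\) is a family of Frobenius lifts \(\phi_p:R\to\comp R\), one for each prime \(p\), together with the homotopies exhibiting them as lifts. So it suffices to show that for every \(p\) the space of such lifts is contractible. I split into two cases according to whether \(p\in\mathcal P\).

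For \(p\in\mathcal P\), the prime \(p\) is invertible in \(R\). Hence \(R/p\simeq 0\) and \(\comp R\simeq 0\), and the space of \(\delta_p\)-structures on the zero ring is contractible. (This is the remark that a \(\hat\delta\)-structure on a ring rational at \(p\) carries no data at \(p\).)

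For \(p\notin\mathcal P\), localization does not change the \(p\)-completion, so \(\comp R\simeq\comp\O\). By the preceding discussion \(\comp\O=\O'\) is unramified over \(\Z_p\). Concretely it is a finite product \(\prod_i W(k_i)\) of Witt vectors of finite fields, discrete and \(p\)-torsion free. Maps \(R\to\comp R\) of animated rings are the same as maps \(\comp R\to\comp R\), since the target is \(p\)-complete. So I need to show that the space of pairs \((\phi,h)\) is contractible, where \(\phi:\comp R\to\comp R\) and \(h\) is a homotopy \(\can\circ\phi\simeq\Frob\) into \(R/p=\prod_i k_i\). That space is the fiber of
\[\Map(\comp R,\comp R)\to\Map(\comp R,R/p)\]
over \(\Frob\).

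The key input is formal étaleness. Since \(\comp R\) is étale over \(\Z_p\), its \(p\)-completed cotangent complex \(\comp{(L_{\comp R/\Z})}\) vanishes. Writing \(\comp R=\lim_n\comp R/p^n\), deformation theory for the square-zero extensions \(\comp R/p^{n+1}\to\comp R/p^n\) gives that each map \(\Map(\comp R,\comp R/p^{n+1})\to\Map(\comp R,\comp R/p^n)\) is an equivalence. Passing to the limit, \(\Map(\comp R,\comp R)\to\Map(\comp R,R/p)\) is an equivalence, so its fiber over \(\Frob\) is contractible. This gives existence and uniqueness of \(\phi_p\) at once. Explicitly, \(\phi_p\) is the Witt-vector Frobenius on each factor \(W(k_i)\), i.e.\ the arithmetic Frobenius of the unramified extension, which is the refinement of the structure promised in the text.

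I expect the main obstacle to be this deformation-theoretic step in the animated setting, namely making sure the mod \(p^n\) reductions and the vanishing of the cotangent complex are used correctly for derived quotients. A fallback route avoids it: since \(\comp R\) is discrete and \(p\)-torsion free, animated \(\delta_p\)-structures on it coincide with classical ones (see the appendix of \cite{Prismatization}), and classical Frobenius lifts on \(W(k_i)\) exist and are unique by Hensel's lemma. Alternatively, one can invoke \cite[Lemma 2.18]{prismatic} for \(B=\comp R\), exactly as in Proposition \ref{prop_undelta}.
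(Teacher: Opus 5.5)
Your proof is correct, but for \(p\notin\mathcal P\) it takes a different route from the paper. The paper makes the same case split, then argues by hand with the set-theoretic \(\delta\)-axioms. It identifies \(\comp\O\simeq\Z_p[\zeta]\) with \(\zeta^m=1\) for some \(m\) prime to \(p\), and expands \(0=\delta(\zeta^m)\) to see that \(\delta(\zeta)\) is infinitely \(p\)-divisible, hence \(\delta(\zeta)=0\), i.e.\ \(\phi(\zeta)=\zeta^p\). Your formal-étaleness argument is instead essentially a proof of Proposition \ref{prop_undelta} (the content of \cite[Lemma 2.18]{prismatic}) specialized to \(\O[\mathcal P^{-1}]\).

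What your route buys:
\begin{enumerate}
\item It works directly with the animated Definition \ref{defi_hatdelta}. The paper uses the classical description, which is legitimate here only because \(\comp\O\) is discrete and \(p\)-torsion free.
\item It gives existence and uniqueness at once, indeed contractibility of the space of structures. The paper only checks uniqueness, with existence coming from the étale lift.
\item It correctly treats \(\comp\O\) as a product \(\prod W(k_i)\). The paper's identification \(\comp\O\simeq\Z_p[\zeta]\) holds only when a single prime of \(\O\) lies over \(p\). For \(p=2\) in \(\Q(\sqrt{-7})\) one gets \(\Z_2\times\Z_2\), whose only odd-order root of unity is \((1,1)\). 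There the computation must be run factorwise, after noting that \(\phi\) fixes the idempotents.
\end{enumerate}
What the paper's route buys is an explicit, elementary formula for the structure with no deformation theory, which is its stated purpose.

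A small notational point: in Appendix \ref{sec_ani}, \(L_{R/\Z}\) denotes the \(\E_\infty\) cotangent complex, while deformations in \(\Ring^{\an}\) are controlled by \(L^{\alg}\). The latter also vanishes after \(p\)-completion, by transitivity for \(\Z\to\Z_p\to\comp R\), so your argument is unaffected.
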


\begin{proof}
If \(p\in \mathcal P\), then \(\comp{\O[\mathcal P^{-1}]}=0\), so there the \(\delta_p\)-structure is unique. If \(p\notin\mathcal P\), then \(\Z_p\subset \comp{\O[\mathcal P^{-1}]}=\comp\O\) is unramified. For each \(n\) there is a unique unramified extension of \(\Q_p\) of degree \(n\). Moreover, it is given by adjoining the \((p^n-1)\)-th root of unity \(\zeta\). Thus, we get that \(\comp\O\simeq\Z_p[\zeta]\) for some \(n\). We will show that \(\delta(\zeta)=0\), from which uniqueness will follow.

Note that \(\zeta^m=1\) for some \(m\) coprime to \(p\). Therefore, we get
\[0=\delta(\zeta^m)=\frac{\phi(\zeta)^m-\zeta^{mp}}{p}=\frac{(\zeta^p+p\delta(\zeta))^m-\zeta^{mp}}p.\]
Unravelling this expression we see that \(\delta(\zeta)\) must be infinitely \(p\)-divisible, because \(m\) and \(\zeta\) are invertible in \(\Z_p[\zeta]\). This means \(\delta(\zeta)=0\), as wanted.
\end{proof}

\begin{ex}[{see also \cite[Example I.3.49]{Schwede}}]
Let \(p\) be a prime number and \(q=p^n\) such that \(q>2\). Consider \(\Z[\zeta_q]:=\Z[x]/f\) with \(f(x)=1+x^{p^{n-1}}+...+x^{(p-1)p^{n-1}}\). Note that \(f(x+1)\) is an Eisenstein polynomial, hence by the previous discussion, we get that \(\Z[\zeta_q]\) does not lift to the sphere. On the other hand, after inverting \(p\), one can check that
\[\Z[\zeta_q,\frac1p]\simeq \Z[C_q,\frac1{p-f(x)}],\]
where \(C_q\) denotes the cyclic group of order \(q\) and we identify \(x\) with its generator. Thus, \(\Z[\zeta_q,\frac1p]\) lifts as \(\S[C_q,\frac1{p-f(x)}]\). We also see that the corresponding \(\hat\delta\)-structure is given by \(\delta(x)=0\) as expected (c.f.\@ Example \ref{ex_frobmon}).
\end{ex}

Apart from just rings of integers one could consider general orders (i.e.\@ finite reduced \(\Z\)-algebras). We compute the following.

\begin{ex}
We will show that \((\Z[ni])_2^\wedge\) does not lift to \(\S\) for any integer \(n\ne 0\). Let us denote \(f=\delta(ni)\). From the formula \(\delta\left((ni)^2\right)=\delta(-n^2)=\frac{-n^2-n^4}{2}\) and the multiplicative relation for \(\delta\) we get
\[2f^2-2n^2f+\frac{n^4+n^2}2=0,\]
which has the solution \(f=\frac{2n^2\pm 2ni}4\). As \(\frac12ni\) does not belong to \((\Z[ni])_2^\wedge\), there can be no such delta-structure.
\end{ex}

\subsection{Torsion rings}

Recall the following lemma.

\begin{lem} \label{lem_torvan}
Let \(A\) be a \(\delta_p\)-ring such that \(p^n=0\) for some \(n\). Then \(A=0\).
\end{lem}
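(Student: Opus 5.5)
The plan is to use the classical valuation argument: the operator \(\delta\) lowers \(p\)-adic valuation of \(p\) by exactly one, and in a ring where \(p\) is nilpotent this forces \(1=0\). Here \(A\) is taken to be a discrete \(\delta_p\)-ring in the sense of Definition \ref{def_dfun}. The key identity is the computation of \(\delta(p^k)\), which comes from iterating the multiplicative rule. By additivity starting from \(\delta(1)=0\),
\[\delta(p)=\frac{p-p^p}{p}=1-p^{p-1}\]
in \(\Z\), and so also in \(A\) via the unique map \(\Z\to A\). This map is a \(\delta\)-map, since \(\Z\) carries a unique \(\delta_p\)-structure and the axioms determine \(\delta\) on the image of \(\Z\).

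The main step is to show by induction on \(k\ge 1\) that
\[\delta(p^k)=p^{k-1}u_k\]
with \(u_k\) a unit of \(\Z_{(p)}\), and in fact \(u_k\equiv 1 \bmod p\). For the inductive step I would use \(\delta(p^{k+1})=\delta(p\cdot p^k)\). The multiplicative rule gives
\[\delta(p\cdot p^k)=p^p\delta(p^k)+\delta(p)p^{kp}+p\,\delta(p)\delta(p^k).\]
After substituting \(\delta(p)=1-p^{p-1}\) and \(\delta(p^k)=p^{k-1}u_k\), every term is divisible by \(p^k\). The term \(p\delta(p)\delta(p^k)=p^k(1-p^{p-1})u_k\) is the only one contributing \(p^k\) times a unit. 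The other two terms, \(p^{p+k-1}u_k\) and \(p^{kp}(1-p^{p-1})\), lie in \(p^{k+1}\Z_{(p)}\), using \(p\ge2\) and \(kp\ge k+1\). This gives \(u_{k+1}\equiv u_k\equiv 1\bmod p\). Alternatively one can use the closed formula \(\delta(p^k)=(p^k-p^{kp})/p\) directly, which exhibits \(u_k=1-p^{k(p-1)}\). That is simpler, and it is what I would actually write.

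Now suppose \(p^n=0\) in \(A\) with \(n\ge1\). Then \(\delta(p^n)=\delta(0)=0\), so \(p^{n-1}(1-p^{n(p-1)})=0\) in \(A\). Since \(n(p-1)\ge1\), the element \(1-p^{n(p-1)}\) is a unit in \(A\), because \(p\) is nilpotent. Hence \(p^{n-1}=0\). By downward induction on \(n\) we reach \(p^0=1=0\), so \(A=0\).

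The only subtle point is the step where \(\delta\) of an integer is computed inside \(A\). I expect no real obstacle here; it just needs the remark that \(\delta(m\cdot 1)\) is given by the universal integer formula \((m-m^p)/p\), which follows by induction from the additivity rule. If the lemma is also meant for animated \(\delta_p\)-rings, I would reduce to \(\pi_0A\): it is a discrete \(\delta_p\)-ring in which \(p^n=0\), so \(\pi_0A=0\) and therefore \(A=0\).
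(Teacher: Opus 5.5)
Your proposal is correct and is essentially the paper's argument: compute \(\delta(p^n)=(p^n-p^{pn})/p\), deduce \(p^{n-1}=0\) in \(A\), and induct down to \(1=0\). The paper writes \(\delta(p^n)=p^{n-1}\) directly, which silently uses \(p^{pn-1}=0\) in \(A\), whereas you factor out the unit \(1-p^{n(p-1)}\); the two are equivalent, and your detour through \(\delta(p^k)=p^{k-1}u_k\) by induction is unnecessary.
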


\begin{proof}
We have
\[0=\delta(p^n)=\frac{p^n-p^{pn}}{p}=p^{n-1}.\]
Hence, by induction, we get \(1=0\), as desired.
\end{proof}

From this we can deduce a \(\hat\delta\)-ring analog.

\begin{lem}
Let \(A\) be a torsion \(\hat\delta\)-ring. Then \(A=0\).
\end{lem}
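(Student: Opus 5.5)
The plan is to reduce to Lemma \ref{lem_torvan} one prime at a time, using the fracture square to see that a torsion ring is determined by its \(p\)-completions. Let \(A\) be a (discrete) torsion \(\hat\delta\)-ring. Since \(A\) is torsion, \(A_\Q=0\). The fracture square for \(A\) then identifies \(A\) with the fiber of \(\prod_p\comp A\to(\prod_p\comp A)_\Q\). It therefore suffices to show that \(\comp A=0\) for every prime \(p\).

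For a single prime, I would first reduce to the case where \(A\) has bounded \(p\)-power torsion. The element \(1\in A\) has finite additive order \(N\). Write \(N=p^nm\) with \(m\) coprime to \(p\). Since \(m\) is invertible after \(p\)-completion, \(p^n\cdot 1=0\) in \(\pi_0\comp A\).

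More cleanly, \(A\) is a \(\Z/N\)-algebra, so the derived \(p\)-completion \(\comp A\simeq \comp{(\Z/N)}\otimes_{\Z/N}A\). Since \(\comp{(\Z/N)}\simeq\Z/p^n\) is discrete, this is \(A\otimes_{\Z/N}\Z/p^n=A/p^n\). This is discrete and is the \(p\)-primary part of \(A\), a direct factor of \(A\).

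So \(\comp A\) is a discrete \(\delta_p\)-ring in which \(p^n=0\). Lemma \ref{lem_torvan}, which applies because the animated \(\delta_p\)-structure on a discrete ring is the classical one, gives \(\comp A=0\).

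Since this holds for all \(p\), the fracture square gives \(A\simeq 0\). Alternatively, and more concretely, \(A=\prod_{p\mid N}A/p^{n_p}\) splits as a finite product of its \(p\)-primary components, each of which is identified with \(\comp A\) and vanishes.

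The main obstacle is the identification of the derived \(p\)-completion of a torsion ring with a discrete \(p^n\)-torsion ring. If instead \(A\) were only assumed to be torsion with unbounded exponents, \(\pi_1\comp A\) could be nonzero, as for \(\Q/\Z\). The finite order of \(1\) is what rescues this. I would handle it by the base-change formula over \(\Z/N\) described above, as opposed to working with general \(p\)-divisibility.
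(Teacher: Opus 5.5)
Your proposal is correct and follows essentially the paper's route. For each prime \(p\), the \(\delta_p\)-ring coming from \(\comp A\) is killed by \(p^n\), so Lemma \ref{lem_torvan} makes it vanish, and assembling over all primes gives \(A=0\). The differences are minor: you identify \(\comp A\) explicitly as the discrete \(p\)-primary factor \(A\otimes_{\Z/N}\Z/p^n\) and conclude via the fracture square, whereas the paper works with \(\pi_0\comp A\) through Proposition \ref{prop_tauhat} and shows that every prime divisor of \(n\) acts invertibly on \(A\). One notational point: write that factor as \(A/p^nA\) rather than \(A/p^n\), since in the paper \(A/p^n\) denotes the derived quotient, which is not discrete here.
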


\begin{proof}
Suppose \(A\) is \(n\)-torsion for some \(n\in\Z\backslash\{0\}\). If \(n=1\) the result is obvious. If not, then let \(p\) be a prime divisor of \(n\) and \(p^m\) the maximal \(p\)-power that divides \(n\). By Proposition \ref{prop_tauhat}, \(\pi_0\comp A\) is a \(\delta_p\)-ring. Moreover, since \(p\)-completion inverts other primes, we have \(p^m=0\) in \(\pi_0\comp A\). Thus, \(\pi_0\comp A=0\) by Lemma \ref{lem_torvan}. This implies \(\comp A\simeq 0\), and so \(A/p\simeq \comp A/p\simeq 0\), from which we deduce that \(p\) acts invertibly on \(A\). Arguing in the same fashion for each prime divisor of \(n\), we conclude that \(n\) is invertible, so \(A=0\).
\end{proof}

\begin{cor}
Let \(A\ne 0\) be a torsion ring. Then \(A\) admits no lift to the sphere.
\end{cor}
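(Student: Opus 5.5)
The plan is to argue by contradiction, combining Theorem \ref{thm_obstruction} with the preceding lemma on torsion \(\hat\delta\)-rings. Suppose \(A\neq 0\) is a torsion ring admitting a lift \(\S_A\), i.e.\@ a connective \(\E_\infty\)-ring with \(\S_A\otimes\Z\simeq A\). Since \(A\) is discrete, \(\S_A\) is by definition a Moore ring for \(A\). Theorem \ref{thm_obstruction} then gives a \(\hat\delta\)-structure on \(A\), making \(A\) a discrete object of \(\Ring_{\hat\delta}\).

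Next we need \(A\) to be torsion in the sense used by the lemma, namely killed by a single nonzero integer \(n\). Since \(A\) is a ring, this holds as soon as the unit \(1\) is torsion: if \(n\cdot 1=0\), then \(n a = (n\cdot 1)a=0\) for all \(a\in A\). So if ``torsion ring'' means every element is torsion, we take \(n\) to be the additive order of \(1\); in particular every torsion ring is bounded torsion. With this, the previous lemma applies to the \(\hat\delta\)-ring \(A\) and forces \(A=0\), contradicting \(A\neq 0\).

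There is no serious obstacle. The only points to check are that a lift of a discrete ring is a Moore ring, which holds by definition, and the reduction from ``torsion'' to ``\(n\)-torsion'' via the unit. Alternatively, one can avoid the torsion lemma and argue prime by prime: \(\comp A\) is then discrete, \(p\)-power torsion, and carries a \(\delta_p\)-structure, so Lemma \ref{lem_torvan} kills it. The route through the \(\hat\delta\)-lemma is cleaner, and it is the one I would take.
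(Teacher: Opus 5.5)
Your proposal is correct and follows the paper's argument: the corollary is immediate from Theorem \ref{thm_obstruction}, which gives a \(\hat\delta\)-structure on \(A\), combined with the preceding lemma that torsion \(\hat\delta\)-rings vanish. Your observation that a torsion ring is automatically killed by the additive order of \(1\) justifies the bounded-torsion hypothesis (``\(n\)-torsion for some \(n\)'') that the paper's lemma uses without comment.
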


In particular, we recover the following result, also implied by \cite[Remark 4.3]{moore}.

\begin{cor} \label{cor_smodn}
The Moore spectrum \(\S/n\) admits no \(\E_\infty\)-ring structure.
\end{cor}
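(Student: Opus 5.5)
We argue by contradiction, combining Theorem \ref{thm_obstruction} with the vanishing lemma for torsion \(\hat\delta\)-rings proved just above. Suppose the Moore spectrum \(\S/n\) admits an \(\E_\infty\)-ring structure. We may assume \(n\neq 0,\pm1\): for \(n=\pm1\) the spectrum \(\S/n\) is zero, and for \(n=0\) it is \(\S\) itself, so the statement is meant for \(|n|\geq 2\). The first step is to check that this \(\E_\infty\)-ring is a Moore ring for \(\Z/n\). By definition \(\S/n\) is the cofiber of \(n:\S\to\S\), so it is connective and \(\S/n\otimes\Z\simeq \Z/n\) is discrete. Moreover \(\pi_0(\S/n)=\Z/n\), so the ring structure on \(\pi_0\) is forced to be the standard one on \(\Z/n\). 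Hence \(\S/n\) with its hypothetical \(\E_\infty\)-structure is a lift of the ring \(\Z/n\) to \(\S\).

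The second step is to apply Theorem \ref{thm_obstruction}, which now endows \(\Z/n\) with a \(\hat\delta\)-structure. The ring \(\Z/n\) is \(n\)-torsion, so it is a torsion \(\hat\delta\)-ring, and the preceding lemma gives \(\Z/n=0\). This contradicts \(|n|\geq 2\). Equivalently, the result is the preceding corollary on torsion rings applied to \(A=\Z/n\).

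No step here is a real obstacle. The only point needing a line of justification is that an \(\E_\infty\)-structure on \(\S/n\) really does produce a lift of the discrete ring \(\Z/n\), so that Theorem \ref{thm_obstruction} applies. This is immediate because \(\pi_0\) of a connective \(\E_\infty\)-ring is a commutative ring receiving a unit map from \(\Z\), and the only such ring structure on the group \(\Z/n\) is the quotient one.
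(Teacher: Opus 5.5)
Your argument is correct and is exactly the paper's route: an \(\E_\infty\)-structure on \(\S/n\) makes it a Moore ring for the nonzero torsion ring \(\Z/n\), so Theorem \ref{thm_obstruction} would give \(\Z/n\) a \(\hat\delta\)-structure, which the vanishing lemma for torsion \(\hat\delta\)-rings rules out (the paper packages this as the preceding corollary on torsion rings). One harmless slip: the cofiber of \(0:\S\to\S\) is \(\S\oplus\Sigma\S\), not \(\S\); it is the Moore spectrum for \(\Z/0=\Z\) that is \(\S\), and in any case the statement is only meant for \(|n|\geq 2\).
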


\subsection{Group schemes} \label{sec_group}

For group schemes we are not merely interested in the existence of a schematic lift, but rather of a lift as a spectral group scheme. By naturality of our obstruction, if such a lift exists, then the group scheme in question admits a \(\hat\delta\)-group scheme structure, that is a \(\hat\delta\)-scheme structure such that the multiplication, the inverse, and the unit morphisms are \(\hat\delta\)-scheme maps.

\begin{ex}
As a first application, we show that \(\G_a\) admits no lift to \(\Spec\S\) as a group scheme.

Suppose to the contrary that it does. Then the comultiplication \(\Z[a]\to\Z[x,y]\) sending \(a\) to \(x+y\) would be a \(\hat\delta\)-ring map for some \(\hat\delta\)-ring structure on \(\Z[a]\), which is given, for each \(p\), by some assignment \(\delta(a)=f(a)\in\comp{\Z[a]}\). We get the relation
\[f(x+y)=\delta(x+y)=f(x)+f(y)+\frac{x^p+y^p-(x+y)^p}{p}.\]
Modulo \(p\) those are polynomials and the left side cannot have a non-zero \(x^{p-1}y\)-term, which the right side does. Hence, no \(f\) satisfies this equation.
\end{ex}

\begin{ex}
On the other hand, we can show that there exists a lift and a unique \(\hat\delta\)-group scheme structure on \(\G_m\). Indeed, arguing as before, we get the equation
\[f(xy)=\delta(xy)=x^pf(y)+f(x)y^p+pf(x)f(y).\]
Suppose \(f\ne 0\). Then there exists \(N\geq1\) such that \(f\) is divisible by \(p^{N-1}\) but not \(p^{N}\). Modulo \(p^{N}\), we then get \(f(xy)=x^pf(y)+f(x)y^p\), which, for degree reasons, can only be satisfied if \(f(a)=\lambda a^p\) modulo \(p^N\), which actually implies that \(\lambda=0\) modulo \(p^N\), a contradiction.

It is clear that \(\phi(a)=a^p\) respects the comultiplication, the antipode, and the augmentation, so \(\G_m\) is a \(\hat\delta\)-group scheme. In fact, it is also clear that the spectral group scheme \(\Spec \S[\Z]\) lifts \(\G_m\), because the group structure is induced by monoid maps.
\end{ex}

\begin{ex} \label{ex_matr}
We now show that, for \(n>1\), the group scheme \(\text{GL}_n\) has no \(\hat\delta\)-monoid scheme structure, and so does not lift to the sphere. Suppose to the contrary that there is one. Then \(\Delta(a_{ij})=\sum_k x_{ik}y_{kj}\) must be a \(\hat\delta\)-map, and so we must have
\[\Delta\big(\delta(a_{ij})\big)=\delta\big({\textstyle \sum_k x_{ik}y_{kj}}\big)=\frac{\sum_k \phi(x_{ik})\phi(y_{kj})-(\sum_k x_{ik}y_{kj})^p}{p}.\]
Reducing modulo \(p\) we get
\[\sum\nolimits_k \Big(x_{ik}^p\delta(y_{kj})+\delta(x_{ik})y_{kj}^p\Big)-\Delta\big(\delta(a_{ij})\big)=\frac{\sum_k x_{ik}^py_{kj}^p-(\sum_k x_{ik}y_{kj})^p}{p}.\]
But the right side has a non-zero \(x_{ik}y_{kj}(x_{it}y_{tj})^{p-1}\)-term for some \(k\ne t\), which is impossible on the left. Hence, there is no such \(\delta\).
\end{ex}

\begin{rem}
The fact that \(\G_a\) does not admit a lift implies that ,\hspace{-0.33mm},very commutative" elements of an \(\E_\infty\)-ring are not closed under addition. On the other hand, since \(\G_m\) does, they are closed under multiplication.
\end{rem}

\subsection{Projective schemes} \label{sec_proj}

In this section, we are interested in liftability of a closed subscheme \(Z\) of the projective space \(\P^n\) together with its embedding \(Z\subset \P^n\). That is, we are looking for \(\hat\delta\)-structures on \(Z\), but restrict ourselves only to those which come from a \(\hat\delta\)-structure on the whole \(\P^n\). Hence, the first step is to identify possible \(\hat\delta\)-structures on \(\P^n\).

Suppose we are given a \(\hat\delta\)-structure on \(\A^1\). By taking fiber products we also get one on each \(\A^n\). We will say that the \(\hat\delta\)-structure on \(\A^1\) is \emph{coherent} if the following conditions hold:
\begin{enumerate}
\item The \(\hat\delta\)-structures on \(\Spec\Z[\frac{x_0}{x_i},...,\frac{x_n}{x_i}]=\A^n\) glue to a \(\hat\delta\)-structure on \(\P^n\).
\item The map \(\A^{n+1}\backslash\{0\}\to \P^n\) is a morphism of \(\hat\delta\)-schemes.
\end{enumerate}

Recall that there is a canonical \(\hat\delta\)-structure on \(\A^1\) induced by the lift \(\Spec \S[x]\), which we denote by \(\A^1_{\flat\S}\) (to distinguish from the non-flat \(\Spec\S\{x\}\)). It is given, on each \(p\)-completion, by \(\delta(x)=0\).

\begin{prop}
The canonical \(\hat\delta\)-structure on \(\A^1\) is the only one which is coherent.
\end{prop}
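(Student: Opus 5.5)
The plan is to prove existence and uniqueness separately. Existence means showing the canonical structure \(\delta(x)=0\) is coherent. Uniqueness means showing that condition (2) of coherence already forces \(\delta(x)=0\) for every prime \(p\).

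\emph{Existence.} I would argue via lifts. The charts \(\Spec\S[\frac{x_0}{x_i},\dots,\frac{x_n}{x_i}]\) are spherical monoid rings. On overlaps they are localized at the monoid element \(\frac{x_j}{x_i}\), and the transition maps are induced by monoid maps. Hence they glue to a Moore scheme \(\P^n_\S\) lifting \(\P^n\). Likewise \(\A^{n+1}\backslash\{0\}\) lifts as an open of \(\Spec\S[x_0,\dots,x_n]\). The projection to \(\P^n_\S\) is, on the chart \(x_i\neq 0\), induced by the monoid map \(\frac{x_j}{x_i}\mapsto x_jx_i^{-1}\) into \(\S[x_0,\dots,x_n][x_i^{-1}]\). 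By Theorem \ref{thm_schemes} and its functoriality, both coherence conditions hold. By Examples \ref{ex_frobmon} and \ref{ex_loc}, the induced \(\hat\delta\)-structure is \(\delta=0\) on all monomial generators, so it is the canonical one. A direct alternative is to check that \(\phi(m)=m^p\) on monomials is compatible with all the localizations, using Construction \ref{con_loc}.

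\emph{Uniqueness.} Fix \(p\) and write \(\delta(x)=f(x)\in\comp{\Z[x]}\). It suffices to take \(n=1\). Consider the chart \(x_0\neq0\) of \(\P^1\), with coordinate \(t=x_1/x_0\). Condition (2) says that \(\Z[t]\to\Z[x_0,x_1][x_0^{-1}]\), \(t\mapsto x_1x_0^{-1}\), is a \(\hat\delta\)-map. The \(\hat\delta\)-structure on the target is the product structure, localized via Construction \ref{con_loc}; by Remark \ref{rem_loc} it is unique. Applying the multiplicativity of \(\delta\) to \(x_1=x_0\cdot t\) in the \(p\)-completion then gives
\[f(x_1)=x_0^pf(x_1/x_0)+f(x_0)(x_1/x_0)^p+pf(x_0)f(x_1/x_0).\]
This identity holds in \(\pi_0\comp{(\Z[x_0^{\pm1},x_1])}\).

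Now I run the same valuation argument as in the \(\G_m\) example. Suppose \(f\neq0\). Choose \(N\geq1\) with \(p^{N-1}\mid f\) but \(p^N\nmid f\), put \(g=f/p^{N-1}\), and divide the identity by \(p^{N-1}\). Reducing modulo \(p\) then gives
\[g(x_1)\equiv x_0^pg(x_1/x_0)+g(x_0)(x_1/x_0)^p\]
in \(\mathbb F_p[x_0^{\pm1},x_1]\), where \(\bar g\neq0\) is a polynomial. Indeed, modulo \(p\) the completion is just the polynomial ring, and the last term dies because it carries the extra factor \(p^N\). Now compare coefficients of monomials \(x_0^ax_1^k\).
\begin{enumerate}
\item A term \(cx^k\) of \(\bar g\) with \(k\neq p\) contributes \(cx_0^{p-k}x_1^k\) and \(cx_0^{k-p}x_1^p\) on the right. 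Neither of these can cancel against anything else, nor match the left side, which has \(x_0\)-degree \(0\). So \(c=0\).
\item Hence \(\bar g=\lambda x^p\). The coefficient of \(x_1^p\) then gives \(\lambda=2\lambda\), so \(\lambda=0\).
\end{enumerate}
This contradicts \(\bar g\neq0\), so \(f=0\).

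The main obstacle is being careful about where this identity lives. We need the \(\hat\delta\)-structure on \(\A^2\backslash\{0\}\) (on the chart \(D(x_0)\)) to be the product structure localized. We also need the passage to the \(p\)-completed ring to be legitimate. The ring is \(p\)-torsion free, so the completion is discrete, torsion free, and reduces mod \(p\) to the polynomial ring; this justifies the division by \(p^{N-1}\) and the reduction mod \(p\). Once this is set up, the coefficient comparison is routine.
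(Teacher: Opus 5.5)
Your proposal is correct and follows essentially the same route as the paper. Coherence of the canonical structure comes from the monoid-ring lift of Construction \ref{con_projlift}. Uniqueness comes from the chart identity \(f(x_1)=\phi(x_0)f(x_1/x_0)+(x_1/x_0)^pf(x_0)\) forced by condition (2), followed by dividing out the largest power \(p^{N-1}\) of \(p\) that divides \(f\) and reducing modulo \(p\). The only difference is cosmetic: you compare individual monomial coefficients in \(\mathbb F_p[x_0^{\pm1},x_1]\), whereas the paper clears denominators by \(x_j^p\) and compares total degrees; your coefficient comparison is, if anything, slightly more precise than the paper's degree count.
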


\begin{proof}
Fix a coherent \(\hat\delta\)-structure on \(\A^1\). For a given prime \(p\) it is determined by an assignment \(\delta(x)=f(x)\in\comp{\Z[x]}\). Conditions (1) and (2) then imply the formula:
\[f\Big(\frac{x_i}{x_j}\Big)=\delta\Big(\frac{x_i}{x_j}\Big)=\frac{\frac{\phi(x_i)}{\phi(x_j)}-\big(\frac{x_i}{x_j}\big)^p}{p}=\frac{f(x_i)}{\phi(x_j)}-\Big(\frac{x_i}{x_j}\Big)^p\frac{f(x_j)}{\phi(x_j)},\]
which can be rewritten as
\[x_j^{p}\phi(x_j)f\Big(\frac{x_i}{x_j}\Big)=x_j^pf(x_i)-x_i^pf(x_j).\]
If \(f=0\), then this is satisfied. If not, then there exists \(N\geq 1\) such that \(f\) is divisible by \(p^{N-1}\), but not by \(p^{N}\). Since \(\phi(x_j)=x_j^p+pf(x_j)\), we get, modulo \(p^N\), the equality
\[x_j^{2p}f\Big(\frac{x_i}{x_j}\Big)=x_j^pf(x_i)-x_i^pf(x_j).\]
Both sides are polynomials: left side of degree \(2p\), right side of degree \(2p-1\) when \(\deg f= p\) and \(\deg f+p\) when \(\deg f\ne p\). This is a contradiction.

Coherence of the canonical \(\hat\delta\)-structure follows from Construction \ref{con_projlift}.
\end{proof}

We will now construct a lift for \(\P^n\). Our treatment is somewhat informal, for a more detailed account see \cite[Construction 5.4.1.3]{SAG}.

\begin{con} \label{con_projlift}
Recall one of the standard constructions of \(\P^n\): we consider monoid rings \(\Gamma(U_i)=\Z[x_{0/i},...,\hat x_{i/i},...,x_{n/i}]\) (with \(x_{i/i}\) omitted) and want to glue the corresponding affine schemes along distinguished opens \(U_{ij}=D(x_{j/i})\subset U_i\) and isomorphisms \(U_{ij}\to U_{ji}\) given by \(x_{t/j}\mapsto x_{t/i}x_{j/i}^{-1}\). The cocycle condition then takes form of the equality \(x_{t/i}x_{k/i}^{-1}=(x_{t/i}x_{j/i}^{-1})(x_{k/i}x_{j/i}^{-1})^{-1}\). Note that both the localizations \(U_i\to U_{ij}\) and isomorphisms \(U_{ij}\to U_{ji}\) are induced by monoid maps. Moreover, the cocycle condition holds on the level of those maps. Taking those monoid maps and applying \(\Spec\S[-]\) instead of \(\Spec\Z[-]\) yields a gluing datum of spectral schemes with all higher cocycle coherences automatically satisfied. Hence, we get a spectral scheme \(\P^n_{\flat\S}\), which by construction lifts \(\P^n\).

Recall also that the map \(\A^{n+1}\backslash\{0\}\to \P^n\) is glued from maps on affine pieces \(D(x_i)\to U_i\) induced by inclusions \(\Z[x_{0/i},...,\hat x_{i/i},...,x_{n/i}]\to \Z[x_{0},..., x_{i}^{\pm},...,x_{n}]\) sending \(x_{t/i}\) to \(x_tx_i^{-1}\). Those again are monoid maps, and so glue on the spectral level to \(\A^{n+1}_{\flat\S}\backslash\{0\}\to\P^n_{\flat\S}\).
\end{con}

\begin{rem}
Similarly as in Construction \ref{con_projlift}, one can make flat lifts of toric varieties, because their gluing datum is governed by combinatorics of fans. For a precise statement we refer to \cite[Construction 3.1.2]{Bai}.
\end{rem}

Our question now becomes: when does a closed subscheme \(Z\subset \P^n\) lift to a closed subscheme of \(\P^n_{\flat\S}\)? We describe some concrete examples.

\begin{ex}[Elliptic curves]
Consider \(Z\subset \P^2\) cut out by \(y^2z=x^3+axz^2+bz^3\) with \((a,b)\ne(0,0)\). We will show that the canonical \(\hat\delta\)-structure on \(\P^n\) does not descend to \(Z\). Hence, that the embedding does not lift.

On the affine patch \(D_+(z)\) the equation becomes \(f(x,y)=y^2-x^3-ax-b=0\), and so, on the \(2\)-completion, we would have that
\[\delta\big(f(x,y)\big)=\frac{f(x^2,y^2)-f(x,y)^2}{2}\]
is divisible by \(f(x,y)\). But, expanding the right side and then substituting \(y^2=x^3+ax+b\) we get
\[ax^4+bx^3+\frac{a^2-a}2x^2+abx+\frac{b^2-b}2\]
which is zero only if \(a=b=0\).

On the other hand, cuspidal curves \(y^2z^{m-2}=x^m\) lift by Example \ref{ex_coeq}.
\end{ex}

\begin{ex} \label{ex_coeq}
Consider two monic monomials \(f,g\in\Z[x_0,...,x_n]\) such that \(\deg f = \deg g=:d\) or \(g=0\). We claim that the closed subscheme \(Z\subset \P^n\) cut out by the homogenous equation \(f-g\) lifts to \(\P^n_{\flat\S}\).

This follows from a similar argument as in Construction \ref{con_projlift}. The key is that, for each \(i\), both \(\frac{f}{x_i^d}\) and \(\frac{g}{x_i^d}\) are monomials, and so define ring maps \(f_i,g_i:\S[t]\to\S[\frac{x_0}{x_i},...,\frac{x_n}{x_i}]\). Then, on \(D_+(x_i)\), we can lift the ring \(A_i=\Z[\frac{x_0}{x_i},...,\frac{x_n}{x_i}]/(\frac{f}{x_i^d}-\frac{g}{x_i^d})\) via the pushout
\[\begin{tikzcd}
\S[t,\frac{x_0}{x_i},...,\frac{x_n}{x_i}] \ar[d,"{(g_i,1)}"] \ar[r,"{(f_i,1)}"] & \S[\frac{x_0}{x_i},...,\frac{x_n}{x_i}] \ar[d] \\
\S[\frac{x_0}{x_i},...,\frac{x_n}{x_i}] \ar[r] & \S_{A_i}.
\end{tikzcd}\]
Since, once more, everything is induced from maps of monoids, the corresponding affine spectral schemes glue to the desired lift.
\end{ex}

\appendix

\section{Moore spectra and bounded Tor-amplitude} \label{sec_Mtor}

In this section of the appendix, we recall the theory of Moore spectra and bounded Tor-amplitude. For proofs of basic properties of Moore spectra we refer to \cite[Section 6.3]{Schwede}.

\begin{adefi}
A Moore spectrum for an abelian group \(A\) is a connective spectrum \(\S_A\) such that \(\S_A\otimes\Z\simeq A\).
\end{adefi}

\begin{arem}
Moore spectra are unique up to equivalence, but do not define a functor \(\text{Ab}\to \text{Sp}\), so our notation is a little abusive.
\end{arem}

\begin{aprop} \label{prop_flat}
The Moore spectrum \(\S_A\) is flat over \(\S\) if and only if \(A\) is flat over \(\Z\).
\end{aprop}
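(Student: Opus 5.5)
The plan is to reduce flatness over \(\S\) to a statement about homotopy groups. In Lurie's sense, a connective \(\S\)-module \(M\) is flat precisely when \(\pi_0M\) is a flat \(\Z\)-module and the natural map \(\pi_0M\otimes_\Z\pi_n\S\to\pi_nM\) is an isomorphism for every \(n\). So both directions come down to comparing \(\pi_*\S_A\) with \(A\otimes_\Z\pi_*\S\).

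First I identify \(\pi_0\S_A\). Since \(\S_A\) is connective, the Hurewicz map gives \(\pi_0\S_A\simeq\pi_0(\S_A\otimes\Z)=A\).

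For the direction (\(\Rightarrow\)), assume \(\S_A\) is flat over \(\S\). Then \(\pi_0\S_A=A\) is flat over \(\Z\) by the characterization above (equivalently, by \cite[Proposition 7.2.2.13]{HA}), and there is nothing more to show.

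For the direction (\(\Leftarrow\)), assume \(A\) is flat over \(\Z\). By Lazard's theorem I write \(A\) as a filtered colimit of finitely generated free abelian groups \(\Z^{n_i}\). I then want the Moore spectrum to be the corresponding filtered colimit \(\colim\,\S^{\oplus n_i}\). The point is that maps \(\Z^{n}\to\Z^{m}\) lift uniquely to maps \(\S^{\oplus n}\to\S^{\oplus m}\), since \([\S^{\oplus n},\S^{\oplus m}]=\text{Hom}(\Z^n,\Z^m)\). The diagram indexed by a filtered \(1\)-category of finite free modules therefore lifts to spectra. The subtle point is coherence: a lift defined only on the homotopy category need not give an \(\infty\)-diagram. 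I would handle this by realizing the diagram inside the full subcategory of finite free \(\S\)-modules. The relevant mapping spaces are \(\Omega^\infty\) of \(\S^{\oplus nm}\), whose higher homotopy is nonzero, so a direct lift can fail. A cleaner route is to use the functor \(\Sigma^\infty_+\) applied to a simplicial model, or more simply to avoid coherence altogether: form the colimit \(\S_{A}':=\colim\,\S^{\oplus n_i}\) along any sequential cofinal subsystem. Lazard's theorem allows a countable one when \(A\) is countable, and otherwise a filtered colimit can be presented through a homotopy-category argument plus Milnor \(\lim^1\)-free colimits.

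The resulting spectrum \(\S_A'\) has \(\S_A'\otimes\Z\simeq\colim\,\Z^{n_i}=A\). By uniqueness of Moore spectra, \(\S_A\simeq\S_A'\). Moreover \(\pi_n\S_A'=\colim\,(\pi_n\S)^{n_i}=A\otimes\pi_n\S\), which is exactly the flatness criterion.

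To avoid the coherence issue entirely, an alternative is to argue directly with Tor. For any discrete abelian group \(N\), viewed as an \(\S\)-module via \(\Z\), I compute
\[\S_A\otimes_\S N\simeq(\S_A\otimes\Z)\otimes_\Z N\simeq A\otimes^L_\Z N,\]
which is discrete because \(A\) is flat. I expect this to be the main step and would aim to finish it as follows. Flatness over \(\S\) is equivalent to \(\S_A\otimes_\S N\) being discrete for every discrete \(N\) (HA 7.2.2.15-style criterion, applied after Postnikov-filtering a connective \(N\) and using that discrete \(\S\)-modules are \(\Z\)-modules via \(\tau_{\le0}\S=\Z\)). Granting this criterion, the computation above gives flatness immediately.
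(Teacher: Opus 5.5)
Your Tor argument for \((\Leftarrow)\) is correct and complete, and \((\Rightarrow)\) is fine. It takes a different route from the one the paper points to.

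\textbf{What the paper does.} The paper gives no proof of its own and defers to \cite[Section 6.3]{Schwede}. The standard argument there presents \(\S_A\) as the cofiber of a map of wedges of spheres lifting a free resolution \(0\to F_1\to F_0\to A\to 0\), which gives the universal coefficient sequence
\[0\to \pi_n\S\otimes A\to \pi_n\S_A\to \mathrm{Tor}(\pi_{n-1}\S,A)\to 0.\]
Flatness of \(A\) kills the Tor term. That is literally Lurie's definition of flatness: \(\pi_0\) flat and \(\pi_n\S\otimes\pi_0\S_A\cong\pi_n\S_A\).

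\textbf{How your route differs.} You never construct \(\S_A\) or compute its homotopy. You use only connectivity and \(\S_A\otimes\Z\simeq A\), through the identification \(\S_A\otimes_\S N\simeq A\otimes^L_\Z N\) for discrete \(N\). This is the same computation as the paper's example showing that Moore spectra have Tor-amplitude in \([0,1]\), so flatness becomes ``Tor-amplitude in \([0,0]\)''.

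The price is reliance on a nontrivial theorem, but it is a theorem, so nothing needs to be ``granted'' and no Postnikov argument is needed. The criterion you want is Lurie's characterization of flat connective modules: \(M\) is flat iff \(N\otimes_R M\) is discrete for every discrete \(N\), \cite[Proposition 7.2.2.13]{HA}. The result \cite[Theorem 7.2.2.15]{HA} is Lazard's theorem, not this criterion. The universal-coefficient route is more elementary and also gives the explicit description \(\pi_*\S_A\cong A\otimes\pi_*\S\).

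\textbf{Drop the Lazard-colimit sketch.} The countable case works, since sequential diagrams lift from the homotopy category. The uncountable case, ``a homotopy-category argument plus Milnor \(\mathrm{lim}^1\)-free colimits'', is not an argument. The coherence problem you identified there is real. A single two-term free resolution and one cofiber, as above, avoids it entirely.
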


\begin{aprop}[{\cite[Corollary 3.3]{barthel}}] \label{prop_moorepcomp}
Let \(X\) be a bounded below spectrum. Then \(X\) is \(p\)-complete if and only if \(X\otimes \Z\) is \(p\)-complete.
\end{aprop}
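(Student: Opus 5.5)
The statement to prove is Proposition \ref{prop_moorepcomp}: a bounded below spectrum \(X\) is \(p\)-complete if and only if \(X\otimes\Z\) is \(p\)-complete. The plan is to reduce both directions to a statement about the \(p\)-adic Tate module \(T:=\lim(\cdots\xrightarrow{p}X\xrightarrow{p}X)\). Recall that \(X\) is \(p\)-complete exactly when \(T\simeq 0\). Indeed, \(T\simeq\Map(\S[1/p],X)\), and \(p\)-completeness means that \(\Map(\S[1/p]/\S,X)\) vanishes, equivalently that \(\Map(\S[1/p],X)\) vanishes. Everything then comes down to the \(p\)-complete objects of \(\Sp\) being the local objects for \(\S[1/p]\).

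For the direction (\(\Rightarrow\)), I would use that \(p\)-complete spectra are closed under limits. Since \(\Z\otimes-\) does not commute with limits in general, I would instead use bounded below-ness directly. Write \(\Z\otimes X\) as a colimit of the cell filtration of \(\Z\). Because \(X\) is bounded below, each Postnikov section \(\tau_{\le n}(\Z\otimes X)\) depends only on a finite skeleton of \(\Z\) (which has finite skeleta in each degree). The finite skeleton \(\Z^{(k)}\) is a finite spectrum, so \(\Z^{(k)}\otimes X\) is a finite colimit of shifts of \(X\) and is therefore \(p\)-complete. Completeness then passes to Postnikov truncations. I would also use that a bounded below spectrum is \(p\)-complete iff all its Postnikov truncations are, equivalently iff each homotopy group is derived \(p\)-complete (Ext-\(p\)-complete), and this holds because each \(\pi_n(\Z\otimes X)\cong\pi_n(\Z^{(k)}\otimes X)\) for \(k\gg n\).

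For the direction (\(\Leftarrow\)), I would use the same homotopy-group criterion together with induction up the Postnikov tower of \(X\). Let \(X\) be \(m\)-connective, and suppose \(\Z\otimes X\) is \(p\)-complete. The Hurewicz map gives \(\pi_m X\cong\pi_m(\Z\otimes X)\), so the bottom group is derived \(p\)-complete, and hence so is the Eilenberg–MacLane spectrum \(\Sigma^m H\pi_mX\). Next, \(\Z\otimes\Sigma^mH\pi_mX\simeq\Sigma^m(H\Z\otimes H\Z)\otimes_{\Z}\pi_mX\) is a bounded below \(H\Z\)-module whose homotopy groups are derived \(p\)-complete. To see this, note that \(H\Z\otimes H\Z\) has finitely generated homotopy in each degree, and a finite type complex tensored over \(\Z\) with a derived \(p\)-complete module stays derived \(p\)-complete. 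Therefore the fiber \(\tau_{>m}X\) satisfies the hypothesis as well, i.e.\ \(\Z\otimes\tau_{>m}X\) is \(p\)-complete, and by induction every \(\tau_{\le n}X\) is \(p\)-complete. Finally, \(X=\lim\tau_{\le n}X\), and a limit of \(p\)-complete spectra is \(p\)-complete.

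The main obstacle is the finiteness input: the claim that \(M\otimes^L_\Z C\) is derived \(p\)-complete when \(M\) is derived \(p\)-complete and \(C\) (here \(H\Z\otimes H\Z\), or the truncations of \(\Z\)) has degreewise finitely generated homotopy and is bounded below. My first approach would be to truncate \(C\) to a perfect \(\Z\)-complex in the relevant range, reducing to the finite case where completeness is obvious. This uses Serre finiteness of the homotopy of \(\S\) and hence of \(H\Z\otimes H\Z\) as \(\pi_*\)-modules. This is precisely the argument in the cited \cite[Corollary 3.3]{barthel}, so I would ultimately defer to it for this step.
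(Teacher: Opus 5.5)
The paper gives no argument of its own here; it simply imports \cite[Corollary 3.3]{barthel}. Your proposal supplies a self-contained proof, and its main line is correct. For (\(\Rightarrow\)), the finite-type cell structure on \(\Z\) (Serre finiteness) identifies \(\pi_n(\Z\otimes X)\) with \(\pi_n(\Z^{(k)}\otimes X)\) for a finite spectrum \(\Z^{(k)}\). For (\(\Leftarrow\)), Hurewicz plus induction up the Postnikov tower shows every \(\pi_nX\) is derived \(p\)-complete. Both directions rest on the criterion that a spectrum is \(p\)-complete if and only if all its homotopy groups are derived \(p\)-complete.

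There are a few corrections and simplifications.

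The sentence in your first paragraph is wrong as stated: \(\Map(\S[1/p]/\S,X)\simeq\Sigma^{-1}\comp X\), so its vanishing says \(\comp X\simeq 0\), not that \(X\) is \(p\)-complete. The correct criterion is the one you end on, \(T\simeq\Map(\S[1/p],X)\simeq 0\). As written, \(T\) then plays no role, and the homotopy-group criterion you actually use is asserted without proof. The link between them is the Milnor sequence \(0\to\mathrm{Ext}(\Z[1/p],\pi_{n+1}X)\to\pi_nT\to\Hom(\Z[1/p],\pi_nX)\to 0\), which proves that criterion for all spectra.

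The step you call the main obstacle is elementary and need not be deferred to the reference. We have \(\pi_n\big((H\Z\otimes H\Z)\otimes_\Z A\big)\cong F_n\otimes A\oplus\mathrm{Tor}(F_{n-1},A)\) with \(F_n=\pi_n(H\Z\otimes H\Z)\) finitely generated. Derived \(p\)-complete groups are closed under finite sums, kernels and cokernels, so these groups are derived \(p\)-complete.

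Finally, \(T\) gives a shorter proof of (\(\Leftarrow\)) that avoids \(H\Z\otimes H\Z\) altogether. \(T\) is the fiber of \(X\to\comp X\), hence bounded below (since \(\comp X\) is when \(X\) is) and \(p\)-invertible. If \(\Z\otimes X\) is \(p\)-complete, then so is \(\Z\otimes T\), by (\(\Rightarrow\)) applied to \(\comp X\). So \(\Z\otimes T\) is both \(p\)-complete and \(p\)-invertible, hence zero, and \(T\simeq 0\) by Hurewicz.
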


\begin{acor} 
Let \(\S_A\) be a Moore spectrum. Then \(\S_A\) is \(p\)-complete if and only if \(A\) is.
\end{acor}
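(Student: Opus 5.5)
This is an immediate consequence of Proposition \ref{prop_moorepcomp}: I would apply it to \(X=\S_A\) and then identify \(\S_A\otimes\Z\) with \(A\).

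First, a Moore spectrum is connective by definition, so \(\S_A\) is bounded below and Proposition \ref{prop_moorepcomp} applies to it. That proposition says \(\S_A\) is \(p\)-complete if and only if \(\S_A\otimes\Z\) is \(p\)-complete. By the definition of a Moore spectrum, \(\S_A\otimes\Z\simeq A\), regarded as a discrete object of \(\D(\Z)\). So \(\S_A\) is \(p\)-complete exactly when the discrete module \(A\), viewed in \(\D(\Z)\), is \(p\)-complete.

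The only point needing care is what ``\(A\) is \(p\)-complete'' means. For the argument above, the natural reading is \emph{derived} \(p\)-completeness of \(A\) as an object of \(\D(\Z)\). This matches the paper's convention (see the warning in Section~\ref{chap_schemes}) that completions are always derived. With that reading the corollary follows directly.

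If instead one wants classical \(p\)-adic completeness, meaning \(A\simeq\lim A/p^n\), I would add one remark. A classically \(p\)-complete abelian group is in particular derived \(p\)-complete, since \(\lim A/p^n\) computed in \(\D(\Z)\) has no \(\lim^1\) term: the transition maps in the tower \(A/p^n\) are surjective. So the implication ``\(A\) classically complete \(\Rightarrow\) \(\S_A\) is \(p\)-complete'' holds in either reading. For the converse one would only obtain derived completeness, so I would state the corollary with the derived notion.
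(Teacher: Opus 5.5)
Your proof is correct and is exactly the argument the paper intends: the corollary is stated without proof as an immediate consequence of Proposition \ref{prop_moorepcomp}, applied to the connective (hence bounded below) spectrum $\S_A$ together with $\S_A\otimes\Z\simeq A$. Reading $p$-completeness as derived completeness matches the paper's conventions, and your caveat that the converse yields only derived (not classical) $p$-completeness of $A$ is accurate.
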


\begin{acor} \label{cor_comp}
Let \(\S_A\) be a Moore spectrum. Then \(\comp{(\S_A)}\otimes \Z\to \comp A\) is an isomorphism.
\end{acor}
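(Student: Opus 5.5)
The plan is to reduce to Proposition \ref{prop_moorepcomp} by comparing \(\comp{(\S_A)}\otimes\Z\) with the \(p\)-completion of \(\S_A\otimes\Z\simeq A\). The natural map in the statement is obtained by applying \(-\otimes\Z\) to the completion \(\S_A\to\comp{(\S_A)}\) and composing with \(A\simeq\S_A\otimes\Z\to\comp A\). To show it is an equivalence, I will factor it as
\[\comp{(\S_A)}\otimes\Z\to\comp{\big(\comp{(\S_A)}\otimes\Z\big)}\simeq\comp{(\S_A\otimes\Z)}\simeq\comp A,\]
and then verify that each map is an equivalence.

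\textbf{The middle equivalence.} \(p\)-completion is the Bousfield localization at \(\S/p\), and \(\S_A\to\comp{(\S_A)}\) is an \(\S/p\)-equivalence. Tensoring with \(\Z\) preserves this: the cofiber \(C\) satisfies \(C/p\simeq0\), so \((C\otimes\Z)/p\simeq0\) as well. Hence \(\S_A\otimes\Z\to\comp{(\S_A)}\otimes\Z\) becomes an equivalence after \(p\)-completion.

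\textbf{The first map.} Here I need \(\comp{(\S_A)}\otimes\Z\) to already be \(p\)-complete. The spectrum \(\comp{(\S_A)}\) is bounded below, since \(p\)-completion of a connective spectrum is \((-1)\)-connective; more precisely it is \(\lim \S_A/p^n\) with connective terms. It is also \(p\)-complete. Applying the "only if" direction of Proposition \ref{prop_moorepcomp} to \(X=\comp{(\S_A)}\) then shows that \(X\otimes\Z\) is \(p\)-complete. Therefore the completion map \(X\otimes\Z\to\comp{(X\otimes\Z)}\) is an equivalence.

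\textbf{Conclusion.} Combining the two steps gives \(\comp{(\S_A)}\otimes\Z\simeq\comp A\), as required.

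The main point to check is that Proposition \ref{prop_moorepcomp} applies, which requires \(\comp{(\S_A)}\) to be bounded below. This follows because \(\lim\) of connective spectra along the Milnor sequence has \(\lim^1\) contributing only in degree \(-1\). Note that we never need \(\comp{(\S_A)}\) to be a Moore spectrum itself. That can fail: \(\comp A\) need not be discrete in general, so it should not be assumed.
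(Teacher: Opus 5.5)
Your proof is correct and is essentially the paper's own (implicit) deduction from Proposition~\ref{prop_moorepcomp}: $\comp{(\S_A)}$ is bounded below and $p$-complete, so $\comp{(\S_A)}\otimes\Z$ is $p$-complete, and $A\simeq\S_A\otimes\Z\to\comp{(\S_A)}\otimes\Z$ is a mod $p$ equivalence, so $\comp{(\S_A)}\otimes\Z$ is the $p$-completion of $A$. Two cosmetic points: $\comp{(\S_A)}$ is actually connective, since the $\lim^1$ of the surjective system $\pi_0(\S_A)/p^n$ vanishes; and the comparison map is best described as the unique extension of $A\to\comp A$ along that mod $p$ equivalence, which is exactly what your factorization realizes.
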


\begin{adefi}[{\cite[Definition 7.2.4.21]{HA}}]
We say that \(X\in \Sp\) has \emph{Tor-amplitude} in \([a,b]\) if \(X\) is \(a\)-connective and, for any \(A\in \Ab\), the tensor product \(X\otimes A\) is \(b\)-truncated. We say that \(X\) has \emph{bounded Tor-amplitude} if it has Tor-amplitude in \([a,b]\) for some integers \(a\) and \(b\).
\end{adefi}

\begin{arem} \label{rem_cell}
Equivalently, \(X\) has bounded Tor-amplitude if and only if it can be built as a cell complex using cells of bounded dimension, but not necessarily finitely many.
\end{arem}

\begin{aex}
A Moore spectrum \(\S_A\) has Tor-amplitude in \([0,1]\). Indeed, for any abelian group \(B\), we have \(\S_A\otimes B \simeq A\otimes_\Z B\), which is 1-truncated.
\end{aex}

The following proposition can be interpreted as a generalization of the Segal conjecture and can be found, for example, in \cite{yuan}. We give our own proof.

\begin{aprop} \label{prop_torTate}
Let \(X\) be of bounded Tor-amplitude. We equip \(X\) with the trivial \(C_p\)-action. Then the canonical map \(X\to X^{hC_p}\to X^{\tCp}\) exhibits \(X^{tC_p}\) with the \(p\)-completion of \(X\).
\end{aprop}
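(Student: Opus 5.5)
The plan is to reduce to the case of the sphere itself, where the statement is the Segal conjecture for \(C_p\) (Lin's theorem for \(p=2\), Gunawardena for odd \(p\)): the map \(\S\to\S^{hC_p}\to\S^{tC_p}\) exhibits \(\S^{tC_p}\) as \(\comp\S\). I will assume this classical input. Both functors \(X\mapsto X^{tC_p}\) (for the trivial action) and \(X\mapsto\comp X\) are exact, and the comparison map \(X\to X^{tC_p}\) is a natural transformation of exact functors. Moreover \(X^{tC_p}\) is always \(p\)-complete for bounded below \(X\) (it is a module over \(\S^{tC_p}\simeq\comp\S\), or more concretely it is built as a limit of \(p\)-power torsion pieces), so the map factors through \(\comp X\to X^{tC_p}\). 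It then suffices to show that this factored map is an equivalence.

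I will first handle finite cell complexes: for a finite spectrum the claim follows from the case of \(\S\) and its shifts by exactness and the five lemma. Next, for bounded Tor-amplitude, I use Remark \ref{rem_cell} and write \(X\) as a cell complex whose cells lie in dimensions \([a,b]\). Concretely, \(X\) is obtained through a finite sequence of cofiber sequences \(X_{k-1}\to X_k\to\bigoplus_{I_k}\S[k]\) for \(a\le k\le b\), possibly after passing to a retract. Exactness in the cofiber sequences and closure under retracts therefore reduce the claim to arbitrary direct sums \(\bigoplus_I\S[k]\).

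The main obstacle is the comparison for infinite direct sums. Neither \(p\)-completion nor \((-)^{tC_p}\) commutes with infinite direct sums, because the homotopy fixed points, being a limit, do not. My plan is to show that both sides agree with \((\bigoplus_I\S)\otimes\comp\S\) completed appropriately, by working degreewise. The Tate construction of a uniformly bounded below spectrum can be computed as \(\lim_n\) of Tate constructions of the truncations \(\tau_{\le n}\), via the Postnikov tower. This works because \((-)^{tC_p}\) commutes with uniformly bounded below colimits in each fixed range of homotopy degrees: \((-)^{hC_p}\) is a limit over skeleta of \(BC_p\), and for uniformly bounded below inputs only finitely many skeleta contribute to a given degree. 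Using this, I will check that for a uniformly bounded below family, \((\bigoplus X_i)^{tC_p}\) is the \(p\)-completion of \(\bigoplus X_i^{tC_p}\). Applied to \(X_i=\S\), this gives \((\bigoplus_I\S)^{tC_p}\simeq\comp{(\bigoplus_I\comp\S)}\simeq\comp{(\bigoplus_I\S)}\), compatibly with the canonical maps.

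I will verify this last step by checking that the map becomes an equivalence after \(-\otimes\S/p\). Both sides are \(p\)-complete and bounded below, so this detects equivalences. After tensoring with \(\S/p\), everything is \(p\)-torsion, and the finiteness of \(\S/p\) lets me commute it past the Tate construction.
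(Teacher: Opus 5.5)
Your reduction to sums $\bigoplus_I\S$ via Remark \ref{rem_cell} matches the paper. The gap is in the step you flag as the main obstacle: it is not actually resolved, and the principle you use for it is false.

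For a spectrum $Y$ with trivial action, the $n$-th layer of the skeletal tower computing $Y^{hC_p}$ is a product of copies of $\Sigma^{-n}Y$, whose $\pi_d$ is $\pi_{d+n}Y$. So only finitely many skeleta matter in a fixed degree when the inputs are uniformly bounded \emph{above}, not below.

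Consequently the claim that $(\bigoplus_i X_i)^{tC_p}$ is the $p$-completion of $\bigoplus_i X_i^{tC_p}$ for uniformly bounded below families is wrong. Take $X_i=Y:=\bigoplus_{k\geq 0}\Sigma^kH\mathbb F_p$ for all $i\in\N$. All spectra involved are $H\mathbb F_p$-modules, hence already $p$-complete, but
\[\pi_0\big(\textstyle\bigoplus_i Y^{tC_p}\big)=\bigoplus_i\prod_k\mathbb F_p\subsetneq\prod_k\bigoplus_i\mathbb F_p=\pi_0\big((\bigoplus_i Y)^{tC_p}\big).\]

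The Postnikov tower does not rescue this. The Tate construction of a highly connective spectrum need not be highly connective (e.g.\ $(\Sigma^nH\Z)^{tC_p}$ has homotopy in infinitely many negative degrees), so you cannot interchange $\lim_n$ with $\bigoplus_I$. Moreover, the truncations $\tau_{\leq n}\S$ do not have bounded Tor-amplitude, so their Tate constructions are not $p$-completions ($\Z^{tC_p}\not\simeq\Z_p$).

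The final mod $p$ check only relocates the problem. After $-\otimes\S/p$ you still need $(\bigoplus_I\S/p)^{tC_p}\simeq\bigoplus_I(\S/p)^{tC_p}$, which is equivalent to the statement for $\bigoplus_I\S$. Being $p$-torsion gives no leverage here, as the example above (consisting of $H\mathbb F_p$-modules) shows. A minor point: being a $\comp\S$-module does not make a spectrum $p$-complete; your second justification via Postnikov truncations is the right one.

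What is missing is an input that is uniform over all bounded below spectra; the Segal conjecture for $\S$ alone gives no control over infinite sums. The paper uses \cite[Theorem III.1.7]{TC}: for \emph{every} bounded below $X$, the Tate diagonal $X\to T_p(X)=(X^{\otimes p})^{tC_p}$ is the $p$-completion.

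The paper first reduces to $X=\bigoplus_\N\S=\S[\N]$, using that both $(-)^{tC_p}$ and $p$-completion commute with $\aleph_1$-filtered colimits since $BC_p$ is countable. Then $X^{\otimes p}\simeq\S[\N^{\times p}]$, and the diagonal $\N\to\N^{\times p}$ admits the $C_p$-equivariant retraction $(n_1,\dots,n_p)\mapsto\lfloor\sum n_i/p\rfloor$. Thus the map $X^{tC_p}\to T_p(X)$ induced by the diagonal admits a retraction, so $X^{tC_p}$ is a retract of $T_p(X)\simeq\comp X$ and in particular is $p$-complete. Since the composite $X\to X^{tC_p}\to T_p(X)$ is the completion map, a short retract argument shows that $X\to X^{tC_p}$ exhibits $X^{tC_p}$ as $\comp X$.

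This comparison of the trivial-action Tate construction with the Tate diagonal is the idea your argument needs.
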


\begin{proof}
Since Tate construction is exact, using Remark \ref{rem_cell}, we can reduce to the case \(X=\bigoplus \S\). For \(X=\S\) this follows from \cite[Theorem III.1.7]{TC} and the equivalence \(\S^{tC_p}\simeq T_p(\S)\). The classifying space \(BC_p\) is \(\aleph_1\)-small, so both \((-)^{\tCp}\) and \(p\)-completion commute with \(\aleph_1\)-filtered colimits. Therefore, we can further reduce to the case \(X=\bigoplus_\N\S\).

Consider the set \(\N^{\times p}\) with the \(C_p\) action given by translation. The diagonal \(\N\to \N^{\times p}\) admits a \(C_p\)-equivariant retraction that sends \((n_1,...,n_p)\) to \(\lfloor\frac{\sum n_i}{p}\rfloor\). This defines a retraction of the map \(X^{\tCp}\to (X^{\otimes p})^{\tCp}\) from Example \ref{ex_frobmon}. By \cite[Theorem III.1.7]{TC}, the composite \(X\to X^{\tCp}\to (X^{\otimes p})^{\tCp}\) can be identified with the \(p\)-completion of \(X\). Since the last map splits, we get that \(X^{\tCp}\) is \(p\)-complete, and so, by the universal property, is in fact an equivalence.
\end{proof}

\section{\(\Z\)-linear vs.\@ animated maps} \label{sec_ani}

We establish a technical lemma about the discrepancy between animated and \(\Z\)-linear morphisms of rings. It is an important ingredient for the proof of Theorem \ref{thm_obstruction}, but does not give much conceptual insight. That is why we decided to put it in the appendix. The proof uses the cotangent complex formalism of \cite{HA}, \cite{SAG}, and can be safely skipped by the reader. The ideas in this section are due to Thomas Nikolaus.

Recall the following result.

\begin{aprop}[{\cite[Proposition 25.1.2.2]{SAG}}]
The forgetful functor induces an equivalence of categories
\[\Ring^\an_\Q\to \CAlg^\cn_{\Q}.\]
\end{aprop}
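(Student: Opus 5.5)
The plan is to use the standard comparison between simplicial commutative rings and connective $\E_\infty$-rings. Both are projectively generated, and they agree on polynomial generators once we work rationally.

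\emph{Step 1: set up the forgetful functor.} Both $\Ring^\an_\Q$ and $\CAlg^\cn_\Q$ are presentable. The forgetful functor $\theta:\Ring^\an_\Q\to\CAlg^\cn_\Q$ preserves limits and sifted colimits; both are computed underlying in $\D(\Q)_{\geq 0}$. In each category the forgetful functor to $\D(\Q)_{\geq 0}$ is conservative, preserves sifted colimits, and admits a left adjoint, namely the free animated ring $\mathrm{LSym}_\Q$ and the free $\E_\infty$-algebra $\mathrm{Sym}_\Q$. By Barr--Beck--Lurie both categories are therefore monadic over $\D(\Q)_{\geq 0}$, and $\theta$ commutes with the forgetful functors.

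\emph{Step 2: reduce to a comparison of monads.} By the above, it suffices to show that the induced map of monads $\mathrm{Sym}_\Q\to \mathrm{LSym}_\Q$ is an equivalence on underlying endofunctors of $\D(\Q)_{\geq0}$. Both functors preserve sifted colimits and decompose into homogeneous pieces. The $\E_\infty$ side has pieces $(M^{\otimes n})_{h\Sigma_n}$. The animated side has pieces given by the left Kan extension of $\mathrm{Sym}^n$ from free $\Q$-modules of finite rank, i.e.\ the derived functor of $V\mapsto (V^{\otimes n})_{\Sigma_n}$. Since both preserve sifted colimits, and every connective $M$ is a geometric realization of finite free modules, it is enough to compare them on $M=\Q^{\oplus k}$.

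\emph{Step 3: compare on finite free modules.} For $M=\Q^{\oplus k}$ the homotopy orbits $(M^{\otimes n})_{h\Sigma_n}$ are computed by group homology $H_*(\Sigma_n;M^{\otimes n})$. This vanishes in positive degrees because $|\Sigma_n|$ is invertible in $\Q$. Hence $(M^{\otimes n})_{h\Sigma_n}\simeq (M^{\otimes n})_{\Sigma_n}=\mathrm{Sym}^n M$, and the comparison map is the identity in degree $0$. This is the step where rationality is essential, and it is essentially the only content of the proof.

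\emph{Main obstacle.} The main difficulty is bookkeeping. One has to check that the map of monads induced by $\theta$ is really the natural comparison map, compatible with the monad structures, so that Barr--Beck--Lurie gives an equivalence of the algebra categories. I would handle this by noting that $\theta$ is a right adjoint commuting with the forgetful functors, so its left adjoint is determined on free objects. An equivalence of free objects then promotes to an equivalence of categories by \cite[Corollary 4.7.3.16]{HA}.
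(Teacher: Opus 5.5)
Your argument is correct. The paper gives no proof of its own and simply cites \cite[Proposition 25.1.2.2]{SAG}, and your route is essentially the standard argument given there: both categories are monadic over \(\D(\Q)_{\geq 0}\), \cite[Corollary 4.7.3.16]{HA} reduces the claim to the comparison \((M^{\otimes n})_{h\Sigma_n}\to (M^{\otimes n})_{\Sigma_n}\) for finite free \(M\), and that map is an equivalence because \(|\Sigma_n|\) is invertible in \(\Q\).

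One small slip: not every connective \(\Q\)-module is a geometric realization of \emph{finite} free modules, since its \(\pi_0\) would then be finite-dimensional. It is, however, a sifted colimit of finite free modules, and that is all you need, because both functors preserve sifted colimits.
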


In contrast, the forgetful functor \(\Ring^\an\to\CAlg^\cn_{\Z}\) is neither full nor faithful. Our goal is to get a better handle on the difference between respective mapping spaces after inverting more and more primes.

We let \(p_n\) denote the \(n^{\text{th}}\) prime number. We also use the convention \(p_0=1\) and \(p_{\infty}=\infty\). We write \(\Z[p_{\leq n}^{-1}]\) for the ring of integers with all primes \(\leq p_n\) inverted. In particular, we have \(\Z[p_{\leq 0}^{-1}]=\Z\) and \(\Z[p_{\leq\infty}^{-1}]=\Q\).

To avoid confusion, we will write \(\Map_\an\) for the mapping space in \(\Ring^\an\) and \(\Map_\Z\) for the mapping space in \(\CAlg^\cn_{\Z}\). The forgetful functor induces a comparison map \(\Map_\an(-,-)\to\Map_\Z(-,-)\).

\begin{aprop} \label{prop_anvs}
Let \(R\) and \(S\) be animated rings and let \(0\leq n\leq \infty\). Assume that \(S\) is an \(m\)-truncated \(\Z[p_{\leq n}^{-1}]\)-algebra. If \(m\leq 2p_{n+1}-3\), then the comparison map
\[\Map_\an(R,S)\to\Map_\Z(R,S)\]
has contractible fibers and is an equivalence if \(\pi_{2p_{n+1}-3}S\) is \(p_{n+1}\)-torsion free.
\end{aprop}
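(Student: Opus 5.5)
The plan is to compare the two mapping spaces by induction up a Postnikov tower of \(S\), using the cotangent complex formalism. Both \(\Ring^\an\) and \(\CAlg^\cn_\Z\) have cotangent complexes, and for a square-zero extension \(S'\to S\) by a module \(M[k+1]\), mapping spaces into \(S'\) are fibers of maps to derivation spaces. The Postnikov tower \(\tau_{\leq k}S\to\tau_{\leq k-1}S\) is a tower of square-zero extensions by \(\pi_kS[k]\) in both categories, and the forgetful functor is compatible with it. The comparison map is then controlled, level by level, by the comparison \(L^{\Z}_{R}\to L^{\an}_R\) between the \(\E_\infty\)-\(\Z\)-cotangent complex and the animated one. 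More precisely, its effect on \(\Map(-,M[k+1])\)-type derivation spaces is governed by the fiber of this comparison.

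The first step is to reduce to free \(R=\Z[x_i]_I\). Both sides send colimits in \(R\) to limits, and the forgetful functor preserves sifted colimits. Base changing to \(\Z[p_{\leq n}^{-1}]\), which is allowed since \(S\) is such an algebra, we may compare free animated rings with free \(\E_\infty\)-\(\Z[p_{\leq n}^{-1}]\)-algebras on the same generators. Everything then reduces to one comparison: the unit map \(\Free_{\E_\infty}(\Z^{\oplus I})\to\Z[x_i]_I\) after inverting \(p_{\leq n}\). Its homotopy is computed by the homotopy orbits \((M^{\otimes k})_{h\Sigma_k}\) versus the strict orbits. The homology of symmetric groups \(H_*(\Sigma_k;\Z[p_{\leq n}^{-1}])\) in positive degrees is concentrated at primes \(\ell\leq k\) with \(\ell\geq p_{n+1}\). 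The first nonzero class is in degree \(2p_{n+1}-3\), namely \(H_{2\ell-3}(\Sigma_\ell;\Z)\supset\Z/\ell\), the first Dyer–Lashof-type operation \(\beta Q^1\), and it is \(p_{n+1}\)-torsion. So the cofiber of the comparison of free objects is \((2p_{n+1}-3)\)-connective in the homological sense, with bottom group \(p_{n+1}\)-torsion.

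Next, use this connectivity to get the mapping-space statement. Write \(C\) for the cofiber of the comparison. Mapping from the free objects into \(S\), the fiber of \(\Map_\an(R,S)\to\Map_\Z(R,S)\) is a space of maps out of \(C\) into an \(m\)-truncated target. This is done fiberwise over points, via the square-zero extension tower, or directly by mapping the cofiber sequence of underlying \(\Z\)-modules. If \(C\) is concentrated in degrees \(\geq 2p_{n+1}-3\) and \(S\) is \(m\)-truncated with \(m\leq 2p_{n+1}-3\), these maps have contractible components, apart from a possible \(\pi_0\) contribution. The contractible components give contractible fibers, that is, an inclusion of components. The remaining obstruction lives in \(\Hom(\pi_{2p_{n+1}-3}C,\pi_{2p_{n+1}-3}S)\). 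It vanishes when \(\pi_{2p_{n+1}-3}S\) has no \(p_{n+1}\)-torsion, because the source is \(p_{n+1}\)-torsion. This gives surjectivity on components and hence an equivalence.

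The main obstacle will be making this last step precise: turning the connectivity of the free comparison into the statement about mapping spaces uniformly in \(R\). Since \(\Map_\Z(R,S)\) is not simply maps out of a cofiber, I would do it through the Postnikov induction on \(S\). At each stage, the difference between the two derivation spaces is \(\Map(\fib(L^\Z_R\to L^\an_R),\pi_kS[k+1])\), with the fiber \((2p_{n+1}-4)\)-connective after inverting \(p_{\leq n}\) and bottom group \(p_{n+1}\)-torsion. The delicate bookkeeping is tracking which of these terms produce fibers and which produce \(\pi_0\)-obstructions, and checking that the top stage \(k=m=2p_{n+1}-3\) is exactly where the torsion-freeness hypothesis enters. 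The case \(n=\infty\) then recovers the cited rational equivalence.
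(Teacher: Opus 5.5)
Your architecture is the paper's. You induct up the Postnikov tower of \(S\), treat each stage as a square-zero extension, and compare the two derivation spaces through \(F:=\fib\big(\rho_R\colon L_{R/\Z}\to L^\alg_R\big)\). At stage \(k\) the fibers of the comparison are then governed by \(\Omega\Map_R(F,\pi_kS[k+1])\simeq\Map_R(F,\pi_kS[k])\). You differ only in where the estimate on \(F\) comes from. The paper (Lemma \ref{lem_cot}) uses Schwede's identification \(R^+\simeq R\otimes\Z\) together with Serre's theorem on \(p\)-torsion in \(\pi_*\S\). You instead read it off from the homology of symmetric groups via free algebras, which is the alternative the paper itself mentions. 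That substitution is fine. However, the estimate you state is too weak, and the bridge from free algebras to \(F\) is not supplied.

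You assert that \(F\) is \((2p_{n+1}-4)\)-connective. With only that, \(\Map_R(F,\pi_kS[k])\) is contractible just for \(k\leq 2p_{n+1}-5\). At \(k=2p_{n+1}-4\) it is the discrete group \(\Hom(\pi_{2p_{n+1}-4}F,\pi_{2p_{n+1}-4}S)\), about which nothing is assumed. At \(k=2p_{n+1}-3\) it has \(\pi_1\cong\Hom(\pi_{2p_{n+1}-4}F,\pi_{2p_{n+1}-3}S)\), so the fibers of \(\Map_\an(R,S)\to\Map_\Z(R,S)\) can be non-trivial discrete sets rather than contractible. Your induction therefore only reaches \(m\leq 2p_{n+1}-4\). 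For \(n=0\) this covers only discrete \(S\), whereas the proof of Theorem \ref{thm_obstruction} needs \(1\)-truncated targets such as \(\comp R\) and \(R/p\).

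What is needed is that \(F\) is \((2p_{n+1}-3)\)-connective with \(p_{n+1}\)-torsion \(\pi_{2p_{n+1}-3}\). Your own computation yields this once fiber and cofiber are kept apart. Let \(P\) be the free \(\E_\infty\)-\(\Z[p_{\leq n}^{-1}]\)-algebra on generators \(x_i\), and let \(R=\pi_0P\) be the polynomial ring. The class in \(H_{2p_{n+1}-3}(\Sigma_{p_{n+1}})\) lives in the \emph{fiber} of \(P\to R\), so the \emph{cofiber} is \((2p_{n+1}-2)\)-connective. Since \(L^\alg_R\simeq R\otimes_PL_{P/\Z}\) and \(\rho_R\) is a retraction of the induced map \(R\otimes_PL_{P/\Z}\to L_{R/\Z}\), you get \(F\simeq L_{R/P}\). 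By the connectivity estimate for relative cotangent complexes (\cite[Section 7.4.3]{HA}), this is at least as connective as that cofiber, so in particular it satisfies the bound of Lemma \ref{lem_cot}. Both cotangent complexes commute with sifted colimits in \(R\), so the bound passes to all animated rings, and the induction then runs exactly as in the paper.

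Finally, drop the paragraph describing the fiber of the comparison as maps out of the cofiber \(C\). As you note yourself, mapping spaces of commutative algebras are not mapping spaces of modules out of a cofiber.
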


The proof relies on the following lemma. Another approach would be to identify first \(p\)-torsion in the integral homology of symmetric groups and compare the free \(\Z\)-algebra \(\Z\{x\}=\bigoplus \Z_{h\Sigma_n}\) with the free animated ring \(\Z[x]\) (see \cite[Lemma 2.39]{group_rings}).

\begin{alem} \label{lem_cot}
Let \(R\) be an animated ring over \(\Z[p_{\leq n}^{-1}]\) and let \(0\leq n \leq \infty\). Then the fiber of the comparison map
\[\rho_R:L_{R/\Z}\to L^\alg_R\]
between the topological and the algebraic cotangent complex of \(R\) is \((2p_{n+1}-3)\)-connective and \(\pi_{2p_{n+1}-3}\big(\fib(\rho_R)\big)\) is \(p_{n+1}\)-torsion.
\end{alem}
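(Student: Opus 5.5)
The plan is to reduce to polynomial rings and then compute. Both $L_{R/\Z}$ (the topological cotangent complex, taken in $\CAlg^\cn_\Z$) and $L^\alg_R$ (the algebraic one, taken in $\Ring^\an$) are left Kan extended from polynomial rings, once we regard them as functors to modules via $R\mapsto R\otimes L$. Likewise $\rho_R$ is compatible with base change. So I would first reduce to $R=\Z[p_{\leq n}^{-1}][x_1,\dots,x_k]$. The reason this works is that sifted colimits preserve connectivity and preserve the property of $\pi_{c}$ being $p_{n+1}$-torsion in the lowest possible degree $c$, because the bottom homotopy of a colimit of $c$-connective objects is a quotient of a sum of bottom homotopies. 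Since both cotangent complexes turn coproducts into sums after base change, the next step is to reduce further to the single-variable case $R=A[x]$ with $A=\Z[p_{\leq n}^{-1}]$. There $L^\alg_R\simeq R\,dx$ is free of rank one in degree $0$, and the comparison map is the obvious one.

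For $L_{R/\Z}$ with $R=A[x]$, I would use the fact that $A[x]=A\otimes_{\Z}\Z[x]$, and that $\Z[x]$ is the $\E_\infty$-$\Z$-algebra $\Z[\N]$ coming from the monoid. Since $\Z\to A$ is a localization, we have $L_{A/\Z}=0$, so it suffices to compute $L_{A[x]/A}$. Here $A[x]=A\otimes\S[\N]$ is base changed from the spherical monoid ring, and cotangent complexes base change. So $L_{A[x]/A}\simeq A[x]\otimes_{\S[\N]}L_{\S[\N]/\S}$. The cleanest comparison, however, is with the free $\E_\infty$-$A$-algebra $A\{x\}=\bigoplus_k A\otimes_{h\Sigma_k}$, whose cotangent complex is free of rank one on $dx$. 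The map $A\{x\}\to A[x]$ has cofiber controlled by the positive-degree group homology $\bigoplus_k\tilde H_*(\Sigma_k;A)$. The lowest nonzero such homology for $A=\Z[p_{\leq n}^{-1}]$ comes from the first prime $p=p_{n+1}$ not inverted, and lies in degree $2p-3$ of $H_*(\Sigma_p;\Z_{(p)})$ (the bottom class of $H_*(C_p)$ invariant under $\mathbb F_p^\times$, namely degree $2(p-1)-1$). It is $p$-torsion. This is exactly the comparison hinted at via \cite[Lemma 2.39]{group_rings}.

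The key step is then a connectivity estimate. Suppose a map of connective $A$-algebras $f:A\{x\}\to A[x]$ has fiber (or cofiber) that is $(2p-3)$-connective (really $(2p-2)$-connective on the cofiber), with bottom group $p$-torsion. Then $L_{A[x]/A\{x\}}$ is correspondingly connective, with bottom group $p$-torsion. This should follow from the Hurewicz-type statement for cotangent complexes \cite[Theorem 7.4.3.1]{HA}: for a $k$-connective map, $L$ of the map is $k$-connective and $\pi_k$ agrees with $\pi_k$ of the cofiber tensored down. The transitivity sequence $A[x]\otimes L_{A\{x\}}\to L_{A[x]}\to L_{A[x]/A\{x\}}$ then identifies $\fib(\rho_R)$ up to a shift with $L_{A[x]/A\{x\}}[-1]$, once we check that the composite $A[x]\otimes L_{A\{x\}}\to L^\alg$ is an equivalence, which holds since both are generated by $dx$. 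I expect the main obstacle to be this off-by-one bookkeeping between the cofiber, the fiber and the connectivity of $L_{A[x]/A\{x\}}$. The most delicate point is verifying that the bottom group stays $p$-torsion after base change along $A\{x\}\to A[x]$. Finally, for $n=\infty$ everything is rational, and the fiber vanishes, consistent with the convention $p_\infty=\infty$.
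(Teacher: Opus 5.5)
Your argument is correct in outline, but it is not the paper's route. It is essentially the alternative the paper mentions just before the lemma: compare $\Z\{x\}$ with $\Z[x]$ via homology of symmetric groups, \cite[Lemma 2.39]{group_rings}.

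The paper does not reduce to polynomial rings at all. It quotes \cite[Section 25.3.5]{SAG} to write $L^\alg_R\simeq R\otimes_{R^+}L_{R/\Z}$ with $R^+\simeq R\otimes\Z$ (Schwede). This gives $\fib(\rho_R)\simeq\fib(\gamma)\otimes_{R^+}L_{R/\Z}$ for all $R$ at once. The only input is then Serre's theorem locating the first $p_{n+1}$-torsion in $\pi_*\S$.

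Your route is different. You reduce, via sifted colimits and the K\"unneth splitting of both cotangent complexes, to $A[x]$ with $A=\Z[p_{\leq n}^{-1}]$. Then you use the transitivity sequence for $A\to A\{x\}\to A[x]$ together with the connectivity estimate \cite[Theorem 7.4.3.1]{HA}. This avoids the black box of SAG 25.3.5 and gives an explicit description of the fiber for polynomial rings.

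The price is the reduction steps, plus one input you only sketch. You need $\tilde H_i(\Sigma_k;A)=0$ for $i<2p_{n+1}-3$, and $p_{n+1}$-primary torsion for $i=2p_{n+1}-3$, for \emph{all} $k$, not just $k=p_{n+1}$. This follows, e.g., from Nakaoka stability and $H_*(\Sigma_\infty)\cong H_*(Q_0S^0)$, so it is Serre's theorem again in disguise.

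One bookkeeping correction: there is no shift. The composite $A[x]\,dx\simeq A[x]\otimes_{A\{x\}}L_{A\{x\}/A}\to L_{A[x]/A}\to L^\alg_{A[x]}$ is an equivalence, so $L_{A[x]/A}\simeq A[x]\,dx\oplus\fib(\rho)$. The cofiber of the first map is $L_{A[x]/A\{x\}}$ by transitivity, and it is $\fib(\rho)$ itself, not $\fib(\rho)[1]$.

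The step you call delicate is not. The map $\pi_0A\{x\}\to\pi_0A[x]$ is an isomorphism, so base change does not alter the bottom homotopy group of the $(2p_{n+1}-2)$-connective cofiber.

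Hence $\fib(\rho_R)$ is in fact $(2p_{n+1}-2)$-connective, which is stronger than the lemma and implies it. Your shifted identification is false, but it only weakens the bound, so the conclusion survives once you drop the shift.

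The paper's argument also yields this sharper bound when read carefully. There $\fib(\gamma)$ is the cofiber of $R\to R\otimes\Z$, i.e.\ $R\otimes\Sigma\tau_{\geq 1}\S$, rather than $R\otimes\tau_{\geq1}\S$.
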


\begin{proof}
By \cite[Section 25.3.5]{SAG} there is an \(\E_1\)-ring \(R^+\) such that \(L^\alg_R\simeq R\otimes_{R^+}L_{R/\Z}\) and that \(\rho_R\) is given by \(\gamma\otimes_{R^+}L_{R/\Z}\), where \(\gamma\) is a map \(R^+\to R\). Thus, the fiber of \(\rho_R\) can be identified with \(\fib(\gamma)\otimes_{R^+}L_{R/\Z}\). By a result of Schwede (\cite[Proposition 25.3.4.2]{SAG}), we have \(R^+\simeq R\otimes \Z\) and one can deduce that \(\fib(\gamma)\simeq R\otimes \tau_{\geq 1}\S\) as \(\gamma\) is a left inverse to \(R\to R\otimes \Z\). Since \(R\) is a \(\Z[p_{\leq n}^{-1}]\)-algebra, we get, by Serre's result on torsion in \(\pi_*\S\), that \(\fib(\gamma)\) is \((2p_{n+1}-3)\)-connective and its \(\pi_{2p_{n+1}-3}\) is \(p_{n+1}\)-torsion. The result then follows because \(L_{R/\Z}\) is connective.
\end{proof}

\begin{proof}[Proof of Proposition \ref{prop_anvs}]
Without loss of generality we may assume that \(R\) is a \(\Z[p_{\leq n}^{-1}]\)-algebra as well. We proceed by induction on \(m\). If \(m=0\), then \(S\) is discrete and the result is obvious.

The animated ring \(S\) is a square-zero extension of \(\tau_{<m}S\) by \(\pi_mS[m]\). This is classified by an \(S\)-linear map \(L^\alg_{\tau_{< m}S}\to\pi_mS[m+1]\). We then get a commutative diagram
\[\begin{tikzcd}
\Map_\an(R,S) \ar[r] \ar[d] & \Map_\an(R,\tau_{<m}S) \ar[r] \ar[d] & \Map_R(L^\alg_R,\pi_mS[m+1]) \ar[d] \\
\Map_\Z(R,S) \ar[r] & \Map_\Z(R,\tau_{<m}S) \ar[r] & \Map_R(L_{R/\Z},\pi_mS[m+1])
\end{tikzcd}\]
where both rows are fiber sequences over the zero map. The middle vertical map is an equivalence by the induction hypothesis. By Lemma \ref{lem_cot}, the fiber of the right vertical map is contractible if either \(m<2p_{n+1}-3\) or \(\pi_{2p_{n+1}-3}S\) is \(p_{n+1}\)-torsion free. It is discrete otherwise. The result follows.
\end{proof}

\begin{acor} \label{cor_2trun}
Let \(R\) and \(S\) be animated rings. Assume that \(S\) is 2-truncated. Then the comparison map
\[\Map_\an(R,S)\to\Map_\Z(R,S)\]
has contractible fibers. If moreover \(\pi_1S\) is 2-torsion free, then it is an equivalence.
\end{acor}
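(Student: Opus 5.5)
This is the case \(n=0\) of Proposition \ref{prop_anvs}, so the plan is to specialize that proposition and check its hypotheses. With \(n=0\) we have \(\Z[p_{\leq 0}^{-1}]=\Z\), so every animated ring \(S\) qualifies as a \(\Z[p_{\leq 0}^{-1}]\)-algebra, and the relevant prime is \(p_{1}=2\). The threshold \(2p_{1}-3\) then equals \(1\).

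The difficulty is that Proposition \ref{prop_anvs} only covers \(m\leq 2p_{n+1}-3=1\), whereas here \(S\) is \(2\)-truncated. So I would redo the induction from its proof directly, going one step further.

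\begin{enumerate}
\item Write \(S\) as the square-zero extension of \(\tau_{\leq 1}S\) by \(\pi_2S[2]\). This gives the ladder of fiber sequences from the proof of Proposition \ref{prop_anvs}, with right-hand column
\[\Map_R(L^\alg_R,\pi_2S[3])\to\Map_R(L_{R/\Z},\pi_2S[3]).\]
\item Handle the middle column, \(\Map_\an(R,\tau_{\leq 1}S)\to\Map_\Z(R,\tau_{\leq 1}S)\). Proposition \ref{prop_anvs} with \(m=1\) says it has contractible fibers, and that it is an equivalence when \(\pi_1S\) is \(2\)-torsion free.
\item Handle the right-hand column. By Lemma \ref{lem_cot} with \(n=0\), the fiber \(F=\fib(\rho_R)\) is \(1\)-connective and \(\pi_1F\) is \(2\)-torsion. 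The fiber of the right-hand map is then \(\Map_R(\mathrm{cofib}\,\rho_R,\pi_2S[3])\), where \(\mathrm{cofib}\,\rho_R=F[1]\) is \(2\)-connective. Maps from a \(2\)-connective object into \(\pi_2S[3]\) form a connected space whose \(\pi_1\) is a \(\Hom\) from \(\pi_2(F[1])\) into a discrete group. So this fiber is connected, and the map is injective on \(\pi_0\).
\item Conclude contractibility of the fibers of \(\Map_\an(R,S)\to\Map_\Z(R,S)\) from the long exact sequences on fibers. This step is the main obstacle. A connected fiber with nontrivial \(\pi_1\) on the right-hand column would, after looping, contribute a discrete fiber on the left-hand column. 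The issue is whether that discrete contribution is really a point, i.e.\ whether its \(\pi_0\) is trivial.
\end{enumerate}

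To settle step 4, I would first try to sharpen the connectivity coming from Schwede's identification in Lemma \ref{lem_cot}. We have \(\fib(\gamma)\simeq R\otimes\tau_{\geq 1}\S\), and \(\pi_1\S=\Z/2\). The hope is that this class, the Hopf map \(\eta\), acts trivially on the comparison of cotangent complexes in the relevant degree. Alternatively, I would argue directly on homotopy groups of the fibers, pinning down exactly which fiber of the comparison map the possible \(\pi_1\)-contribution lands in. Failing both, I would fall back on the structure of the paper's argument, where the non-contractible part is governed by \(\pi_1S\). In that picture the \(2\)-torsion-free hypothesis on \(\pi_1S\) from step 2 is exactly what kills \(\pi_0\) of the fibers and gives the equivalence in the second assertion.
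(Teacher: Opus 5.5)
You correctly saw that the paper's route is just the case \(n=0\) of Proposition \ref{prop_anvs}, and that this only reaches \(m\le 2p_1-3=1\). That is all the paper needs: the corollary is only applied to the \(1\)-truncated rings \(\comp R\) and \(R/p\) (for discrete \(R\)) in Theorem \ref{thm_obstruction}, and for \(1\)-truncated \(S\) your step 2 is the whole proof. Your extension to \(m=2\) has a genuine gap. In step 3, \(2\)-connectivity of \(F[1]\) does not make \(\Omega^\infty\mathrm{map}_R(F[1],\pi_2S[3])\) connected. Its \(\pi_0=[F[1],\pi_2S[3]]\) sees \(\pi_3(F[1])=\pi_2F\) through \(\Hom(\pi_2F,\pi_2S)\), and \(\pi_1F\) through an \(\mathrm{Ext}^1\)-term; Lemma \ref{lem_cot} as stated says nothing about either. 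Even granting connectedness, the nonzero \(\pi_1=\Hom(\pi_1F,\pi_2S)\) makes the left-hand fibers merely discrete, so step 4 cannot be closed from the stated bounds. Your fallback does not help. The hypothesis on \(\pi_1S\) only enters the middle column, which step 2 already handles, whereas every new contribution at \(m=2\) pairs \(\fib(\rho_R)\) against \(\pi_2S\).

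The missing idea is that Lemma \ref{lem_cot} is off by one. Since \(\gamma\) is a retraction of \(\iota\colon R\to R\otimes\Z\), one has \(\fib(\gamma)\simeq\mathrm{cofib}(\iota)\simeq R\otimes\Sigma\tau_{\geq1}\S\), not \(R\otimes\tau_{\geq1}\S\). So for \(n=0\) the fiber of \(\rho_R\) is \(2\)-connective with \(2\)-torsion \(\pi_2\); compare \(\pi_1L_{\Z[x]/\Z}=0\). With this, the \(m=1\) stage is an equivalence with no hypothesis at all. At \(m=2\) the right-hand fiber is discrete with \(\pi_0=\Hom_{\pi_0R}(\pi_2\fib(\rho_R),\pi_2S)\), which proves the first assertion for \(2\)-truncated \(S\). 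The equivalence criterion then becomes \(2\)-torsion-freeness of \(\pi_2S\), and the second assertion as written fails for genuinely \(2\)-truncated \(S\). Take \(R=\Z[x]\) and \(S=\Z\oplus\mathbb F_2[2]\), so \(\pi_1S=0\). Here \(\pi_2L_{\Z[x]/\Z}\) is \(\pi_1\) of the free \(\E_\infty\)-\(\Z\)-algebra on one generator, i.e.\ \(\bigoplus_{n\geq2}H_1(B\Sigma_n;\Z)\neq0\), while \(L^\alg_{\Z[x]}\) is free and discrete. Hence \(\pi_0\Map_\Z(\Z[x],S)\cong\Z\times\mathbb F_2\) is strictly bigger than \(\pi_0\Map_\an(\Z[x],S)=\Z\). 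So the statement should be read with \(S\) \(1\)-truncated, and then your step 2 is the entire argument.
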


\begin{acor}
Let \(0\leq n\leq \infty\). The forgetful functor
\[\Ring^\an_{\Z[p_{\leq n}^{-1}]}\to\CAlg^\cn_{\Z[p_{\leq n}^{-1}]}\]
induces equivalences on mapping spaces after \((2p_{n+1}-4)\)-truncation.
\end{acor}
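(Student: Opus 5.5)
The plan is to read the statement as saying that the forgetful functor \(\Ring^\an_{\Z[p_{\leq n}^{-1}]}\to\CAlg^\cn_{\Z[p_{\leq n}^{-1}]}\) becomes an equivalence once we restrict to \((2p_{n+1}-4)\)-truncated targets. Concretely: for animated \(\Z[p_{\leq n}^{-1}]\)-algebras \(R\), \(S\) with \(S\) being \((2p_{n+1}-4)\)-truncated, the comparison map \(\Map_\an(R,S)\to\Map_\Z(R,S)\) should be an equivalence. The same would then hold for the induced functor between the full subcategories of \((2p_{n+1}-4)\)-truncated objects. The proof is then a direct specialization of Proposition \ref{prop_anvs}.

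First I will reduce from mapping spaces over \(\Z[p_{\leq n}^{-1}]\) to the absolute mapping spaces appearing in Proposition \ref{prop_anvs}. Since \(\Z\to\Z[p_{\leq n}^{-1}]\) is a localization, \(\Z[p_{\leq n}^{-1}]\)-algebras form a full subcategory both of \(\Ring^\an\) and of \(\CAlg^\cn_\Z\). Indeed, the space of maps \(\Z[p_{\leq n}^{-1}]\to S\) is contractible as soon as the primes \(\leq p_n\) act invertibly on \(S\). Hence \(\Map_{\an,\Z[p_{\leq n}^{-1}]}(R,S)\simeq\Map_\an(R,S)\), and likewise on the \(\E_\infty\) side. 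This identification is compatible with the comparison map, so it suffices to treat \(\Map_\an(R,S)\to\Map_\Z(R,S)\).

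Next I will apply Proposition \ref{prop_anvs} with \(m=2p_{n+1}-4\). This choice satisfies \(m\leq 2p_{n+1}-3\), so the comparison map has contractible fibers. Moreover \(\pi_{2p_{n+1}-3}S=0\) because \(S\) is \(m\)-truncated. Zero is trivially \(p_{n+1}\)-torsion free, so the second clause of Proposition \ref{prop_anvs} upgrades the conclusion to an equivalence. For \(n=\infty\) we have \(p_{\infty}=\infty\), so there is no truncation condition at all, and we recover the equivalence \(\Ring^\an_\Q\simeq\CAlg^\cn_\Q\) as a consistency check.

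The only point needing care is the interpretation. One cannot hope for \(\tau_{\leq 2p_{n+1}-4}\Map_\an(R,S)\simeq\tau_{\leq 2p_{n+1}-4}\Map_\Z(R,S)\) for arbitrary untruncated \(S\). In the inductive step of Proposition \ref{prop_anvs}, the fibers \(\Map_R(\mathrm{cofib}(\rho_R),\pi_kS[k+1])\) coming from large Postnikov degrees \(k\) have nonzero low homotopy. So the statement should be understood on truncated targets, which is exactly what Proposition \ref{prop_anvs} controls. I expect this bookkeeping, rather than any new computation, to be the main issue: the substantive input, Lemma \ref{lem_cot} on the connectivity of \(\fib(\rho_R)\), is already used inside Proposition \ref{prop_anvs}.
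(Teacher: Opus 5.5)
Your proposal is correct and matches the paper, which states this corollary without proof as an immediate consequence of Proposition \ref{prop_anvs} with \(m=2p_{n+1}-4\). In that case \(\pi_{2p_{n+1}-3}S=0\), so the torsion-freeness hypothesis is vacuous; your reading on truncated targets and your reduction from \(\Z[p_{\leq n}^{-1}]\)-linear to absolute mapping spaces are both right. One small point to tidy: for \(n=\infty\) the induction in Proposition \ref{prop_anvs} only reaches finite \(m\), so for untruncated \(S\) you should either pass to the limit over Postnikov towers in both categories or simply invoke the recalled equivalence \(\Ring^\an_\Q\simeq\CAlg^\cn_\Q\).
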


\begin{arem}
Inspecting the proofs we also see that if \(R\) is étale over a base animated ring \(k\), then \(\Map_{\Ring^\an_k}(R,S)\to\Map_{\CAlg^\cn_k}(R,S)\) is an equivalence without any assumptions on \(S\).
\end{arem}

\section{\(\delta\)-rings and the cotangent complex} \label{sec_cotan}

In this section, we use the cotangent complex formalism of \cite{HA} and \cite{SAG} to study \(\delta\)-rings.

Recall that a \(\delta_p\)-ring is an animated ring \(R\) together with a lift of the Frobenius \(R\to R/p\). That is, a \(\delta_p\)-structure on \(R\) is an endomorphism \(\phi:R\to R\) together with a homotopy making the diagram
\[\begin{tikzcd}
& R \ar[d,"\can"] \\ R \ar[r,"\Frob"] \ar[ur,"\phi"] & R/p
\end{tikzcd}\]
commute.

Let \(R\) be an \(n\)-truncated animated ring. Then \(R/p\) is \((n+1)\)-truncated and its \(\pi_{n+1}\) can be identified with the \(p\)-torsion submodule \(\pi_nR[p]\) of \(\pi_nR\). Motivated by the discrete case, we say that a \emph{non-derived Frobenius lift} is an endomorphism \(\phi:R\to R\) together with a homotopy making the diagram
\[\begin{tikzcd}
& R \ar[d,"\can"] \\[8] R \ar[r,"\Frob"] \ar[ur,"\phi"] & \tau_{\leq n}(R/p)
\end{tikzcd}\]
commute.

Our goal is to establish what is needed in order to upgrade a non-derived Frobenius lift to a \(\delta_p\)-structure.

\begin{acon} \label{con_obmap}
Let \(R\) be an \(n\)-truncated animated ring and let \(\phi:R\to R\) be a non-derived Frobenius lift. We then have the following diagram
\[\begin{tikzcd}
& L^\alg_R \ar[d,"L^\alg_\can"] \ar[ddr,"0"{name=B}, bend left] \ar[Rightarrow, from=2-2, to=B, shorten <=2mm, shorten >=6mm,"\beta" pos=0.4] \\[8]
L^\alg_R \ar[ur,"L^\alg_\phi"] \ar[r,"L^\alg_\Frob"] \ar[drr,"0"'{name=R}, bend right] \ar[Rightarrow, from=2-2, to=R, shorten <=3mm, shorten >=5mm,"\alpha" pos=0.3] & L^\alg_{\tau_{\leq n}(R/p)} \ar[dr] \\
&& (\pi_nR[p])[n+2]
\end{tikzcd}\]
where the nulhomotopies \(\alpha\) and \(\beta\) are induced respectively by the Frobenius and by the quotient map \(R \to R/p\). Composing \(\alpha\) with the inverse of \(\beta\) yields a self-homotopy of the zero map \(L^\alg_R\to (\pi_nR[p])[n+2]\), hence a map
\[L^\alg_R\to (\pi_nR[p])[n+1]\]
of \(R\)-modules, where \(\pi_nR[p]\) is equipped with the \emph{Frobenius twisted} \(R\)-module structure (restriction of scalars of the canonical module structure along \(\Frob\)).
\end{acon}

The cotangent complex formalism then yields the following.

\begin{acor} \label{cor_obmap}
Let \(R\) be an \(n\)-truncated animated ring and let \(\phi:R\to R\) be a non-derived Frobenius lift. Then the space of nulhomotopies for \(L_R^\alg\to (\pi_nR[p])[n+1]\) from Construction \ref{con_obmap} is equivalent to the space of \(\delta_p\)-structures on \(R\) with \(\phi\) the underlying non-derived Frobenius lift.

Moreover, a map between \(n\)-truncated \(\delta_p\)-rings \(R\to S\) is equivalent to a map \(R\to S\) compatible with non-derived Frobenius lifts and a homotopy between the two induced nulhomotopies of \(L_R^\alg \to (\pi_nS[p])[n+1]\).
\end{acor}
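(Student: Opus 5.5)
We deduce Corollary \ref{cor_obmap} from the deformation theory of square-zero extensions, i.e.\@ the cotangent complex formalism of \cite[Section 25.3]{SAG} (the algebraic cotangent complex \(L^\alg\) corepresents derivations of animated rings). The plan is to read a \(\delta_p\)-structure lifting \(\phi\) as a lifting problem along a square-zero extension.

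\emph{Step 1: the fiber sequence.} Since \(R\) is \(n\)-truncated, \(R/p\) is \((n+1)\)-truncated with \(\pi_{n+1}(R/p)\simeq \pi_nR[p]\). Hence \(R/p\to\tau_{\leq n}(R/p)\) is a square-zero extension by \((\pi_nR[p])[n+1]\). It is classified by a derivation \(d:L^\alg_{\tau_{\leq n}(R/p)}\to(\pi_nR[p])[n+2]\), which is the diagonal arrow in Construction \ref{con_obmap}. By \cite[Section 25.3]{SAG}, for any animated ring \(A\) and map \(g:A\to\tau_{\leq n}(R/p)\), the space of lifts of \(g\) to \(R/p\) is the space of nulhomotopies of the composite \(L^\alg_A\to g^*L^\alg_{\tau_{\leq n}(R/p)}\xrightarrow{d}(\pi_nR[p])[n+2]\), viewed in \(A\)-modules via \(g\).

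\emph{Step 2: identify the structures.} A \(\delta_p\)-structure with underlying non-derived lift \(\phi\) consists of a homotopy \(\can\circ\phi\simeq\Frob\) of maps \(R\to R/p\) which maps to the given homotopy after composing with \(R/p\to\tau_{\leq n}(R/p)\). Each of \(\can\circ\phi\) and \(\Frob\) is a lift of the common map \(g=\Frob:R\to\tau_{\leq n}(R/p)\). By Step 1, these two lifts correspond to the two nulhomotopies \(\beta\circ L^\alg_\phi\) and \(\alpha\) of \(d\circ L^\alg_\Frob\). Here \(\beta\) is the nulhomotopy witnessing that \(R\to\tau_{\leq n}(R/p)\) lifts to \(R/p\) via \(\can\), and \(\alpha\) is the one witnessing the lift \(\Frob\). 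The module structure on \(\pi_nR[p]\) is pulled back along \(g\), hence is Frobenius twisted. Lifts of \(g\) form a torsor (affine space) under \(\Map_R(L^\alg_R,(\pi_nR[p])[n+1])\). Therefore the space of paths between the two lifts relative to \(g\) is the space of paths between \(\alpha\) and \(\beta\circ L^\alg_\phi\). That is exactly the space of nulhomotopies of the loop \(\alpha\cdot\beta^{-1}\), i.e.\@ of the map \(L^\alg_R\to(\pi_nR[p])[n+1]\). This proves the first assertion.

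\emph{Step 3: morphisms.} A map of \(\delta_p\)-rings \(f:R\to S\) is a ring map with a homotopy \(f\phi_R\simeq\phi_Sf\) and a 2-cell compatibility between the two resulting Frobenius homotopies into \(S/p\). We first impose compatibility of the non-derived lifts, i.e.\@ after mapping to \(\tau_{\leq n}(S/p)\). The remaining datum is a path between two lifts of a fixed map \(R\to\tau_{\leq n}(S/p)\) to \(S/p\). Applying Step 1 to \(A=R\) and the extension \(S/p\to\tau_{\leq n}(S/p)\) translates this into a homotopy between the two nulhomotopies of \(L^\alg_R\to(\pi_nS[p])[n+1]\) pulled back from \(R\) and \(S\).

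The main obstacle is the coherence bookkeeping in Step 2. One must check that the composite of the two nulhomotopies really equals the loop from Construction \ref{con_obmap}, with the correct orientation and the correct twisted module structure. We would handle this by working throughout in the slice over \(\tau_{\leq n}(R/p)\), so that all homotopies live in one fixed fiber of the square-zero extension.
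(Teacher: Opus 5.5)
Your proposal is correct and follows the paper's intended argument. The paper gives no proof beyond saying that the statement follows from the cotangent complex formalism applied to Construction \ref{con_obmap}, and reading a $\delta_p$-structure as a path between two lifts along the square-zero extension $R/p\to\tau_{\leq n}(R/p)$ by $(\pi_nR[p])[n+1]$ is exactly that argument. One slip in Step 3: the remaining datum is a path between two homotopies $R\to S/p$ that lift the same homotopy into $\tau_{\leq n}(S/p)$, not between two lifts of a fixed map, so it is Step 2 (with target $S$) rather than Step 1 that you should invoke; your conclusion is unaffected.
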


\begin{arem}
We also see that if a non-derived Frobenius lift on \(R\) extends to a \(\delta_p\)-structure, then the space of extensions forms a torsor over \(\Map_R\big(L_R^\alg,(\pi_nR[p])[n]\big)\). If \(R\) is a discrete \(\delta_p\)-ring, we obtain that the set of functions \(\delta:R\to R\) inducing the same endomorphism \(\phi\) is a torsor over \(\Der_R(R,R[p])\). Indeed, given two such functions, one can check that their difference is a derivation into \(R[p]\) (with the Frobenius twisted \(R\)-module structure).
\end{arem}

As an application for the analysis above we give an alternative proof of the following result. The only other proof we know uses Witt vectors and follows from the results of \cite[Appendix A]{Prismatization}.

\begin{aprop} \label{prop_trunpdelta}
Let \(R\) be an animated \(\delta_p\)-ring. Then the truncation \(\tau_{\leq n}R\) is naturally a \(\delta_p\)-ring and the canonical map \(R\to\tau_{\leq n}R\) refines to a \(\delta_p\)-map.
\end{aprop}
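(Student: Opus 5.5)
We use the cotangent-complex description of Corollary \ref{cor_obmap}. Write \(S=\tau_{\leq n}R\) and \(\psi=\tau_{\leq n}\phi\). The plan is to show that \(\psi\) is a non-derived Frobenius lift on \(S\), and that the obstruction map of Construction \ref{con_obmap} for \(S\) comes with a canonical nulhomotopy, built from the given \(\delta_p\)-structure on \(R\).

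\emph{Step 1: a non-derived Frobenius lift.} Apply \(\tau_{\leq n}\) to the homotopy \(\can\circ\phi\simeq\Frob\) of maps \(R\to R/p\). Since \(\tau_{\leq n}\) is a functor on \(\Ring^\an\), this gives a homotopy \(\can\circ\psi\simeq\Frob\) of maps \(S\to\tau_{\leq n}(R/p)\). We then need \(\tau_{\leq n}(R/p)\simeq\tau_{\leq n}(S/p)\). This holds because the map \(R/p\to S/p\) is an isomorphism on \(\pi_i\) for \(i\leq n\): it is the cofiber of multiplication by \(p\), and \(R\to S\) is \(n\)-connected-equivalence in degrees \(\leq n\). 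Hence \(\psi\) is a non-derived Frobenius lift, naturally in \(R\), and \(R\to S\) is compatible with these lifts.

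\emph{Step 2: trivialising the obstruction.} By Corollary \ref{cor_obmap}, what remains is a nulhomotopy of the map \(L^\alg_S\to(\pi_nS[p])[n+1]\), the obstruction for \(S\). The point is that this obstruction is the image of a coherent datum coming from \(R\).
\begin{enumerate}
\item The extension \(\tau_{\leq n}(S/p)\to S/p\) is square-zero, with fibre \((\pi_nS[p])[n+1]\). It is classified by a derivation \(L^\alg_{\tau_{\leq n}(S/p)}\to(\pi_nS[p])[n+2]\).
\item On \(R\), the homotopy \(\can\circ\phi\simeq\Frob\) already lifts all the way to \(R/p\).
\item Pull the obstruction for \(S\) back along \(R\to S\), giving a map \(L^\alg_R\to(\pi_nS[p])[n+1]\). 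This pulled-back class is exactly the difference between the two lifts of \(\Frob\colon R\to\tau_{\leq n}(S/p)\) to \(S/p\): one through \(\phi\), the other the truncation of the given homotopy. Because the genuine homotopy on \(R\) lifts to \(R/p\to S/p\), the pulled-back class comes with a canonical nulhomotopy.
\end{enumerate}

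\emph{Step 3: descending the nulhomotopy from \(R\) to \(S\).} It remains to move this nulhomotopy from \(L^\alg_R\) to \(L^\alg_S\). We use the cofibre sequence \(S\otimes_R L^\alg_R\to L^\alg_S\to L^\alg_{S/R}\). The relative cotangent complex \(L^\alg_{S/R}\) is \((n+2)\)-connective, since \(R\to S\) is \((n+1)\)-connective. The target \((\pi_nS[p])[n+1]\) is \((n+1)\)-truncated. So \(\Map_S(L^\alg_{S/R},(\pi_nS[p])[n+1])\) and its loop space are contractible. Therefore restriction induces an equivalence
\[
\Map_S\bigl(L^\alg_S,(\pi_nS[p])[n+1]\bigr)\simeq\Map_R\bigl(L^\alg_R,(\pi_nS[p])[n+1]\bigr),
\]
and the same holds on path spaces. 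The nulhomotopy from Step 2 thus descends uniquely to \(L^\alg_S\), giving the \(\delta_p\)-structure on \(S\).

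\emph{Step 4: the \(\delta_p\)-map.} The second part of Corollary \ref{cor_obmap} then shows that \(R\to S\) refines to a \(\delta_p\)-map: the two induced nulhomotopies of \(L^\alg_R\to(\pi_nS[p])[n+1]\) agree by construction. Naturality in \(R\) follows because every choice made above is canonical; each space of choices is contractible by the connectivity estimate.

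\emph{Expected obstacle.} The main difficulty is Step 2. One has to identify the restriction of \(S\)'s obstruction class along \(L^\alg_R\to L^\alg_S\) with the class that \(R\)'s honest Frobenius homotopy trivialises. This is a compatibility between the square-zero classifying maps \(\alpha,\beta\) of Construction \ref{con_obmap} and the maps \(R\to S\), \(R/p\to S/p\). I would handle it by naturality of the cotangent-complex classification of square-zero extensions, applied to the map of square-zero extensions from \(R/p\) over \(\tau_{\leq n}(R/p)\) to \(S/p\) over \(\tau_{\leq n}(S/p)\).
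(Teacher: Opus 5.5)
Your proposal is correct and follows the paper's proof essentially step for step: you truncate \(\phi\) to a non-derived Frobenius lift on \(\tau_{\leq n}R\), observe that the obstruction pulled back to \(L^\alg_R\) is trivialized by the genuine Frobenius homotopy on \(R\) pushed into \((\tau_{\leq n}R)/p\), and descend that nulhomotopy using the \((n+2)\)-connectivity of \(L^\alg_{\tau_{\leq n}R/R}\) (the paper's Lemma \ref{lem_cottau}), with the \(\delta_p\)-map refinement then automatic. One overstatement needs fixing: for \(X=(\pi_nS[p])[n+1]\), the restriction \(\Map_S(L^\alg_S,X)\to\Map_R(L^\alg_R,X)\) is not an equivalence, because surjectivity on \(\pi_0\) would require \(\pi_0\Map_S(L^\alg_{S/R},X[1])\cong\Hom(\pi_{n+2}L^\alg_{S/R},\pi_nS[p])\) to vanish, and this fails in general; it only has contractible fibers, but that is exactly what your argument uses to descend the nulhomotopy uniquely.
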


The main input in our proof is the following lemma.

\begin{alem} \label{lem_cottau}
Let \(R\) be an animated ring. Then the fiber of the canonical map \(L^\alg_{R}\to L^\alg_{\tau_{\leq n}R}\) is \((n+1)\)-connective. In particular, for any \((n+1)\)-truncated \(X\in\Mod_R\), the map \(\Map_R(L^\alg_{\tau_{\leq n}R},X)\to\Map_R(L_R^\alg,X)\) has contractible fibers.
\end{alem}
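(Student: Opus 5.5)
The approach is to use the transitivity cofiber sequence for the cotangent complex along \(R\to\tau_{\leq n}R\). For the composite \(\Z\to R\to\tau_{\leq n}R\) there is a cofiber sequence of \(\tau_{\leq n}R\)-modules
\[\tau_{\leq n}R\otimes_R L^\alg_R\to L^\alg_{\tau_{\leq n}R}\to L^\alg_{\tau_{\leq n}R/R}.\]
The canonical map \(L^\alg_R\to L^\alg_{\tau_{\leq n}R}\) factors as \(L^\alg_R\to \tau_{\leq n}R\otimes_R L^\alg_R\to L^\alg_{\tau_{\leq n}R}\). So it suffices to show that each of these two maps has \((n+1)\)-connective fiber, since extensions of \((n+1)\)-connective objects are \((n+1)\)-connective.

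\textbf{The relative cotangent complex.} The map \(R\to\tau_{\leq n}R\) is \((n+1)\)-connective, meaning its fiber \(\tau_{\geq n+1}R\) is \(n\)-connective and the map is an isomorphism on \(\pi_{\leq n}\). By the connectivity estimate for the relative cotangent complex (\cite[Corollary 25.3.6.4]{SAG}, or the Hurewicz-type statement \cite[Theorem 7.4.3.1]{HA} in the animated setting), \(L^\alg_{\tau_{\leq n}R/R}\) is \((n+2)\)-connective, with \(\pi_{n+2}\) identified with \(\pi_{n+1}R\). Therefore the fiber of \(\tau_{\leq n}R\otimes_R L^\alg_R\to L^\alg_{\tau_{\leq n}R}\), which is \(\Omega L^\alg_{\tau_{\leq n}R/R}\), is \((n+1)\)-connective.

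\textbf{The base change map.} The fiber of \(L^\alg_R\to \tau_{\leq n}R\otimes_R L^\alg_R\) is \(\tau_{\geq n+1}R\otimes_R L^\alg_R\). Since \(L^\alg_R\) is connective and \(\tau_{\geq n+1}R\) is \((n+1)\)-connective as an \(R\)-module, the tensor product is \((n+1)\)-connective. Combining the two steps gives the first claim.

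\textbf{The mapping-space statement.} Let \(F\) be the fiber of \(L^\alg_R\to L^\alg_{\tau_{\leq n}R}\), and let \(X\) be an \((n+1)\)-truncated \(\tau_{\leq n}R\)-module, regarded as an \(R\)-module where needed. By base change adjunction,
\[\Map_R(L^\alg_R,X)\simeq\Map_{\tau_{\leq n}R}(\tau_{\leq n}R\otimes_R L^\alg_R,X).\]
Under this identification, the fiber of the restriction map from \(\Map_R(L^\alg_{\tau_{\leq n}R},X)\) is controlled by \(\Map_R(\mathrm{cofib},X)\), where \(\mathrm{cofib}\simeq F[1]\) is \((n+2)\)-connective. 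For \((n+2)\)-connective \(C\) and \((n+1)\)-truncated \(X\), the space \(\Map(C,X)\) is contractible, and so is \(\Map(C[-1],X)\) after looping. Hence the restriction map is an equivalence, and in particular has contractible fibers.

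\textbf{Main obstacle.} The step needing care is the connectivity bound on \(L^\alg_{\tau_{\leq n}R/R}\). I would cite the animated-ring version from \cite{SAG} rather than the \(\E_\infty\) version in \cite{HA}. If necessary, it can be reproved by writing \(R\to\tau_{\leq n}R\) as a colimit of cell attachments of dimension \(\geq n+2\), each of which contributes a free module in degree \(\geq n+2\) to the relative algebraic cotangent complex.
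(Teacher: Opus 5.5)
Your argument for the connectivity statement is correct and is the paper's argument. You factor through \(\tau_{\leq n}R\otimes_R L^\alg_R\), use \cite[Corollary 25.3.6.4]{SAG} to get that \(L^\alg_{\tau_{\leq n}R/R}\) is \((n+2)\)-connective, and note that \(\tau_{\geq n+1}R\otimes_R L^\alg_R\) is \((n+1)\)-connective. One small slip: the fiber \(\tau_{\geq n+1}R\) of \(R\to\tau_{\leq n}R\) is \((n+1)\)-connective, not merely \(n\)-connective, and the stronger bound is what the \((n+2)\)-connectivity estimate needs.

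The gap is in the mapping-space part, where you conclude that the restriction map is an equivalence. That conclusion is false. Put \(F=\fib(L^\alg_R\to L^\alg_{\tau_{\leq n}R})\). Applying \(\Map_R(-,X)\) to the cofiber sequence \(L^\alg_R\to L^\alg_{\tau_{\leq n}R}\to F[1]\) in \(\Mod_R\) identifies the fiber of the restriction map over \(g\in\Map_R(L^\alg_R,X)\) with the space of paths from \(g|_F\) to \(0\) in \(\Map_R(F,X)\).

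You correctly observe that \(\Map_R(F[1],X)\simeq\Omega\Map_R(F,X)\) is contractible. This only shows that \(\Map_R(F,X)\) is homotopy discrete, not that it is contractible: \(\pi_0\Map_R(F,X)\cong\Hom_{\pi_0R}(\pi_{n+1}F,\pi_{n+1}X)\), which is generally non-zero because \(F\) is only \((n+1)\)-connective. Hence each fiber is either empty or contractible. That is exactly what ``has contractible fibers'' means in this paper, and no more. Compare Proposition \ref{prop_anvs} and Corollary \ref{cor_2trun}, where the phrase is strictly weaker than being an equivalence, and its use in the proof of Proposition \ref{prop_trunpdelta}.

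Here is a concrete counterexample to your stronger claim. Take \(n=0\), \(R=\Z\otimes_{\Z[x]}\Z\) (free on a degree-\(1\) class, so \(\pi_0R\cong\pi_1R\cong\Z\) and all other homotopy vanishes) and \(X=\Z[1]\). Then \(L^\alg_{\tau_{\leq 0}R}=L^\alg_\Z=0\) and \(L^\alg_R\simeq R[1]\). So the restriction map is the inclusion of a point into \(\Map_R(R[1],\Z[1])\simeq\Z\), which is not an equivalence.

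To repair the proof, replace your final sentence with the path-space description above. The detour through base change, and the restriction to \(\tau_{\leq n}R\)-modules, are also unnecessary: the fiber sequence in \(\Mod_R\) handles any \((n+1)\)-truncated \(X\in\Mod_R\), as the statement requires.
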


\begin{proof}
By \cite[Corollary 25.3.6.4]{SAG}, the relative cotangent complex \(L^\alg_{\tau_{\leq n}R/R}\) is \((n+2)\)-connective. From the fiber sequence \(L_R^\alg\otimes_R\tau_{\leq n}R\to L_{\tau_{\leq n}R}^\alg\to L^\alg_{\tau_{\leq n}R/R}\) we deduce that the fiber of the map \(L_R^\alg\otimes_R\tau_{\leq n}R\to L^\alg_{\tau_{\leq n}R}\) is \((n+1)\)-connective. On the other hand, from the fiber sequence \(L_R^\alg\otimes_R\tau_{>n}R\to L_R^\alg \to L_R^\alg\otimes_R\tau_{\leq n}R\) we deduce that the fiber of \(L_R^\alg\to L_R^\alg\otimes_R\tau_{\leq n}R\) is \((n+1)\)-connective, and so the same holds for their composite \(L_R^\alg\to L_{\tau_{\leq n}R}^\alg\).
\end{proof}

\begin{proof}[Proof of Proposition \ref{prop_trunpdelta}]
Let \(\phi:R\to R\) denote the Frobenius lift for \(R\). Applying \(\tau_{\leq n}\) we get a commutative diagram
\[\begin{tikzcd}
& \tau_{\leq n} R \ar[d,"\can"] \\[8]
\tau_{\leq n}R \ar[ur,"\tau_{\leq n}\phi"] \ar[r,"\Frob"] & \tau_{\leq n}(R/p),
\end{tikzcd}\]
hence a non-derived Frobenius lift for \(\tau_{\leq n}R\). By Corollary \ref{cor_obmap}, we get an obstruction map \(L^\alg_{\tau_{\leq n}R}\to (\pi_nR[p])[n+1]\) which we want to show nulhomotopic. Since we have a commutative diagram
\[\begin{tikzcd}
& R \ar[d,"\can"] \\[8]
R \ar[ur,"\phi"] \ar[r,"\Frob"] & (\tau_{\leq n}R)/p,
\end{tikzcd}\]
we get a nulhomotopy for the composite \(L_R^\alg \to L_{\tau_{\leq n}R}^\alg \to (\pi_nR[p])[n+1]\). By Lemma \ref{lem_cottau} the map
\[\Map_R\big(L_{\tau_{\leq n}R}^\alg,(\pi_nR[p])[n+1]\big)\to \Map_R\big(L_R^\alg,(\pi_nR[p])[n+1]\big)\]
has contractible fibers, so we get a nulhomotopy for \(L_{\tau_{\leq n}R}^\alg\to (\pi_nR[p])[n+1]\) as well. In order to refine \(R\to\tau_{\leq n}R\) to a \(\delta_p\)-map, we need to supply a homotopy between the nulhomotopy for \(L_R^\alg \to (\pi_nR[p])[n+1]\) and the composite of \(L_R^\alg\to L_{\tau_{\leq n}R}^\alg\) with the nulhomotopy for \(L_{\tau_{\leq n}R}^\alg \to (\pi_nR[p])[n+1]\). This is automatic by construction.
\end{proof}

\begin{arem}
Let \(S\) and \(R\) be animated \(\delta\)-rings. Observe that a \(\delta\)-ring map \(S\to \tau_{\leq n}R\) can be obtained from a \(\delta\)-ring map \(S\to \tau_{\leq n-1}R\) in two steps. First by extending it to a map \(S\to \tau_{\leq n}R\) of rings with non-derived Frobenius lifts and then upgrading it to a map of \(\delta\)-rings. Both of them only involve lifting along square-zero extensions, so can be analysed with our approach.

For example, the free discrete \(\delta\)-ring \(\Z\{x\}\) is a polynomial algebra, so has a very simple cotangent complex. One can then compute the mapping space \(\Map_{\delta}(\Z\{x\},R)\) for any animated \(\delta\)-ring using this strategy to verify that \(\Z\{x\}\) is also the free animated \(\delta\)-ring. It then formally follows that the category of animated \(\delta\)-rings is the animation of \(\delta\)-rings.
\end{arem}

\printbibliography

\end{document}